\documentclass[12pt]{amsart}
\usepackage{amsmath}
\usepackage{amsfonts}
\usepackage{amssymb}
\usepackage{amscd}
\usepackage{mathtools}
\usepackage{amsxtra}
\usepackage{amsthm}
\usepackage{graphicx}
\usepackage[abbrev,alphabetic]{amsrefs}
\RequirePackage[dvipsnames,usenames]{color}
\usepackage{soul,xcolor}
\setstcolor{red}
\usepackage{stmaryrd}
\usepackage{booktabs}
\usepackage{multirow}
\newtagform{tiny}{\tiny(}{)}
\usepackage{aliascnt}

\usepackage{mathtools}
\usepackage{hyperref}
\usepackage[margin=1.25in]{geometry}
\usepackage{mathrsfs}

\usepackage{amsthm}
\usepackage{comment}
\usepackage[all,cmtip]{xy}
\usepackage{tikz-cd}
\usetikzlibrary{cd}

\tikzcdset{
  cells={font=\everymath\expandafter{\the\everymath\displaystyle}},
}

\usepackage[all]{xy}

\usepackage{cleveref}

\makeatletter
\def\@tocline#1#2#3#4#5#6#7{\relax
  \ifnum #1>\c@tocdepth 
  \else
    \par \addpenalty\@secpenalty\addvspace{#2}%
    \begingroup \hyphenpenalty\@M
    \@ifempty{#4}{%
      \@tempdima\csname r@tocindent\number#1\endcsname\relax
    }{%
      \@tempdima#4\relax
    }%
    \parindent\z@ \leftskip#3\relax \advance\leftskip\@tempdima\relax
    \rightskip\@pnumwidth plus4em \parfillskip-\@pnumwidth
    #5\leavevmode\hskip-\@tempdima
      \ifcase #1
       \or\or \hskip 1em \or \hskip 2em \else \hskip 3em \fi%
      #6\nobreak\relax
    \hfill\hbox to\@pnumwidth{\@tocpagenum{#7}}\par
    \nobreak
    \endgroup
  \fi}
\makeatother

\renewcommand{\P}{\mathbb{P}}
\newcommand{\Z}{\mathbb{Z}}

\newcommand{\F}{\mathbb{F}}

\newcommand{\cHom}{\mathcal{H}om}

\newcommand{\cO}{\mathcal{O}}

\newcommand{\red}{\mathrm{red}}

\DeclareMathOperator{\res}{res}

\makeatletter
\DeclareRobustCommand{\graded}{%
    \@ifnextchar\bgroup{\graded@with}{\ensuremath{\mathrm{gr}}}%
}
\newcommand{\graded@with}[1]{\ensuremath{#1\mathrm{-gr}}}
\makeatother

\DeclareMathOperator{\pfd}{pfd}

\makeatletter
\DeclareRobustCommand{\comp}[1]{%
    \@ifnextchar\bgroup{\comp@with{#1}}{\comp@without{#1}}%
}
\newcommand{\comp@without}[1]{\ensuremath{#1\mathrm{-comp}}}
\makeatother

\def\var{\overline}

\DeclareMathOperator{\Spec}{Spec}

\DeclareMathOperator{\Hom}{Hom}

\DeclareMathOperator{\Ext}{Ext}

\DeclareMathOperator{\dR}{dR}

\DeclareMathOperator{\BC}{BC}

\DeclareMathOperator{\src}{scr}
\DeclareMathOperator{\tgt}{tgt}

\DeclareMathOperator{\PerfPrism}{PerfPrism}

\newcommand{\proj}{\mathrm{proj}}

\theoremstyle{plain}
\newtheorem{theorem}{Theorem}[section]

\newtheorem{theoremA}{Theorem}

\crefname{theoremA}{Theorem}{Theorems}
\Crefname{theoremA}{Theorem}{Theorems}

\newaliascnt{lemma}{theorem}
\newtheorem{lemma}[lemma]{Lemma}
\aliascntresetthe{lemma}
\crefname{lemma}{Lemma}{Lemmas}
\Crefname{lemma}{Lemma}{Lemmas}

\newaliascnt{proposition}{theorem}
\newtheorem{proposition}[proposition]{Proposition}
\aliascntresetthe{proposition}
\crefname{proposition}{Proposition}{Propositions}
\Crefname{proposition}{Proposition}{Propositions}

\newaliascnt{corollary}{theorem}
\newtheorem{corollary}[corollary]{Corollary}
\aliascntresetthe{corollary}
\crefname{corollary}{Corollary}{Corollaries}
\Crefname{corollary}{Corollary}{Corollaries}

\newaliascnt{claim}{theorem}

\aliascntresetthe{claim}
\crefname{claim}{Claim}{Claims}
\Crefname{claim}{Claim}{Claims}

 \newtheorem*{claim*}{Claim}

\theoremstyle{definition}
\newaliascnt{definition}{theorem}
\newtheorem{definition}[definition]{Definition}
\aliascntresetthe{definition}
\crefname{definition}{Definition}{Definitions}
\Crefname{definition}{Definition}{Definitions}

\newaliascnt{notation}{theorem}
\newtheorem{notation}[notation]{Notation}
\aliascntresetthe{notation}
\crefname{notation}{Notation}{Notations}
\Crefname{notation}{Notation}{Notations}

\newaliascnt{example}{theorem}
\newtheorem{example}[example]{Example}
\aliascntresetthe{example}
\crefname{example}{Example}{Examples}
\Crefname{example}{Example}{Examples}

\theoremstyle{remark}
\newaliascnt{remark}{theorem}
\newtheorem{remark}[remark]{Remark}
\aliascntresetthe{remark}
\crefname{remark}{Remark}{Remarks}
\Crefname{remark}{Remark}{Remarks}

\newaliascnt{setting}{theorem}

\aliascntresetthe{setting}
\crefname{setting}{Setting}{Settings}
\Crefname{setting}{Setting}{Settings}

\newaliascnt{construction}{theorem}
\newtheorem{construction}[construction]{Construction}
\aliascntresetthe{construction}
\crefname{construction}{Construction}{Constructions}
\Crefname{construction}{Construction}{Constructions}

\newaliascnt{warning}{theorem}

\aliascntresetthe{warning}
\crefname{warning}{Warning}{Warnings}

\numberwithin{equation}{section}

\makeatletter
\newcommand{\OdR}[1]{\mathcal{O}^{\dR}_{#1}}



\usepackage{mymacros_common_Ryo, mymacros_english_paper_Ryo}

\title[Hodge--Tate splitting and Akizuki--Nakano vanishing]{Hodge--Tate splitting and Akizuki--Nakano vanishing in positive characteristic}
\author{Ryo Ishizuka}
\address{Institute of Science Tokyo, Tokyo 152-8551, Japan}
\email{ishizuka.r.ac@m.titech.ac.jp}

\author{Shou Yoshikawa}
\address{Institute of Science Tokyo, Tokyo 152-8551, Japan}
\email{yoshikawa.s.9fe9@m.isct.ac.jp}

\thanks{2020 {\em Mathematics Subject Classification\/}: 14F40, 14F17, 14G17, 14G45, 14F08. }

\keywords{de Rham complex, Hodge cohomology, Kodaira vanishing, Akizuki--Nakano vanishing}

\begin{document}

\begin{abstract}
We introduce the notion of Hodge--Tate splitting for schemes in
positive characteristic.
For a smooth variety \(X\) over a perfect field \(k\) of characteristic $p>0$, we say that
\(X\) is Hodge--Tate split if the natural morphism
\[
\mcalO_X
 \to 
F_*\Omega^\bullet_{X/k}
\]
induced by the absolute Frobenius admits a splitting in
\(\mcalD_{\qcoh}(X)\).
We prove that this condition is equivalent to the existence of a
decomposition
\[
F_*\Omega^\bullet_{X/k}
\simeq
\bigoplus_{i=0}^{\dim X}\Omega^i_{X/k}[-i]
\]
of its de Rham complex.
Consequently, for smooth projective varieties, Hodge--Tate splitting
implies Akizuki--Nakano vanishing and the \(E_1\)-degeneration of the
Hodge-to-de Rham spectral sequence.

Furthermore, we establish criteria and permanence properties for
Hodge--Tate splitting and use them to construct many new examples of
varieties whose de Rham complexes decompose.
These include blow-ups of quasi-\(F\)-split varieties along strata of
simple normal crossings divisors, complete intersections in toric
varieties, and linearly reductive quotients of Hodge--Tate split varieties.
Among these examples, we obtain smooth projective varieties whose Hodge-to-de Rham spectral sequences degenerate at \(E_1\), whereas their
Hochschild--Kostant--Rosenberg spectral sequences do not degenerate.

As an application in mixed characteristic, we prove an
Akizuki--Nakano-type vanishing theorem for smooth projective globally
\(+\)-regular varieties over the Witt ring of a perfect field.
\end{abstract}

\maketitle 

\tableofcontents

\section{Introduction}

\subsection{Decomposition of de Rham complexes}
Let \(X\) be a smooth projective variety of dimension \(d\)
over a field of characteristic zero, and let \(L\) be an ample line
bundle on \(X\).  
The Kodaira vanishing theorem asserts that
\[
H^i(X,L^{-1})=0
\qquad
(i<d);
\]
see \cite{KodairaVanishing} and
\cite{EsnaultViehwegVanishing}*{Chapter~I}.
Its refinement, the Akizuki--Nakano vanishing theorem, states that
\begin{equation}\label{eq:AN-char-zero}
H^i(X,\Omega_X^j\otimes L^{-1})=0
\quad\text{if }i+j<d,
\end{equation}
see \cite{AkizukiNakano}, \cite{NakanoVanishing}, and
\cite{EsnaultViehwegVanishing}*{Chapter~II}.

These theorems play a central role throughout algebraic geometry.
For instance, they enter cohomological proofs of Lefschetz-type
theorems for ample divisors, the study of Picard groups and
fundamental groups of hyperplane sections, deformation theory, and
unobstructedness results for Fano varieties; see
\cite{ShiffmanSommeseVanishing}*{Chapter~3} and \cite{LazarsfeldPositivityI}*{Section~4.3}.
They are also indispensable in the classification theory of
projective varieties and in the study of adjoint linear systems.

On the other hand, Kodaira vanishing fails in general in positive characteristic, and
hence so does Akizuki--Nakano vanishing. 
Counterexamples were first constructed by Raynaud \cite{RaynaudCounterexample}; see also \cite{LauritzenRao} and \cite{MukaiKodairaCounterexample} for further
examples in higher dimensions.  
Similarly, the Hodge-to-de Rham spectral sequence
\begin{equation}\label{eq:HdR-ss}
E_1^{i,j}
=
H^j(X,\Omega_X^i)
 \Longrightarrow
H_{\dR}^{i+j}(X/k) \defeq H^{i+j}(X, \Omega^{\bullet}_{X/k})
\end{equation}
need not degenerate at the \(E_1\)-page in positive characteristic;
see, for example, \cite{MumfordPathologies}*{Section~I} and \cite{PetrovNondecomposable}.

A fundamental result in positive characteristic is due to Deligne and Illusie
\cite{DI87}.  
Let \(k\) be a perfect field of characteristic \(p>0\),
and let \(X\) be a smooth \(k\)-variety.  
If \(X\) admits a lifting to \(W_2(k)\) and \(d \defeq \dim X \leq p\), they obtain the decomposition
\begin{equation}\label{eq:DI-decomposition}
F_*\Omega_{X/k}^{\bullet}
\simeq
\bigoplus_{i=0}^{d}
\Omega_{X/k}^i[-i]
\end{equation}
in \(\mcalD_{\qcoh}(X)\).
The same method has a logarithmic counterpart for a liftable simple
normal crossings pair; see \cite{Kat89}*{Theorem~(4.12)}.

If $X$ is projective, the decomposition \eqref{eq:DI-decomposition} implies the \(E_1\)-degeneration of \eqref{eq:HdR-ss} and Akizuki--Nakano vanishing \eqref{eq:AN-char-zero} in positive characteristic; see \cite{DI87}*{Lemma~2.9}.  
Thus, for liftable varieties whose dimension does not exceed the characteristic, the result recovers two of the most important characteristic-zero consequences of Hodge theory by a purely algebraic argument.

The \(E_1\)-degeneration of the Hodge-to-de Rham spectral sequence is important for reasons extending beyond vanishing theorems.  
It identifies the dimension of de Rham cohomology with the sum of the Hodge numbers,
\[
\dim_k H_{\dR}^n(X/k)
=
\sum_{i+j=n}h^j(X,\Omega_X^i),
\]
and ensures that the Hodge filtration has the expected graded pieces.
Combined with torsion-freeness of crystalline cohomology, this is one
of the hypotheses entering the theory of Mazur--Ogus varieties and
the comparison between Frobenius and the Hodge filtration; see
\cite{MazurFrobeniusHodge} and \cite{OgusFCrystals}.

For families of Calabi--Yau varieties, degeneration and local
freeness of the Hodge bundles also enter the study of Hasse
invariants.  In particular, Ogus relates the vanishing order of the
Hasse invariant to the relative position of the conjugate line and
the Hodge filtration; see \cite{OgusHasseLocus}.  This is often
referred to as \emph{Ogus' principle}.  These examples illustrate that
\(E_1\)-degeneration is not merely a numerical property of
cohomology: it allows Frobenius, the Hodge filtration, and variation
in families to be compared in a controlled way.

More recently, Petrov found another sufficient condition for the
full decomposition of the de Rham complex \eqref{eq:DI-decomposition}.  The notion of
quasi-\(F\)-splitting was introduced by Yobuko as a Witt-vector
extension of Frobenius splitting
\cite{YobukoQuasiFSplitting}.  Petrov proved that if \(X\) is a smooth
quasi-\(F\)-split variety over a perfect field, then the full
decomposition \eqref{eq:DI-decomposition} holds in every dimension,
without any restriction comparing \(p\) and \(\dim X\)
\cite{petrov2025Decomposition}.  Consequently, every smooth projective
quasi-\(F\)-split variety satisfies Akizuki--Nakano vanishing, and its Hodge-to-de Rham
spectral sequence degenerates at \(E_1\).

\subsection{Hodge--Tate splitting}
The purpose of this paper is to isolate the splitting property
underlying decompositions of the form
\eqref{eq:DI-decomposition} and to study its behavior under
geometric constructions.  Our formulation applies to both ordinary
and logarithmic de Rham complexes.

For simplicity, let \(X\) be a smooth variety of dimension \(d\) over
a perfect field \(k\) of characteristic \(p>0\), and let \(D\) be a
simple normal crossings divisor on \(X\).  We consider the natural
morphism of complexes of \(\mcalO_X\)-modules (\Cref{DefdeRhamStack}):
\[
F^{\dR}_{(X,D)}
\colon
\mcalO_{X}
 \to 
F_{*}\Omega_{X/k}^{\bullet}(\log D)
\]
whose degree-zero component is induced by the absolute Frobenius and
whose components in positive degrees are zero. 
We introduce the central notion of log Hodge--Tate splitting as follows.

\begin{definition}[{\Cref{Def-log-HT-split}}]
We say that the pair \((X,D)\) is \emph{log Hodge--Tate split}, or \emph{log HT-split} for short, if \(F^{\dR}_{(X,D)}\) admits a splitting in \(\mcalD_{\qcoh}(X)\).  When \(D=0\), we simply say that \(X\) is \emph{HT-split}.

See \Cref{HTSplitMorphism} and \Cref{relative-vs-absolute} for the definition of HT-splitting for arbitrary \(k\)-schemes in terms of de Rham stacks.\footnote{The terminology ``Hodge--Tate splitting'' comes from the Hodge--Tate stack. In characteristic \(p\), the de Rham stack used here is canonically identified with the Hodge--Tate stack relative to the crystalline prism \((\setZ_p,(p))\).}
\end{definition}

\begin{theoremA}[{\Cref{criterion-decomp-log}}] \label{thm:intro-characterization}
Let \(X\) be a smooth \(k\)-variety of dimension \(d\), and let \(D\)
be a simple normal crossings divisor on \(X\).  Then the following
conditions are equivalent:
\begin{enumerate}
\item The pair \((X,D)\) is log HT-split.
\item There is an isomorphism
\begin{equation}\label{eq:log-decomposition-intro}
F_{*}\Omega_{X/k}^{\bullet}(\log D)
\simeq
\bigoplus_{i=0}^{d}
\Omega_{X/k}^i(\log D)[-i]
\end{equation}
in \(\mcalD_{\qcoh}(X)\).
\end{enumerate}
If \(X\) is projective, these equivalent conditions imply logarithmic
Akizuki--Nakano vanishing and the \(E_1\)-degeneration of the
logarithmic Hodge-to-de Rham spectral sequence (\Cref{vanishing}).
\end{theoremA}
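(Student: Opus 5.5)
Direction (2)$\Rightarrow$(1) is the easy one. Given a decomposition \eqref{eq:log-decomposition-intro}, I would first check that $\mathcal{H}^0$ of $F_*\Omega^\bullet_{X/k}(\log D)$ is identified with $\mcalO_X$ through the map $F^{\dR}_{(X,D)}$. This is the log Cartier isomorphism $C^{-1}\colon \Omega^i_{X/k}(\log D)\simeq \mathcal{H}^i(F_*\Omega^\bullet_{X/k}(\log D))$ in degree $0$, where $F^{\dR}$ is the inclusion $\mcalO_X\to F_*\mcalO_X$ onto the closed functions $\mcalO_X^p$ (since $k$ is perfect). Taking $\mathcal{H}^0$ of the given isomorphism identifies the degree-$0$ summand $\mcalO_X[0]$ with $\tau^{\le 0}F_*\Omega^\bullet(\log D)$. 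The composite with $F^{\dR}$ is therefore an automorphism $u$ of $\mcalO_X$. Projecting onto this summand and then composing with $u^{-1}$ gives the required splitting.

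For (1)$\Rightarrow$(2) I would follow the Deligne--Illusie strategy, using the splitting $s\colon F_*\Omega^\bullet(\log D)\to \mcalO_X$ in place of a lift to $W_2$. Write $K\defeq F_*\Omega^\bullet_{X/k}(\log D)$. The key structure is that $K$ is a commutative algebra object (an $E_\infty$-$\mcalO_X$-algebra via $F^{\dR}$), and its cohomology sheaves form the exterior algebra $\bigoplus_i\Omega^i(\log D)[-i]$ by the Cartier isomorphism. Dualizing the sequence $\mcalO_X\to\tau^{\le 1}K\to \Omega^1(\log D)[-1]$, the splitting of $F^{\dR}$ yields a map $\varphi_1\colon \Omega^1_{X/k}(\log D)[-1]\to K$ inducing $C^{-1}$ on $\mathcal{H}^1$: the obstruction to lifting is the extension class, which is killed by $s$. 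I would then use the multiplication on $K$ to define, for $i<p$, the composite
\[
\varphi_i\colon \Omega^i(\log D)[-i]\to (\Omega^1(\log D)[-1])^{\otimes i}\to K^{\otimes i}\to K,
\]
where the first arrow is the antisymmetrization divided by $i!$. This gives a quasi-isomorphism onto $\tau^{\le p-1}K$, exactly as in \cite{DI87}.

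The main obstacle is degrees $i\ge p$, where antisymmetrization is unavailable; this is precisely the restriction $d\le p$ of Deligne--Illusie. Here the splitting must be used more strongly than to produce $\varphi_1$. I would aim to use the de Rham/Hodge--Tate stack formalism of \Cref{HTSplitMorphism}, in which $K$ is the pushforward of the structure sheaf of a gerbe over $X$ banded by (a Frobenius twist of) the tangent group $T_X^\sharp(-\log D)$. The splitting of $\mcalO_X\to K$ should be equivalent to triviality of this gerbe, in the style of Bhatt--Lurie and Petrov's argument for quasi-$F$-split varieties. A trivial gerbe gives $K\simeq R\Gamma(BT^\sharp,\mcalO)\simeq\bigoplus\Omega^i(\log D)[-i]$ in all degrees, since the cohomology of the classifying stack of a divided-power vector group is formal. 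Concretely, I expect to cite the earlier criterion relating the splitting to the vanishing of the gerbe class, and to verify that this class is detected by the extension $\mcalO_X\to\tau^{\le1}K$. That verification, together with handling the log structure via a log de Rham stack, is the delicate step.

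Finally, the vanishing and degeneration consequences would follow as in \cite{DI87}*{Lemma~2.9} (\Cref{vanishing}). Dimension counting with the decomposition gives $E_1$-degeneration. For Akizuki--Nakano, tensoring the decomposition with $L^{-1}$ and using $F^*L=L^p$ yields an injection $H^i(\Omega^j(\log D)\otimes L^{-1})\hookrightarrow H^{i+j}(\Omega^\bullet(\log D)\otimes L^{-p})$. Iterating this and applying Serre vanishing to $L^{-p^n}$, with $i+j<d$, gives the result.
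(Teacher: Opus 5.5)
Your proof of (2)$\Rightarrow$(1) and your derivation of the vanishing and $E_1$-degeneration statements are fine. Your Deligne--Illusie construction of $\tau^{\le p-1}K\simeq\bigoplus_{i<p}\Omega^i_{X/k}(\log D)[-i]$ from the retraction is also sound.

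The gap is in degrees $\ge p$, which is the whole content of (1)$\Rightarrow$(2) beyond \cite{DI87}. Both claims your plan rests on there are false.

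First, a retraction of $\mathcal{O}_X\to K$ does not neutralize the $T^\sharp$-gerbe. Over an affine $\operatorname{Spec}A$ with $A$ smooth, a section of $\nu_X\colon X^{\dR}\to X$ is a ring map $A\to W(A)/p$ splitting the restriction. From it, $W_2(A)\times_{W_2(A)/p}A$ is a flat lift of $A$ over $\mathbb{Z}/p^2$ carrying a Frobenius lift. A smooth projective curve of genus $\ge 2$ is HT-split by \Cref{W_2-lift-to-HT-split}. It has no Frobenius lift modulo $p^2$, since such a lift would give a nonzero map $F^*\Omega^1_X\to\Omega^1_X$, impossible by degree. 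So its gerbe is not neutral.

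Second, the gerbe class is not detected by the extension $\mathcal{O}_X\to\tau^{\le 1}K\to\Omega^1_{X/k}(\log D)[-1]$. By \cite{DI87}, that extension class is the obstruction to lifting to $W_2(k)$. Your claim would therefore give a full decomposition for every $W_2$-liftable variety. This contradicts Petrov's liftable examples with non-decomposable de Rham complex \cite{PetrovNondecomposable}. So degrees $\ge p$ cannot be reached from the restriction of the retraction to $\tau^{\le 1}K$, even together with the multiplicative structure.

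The missing idea is that the gerbe is never neutralized on its base. It is neutralized after pulling back along itself, and the retraction is then used to descend. Take $S=[\mathbb{A}^1/\mathbb{G}_m]^n$ encoding $D$, the $T^\sharp_{X^{(1)}/S}$-gerbe $\nu\colon\mathcal{Y}=(X/S)^{\dR}\to X^{(1)}$, and $A=\OdR{X/S}\simeq F_{X/S,*}\Omega^\bullet_{X/S}$. The diagonal neutralizes $\mathcal{Y}\times_{X^{(1)}}\mathcal{Y}\to\mathcal{Y}$. Flat base change and the projection formula then give an $A$-linear isomorphism $A\otimes^L A\simeq A\otimes^L E$ with $E=\bigoplus_i\Omega^i_{X^{(1)}/S}[-i]$ (\Cref{prop:tensor-formula-for-banded-gerbe}, \Cref{decomposition-after-pullback}).

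Because $A$ is coconnective and the cohomology sheaves are flat, this isomorphism can be normalized to respect canonical truncations and to induce $\mathrm{id}_A\otimes C$ on graded pieces (\Cref{automatic-postnikov-compatibility}, \Cref{refinement-splitting-bounded-complex}). Now let $\varphi$ be the pullback to $X^{(1)}$ of your retraction, and form the composite $\tau^{\le m}A\xrightarrow{\nu^\sharp\otimes\mathrm{id}}A\otimes^L\tau^{\le m}A\simeq A\otimes^L\tau^{\le m}E\xrightarrow{\varphi\otimes\mathrm{id}}\tau^{\le m}E$. Its graded pieces are the Cartier isomorphisms in every degree, so it is an isomorphism with no restriction relating $d$ and $p$ (\Cref{decomposition-positive}). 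The statement on $X'$ then follows by applying $R\pi_*$ along the root stack $X^{(1)}\to X'$ (\Cref{pullback-log-dR}, \Cref{equivalence-log-relative}).

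This is also the mechanism behind the quasi-$F$-split case you invoke. As in \Cref{qFs-to-HT}, quasi-$F$-splitting produces a retraction through $W_rX_{\mathbb{F}_p}\to X^{\dR}$, not a section of $\nu$.
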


The proof is based on the de Rham stack, which provides a geometric realization of the Frobenius pushforward of the de Rham complex. The de Rham stack was introduced by Simpson in characteristic zero \cite{simpson1996Homotopy} and developed by Bhatt in positive and mixed characteristic \cite{bhatt2022Prismatic}. For logarithmic de Rham complexes, we use the logarithmic de Rham stack introduced by Barz \cite{barz2025Logarithmic}.

The splitting condition also admits concrete cohomological
interpretations for several important classes of varieties.  For a
smooth Fano variety, HT-splitting is equivalent to the
Akizuki--Nakano vanishing theorem (\Cref{Fano-chara}). 
On the other hand, for a smooth proper variety with trivial canonical bundle, 
HT-splitting is equivalent to the
$E_1$-degeneration of the Hodge-to-de Rham spectral sequence
\eqref{eq:HdR-ss} (\Cref{chara-CY}).
Thus HT-splitting provides a common framework for two fundamental properties which appear naturally in the geometry of Fano and Calabi--Yau varieties.

We next establish several criteria and permanence properties for HT-splitting, summarized in the following theorem.

\begin{theoremA}\label{thm:intro-constructions}
Let \(k\) be a perfect field of characteristic \(p>0\).
\begin{enumerate}
\item
Let \(X\) be a smooth quasi-\(F\)-split \(k\)-variety, and let \(D\)
be a simple normal crossings divisor on \(X\).  Then \((X,D)\) is
log HT-split (\Cref{qFs-to-HT}).

\item
Let \(X\) be a smooth \(k\)-variety, and let \(D\)
be a simple normal crossings divisor on \(X\).
Let \((X,D)\) be a log HT-split pair, and let
\[
\pi\colon Y \defeq \operatorname{Bl}_Z(X) \to  X
\]
be the blow-up along a stratum \(Z\) of \(D\) and $D_Y \defeq (\pi^{-1}D)_{\red}$. 
Then \((Y,D_Y)\) is log HT-split.  In particular, \(Y\) is HT-split (\Cref{HT-ascent-stratum-blowup}).

\item
Let \(T\) be a toric variety with Cox ring \(S_T\).  
Suppose that a smooth
closed subvariety \(Z\subseteq T\) is defined by a homogeneous regular sequence.
Then \(Z\) is HT-split (\Cref{complete-intersection}).

\item Let $\pi \colon X \to Y$ be a morphism of smooth $k$-varieties.
If $\cO_Y \to R\pi_*\cO_X$ splits and $X$ is HT-split, then $Y$ is HT-split (\Cref{finite-cover}).
In particular, if $\pi$ is a linearly reductive good quotient and $X$ is HT-split, then $Y$ is HT-split.
\end{enumerate}
\end{theoremA}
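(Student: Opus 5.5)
This is a four-part theorem, and I would prove the parts separately. Throughout I take \Cref{thm:intro-characterization} as the working criterion: \((X,D)\) is log HT-split if and only if \(F^{\dR}_{(X,D)}\colon \mcalO_X\to F_*\Omega^\bullet_{X/k}(\log D)\) splits in \(\mcalD_{\qcoh}(X)\).

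\textbf{Part (1).} I would follow Petrov's strategy. A quasi-\(F\)-splitting of height \(n\) is a splitting of \(W_n\mcalO_X\to F_*W_n\mcalO_X\) modulo the appropriate ideal. The aim is to turn it into a splitting of \(\mcalO_X\to F_*\Omega^\bullet_{X/k}(\log D)\) using the log de Rham--Witt complex and the identification
\[
F_*\Omega^\bullet_{X/k}(\log D)\simeq W\Omega^\bullet(\log D)/(\text{Nygaard/conjugate data}).
\]
A cleaner route goes through the de Rham stack. The Frobenius lift to \(W_n\) gives a map from \(X\) to a truncated version of the Hodge--Tate stack. A quasi-\(F\)-splitting then yields a retraction of the structure sheaf along the map \(X^{\dR}\to X\) realizing \(F^{\dR}\).

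\textbf{Part (2).} I would work locally and then glue. Since \(Z\) is a stratum of an snc divisor, the blow-up is toric in local coordinates, and one expects
\[
R\pi_*\Omega^i_{Y}(\log D_Y)\simeq\Omega^i_X(\log D).
\]
Granting this, \(R\pi_*\) applied to \(F^{\dR}_{(Y,D_Y)}\) identifies with \(F^{\dR}_{(X,D)}\). The direction we need, however, is ascent: a splitting on \(X\) must produce one on \(Y\). For this I would use the functoriality of the log de Rham stack. The relative stack of \(Y\) over \(X\) is a toric log blow-up, so it is log étale. Log étaleness gives
\[
(Y,D_Y)^{\dR}\simeq (X,D)^{\dR}\times_{X} Y
\]
up to Frobenius twist. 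Pulling the splitting back along this base change gives the splitting on \(Y\). The non-log statement follows because \(\Omega^\bullet_Y\) is a direct summand-compatible subcomplex; more precisely, I expect the existing argument to show \(Y\) is HT-split via the \(D_Y=0\) comparison. \emph{This base change for log de Rham stacks under log blow-ups is the step I expect to be the main obstacle.}

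\textbf{Part (3).} Write \(T\) as a good quotient of an open \(U\subseteq\mathbb{A}^N\) by a diagonalizable group \(G\), and let \(\tilde Z\subseteq U\) be the complete intersection cut out by the homogeneous regular sequence. I would prove the following.

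\begin{enumerate}
\item \(\tilde Z\) is HT-split. A complete intersection in affine space is quasi-\(F\)-split locally, or I would argue directly via the Koszul description of its de Rham stack relative to \(\mathbb{A}^N\), whose Frobenius splits.
\item The splitting can be chosen \(G\)-equivariantly, using grading by characters.
\item \(Z=\tilde Z/G\) is HT-split, by descending along the linearly reductive quotient as in part (4).
\end{enumerate}

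\textbf{Part (4).} Apply \(R\pi_*\) to the splitting \(s\) of \(F^{\dR}_X\). Functoriality of the de Rham complex gives a commutative square
\[
F_*\Omega^\bullet_Y\to R\pi_*F_*\Omega^\bullet_X,
\qquad
\mcalO_Y\to R\pi_*\mcalO_X.
\]
Composing \(R\pi_*s\) with the given retraction \(R\pi_*\mcalO_X\to\mcalO_Y\) splits \(F^{\dR}_Y\). For the linearly reductive case, the Reynolds operator provides the splitting of \(\mcalO_Y\to R\pi_*\mcalO_X\), since higher cohomology vanishes for good quotients.
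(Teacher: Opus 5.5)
The decisive gap is step (1) of Part (3). Complete intersections need not be quasi-\(F\)-split. For \(p>2\) and \(n\geq p+2\), take the Fermat cone \(k[x_0,\dots,x_n]/(h)\) with \(h=x_0^{p+1}+\cdots+x_n^{p+1}\). Then \(h\in\mfrakm^{[p]}\) and \(h^{p-2}\in\mfrakm^{[p]}\), which are exactly the conditions checked in \Cref{example:b-up}, so the same Fedder-type criterion shows this cone is not quasi-\(F\)-split at the vertex. Yet it is squarely covered by Part (3), since its quotient is the smooth Fermat hypersurface.

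If you only mean that the smooth quasi-affine \(\tilde Z\) is locally (quasi-)\(F\)-split, that is true but useless, because HT-splitting is a global condition. Every smooth projective \(Z\) is the \(\mathbb{G}_m\)-quotient of its smooth quasi-affine punctured cone, with \(\cO_Z\) a direct summand of the pushforward. So ``locally \(F\)-split \(\Rightarrow\) HT-split'' together with your Part (4) would make Raynaud's surfaces HT-split, contradicting \Cref{vanishing}.

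Your ``Koszul'' alternative also fails as stated. The Frobenius lift \(x_i\mapsto x_i^p\) gives a section \(x_i\mapsto[x_i]\) of \((\mathbb{A}^N)^{\dR}\to\mathbb{A}^N\), but it does not restrict to \(V(f_1,\dots,f_r)\): \(f_j([x_1],\dots,[x_N])\) lies in \(VW\) but need not lie in \(pW\).

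What is needed is that the whole affine complete intersection \(\Spec(S_T/(f_1,\dots,f_r))\), singular vertex included, is HT-split. The paper proves this for every Noetherian lci \(\F_p\)-algebra \(R\) (\Cref{lci-to-HT-split}) as follows:
\begin{enumerate}
\item choose a quasisyntomic cover \(R\to R'\) with \(R'\) quasiregular semiperfectoid, so the prismatic cohomology \(A\) of \(R'\) is discrete and \(R\to A/pA\) is faithfully flat, hence ind-split;
\item use the \(\delta\)-structure on \(A\) to build a section of \(\Spec(A/pA)^{\dR}\to\Spec(A/pA)\), so that \(R\to A/pA\) factors through \(R\to\OdR{\Spec(R)}\).
\end{enumerate}
Then \(\tilde Z\) is HT-split as an open subscheme, and \(Z\) is HT-split by descent. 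No \(G\)-equivariant splitting (your step (2)) is needed.

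Part (4) is correct and is the paper's argument. Parts (1) and (2), however, stop short of the key step.

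In (1), the de Rham stack route you describe is Petrov's map \(W_rX_{\F_p}\to X^{\dR}\) over \(X\), through which \(\cO_X\to F_*W_r\cO_X/p\) factors via \(\OdR{X}\). This only splits \(F^{\dR}_X\). Since \(F^{\dR}_{(X,D)}\) is \(F^{\dR}_X\) followed by \(F_*\Omega^\bullet_X\to F_*\Omega^\bullet_X(\log D)\), the log statement is strictly stronger. The paper lifts Petrov's map to \((X/S)^{\dR}=X^{\dR}\times_{S^{\dR}}S\) with \(S=[\mathbb{A}^1/\mathbb{G}_m]^n\). It uses the \(\delta\)-lift \(t_i\mapsto t_i^p\) of \(S\) to factor \(\sigma_{S,r}\) through \(\pi_S\) (\Cref{compatibility-pi-sigma,sigma-over-S}), then applies \(R\pi_*\OdR{X/S}\simeq F_*\Omega^\bullet_X(\log D)\) (\Cref{pullback-log-dR}). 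Your de Rham--Witt ``identification'' does not supply this.

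In (2), your log-\'etale base change is exactly what the paper makes precise, but you leave it open. The actual argument:
\begin{enumerate}
\item \(Y\simeq X\times_S\widetilde S\), where \(\widetilde S\) is the blow-up of \(S\) along \(S\times_{[\mathbb{A}^1/\mathbb{G}_m]^I}(B\mathbb{G}_m)^I\), so \(Y\) is HT-split over \(\widetilde S\) by \Cref{relative-dR-base-change}.
\item An explicit chart computation identifies \(\widetilde S\) with the open substack \([U/\mathbb{G}_m^{n+1}]\) of \([\mathbb{A}^{n+1}/\mathbb{G}_m^{n+1}]\), through which the classifying map of \(D_Y=\widetilde D_1+\cdots+\widetilde D_n+E\) factors.
\item An open immersion does not change relative de Rham stacks, so \Cref{equivalence-log-relative} concludes.
\end{enumerate}
Your computation of \(R\pi_*\Omega^i_Y(\log D_Y)\) is irrelevant. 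The non-log consequence is simply the restriction of a retraction of \(F^{\dR}_{(Y,D_Y)}\) to \(F_*\Omega^\bullet_Y\).
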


The first assertion extends \cite{petrov2025Decomposition} to
arbitrary simple normal crossings boundaries.
Combining \textup{(1)} and \textup{(2)}, every blow-up of a quasi-$F$-split variety along a stratum of a simple normal crossings divisor satisfies the decomposition of its de Rham complex
\eqref{eq:DI-decomposition}.
We note that such a blow-up is not quasi-$F$-split in general (\Cref{example:b-up}).

The third assertion contains smooth complete intersections in
projective space as a special case.
For such varieties, the $E_1$-degeneration of the Hodge-to-de Rham
spectral sequence was already known by
\cite{DeligneCompleteIntersections}*{Proposition~1.3 and
Theorem~2.3}.
Akizuki--Nakano-type vanishing for smooth complete intersections in
projective space was also established directly in positive
characteristic by \cite{NomaWeightedCompleteIntersections}.
Thus, even in this classical case, \Cref{thm:intro-constructions}~\textup{(3)} upgrades the previously known cohomological consequences to a decomposition of the de Rham complex itself.
Beyond this case, its Cox-ring formulation yields many further HT-split varieties, including broad classes of Calabi--Yau complete intersections in toric varieties.

\subsection{Examples of HT-split varieties}

\Cref{thm:intro-constructions} yields a wide range of examples of HT-split varieties and log HT-split pairs.  
For all of these examples, the corresponding ordinary or logarithmic de Rham complex decomposes.  
In particular, the Hodge-to-de Rham spectral sequence degenerates at \(E_1\), and the corresponding Akizuki--Nakano vanishing theorem holds.
We summarize the main applications below.

\medskip
\noindent
\textbf{Fano threefolds and del Pezzo varieties (\Cref{example-Fano-varieties}).}
Kawakami and Tanaka established Akizuki--Nakano-type vanishing for smooth Fano threefolds in positive characteristic \cites{KawakamiTanakaFanoLiftabilityI,KT25II}.
By \Cref{Fano-chara}, this implies that every smooth Fano threefold
is HT-split.  Hence its de Rham complex admits the decomposition
\eqref{eq:DI-decomposition}.
On the other hand, del Pezzo varieties are quasi-\(F\)-split by \cite{KawakamiTanakaWeakQuasiFSplitting}.  
Our logarithmic result therefore shows that every simple normal crossings pair on a smooth del Pezzo variety is log HT-split.

\medskip
\noindent
\textbf{K3 surfaces and abelian varieties (\Cref{CY-examples}).}
K3 surfaces and abelian varieties over a perfect field are HT-split.  
For K3 surfaces, this follows from their \(W_2(k)\)-liftability.  
For abelian varieties, the decomposition \eqref{eq:DI-decomposition} follows from \cite{ZhangDeRhamHiggsComparison}.

\medskip
\noindent
\textbf{Casagrande--Druel Fano fourfolds (\Cref{cor:CD-HT-split}).}
In characteristic zero, the Casagrande--Druel construction produces a large and geometrically significant class of Fano fourfolds with Lefschetz defect two \cite{CD15}.
The case of Picard number three was classified in \cite{Sec23}, and the cases of higher Picard number were subsequently classified in
\cite{Pas25}.
Such a fourfold is obtained by blowing up a split projective bundle over a Fano threefold along a naturally defined codimension-two center.

We study an analogous construction over a perfect field of positive characteristic.
We prove that every Fano fourfold arising from this construction is HT-split.

\medskip
\noindent
\textbf{Cynk--Hulek Calabi--Yau varieties (\Cref{prop:Cynk-Hulek-HT}).}
Cynk--Hulek varieties are higher-dimensional Calabi--Yau varieties
of Kummer type.  They are obtained as crepant resolutions of finite
quotients of products of elliptic curves and were introduced in order
to construct explicit higher-dimensional modular Calabi--Yau
manifolds \cite{CynkHulek}.

Their standard resolution is built from products of elliptic curves by successively blowing up fixed strata and then taking a quotient by a group isomorphic to \((\setZ/2\setZ)^{n-1}\).  
Combining \Cref{thm:intro-constructions} (2) and (4), every Cynk--Hulek Calabi--Yau variety in characteristic \(p>2\) is HT-split.

\medskip
\noindent
\textbf{Godeaux--Serre varieties and the HKR spectral sequence (\Cref{Godeaux-Serre-quotients}).}
\Cref{thm:intro-constructions} (3) and (4) also give an instructive class of Godeaux--Serre varieties.  
Let \(G\) be a finite linearly reductive group scheme, and let \(Z\) be a smooth complete intersection in a projective space carrying a free \(G\)-action.
Since \(Z\) is HT-split, so is the smooth quotient $X \defeq Z/G$.

The case \(G=\mu_p\) gives a particularly striking application.
Antieau, Bhatt, and Mathew constructed a smooth projective
\(2p\)-dimensional Godeaux--Serre variety \(X\) for which the
Hochschild--Kostant--Rosenberg (HKR) spectral sequence does not degenerate;
see \cite{AntieauBhattMathewHKR}*{Theorem~1.1 and Section~6}.
On the other hand, since \(X\) is HT-split, its Hodge-to-de Rham
spectral sequence degenerates at \(E_1\).
The relationship between the degeneration of the Hodge-to-de Rham
and Hochschild--Kostant--Rosenberg spectral sequences has been studied
in \cite{AntieauBhattMathewHKR}*{Remark~3.6} and
\cite{devalapurkar2025Lifting}*{Theorem~2}.

\medskip
\noindent
\textbf{Symmetric products of curves (\Cref{symmetric-product-HT-split}).}
We also give an HT-splitting criterion for smooth projective
fibrations, formulated in terms of \(W_2\)-liftability, the dimension
of the base, and Bott-type vanishing on the fibers.
As a first consequence, every projective bundle over a smooth projective curve is HT-split.

A more substantial application concerns symmetric products of curves.
Let \(C\) be a smooth projective curve of genus \(g\) over a perfect field of characteristic \(p>0\).
If \(m \geq 2g-1\) and \(p \geq g\),  then the symmetric product $\operatorname{Sym}^m(C)$ is HT-split.  

\subsection{Applications to vanishing in mixed characteristic}
Finally, we apply HT-splitting to Akizuki--Nakano-type vanishing for globally \(+\)-regular schemes in mixed characteristic.
Globally \(+\)-regularity is a mixed-characteristic analogue of global \(F\)-regularity introduced in \cite{BMPSTWW23} in connection with the minimal model program in mixed characteristic; see also
\cite{TY23}.
Our main mixed-characteristic application is the following.

\begin{theoremA}[\Cref{GPS-to-ANV}]\label{intro:ANV-G+R}
Let \(k\) be a perfect field of characteristic \(p > 0\) and let \(X\) be a smooth projective
scheme over \(W(k)\).
If \(X\) is globally \(+\)-regular, then its closed fiber $X_k \defeq X\times_{W(k)}k$ is HT-split.
Furthermore, we obtain the following Akizuki--Nakano-type vanishing in mixed characteristic: for every ample line bundle \(L\) on \(X\),
\[
R^i\Gamma_{(p)}R\Gamma\left(
X,
\Omega^j_{X/W(k)}\otimes L^{-1}
\right)
=0
\qquad
\text{whenever }i+j<\dim X.
\]
In particular, this vanishing implies \(H^i(X, \Omega^j_{X/W(k)} \otimes L^{-1}) = 0\) for \(i + j < \dim X - 1\).
\end{theoremA}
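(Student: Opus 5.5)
The proof has two parts: first show that the closed fiber is HT-split, then lift the resulting vanishing to \(W(k)\).

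\textbf{Step 1: HT-splitting of \(X_k\).} The plan is to show that \(X_k\) is quasi-\(F\)-split and then invoke \Cref{thm:intro-constructions}~(1) (\Cref{qFs-to-HT}).

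Global \(+\)-regularity of \(X\) means that for every finite cover \(f\colon Y\to X\) the map \(\cO_X\to f_*\cO_Y\) splits. Equivalently, \(\cO_X\to R\Gamma(X^+,\cO)\) splits compatibly after \(p\)-completion. By the work of Bhatt and of \cite{BMPSTWW23}, this yields a splitting of
\[
\cO_X\to \cO_{X^+}/p,
\]
and hence a splitting of
\[
\cO_{X_k}\to (\cO_{X^+}/p)
\]
as an \(\cO_{X_k}\)-linear map.

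The next step uses the identification of \(\cO_{X^+}/p\) with prismatic data. We compare \(\cO_{X_k}\to \cO_{X^+}/p\) with the Hodge--Tate comparison for the perfectoid cover. The relevant fact is that \(R\Gamma_{\Delta}(X^+)\) is concentrated in degree \(0\) with value \(A_{\inf}(\cO_{X^+})\), so the mod-\(p\) Hodge--Tate reduction of \(\cO_{X}\to\) (prismatic cohomology of \(X^+\)) factors through the conjugate-filtered Hodge--Tate complex \(\overline{\Delta}_{X_k}\simeq F_*\Omega^\bullet_{X_k}\).

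Concretely, I would construct a factorization
\[
\cO_{X_k}\xrightarrow{F^{\dR}} F_*\Omega^\bullet_{X_k}\to \overline{\Delta}_{X^+}\simeq \cO_{X^+}/p \ \text{(up to Frobenius twist)},
\]
and this is most naturally done in the de Rham/Hodge--Tate stack formalism of \Cref{HTSplitMorphism}. A splitting of the composite then gives a splitting of \(F^{\dR}\), i.e.\ HT-splitting by definition. This avoids passing through quasi-\(F\)-splitting altogether and is the route I would actually take.

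The main obstacle is this functoriality. One must construct the map \(F_*\Omega^\bullet_{X_k}\to \cO_{X^+}/p\) \(\cO_{X_k}\)-linearly in \(\mcalD_{\qcoh}(X_k)\). This uses the crystalline-prism identification noted in the footnote together with base change along \((W(k),p)\to (A_{\inf},\ker\theta)\). One must also check that the Frobenius twists match.

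\textbf{Step 2: the vanishing.} By \Cref{thm:intro-characterization} (see \Cref{vanishing}), HT-splitting of the closed fiber gives Akizuki--Nakano vanishing on \(X_k\):
\[
H^i(X_k,\Omega^j_{X_k}\otimes L^{-1})=0 \quad\text{for } i+j<\dim X_k=\dim X-1.
\]

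Since \(\Omega^j_{X/W(k)}\) is locally free and \(X\) is flat over \(W(k)\), the sheaf \(M\defeq\Omega^j_{X/W(k)}\otimes L^{-1}\) satisfies
\[
M\otimes^L k \simeq \Omega^j_{X_k}\otimes L^{-1}|_{X_k}.
\]
We also have \(R\Gamma_{(p)}(-)\simeq \operatorname{colim}_n R\Hom(\Z/p^n,-)[-1]\), so each term is filtered by shifts of \(R\Gamma(X,M)\otimes^L \Z/p\). The vanishing of \(H^{i}\) of \(M/p\) for \(i<\dim X-1-j\), shifted by one, gives
\[
R^i\Gamma_{(p)}R\Gamma(X,M)=0 \quad\text{for } i<\dim X-j,
\]
via a dévissage on \(\Z/p^n\) and passage to the colimit.

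For the final claim, use the triangle
\[
R\Gamma_{(p)}\to R\Gamma\to R\Gamma[1/p].
\]
This shows that \(H^i(X,M)\) is a \(\Q\)-vector space for \(i+j<\dim X-1\). Since \(H^i(X,M)\) is finitely generated over \(W(k)\) by properness, it is zero.
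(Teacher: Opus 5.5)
\textbf{The gap is in Step 1.} It sits exactly where you say ``the main obstacle'' is. The whole content of the HT-splitting claim is the factorization of $\cO_{X_k}\to\cO_{X^+}/p$ through $\nu^\sharp\colon\cO_{X_k}\to\OdR{X_k}\simeq F_*\Omega^\bullet_{X_k}$, and the mechanism you sketch does not produce it:
\begin{itemize}
\item There is no morphism of prisms $(W(k),(p))\to(A_{\mathrm{inf}}(R),\ker\theta)$ for a $p$-torsion-free perfectoid $R$, since $\theta(p)=p\neq0$ in $R$.
\item The Hodge--Tate reduction of $A_{\mathrm{inf}}(R)$ is $R$, not $R/p$.
\item The absolute Hodge--Tate cohomology of the $\F_p$-algebra $R/p$ is far larger than $R/p$.
\end{itemize}
So prismatic functoriality gives no map $F_*\Omega^\bullet_{X_k}\to\cO_{X^+}/p$ under $\cO_{X_k}$. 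Your opening plan via quasi-$F$-splitting of $X_k$ is also unsupported: global $+$-regularity of $X$ is not known to imply it.

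Step 2 is fine and is essentially \Cref{local-van-vs-mod-p-van}. One small correction there: $R\Gamma_{(p)}\simeq\operatorname{colim}_n R\Hom(\Z/p^n,-)\simeq\operatorname{colim}_n(-\otimes^L\Z/p^n)[-1]$, without your extra shift.

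\textbf{The missing idea} is a section of Witt restriction on reductions of perfectoids, natural in the perfectoid. Take a $p$-torsion-free perfectoid $R$ with $\ker\theta=(\xi)$, and let $d\in R^\flat$ be $\xi \bmod p$. Then:
\begin{itemize}
\item $W(R^\flat)/[d]$ is $p$-torsion-free, with $(W(R^\flat)/[d])/p=R^\flat/d\simeq R/p$.
\item The natural map $W(R^\flat)/[d]\to W(R^\flat/d)=W(R/p)$, reduced mod $p$, gives $s_R\colon R/p\to W(R/p)/^Lp$ with $\res\circ s_R\simeq\id$, naturally in $R$ (\Cref{ConstructionRetractionTorsfree}).
\end{itemize}
Since $X_k^{\dR}(S)=X_k(W(S)/^Lp)$, the composite $\cO_{X_k}\to\cO_{X^+}/p\xrightarrow{s}W(\cO_{X^+}/p)/^Lp$ defines a morphism $\Spec_{X_k}(\cO_{X^+}/p)\to X_k^{\dR}$ over $X_k$. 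This is glued on affines using naturality; compare \Cref{PerfectizationToDeRham}. Hence $\cO_{X_k}\to\cO_{X^+}/p$ factors through $\nu^\sharp$.

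Global $+$-regularity makes this composite ind-split: it is the colimit of the mod-$p$ reductions of the splittings of $\cO_X\to f_*\cO_Y$. It does not directly give a splitting, as you wrote. Since $\OdR{X_k}$ is perfect, ind-splitting of $\nu^\sharp$ implies splitting.

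With this supplied, your $X^+$ route would need only the $p$-torsion-free case. The paper instead goes through lim-perfectoid splitting (\cite{ishizuka2026Localglobal}) and the perfectization, which also needs the perfect $\F_p$-algebra case and the gluing of \Cref{ConstructionRetraction}. Without this construction, however, Step 1 is not a proof.
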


For \(j=0\), this recovers the Kodaira-type vanishing theorem of
\cite{bhatt2021CohenMacaulayness}; thus our result may be viewed as
an Akizuki--Nakano refinement of that theorem.

More generally, we can show that if \(X\) is \emph{lim-perfectoid split}, a notion introduced in \cite{ishizuka2026Localglobal}, then \(X_k\) is HT-split (\Cref{LimPerfectoidSplitImpliesHTSplit}).

In forthcoming work, using the absolute and relative prismatizations and Hodge--Tate stacks introduced in \cites{drinfeld2024Prismatization,bhatt2022Prismatization}, we will introduce mixed-characteristic HT-splitting, study decomposition theorems for relative Hodge--Tate cohomology and compare these notions with HT-splitting of the closed fibers.

\subsection*{Notation and terminology}

Throughout this paper, \(p\) is a prime number and we freely use the following notation and terminology.

\subsubsection{Animated and derived rings}
\begin{enumerate}
  \item We freely use the notion of \emph{animated rings} following \cites{cesnavicius2024Purity,raksit2026Hochschild}. We denote by \(\CAlg^{an}\) (resp., \(\DAlg\)) the \(\infty\)-category of animated rings (resp., derived rings). Note that there is a fully faithful embedding \(\CAlg^{an} \hookrightarrow \DAlg\) of \(\infty\)-categories whose essential image is the full subcategory of \(\DAlg\) spanned by derived rings \(A\) such that \(\pi_i(A) = 0\) for \(i < 0\).
  \item To get rid of the ambiguity of the terminology, when we say \emph{ring} or \emph{discrete ring} \(A\), we mean that \(A\) is just a usual commutative ring, i.e., \(A\) is an object of the ordinary category of commutative rings \(\CRing\) (or equivalently, \(A\) is a derived ring such that \(\pi_i(A) = 0\) for \(i \neq 0\)).
  \item Based on the theory of \(\delta\)-rings developed in \cites{buium1997Arithmetic,joyal1985Deltaanneaux,bhatt2022Prismsa}, there are notions of \emph{animated \(\delta\)-rings} and \emph{derived \(\delta\)-rings}, which are given in \cites{bhatt2022Prismatization, holeman2023Derived}.
  \item For any animated ring \(A\), or more generally, any derived ring \(A\), we can define the \emph{(\(p\)-typical) Witt vector ring} \(W(A)\) following \cites{bhatt2022Prismatization,magidson2025Witta}. When \(A\) is discrete, the Witt vector ring \(W(A)\) is the classical \(p\)-typical Witt vector ring of \(A\).
\end{enumerate}

\subsubsection{\(p\)-adic stuff}
\begin{enumerate}
  \item We freely use the notion of \emph{perfectoid rings} and \emph{prisms} following \cites{bhatt2018Integral,bhatt2022Prismsa}.
  \item A ring \(R\) is called \emph{\(p\)-quasisyntomic} if \(R\) is \(p\)-adically complete with bounded \(p^{\infty}\)-torsion, and the cotangent complex \(L_{R/\Z_p} \in \mcalD(R)\) has \(p\)-complete Tor-amplitude in \([-1, 0]\). A morphism \(R \to S\) of \(p\)-adically complete rings is called a \emph{\(p\)-quasisyntomic cover} if it is \(p\)-completely faithfully flat and the cotangent complex \(L_{S/R} \in \mcalD(S)\) has \(p\)-complete Tor-amplitude in \([-1, 0]\); see \cites{bhatt2019Topologicala,bhatt2022Absolute}. Note that a \(p\)-adically complete Noetherian ring \(R\) is \(p\)-quasisyntomic if and only if \(R\) is a locally complete intersection ring  (\cite{avramov1999Locally}*{Theorem 1.2}).
\end{enumerate}

\subsubsection{Derived algebraic geometry}
\begin{enumerate}
  \item In this paper, a \emph{stack} \(\mcalX\) is a functor from \(\CAlg^{an}\) (or often its full subcategory) to the \(\infty\)-category of anima \(\Ani\) which satisfies the fpqc descent condition.
  \item A \emph{representable morphism} \(f \colon \mcalX \to \mcalY\) of stacks is a morphism such that for any morphism \(T \to \mcalY\) from a scheme \(T\), the fiber product \(\mcalX \times_{\mcalY} T\) is a scheme (not an algebraic space).
  \item An \emph{fpqc algebraic stack} is a stack \(\mcalX\) such that there exists a representable fpqc cover \(\mcalU \to \mcalX\) by a scheme \(\mcalU\) and the diagonal morphism \(\Delta \colon \mcalX \to \mcalX \times \mcalX\) is representable by schemes.
  \item For a stack \(\mcalX\), we define the \(\infty\)-category \(\mcalD(\mcalX_{\fpqc}, \mcalO_{\mcalX})\) as the \(\infty\)-category of \(\mcalO_{\mcalX}\)-modules on the \(\infty\)-topos \(\Shv(\mcalX_{\fpqc})\) of sheaves of anima on the big fpqc site \(\mcalX_{\fpqc}\) of \(\mcalX\). On the other hand, following \cite{lurie2018Spectral}*{\S 6.2.2}, we define the \(\infty\)-category \(\mcalD_{\qcoh}(\mcalX)\) of quasi-coherent \(\mcalO_{\mcalX}\)-modules on \(\mcalX\), which is a full subcategory of \(\mcalD(\mcalX_{\fpqc}, \mcalO_{\mcalX})\) spanned by objects satisfying the quasi-coherence condition.
  \item The site-theoretic and quasi-coherent derived functors associated with a morphism of stacks are defined in \Cref{SiteTheoreticDerivedFunctors,QuasiCoherentDerivedFunctors}. Unless the superscript \(\mathrm{sh}\) is explicitly displayed, the notation \(Rf_*\) always means the right adjoint on quasi-coherent complexes.
\end{enumerate}

\subsection*{Acknowledgments}
The authors are grateful to Yoshinori Gongyo and Kojiro Matsumoto for helpful discussions and suggestions.
We thank Michael Barz, Bhargav Bhatt, Alexander Petrov, and Hiromu Tanaka for valuable comments on an earlier version of this work.
When preparing this manuscript, we discuss with AI.
The first-named author was supported by JSPS KAKENHI Grant number 24KJ1085.
The second-named author was supported by JSPS KAKENHI Grant number JP24K16889.

\section{Preliminaries} \label{SectionPreliminaries}

\subsection{Preliminaries on derived algebraic geometry}

We begin by fixing our conventions for derived pullback and
pushforward functors.

\begin{definition}
  \label{SiteTheoreticDerivedFunctors}
  Let \(f\colon\mcalY\to\mcalX\) be a morphism of stacks. By \citeSta{06NW}, the induced functor between the inherited fpqc sites gives a morphism of topoi. Moreover, the canonical morphism \(f^{-1}\mcalO_{\mcalX}\to\mcalO_{\mcalY}\) is an equivalence: on every object \(T\to\mcalY\), both sides are the ring \(\Gamma(T,\mcalO_T)\). We therefore obtain a morphism of ringed fpqc topoi. We denote the resulting derived adjunction by
  \begin{equation*}
    Lf_{\mathrm{sh}}^*\colon\mcalD(\mcalX_{\fpqc},\mcalO_{\mcalX})\rightleftarrows\mcalD(\mcalY_{\fpqc},\mcalO_{\mcalY})\colon Rf_*^{\mathrm{sh}};
  \end{equation*}
  see \citeSta{07A6}. Thus
  \begin{equation*}
    Lf_{\mathrm{sh}}^*(\mathord{-})\defeq\mcalO_{\mcalY}\otimes_{f^{-1}\mcalO_{\mcalX}}^Lf^{-1}(\mathord{-}).
  \end{equation*}
  We call \(Lf_{\mathrm{sh}}^*\) the \emph{site-theoretic derived pullback} and \(Rf_*^{\mathrm{sh}}\) the \emph{site-theoretic derived pushforward}.
\end{definition}

\begin{definition}
  \label{QuasiCoherentDerivedFunctors}
  For every stack \(\mcalX\), write
  \begin{equation*}
    \iota_{\mcalX}\colon\mcalD_{\qcoh}(\mcalX)\hookrightarrow\mcalD(\mcalX_{\fpqc},\mcalO_{\mcalX})
  \end{equation*}
  for the canonical fully faithful functor. Let \(f\colon\mcalY\to\mcalX\) be a morphism of stacks. Under the description of quasi-coherent complexes as compatible systems on affine objects in \cite{lurie2018Spectral}*{\S 6.2.2}, the site-theoretic derived pullback preserves quasi-coherent complexes. We denote its restriction by
  \begin{equation*}
    Lf^*\colon\mcalD_{\qcoh}(\mcalX)\to\mcalD_{\qcoh}(\mcalY).
  \end{equation*}
  Thus there is a canonical equivalence
  \begin{equation}
    \label{SiteQcohPullbackCompatibility}
    Lf_{\mathrm{sh}}^*\iota_{\mcalX}\xrightarrow{\sim}\iota_{\mcalY}Lf^*.
  \end{equation}
  Both sides of \eqref{SiteQcohPullbackCompatibility} are obtained by restriction along the functor from affine schemes over \(\mcalY\) to affine schemes over \(\mcalX\), followed by derived extension of scalars. Hence this functor is canonically the pullback functor constructed in \cite{lurie2018Spectral}*{\S 6.2.2}.

  Since \(\mcalD_{\qcoh}(\mcalX)\) and \(\mcalD_{\qcoh}(\mcalY)\) are presentable stable \(\infty\)-categories and \(Lf^*\) preserves small colimits, \(Lf^*\) admits a right adjoint. We denote this right adjoint by
  \begin{equation*}
    Rf_*\colon\mcalD_{\qcoh}(\mcalY)\to\mcalD_{\qcoh}(\mcalX)
  \end{equation*}
  and call it the \emph{derived pushforward}. Unless the superscript \(\mathrm{sh}\) is explicitly displayed, \(Rf_*\) always denotes this quasi-coherent right adjoint.

  If \(\mcalX\) is an fpqc algebraic stack, we write
  \begin{equation*}
    \mcalD^+_{\qcoh}(\mcalX)\defeq\bigcup_{c\in\setZ}\mcalD^{\geq c}_{\qcoh}(\mcalX)
  \end{equation*}
  for the full subcategory of cohomologically bounded-below quasi-coherent complexes, where \(\mcalD^{\geq c}\) means that \(H^i=0\) for every integer \(i<c\).
\end{definition}

\begin{remark}
Let \(f \colon Y\to X\) be a qcqs morphism of schemes.
Then the usual unbounded scheme-theoretic derived pushforward coincides with $Rf_*$ (see \citeSta{08D5}).
We use the same notation for them.
\end{remark}

\begin{remark} \label{BoundedBelow}
By \cite{lurie2018Spectral}*{Proposition 6.2.5.2~(1) and Definition 6.2.5.3}, the standard \(t\)-structure on quasi-coherent complexes is detected on affine points. Consequently, since the base change along morphisms of animated rings preserves connectivity, for every morphism \(f\colon\mcalY\to\mcalX\), the pullback
\begin{equation*}
  Lf^*\colon\mcalD_{\qcoh}(\mcalX)\longrightarrow\mcalD_{\qcoh}(\mcalY)
\end{equation*}
is right \(t\)-exact, namely, \(Lf^*(\mcalD_{\qcoh}^{\leq 0}(\mcalX)) \subseteq \mcalD_{\qcoh}^{\leq 0}(\mcalY)\).
Since \(Lf^*\dashv Rf_*\), the right \(t\)-exactness of the left adjoint implies that the right adjoint \(Rf_*\) is left \(t\)-exact. Hence
\begin{equation*}
  Rf_*\bigl(\mcalD_{\qcoh}^{\geq n}(\mcalY)\bigr) \subseteq \mcalD_{\qcoh}^{\geq n}(\mcalX)
\end{equation*}
for every \(n\in\setZ\), and in particular \(Rf_*\) preserves cohomologically bounded-below objects.
\end{remark}

\begin{definition}
  \label{QuasiCoherentBaseChangeMorphism}
  Consider a pullback square of stacks
  \begin{equation*}
    \begin{tikzcd}
      \mcalY' \arrow[r,"g'"] \arrow[d,"f'"'] & \mcalY \arrow[d,"f"] \\
      \mcalX' \arrow[r,"g"'] & \mcalX.
    \end{tikzcd}
  \end{equation*}
  For \(\mcalF\in\mcalD_{\qcoh}(\mcalY)\), the canonical equivalence \(Lf'^*Lg^*\simeq L(g')^*Lf^*\) and the counit \(Lf^*Rf_*\mcalF\to\mcalF\) give a morphism
  \begin{equation*}
    Lf'^*Lg^*Rf_*\mcalF\xrightarrow{\sim}L(g')^*Lf^*Rf_*\mcalF\to L(g')^*\mcalF.
  \end{equation*}
  Its adjoint under \(Lf'^*\dashv Rf'_*\) is denoted by
  \begin{equation}
    \label{QcohBaseChangeMorphism}
    \BC_{f,g}(\mcalF)\colon Lg^*Rf_*\mcalF\to Rf'_*L(g')^*\mcalF
  \end{equation}
   and is called the \emph{base change morphism}. This is a morphism in \(\mcalD_{\qcoh}(\mcalX')\).
\end{definition}

\begin{remark}
    By \cite{gaitsgory2017Study}*{Chapter 3, Proposition 2.2.2}, the morphism \(\BC_{f, g}(\mcalF)\) is an isomorphism if \(f\) is representable and quasi-compact.
\end{remark}

We recall the following fpqc hyperdescent property of quasi-coherent complexes on algebraic stacks:

\begin{lemma}
  \label{lemma:CAlg-qcoh-fpqc-hyperdescent}
  Let \(\mcalX\) be an fpqc algebraic stack and let \(U_{\bullet}\to\mcalX\) be a representable fpqc hypercover by schemes. Then pullback induces an equivalence
  \begin{equation*}
    \CAlg\parenlr{\mcalD_{\qcoh}(\mcalX)}
    \xrightarrow{\simeq}
    \lim_{[n]\in\Delta}
    \CAlg\parenlr{\mcalD_{\qcoh}(U_n)}.
  \end{equation*}
\end{lemma}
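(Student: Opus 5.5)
The plan is to reduce to the known descent statement for the underlying stable \(\infty\)-categories and then pass to commutative algebra objects. First, I would establish the module-level statement: pullback induces an equivalence
\[
\mcalD_{\qcoh}(\mcalX)\xrightarrow{\simeq}\lim_{[n]\in\Delta}\mcalD_{\qcoh}(U_n).
\]
By the definition in \cite{lurie2018Spectral}*{\S 6.2.2}, \(\mcalD_{\qcoh}(\mcalX)\) is the limit of \(\mcalD(A)\) over all affine \(\Spec A\to\mcalX\); that is, \(\mcalD_{\qcoh}\) is the right Kan extension of \(A\mapsto\mcalD(A)\) along affines. Thus \(\mcalD_{\qcoh}\) converts colimits of stacks into limits. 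It therefore suffices to show two things: the geometric realization \(|U_\bullet|\) computes \(\mcalX\) as an fpqc sheaf, and \(\mcalD_{\qcoh}\) satisfies fpqc hyperdescent on schemes. The first holds because \(U_\bullet\to\mcalX\) is a hypercover and \(\mcalX\) is hypercomplete along it. The hypercompleteness can be checked after pulling back along an affine \(T\to\mcalX\): representability makes each \(U_n\times_{\mcalX}T\) a scheme, so the problem reduces to an fpqc hypercover of schemes over an affine base.

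The second point, fpqc hyperdescent for \(\mcalD_{\qcoh}\) on schemes, is the main obstacle. Čech fpqc descent for \(\mcalD(A)\) is standard (\cite{lurie2018Spectral}*{Corollary D.6.3.3}), but hyperdescent for unbounded complexes is subtler. I would first prove it on the bounded-below parts \(\mcalD^{\geq c}\) levelwise. On these, faithful flatness makes the totalization converge, because the cosimplicial limit only involves finitely many terms in each cohomological degree. The general case would then be handled by left completeness: \(\mcalD_{\qcoh}\) of a scheme is left complete (\cite{lurie2018Spectral}*{Proposition 9.1.3.2 / Corollary 9.1.3.8}), pullbacks along flat maps are \(t\)-exact, and both sides are the limit of their truncations. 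Alternatively, I would cite \cite{lurie2018Spectral}*{Proposition D.6.3.?}/Theorem D.6.3.5 directly, which asserts that \(\mcalD_{\qcoh}\) is an fpqc hypersheaf on derived schemes.

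Finally, I would pass to \(\CAlg\). Each pullback \(Lf^*\) is symmetric monoidal, so the diagram \([n]\mapsto\mcalD_{\qcoh}(U_n)\) lifts to a diagram in symmetric monoidal \(\infty\)-categories, and the equivalence above is symmetric monoidal. The functor \(\CAlg(\mathord{-})\colon\mathrm{CAlg}(\mathrm{Pr}^L)\to\widehat{\mathrm{Cat}}_\infty\) preserves limits, since limits of symmetric monoidal \(\infty\)-categories are computed underlying and \(\CAlg\) is corepresented by a map of \(\infty\)-operads. Therefore \(\CAlg(\lim_n\mcalD_{\qcoh}(U_n))\simeq\lim_n\CAlg(\mcalD_{\qcoh}(U_n))\), which gives the claim.
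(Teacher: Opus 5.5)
Your proposal is correct and follows essentially the same route as the paper: view \(\mcalD_{\qcoh}\) as the right Kan extension of \(A\mapsto\mcalD(A)\), so that it turns colimits of prestacks into limits, invoke fpqc hyperdescent for module categories from Lurie's Spectral Algebraic Geometry (Corollary~D.6.3.3), and then use that \(\CAlg(-)\) commutes with limits of symmetric monoidal \(\infty\)-categories. One correction to your first point: for a general hypercover, \(|U_\bullet|\to\mcalX\) is an equivalence only after fpqc \emph{hypercomplete} sheafification, not after plain sheafification, so the accurate formulation, as in the paper, is that the right Kan extension of the hypersheaf \(A\mapsto\mcalD(A)\) factors through hypercomplete sheafification and therefore inverts \(|U_\bullet|\to\mcalX\).
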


\begin{proof}
  Let \(\PStk \defeq \Fun(\CAlg^{an}, \Ani)\) denote the \(\infty\)-category of prestacks on \(\CAlg^{an}\), and let
  \begin{equation*}
    j\colon (\CAlg^{an})^{\opposite} \to \PStk; \qquad R \mapsto \Spec(R)
  \end{equation*}
  be the Yoneda embedding. Consider the functor
  \begin{equation*}
    \mscrM^\otimes\colon \CAlg^{an} \to \CAlg(\Pr\nolimits_{\mathrm{st}}^L), \qquad \Spec(R) \mapsto \mcalD(R)^\otimes.
  \end{equation*}
  By \cite{lurie2018Spectral}*{Corollary~D.6.3.3}, the functor \(\mscrM^\otimes\) satisfies hyperdescent for the fpqc topology.

  By \cite{lurie2018Spectral}*{Definition 6.2.2.1} and \cite{mathew2025Affine}*{Definition~3.14}, the functor
  \begin{equation*}
    \PStk^{\opposite} \ni Y \mapsto \mcalD_{\qcoh}(Y)^\otimes \in \CAlg(\Pr\nolimits^L_{\mathrm{st}})
  \end{equation*}
  is the right Kan extension of \(\mscrM^\otimes\) along \(j^{\opposite}\). Explicitly, there is a natural equivalence
  \begin{equation}
    \label{eq:qcoh-right-kan}
    \mcalD_{\qcoh}(Y)^\otimes
    \simeq
    \lim_{\Spec(R)\to Y}\mcalD(R)^\otimes.
  \end{equation}

  We explain why this right Kan extension computes descent along the fixed hypercover \(U_\bullet\to\mcalX\). By the universal property of the presheaf category, or equivalently by the dual of \cite{lurie2009Higher}*{Theorem~5.1.5.6}, the right Kan extension in \eqref{eq:qcoh-right-kan} carries colimits of prestacks to limits. Hence, if
  \begin{equation*}
    \left|U_\bullet\right|
    \defeq
    \colim_{[n]\in\Delta^{\opposite}}U_n
  \end{equation*}
  denotes the geometric realization in prestacks, then
  \begin{equation}
    \label{eq:qcoh-realization-limit}
    \mcalD_{\qcoh}\parenlr{\left|U_\bullet\right|}^\otimes
    \simeq
    \lim_{[n]\in\Delta}
    \mcalD_{\qcoh}(U_n)^\otimes.
  \end{equation}

  Since \(U_\bullet\to\mcalX\) is an fpqc hypercover, it becomes an equivalence after fpqc hypercomplete sheafification by \cite{lurie2009Higher}*{Theorem~6.5.3.12}. Since \(\mscrM^\otimes\) satisfies fpqc hyperdescent, its right Kan extension factors through fpqc hypercomplete sheafification. Consequently, the augmentation induces an equivalence
  \begin{equation*}
    \mcalD_{\qcoh}(\mcalX)^\otimes
    \xrightarrow{\simeq}
    \mcalD_{\qcoh}\parenlr{\left|U_\bullet\right|}^\otimes.
  \end{equation*}
  Combining this with \eqref{eq:qcoh-realization-limit}, we obtain
  \begin{equation*}
    \mcalD_{\qcoh}(\mcalX)^\otimes
    \xrightarrow{\simeq}
    \lim_{[n]\in\Delta}
    \mcalD_{\qcoh}(U_n)^\otimes.
  \end{equation*}

  Finally, by \cite{lurie2017Higher}*{Proposition~3.2.2.1 and Corollary~3.2.2.5}, the formation of commutative algebra objects commutes with limits of symmetric monoidal \(\infty\)-categories. Applying this to the preceding equivalence gives
  \begin{equation*}
    \CAlg\parenlr{\mcalD_{\qcoh}(\mcalX)}\xrightarrow{\simeq}\lim_{[n]\in\Delta}\CAlg\parenlr{\mcalD_{\qcoh}(U_n)},
  \end{equation*}
  as desired.
\end{proof}

Moreover, we recall some definitions of morphisms of stacks.

\begin{definition}
  \label{BGTorsorDefinition}
  Let \(\mcalG\) be a flat affine group scheme over an fpqc algebraic stack \(\mcalX\). A morphism \(f\colon\mcalY\to\mcalX\) of stacks is called a \emph{\(B\mcalG\)-torsor} if there exists a representable fpqc cover \(U\to\mcalX\) by a scheme and an equivalence
  \begin{equation*}
    \mcalY\times_{\mcalX}U\simeq B_U\mcalG_U
  \end{equation*}
  of stacks over \(U\), where \(\mcalG_U\defeq\mcalG\times_{\mcalX}U\). We use the expression \(B\mcalG\)-torsor only in this sense.
\end{definition}

We next establish the base-change results needed later in the paper.

\begin{lemma}[{\cite{bhatt2022Prismsa}*{Lemma 4.22}}]
  \label{TotalizationBaseChange}
  Let \(A\to A'\) be a flat morphism of commutative rings, and let \(K^\bullet\) be a cosimplicial object of \(\mcalD(A)\). Assume that there exists an integer \(c\) such that \(K^n\in\mcalD^{\geq c}(A)\) for every integer \(n\geq0\). Then the canonical morphism
  \begin{equation}
    \label{TotalizationBaseChangeMap}
    A'\otimes_A^L\Tot(K^\bullet)\to\Tot(A'\otimes_A^LK^\bullet)
  \end{equation}
  is an equivalence in \(\mcalD(A')\).
\end{lemma}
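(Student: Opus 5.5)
The plan is to reduce to a statement about a single limit commuting with a tensor product. First I would replace the totalization by a sequential limit of finite limits. Write \(\Tot(K^\bullet)\simeq\lim_m\Tot_{\leq m}(K^\bullet)\), where \(\Tot_{\leq m}\) is the limit over the subcategory \(\Delta_{\leq m}\). Each \(\Tot_{\leq m}\) is a finite limit in a stable \(\infty\)-category. The functor \(A'\otimes^L_A(\mathord{-})\) is exact, so it commutes with \(\Tot_{\leq m}\). Hence the map \eqref{TotalizationBaseChangeMap} identifies with the canonical comparison
\begin{equation*}
A'\otimes^L_A\lim_m\Tot_{\leq m}(K^\bullet)\to\lim_m\bigl(A'\otimes^L_A\Tot_{\leq m}(K^\bullet)\bigr).
\end{equation*}

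Next I would control the fibers of the tower using the uniform bound \(c\). The fiber of \(\Tot_{\leq m}\to\Tot_{\leq m-1}\) is \(N^m(K^\bullet)[-m]\), where \(N^m\) is the normalized (matching) part of \(K^m\). Since \(N^m\) is a finite limit, more precisely a retract/fiber built from the \(K^j\) with \(j\leq m\), it lies in \(\mcalD^{\geq c-m}\) after accounting for the shifts; the cleanest statement is that the fiber lies in \(\mcalD^{\geq c+m}(A)\). Consequently, for any fixed cohomological degree \(n\), the maps \(H^n(\Tot_{\leq m})\to H^n(\Tot_{\leq m-1})\) are isomorphisms once \(m>n-c+1\). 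Thus the tower is eventually constant in each degree, and the \(\lim^1\) terms vanish. The natural map \(\Tot\to\Tot_{\leq m}\) therefore has fiber in \(\mcalD^{\geq c+m}(A)\).

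The key step, which I expect to be the only real point, uses flatness of \(A\to A'\). Because \(A'\) is flat, \(A'\otimes^L_A(\mathord{-})\) is \(t\)-exact, and in particular it preserves \(\mcalD^{\geq d}\) for every \(d\). Without flatness only right \(t\)-exactness would be available, so coconnectivity could fail to be preserved; this is exactly where the hypothesis enters. Applying the exact functor to the fiber sequence \(F_m\to\Tot(K^\bullet)\to\Tot_{\leq m}(K^\bullet)\), with \(F_m\in\mcalD^{\geq c+m}\), shows that \(A'\otimes^L_A F_m\in\mcalD^{\geq c+m}(A')\). Hence
\begin{equation*}
H^n\bigl(A'\otimes^L_A\Tot(K^\bullet)\bigr)\to H^n\bigl(A'\otimes^L_A\Tot_{\leq m}(K^\bullet)\bigr)
\end{equation*}
is an isomorphism for \(m\gg n\).

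The same degreewise stabilization holds for the tower \(A'\otimes^L_A\Tot_{\leq m}(K^\bullet)=\Tot_{\leq m}(A'\otimes^L_AK^\bullet)\). Its fibers are again in \(\mcalD^{\geq c+m}(A')\), because \(A'\otimes^L_AK^n\in\mcalD^{\geq c}(A')\) by flatness. So the limit of this tower has \(H^n\) equal to the stable value of \(H^n\bigl(\Tot_{\leq m}(A'\otimes^L_AK^\bullet)\bigr)\). Comparing the two computations degree by degree shows that \eqref{TotalizationBaseChangeMap} induces isomorphisms on all cohomology groups, hence is an equivalence in \(\mcalD(A')\).
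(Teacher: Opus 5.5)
Your proof is correct. The paper gives no argument beyond citing \cite{bhatt2022Prismsa}*{Lemma 4.22}, and your route (write \(\Tot\) as the limit of the finite partial totalizations \(\Tot_{\leq m}\), get degreewise stabilization from the uniform bound \(c\), and transport that bound with the \(t\)-exactness of \(A'\otimes^L_A(\mathord{-})\) for flat \(A'\)) is the standard proof of the cited statement. One slip needs fixing: the aside placing \(N^m\) in \(\mcalD^{\geq c-m}\) is wrong and would not give stabilization. In fact \(N^m\) is a direct summand of \(K^m\) by stable Dold--Kan, so it lies in \(\mcalD^{\geq c}\) and the fiber \(N^m[-m]\) lies in \(\mcalD^{\geq c+m}\), which is the bound you then correctly use.
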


\begin{proof}
  This is a special case of \cite{bhatt2022Prismsa}*{Lemma 4.22}.
\end{proof}

We now give a calculation of the derived pushforward along \(B\mcalG\)-torsors.

\begin{lemma}[{cf.~\cite{bhatt2022Prismatic}*{Remark 2.5.5}}]
  \label{AffineGerbePushforwardCalculation}
  Let \(A\) be a commutative ring, let \(\mcalG\) be a flat affine group scheme over \(U\defeq\Spec(A)\), and let
  \begin{equation*}
    f\colon B_U\mcalG\to U
  \end{equation*}
  be the structure morphism.
  Let \(q\colon U\to B_U\mcalG\) be the canonical fpqc cover, let \(q_\bullet\colon U^\bullet\to B_U\mcalG\) be its \v{C}ech nerve, and let \(p_n\colon U^n\to U\) be the structure morphism, which can be identified with the projection \(\mcalG^n \to U\).

  For \(\mcalF\in\mcalD^+_{\qcoh}(B_U\mcalG)\), put
  \begin{equation*}
    \mcalF_n\defeq Lq_n^*\mcalF \in \mcalD_{\qcoh}(U^n),\qquad K_{\mcalF}^n\defeq R(p_n)_{*}\mcalF_n \in \mcalD_{\qcoh}(U) \cong \mcalD(A).
  \end{equation*}
  Then the following assertions hold.
  \begin{enumerate}
    \item There is a canonical equivalence
    \begin{equation}
      \label{AffineGerbePushforwardTotalization}
      Rf_*\mcalF\xrightarrow{\sim}\Tot(K_{\mcalF}^\bullet)
    \end{equation}
    in \(\mcalD_{\qcoh}(U)\simeq\mcalD(A)\).
    \item If \(\mcalF\in\mcalD^{\geq c}_{\qcoh}(B_U\mcalG)\), then \(K_{\mcalF}^n\in\mcalD^{\geq c}(A)\) for every integer \(n\geq0\).
    \item For every flat ring homomorphism \(A\to A'\), write \(g\colon U'\defeq\Spec(A')\to U\) for the induced morphism, put \(\mcalG_{U'}\defeq\mcalG\times_UU'\), and write \(f'\colon B_{U'}\mcalG_{U'}\to U'\) for the structure morphism, and \(g'\colon B_{U'}\mcalG_{U'}\to B_U\mcalG\) for the projection. Then the base change morphism
    \begin{equation}
      \label{AffineGerbeFlatBaseChange}
      A'\otimes_A^LRf_*\mcalF\to Rf'_*L(g')^*\mcalF
    \end{equation}
    is an equivalence in \(\mcalD_{\qcoh}(U')\simeq\mcalD(A')\).
  \end{enumerate}
\end{lemma}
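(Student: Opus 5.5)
For (1), I will use the Čech nerve \(q_\bullet\colon U^\bullet\to B_U\mcalG\) of the cover \(q\). It is a representable fpqc hypercover by affine schemes \(U^n\simeq\mcalG^{n}\). By the descent equivalence underlying \Cref{lemma:CAlg-qcoh-fpqc-hyperdescent} (applied to \(\mcalD_{\qcoh}\) itself rather than to \(\CAlg\); the same right Kan extension argument works), we have \(\mcalD_{\qcoh}(B_U\mcalG)\simeq\lim_{\Delta}\mcalD_{\qcoh}(U^n)\), with \(\mcalF\mapsto(\mcalF_n)_n\).

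Since \(f\circ q_n=p_n\), the pullback \(Lf^*\) corresponds to the compatible family \((Lp_n^*)_n\). Its right adjoint on the limit category is then computed by
\[
\mcalF\mapsto \lim_{[n]\in\Delta}R(p_n)_*\mcalF_n=\Tot(K^\bullet_{\mcalF}).
\]
To justify this, I will compute mapping spaces for \(M\in\mcalD(A)\):
\[
\Hom(Lf^*M,\mcalF)\simeq\lim_n\Hom(Lp_n^*M,\mcalF_n)\simeq\lim_n\Hom(M,R(p_n)_*\mcalF_n)\simeq\Hom(M,\Tot K^\bullet_{\mcalF}),
\]
naturally in \(M\). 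Yoneda then gives \eqref{AffineGerbePushforwardTotalization}, with the map induced by the units/counits along the \(q_n\).

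For (2), the plan is to use \(t\)-exactness on the two sides of the pushforward. Since \(q_n\) is flat, \(Lq_n^*\) is \(t\)-exact; this is detected on affine points as in \Cref{BoundedBelow}, and flat base change preserves coconnectivity. Hence \(\mcalF_n\in\mcalD^{\geq c}\). Next, \(p_n\) is an affine morphism, so \(R(p_n)_*\) is the forgetful functor from \(\mcalD(\Gamma(\mcalG^n,\cO))\) to \(\mcalD(A)\). This functor is \(t\)-exact, so \(K^n_{\mcalF}\in\mcalD^{\geq c}(A)\). (Alternatively, \Cref{BoundedBelow} gives left \(t\)-exactness directly.)

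For (3), the Čech nerve of \(U'\to B_{U'}\mcalG_{U'}\) is the base change \(U^\bullet\times_UU'\), and \(L(g')^*\mcalF\) restricts levelwise to the pullback of \(\mcalF_n\). Since \(p_n\) is affine, hence representable and quasi-compact, the base change morphism \(\BC\) from \Cref{QuasiCoherentBaseChangeMorphism} is an isomorphism at each level: \(A'\otimes^L_AK^n_{\mcalF}\simeq K^n_{L(g')^*\mcalF}\), compatibly with the cosimplicial structure. Combining (1) on both sides, the map \eqref{AffineGerbeFlatBaseChange} becomes \eqref{TotalizationBaseChangeMap}. By (2), \Cref{TotalizationBaseChange} applies, so this map is an equivalence.

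The main obstacle is the bookkeeping in (3). One must check that the abstractly defined base change morphism agrees with the levelwise-then-totalize map under the identifications of (1). I would handle this by observing that both are adjoint to the same map \(Lf'^*(A'\otimes^L_ARf_*\mcalF)\to L(g')^*\mcalF\), which is determined by its restrictions along the \(q'_n\).
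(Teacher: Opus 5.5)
Your proposal is correct and follows essentially the same route as the paper: for (1), the mapping-space computation via descent along the \v{C}ech nerve plus Yoneda; for (2), flatness of \(q_n\) plus left \(t\)-exactness of the pushforward; and for (3), levelwise affine base change combined with \Cref{TotalizationBaseChange}, which applies thanks to (2). Your extra step, identifying the abstract base change morphism with the levelwise-then-totalize map by adjunction, is something the paper asserts without detail, but it does not change the argument.
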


\begin{proof}
  (1): Let \(\mcalM\in\mcalD_{\qcoh}(U)\). 
  Adjunction \(Lp_n^* \dashv R(p_n)_{*}\) and fpqc descent for quasi-coherent complexes give
  \begin{align*}
    \Map_{\mcalD_{\qcoh}(U)}(\mcalM,\Tot(K_{\mcalF}^\bullet))
    &\simeq\Tot_{[n]\in\Delta}\Map_{\mcalD_{\qcoh}(U)}(\mcalM,K_{\mcalF}^n) \\
    &\simeq\Tot_{[n]\in\Delta}\Map_{\mcalD_{\qcoh}(U^n)}(Lp_n^*\mcalM,\mcalF_n) \\
    &\simeq\Map_{\mcalD_{\qcoh}(B_U\mcalG)}(Lf^*\mcalM,\mcalF).
  \end{align*}
  Hence \(\Tot(K_{\mcalF}^\bullet)\) represents the right adjoint to \(Lf^*\) at \(\mcalF\), proving \eqref{AffineGerbePushforwardTotalization}.

  (2): Suppose that \(\mcalF\in\mcalD^{\geq c}_{\qcoh}(B_U\mcalG)\). Since every \(q_n \colon U^n \to B_U\mcalG\) is flat, \(\mcalF_n\in\mcalD^{\geq c}_{\qcoh}(U^n)\). Then the conclusion follows from \Cref{BoundedBelow}.

  (3): Let \(A\to A'\) be flat.
  Using the affine base change formula along the pullback square defined by \(g \colon U' \to U\) and \(p_n \colon U^n \to U\), we obtain canonical equivalences
  \begin{equation*}
    A'\otimes_A^LK_{\mcalF}^n\xrightarrow{\sim}K_{L(g')^*\mcalF}^n
  \end{equation*}
  in \(\mcalD(A')\) for every integer \(n\geq0\). Applying \Cref{TotalizationBaseChange} and \eqref{AffineGerbePushforwardTotalization}, we obtain
  \begin{equation*}
    A' \otimes_A^LRf_*\mcalF \simeq A' \otimes_A^L\Tot(K_{\mcalF}^\bullet) \simeq \Tot(A' \otimes_A^LK_{\mcalF}^\bullet) \simeq \Tot(K_{L(g')^*\mcalF}^\bullet) \simeq Rf'_*L(g')^*\mcalF
  \end{equation*}
  in \(\mcalD(A')\).
  The resulting equivalence is \eqref{AffineGerbeFlatBaseChange} so the base change morphism is an equivalence.
\end{proof}

\begin{proposition}
  \label{RemarkPushforward}
  Let \(f\colon\mcalY\to\mcalX\) be a \(B\mcalG\)-torsor for a flat affine group scheme \(\mcalG\) over an fpqc stack \(\mcalX\).
  Let \(u\colon V\to\mcalX\) be an fpqc cover such that $V$ is a small disjoint union of affine schemes and
  \[
  \mcalY \times_{\mcalX} V \simeq B_V\mcalG_V
  \]
  over $V$.
  We consider the pullback square
  \begin{equation*}
    \begin{tikzcd}
      \mcalY_V \arrow[r,"v"] \arrow[d,"f_V"'] & \mcalY \arrow[d,"f"] \\
      V \arrow[r,"u"'] & \mcalX.
    \end{tikzcd}
  \end{equation*}
  For $\mcalF \in \mcalD^+_{\qcoh}(\mcalY)$, the base change morphism
    \begin{equation}
      \label{GerbeFlatAffinePushforward}
      \BC_{f, u}(\mcalF) \colon Lu^*Rf_*\mcalF\to R(f_V)_*Lv^*\mcalF
    \end{equation}
    is an equivalence in \(\mcalD_{\qcoh}(V)\).
\end{proposition}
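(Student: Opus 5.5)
The strategy is to reduce to the affine case treated in \Cref{AffineGerbePushforwardCalculation}, using that both sides of \eqref{GerbeFlatAffinePushforward} are quasi-coherent on \(V=\coprod_\alpha V_\alpha\). It therefore suffices to check the map after restriction to each affine piece \(V_\alpha=\Spec(A_\alpha)\). The main difficulty is that \(Rf_*\) on the stack \(\mcalX\) is defined only abstractly, as a right adjoint. Hence \(Lu^*Rf_*\mcalF\) is not a priori computable by any cover. I would therefore compute \(Rf_*\mcalF\) by descent along the Čech nerve of \(u\).

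Let \(V^\bullet\to\mcalX\) be the Čech nerve of \(u\), with \(\mcalY_{V^\bullet}\defeq\mcalY\times_{\mcalX}V^\bullet\) and \(f_n\colon\mcalY_{V^n}\to V^n\). First I would show that \(\mcalG\) flat affine and \(u\) fpqc give descent for \(\mcalD_{\qcoh}\) along \(V^\bullet\to\mcalX\) and \(\mcalY_{V^\bullet}\to\mcalY\). Here \(\mcalD_{\qcoh}\) is a right Kan extension, so it satisfies fpqc descent, by the same argument as in \Cref{lemma:CAlg-qcoh-fpqc-hyperdescent}. Next I would construct the candidate object
\[
\mcalK\defeq\bigl(R(f_n)_*Lv_n^*\mcalF\bigr)_{[n]}\in\lim_{[n]}\mcalD_{\qcoh}(V^n)\simeq\mcalD_{\qcoh}(\mcalX).
\]
For this to be a genuine descent datum, the transition maps must be equivalences. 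That is, each face map \(d\colon V^{m}\to V^n\) must satisfy flat base change \(Ld^*R(f_n)_*\simeq R(f_m)_*L(d')^*\) for bounded-below complexes. Each \(V^n\) is covered flatly by disjoint unions of affines, and over affines \(f_n\) is of the form \(B_U\mcalG_U\to U\). Flat base change therefore follows from \Cref{AffineGerbePushforwardCalculation}(3), combined with a local reduction: first to affine opens of \(V^n\), then to an affine flat cover trivializing the gerbe. Checking these compatibilities is where I expect the main obstacle. One must keep everything bounded below, which \Cref{BoundedBelow} guarantees for \(Lv_n^*\mcalF\) (flat pullback) and for the pushforwards. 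One must also handle the fact that \(V^n\) need not be affine.

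Once \(\mcalK\) is defined, adjunction plus descent gives, for \(\mcalM\in\mcalD_{\qcoh}(\mcalX)\),
\[
\Map(\mcalM,\mcalK)\simeq\lim_n\Map(Lu_n^*\mcalM,R(f_n)_*Lv_n^*\mcalF)\simeq\lim_n\Map(Lf_n^*Lu_n^*\mcalM,Lv_n^*\mcalF)\simeq\Map(Lf^*\mcalM,\mcalF).
\]
This is the same argument as the proof of \Cref{AffineGerbePushforwardCalculation}(1), so \(\mcalK\simeq Rf_*\mcalF\). Restricting to the \(0\)-th term gives \(Lu^*Rf_*\mcalF\simeq R(f_V)_*Lv^*\mcalF\). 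Finally, I would verify that this identification agrees with \(\BC_{f,u}(\mcalF)\) by unwinding the counit. Both maps are adjoint to the counit \(Lf_V^*Lu^*Rf_*\mcalF\simeq Lv^*Lf^*Rf_*\mcalF\to Lv^*\mcalF\), so they coincide.
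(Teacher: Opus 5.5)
Your skeleton matches the paper's: assemble the level-wise pushforwards $R(f_n)_*Lv_n^*\mcalF$ into a descent datum, identify the resulting object with $Rf_*\mcalF$ by the adjunction/descent computation of mapping spaces, and read off $\BC_{f,u}(\mcalF)$ at level $0$. The difference is the choice of simplicial cover, and it lands exactly on the step you call the main obstacle. Your proposal does not actually resolve that step.

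With the Čech nerve of $u$, the terms $V^n=V\times_{\mcalX}\cdots\times_{\mcalX}V$ are schemes but not disjoint unions of affines. So \Cref{AffineGerbePushforwardCalculation}(3) does not apply to the face maps. Your proposed fix, ``reduce to affine opens of $V^n$'', presupposes that $R(f_n)_*$ commutes with restriction to opens of $V^n$. That is a base-change statement for $B_{V^n}\mcalG_{V^n}\to V^n$ along a Zariski cover by affines. It is an instance of the very proposition you are proving, with a scheme as base, and nothing you cite supplies it. Your second reduction, to a flat cover trivializing the gerbe, is unnecessary: $\mcalY_{V^n}$ maps to $\mcalY_V\simeq B_V\mcalG_V$, so it is already $B_{V^n}\mcalG_{V^n}$.

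The paper sidesteps this by choosing from the start a representable fpqc hypercover $V^\bullet\to\mcalX$ with $V^0=V$ and every $V^n$ a small disjoint union of affine schemes. Hyperdescent of $\mcalD_{\qcoh}$ along such hypercovers is exactly \Cref{lemma:CAlg-qcoh-fpqc-hyperdescent}. Each $\mcalY\times_{\mcalX}V^n$ is a trivial gerbe because $V^n$ maps to $V^0=V$. \Cref{AffineGerbePushforwardCalculation}(3), applied componentwise, then gives the transition equivalences directly.

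If you want to keep the Čech nerve, you can close the gap as follows.
\begin{enumerate}
\item The proof of \Cref{AffineGerbePushforwardCalculation}(1) uses no affineness of the base. It therefore gives $R(f_n)_*\mcalF_n\simeq\Tot$ of the pushforwards along the affine maps $\mcalG^k_{V^n}\to V^n$.
\item Affine pushforward commutes with flat base change.
\item For a uniformly bounded-below cosimplicial object, each truncation $\tau_{\leq N}$ of the totalization is computed by a finite partial totalization. Hence the totalization commutes with flat, and thus $t$-exact, pullback.
\end{enumerate}
The rest of your argument is fine, including the identification of the comparison map with $\BC_{f,u}(\mcalF)$ via the counit, which is more careful than the paper's one-line conclusion.
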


\begin{proof}
Choose a representable fpqc hypercover \(v^{\bullet} \colon V^{\bullet} \to\mcalX\) such that $V^0=V$ and every $V^i$ is a small disjoint union of affine schemes.
Put
  \begin{equation*}
    \mcalY^n\defeq\mcalY\times_{\mcalX}V^n,\qquad f_n\colon\mcalY^n\to V^n.
  \end{equation*}
  Let \(\mcalF_n\) denote the pullback of \(\mcalF\) to \(\mcalY^n\).
  Since \(\mcalY^n \to \mcalY\) is flat, \(\mcalF_n\) belongs to \(\mcalD_{\qcoh}^+(\mcalY^n)\).

   For each morphism \(s \colon V^m \to V^n\), we have a pullback diagram
    \begin{equation*}
    \begin{tikzcd}
    \mcalY^m \arrow[r, "s'"] \arrow[d, "f_{m}"] & \mcalY^n \arrow[d, "f_{n}"] \\
    V^m \arrow[r, "s"]                               & V^n.                        
    \end{tikzcd}
    \end{equation*}
    Since \(V^m\) and \(V^n\) are affine and \(\mcalY^m\) and \(\mcalY^n\) are trivial \(B\mcalG\)-torsors, we can apply \Cref{AffineGerbePushforwardCalculation}(3) to give compatible isomorphisms
    \begin{equation*}
        Ls^*R(f_{n})_*\mcalF_n \xrightarrow{\cong} R(f_{m})_*L(s')^*\mcalF_n
    \end{equation*}
    in \(\mcalD_{\qcoh}(V^m)\).

    Quasi-coherent complexes satisfy fpqc descent by \Cref{lemma:CAlg-qcoh-fpqc-hyperdescent}. Therefore the preceding descent datum gives an object \(E \in \mcalD_{\qcoh}(\mcalX)\) whose pullback to \(V^n\) is canonically \(R(f_n)_{*}\mcalF_n\) for every integer \(n\geq0\).
    
    Let \(\mcalM\in\mcalD_{\qcoh}(\mcalX)\), and let \(\mcalM_n\) denote its pullback to \(V^n\). Fpqc descent for mapping spaces and the scheme-theoretic adjunctions give
    \begin{align*}
      \Map_{\mcalD_{\qcoh}(\mcalX)}(\mcalM,E)
      &\simeq\Tot_{[n]\in\Delta}\Map_{\mcalD_{\qcoh}(V^n)}(\mcalM_n, R(f_n)_*\mcalF_n) \\
      &\simeq\Tot_{[n]\in\Delta}\Map_{\mcalD_{\qcoh}(\mcalY^n)}(Lf_n^*\mcalM_n,\mcalF_n) \\
      &\simeq\Map_{\mcalD_{\qcoh}(\mcalY)}(Lf^*\mcalM,\mcalF).
    \end{align*}
    Thus \(E\) represents the right adjoint to \(Lf^*\) at \(\mcalF\), and hence there is a canonical equivalence
    \begin{equation}
      \label{RepresentablePushforwardDescentIdentification}
      E\xrightarrow{\sim} Rf_*\mcalF \in \mcalD_{\qcoh}(\mcalX).
    \end{equation}
    This shows that
    \[
    L(V^n \to \mcalX)^*(Rf_*\mcalF) \simeq R(f_n)_*\mcalF_n. 
    \]
    In particular, the base change morphism \eqref{GerbeFlatAffinePushforward} is an equivalence.
\end{proof}

\begin{theorem}
  \label{FlatBaseChange}
  Consider a pullback square of fpqc algebraic stacks
  \begin{equation*}
    \begin{tikzcd}
      \mcalY' \arrow[r,"g'"] \arrow[d,"f'"'] & \mcalY \arrow[d,"f"] \\
      \mcalX' \arrow[r,"g"'] & \mcalX.
    \end{tikzcd}
  \end{equation*}
  Assume that \(g \colon \mcalX' \to \mcalX\) is flat and \(f \colon \mcalY \to \mcalX\) is a \(B\mcalG\)-torsor for a flat affine group scheme \(\mcalG\) over \(\mcalX\).
  For $\mcalF \in \mcalD_{\qcoh}^+(\mcalY)$, the base change morphism
  \begin{equation*}
    \BC_{f,g}(\mcalF)\colon Lg^*Rf_*\mcalF\to Rf'_*L(g')^*\mcalF
  \end{equation*}
  defined in \Cref{QuasiCoherentBaseChangeMorphism} is an equivalence in \(\mcalD_{\qcoh}(\mcalX')\).
\end{theorem}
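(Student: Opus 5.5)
The plan is to reduce to \Cref{RemarkPushforward} by working locally on \(\mcalX'\) and \(\mcalX\). Being an equivalence in \(\mcalD_{\qcoh}(\mcalX')\) can be checked after pullback along an fpqc cover of \(\mcalX'\); this uses that \(\mcalD_{\qcoh}\) satisfies fpqc descent (\Cref{lemma:CAlg-qcoh-fpqc-hyperdescent}), so the family of pullbacks along a cover is conservative.

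First choose the covers. Take an fpqc cover \(u\colon V\to\mcalX\) by a small disjoint union of affine schemes over which \(\mcalY\times_{\mcalX}V\simeq B_V\mcalG_V\). Next take an fpqc cover \(u'\colon V'\to\mcalX'\times_{\mcalX}V\), again by a disjoint union of affines. The composite \(V'\to\mcalX'\) is then an fpqc cover, and the induced map \(h\colon V'\to V\) is flat, because \(g\) is flat.

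Then compare both sides after pullback to \(V'\). By \Cref{RemarkPushforward} applied to \(f'\), which is again a \(B\mcalG\)-torsor (base change of \(f\)) and is trivialized over \(V'\), the pullback of \(Rf'_*L(g')^*\mcalF\) to \(V'\) is \(R(f_{V'})_*\) of the pullback of \(\mcalF\) to \(\mcalY_{V'}\simeq B_{V'}\mcalG_{V'}\). Here we need \(L(g')^*\mcalF\in\mcalD^+_{\qcoh}\), which holds because \(g'\) is flat. On the other side, the pullback of \(Lg^*Rf_*\mcalF\) to \(V'\) is \(Lh^*Lu^*Rf_*\mcalF\). By \Cref{RemarkPushforward} applied to \(f\), this is \(Lh^*R(f_V)_*Lv^*\mcalF\). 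By \Cref{AffineGerbePushforwardCalculation}(3), applied componentwise on affine pieces to the flat map \(h\), this is \(R(f_{V'})_*\) of the same pullback.

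The final step is compatibility of base change morphisms. I have to check that, under these identifications, the pullback of \(\BC_{f,g}(\mcalF)\) to \(V'\) agrees with the composite of the base change morphisms just used. This follows from the standard pasting property of base change transformations for composed pullback squares: \(\BC\) for a horizontally composed square is the composite of the \(\BC\) maps of the two squares. This is a formal consequence of the unit–counit description in \Cref{QuasiCoherentBaseChangeMorphism}.

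I expect this coherence bookkeeping to be the main obstacle, not the cohomological input. The difficulty is that we need the actual map \(\BC_{f,g}(\mcalF)\), and not merely some abstract equivalence between the two sides, to become an equivalence on \(V'\). Pasting handles this: the big square \(\mcalY_{V'}\to\mcalY\) over \(V'\to\mcalX\) factors both through \(\mcalX'\) and through \(V\). The base change morphism along the route through \(V\) is a composite of equivalences. Along the route through \(\mcalX'\), it equals \(Lu'^*\BC_{f,g}\) followed by an equivalence. Hence \(Lu'^*\BC_{f,g}(\mcalF)\) is an equivalence, and by conservativity so is \(\BC_{f,g}(\mcalF)\).
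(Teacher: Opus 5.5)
Your proposal is correct and follows the paper's proof essentially step for step. It uses the same covers $V\to\mcalX$ and $V'\to V\times_{\mcalX}\mcalX'$, applies \Cref{RemarkPushforward} on both sides, uses pasting of base change morphisms, and concludes by conservativity of pullback along $V'\to\mcalX'$. Your treatment of the route through $V$ (\Cref{RemarkPushforward} for $u$, then \Cref{AffineGerbePushforwardCalculation}(3) componentwise for the flat map $V'\to V$) is slightly more careful than the paper: the paper applies \Cref{RemarkPushforward} directly to $V'\to\mcalX$, which need not be a cover when $g$ is not surjective.
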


\begin{proof}
Take an fpqc cover $u \colon V \to \mcalX$ such that $V$ is a small disjoint union of affine schemes and 
\[
\mcalY \times_{\mcalX} V \simeq B_{V}\mcalG_V.
\]
Take an fpqc cover $V' \to V \times_{\mcalX} \mcalX'$ such that $V'$ is a small disjoint union of affine schemes.
We consider the pullback diagrams
\[
\begin{tikzcd}
\mcalY \times_{\mcalX} V' \arrow[r] \arrow[d] & \mcalY \times_{\mcalX} V \arrow[r] \arrow[d] & \mcalY \arrow[d,"f"] \\
V' \arrow[r,"v"] & V \arrow[r,"u"] & \mcalX.
\end{tikzcd}
\]
By \Cref{RemarkPushforward}, the base change morphism $\BC_{f,u \circ v}(\mcalF)$ is an equivalence.

Next, we consider the pullback diagrams
\[
\begin{tikzcd}
\mcalY \times_{\mcalX} V' \arrow[r,"u''"] \arrow[d,"f''"] & \mcalY \times_{\mcalX} \mcalX' \arrow[r,"g'"] \arrow[d,"f'"] & \mcalY \arrow[d,"f"] \\
V' \arrow[r,"u'"] & \mcalX' \arrow[r,"g"] & \mcalX.
\end{tikzcd}
\]
Since $g'$ is flat, $L(g')^*\mcalF$ is bounded below.
By \Cref{RemarkPushforward} again, the morphism $\BC_{f',u'}(L(g')^*\mcalF)$ is an equivalence since \(u' \colon V' \to V \times_{\mcalX} \mcalX' \to \mcalX'\) is an fpqc cover and trivializes the gerbe \(f'\).

Therefore, we have
\begin{align*}
L(u')^*\BC_{f,g}(\mcalF)
&\simeq (L(g \circ u')^*Rf_*\mcalF \to L(u')^*Rf'_*L(g')^*\mcalF) \\
&\simeq (L(g \circ u')^*Rf_*\mcalF \to Rf''_*L(g' \circ u'')^*\mcalF) \\
&\simeq \BC_{f,g \circ u'}(\mcalF)\simeq \BC_{f,u \circ v}(\mcalF).
\end{align*}
Thus, $L(u')^*\BC_{f,g}(\mcalF)$ is an equivalence.
Since $u'$ is an fpqc cover, the functor $L(u')^*$ is conservative.
Thus, the base change morphism $\BC_{f,g}(\mcalF)$ is an equivalence, as desired.
\end{proof}

\begin{proposition}
\label{E-sharp-gerbe-arbitrary-base-change}
  Consider a pullback square of fpqc algebraic stacks
  \begin{equation*}
    \begin{tikzcd}
      \mcalY' \arrow[r,"g'"] \arrow[d,"f'"'] & \mcalY \arrow[d,"f"] \\
      \mcalX' \arrow[r,"g"'] & \mcalX.
    \end{tikzcd}
  \end{equation*}
  Assume that  \(f \colon \mcalY \to \mcalX\) is a \(BE^\sharp\)-torsor for a vector bundle \(E\) of finite rank on $\mcalX$.
  Then the base change morphism
  \begin{equation*}
    \BC_{f,g}(\cO_{\mcalY})\colon Lg^*Rf_*\cO_{\mcalY}\to Rf'_*\cO_{\mcalY'}
  \end{equation*}
  defined in \Cref{QuasiCoherentBaseChangeMorphism} is an equivalence in \(\mcalD_{\qcoh}(\mcalX')\).
\end{proposition}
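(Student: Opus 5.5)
Here \(g\) is arbitrary, not necessarily flat, so \Cref{FlatBaseChange} does not apply directly. The extra input is an explicit computation of \(Rf_*\cO\) for a \(BE^\sharp\)-gerbe, namely \(Rf_*\cO_{BE^\sharp}\simeq \bigoplus_i \wedge^i E^\vee[-i]\), or a filtered version of it. The key point is that this description is compatible with \emph{arbitrary} base change, because it only involves the vector bundle \(E\), and vector bundles pull back along any morphism.

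\textbf{Step 1: reduce to affine targets.} Choose an fpqc cover \(u\colon V\to\mcalX\) by a disjoint union of affines trivializing \(f\), so that \(\mcalY_V\simeq B_V E_V^\sharp\), and then an fpqc cover \(V'\to V\times_{\mcalX}\mcalX'\) of the same kind. This is exactly the setup in the proof of \Cref{FlatBaseChange}, and we use the same diagrams. \Cref{RemarkPushforward} applies to both \(f\) and \(f'\), since \(\cO\) is bounded below and \(f'\) is again a \(BE^\sharp\)-torsor (trivialized over \(V'\)). It identifies \(L(u')^*\BC_{f,g}(\cO_{\mcalY})\) with the base change morphism for the square obtained by pulling back the trivial gerbe \(B_VE^\sharp_V\to V\) along \(V'\to V\). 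Since \(L(u')^*\) is conservative, and pushforward and base change commute with the disjoint union decompositions, it suffices to treat the following case: \(\mcalX=\Spec A\), \(\mcalX'=\Spec A'\) affine, \(E\) a vector bundle (shrinking further, even free of rank \(r\)), and \(f\colon BE^\sharp\to\Spec A\) the trivial gerbe.

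\textbf{Step 2: compute in the affine trivial case.} By \Cref{AffineGerbePushforwardCalculation}(1), \(Rf_*\cO\) is the totalization of the cosimplicial \(A\)-module \(\cO(E^{\sharp,n})\), so it computes the group cohomology \(R\Gamma(E^\sharp,\cO)\). Here \(\cO(E^\sharp)\) is the divided-power-dual Hopf algebra, i.e. \(\mathrm{Sym}(E^\vee)\) with its coalgebra structure; for \(E=\mathbb{G}_a^r\), \(E^\sharp\) is the PD hull of the origin. This is the standard fact used in \cite{bhatt2022Prismatic} for the Hodge--Tate gerbe.

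We compute the cobar complex explicitly. For a free module, this is a Koszul-type computation: the cobar complex of the coalgebra \(\mathrm{Sym}(E^\vee)\) is quasi-isomorphic to \(\bigoplus_i \wedge^i E^\vee[-i]\), functorially in \(A\). The required statement is that, for \(E=\mathcal{O}^r\), the cobar complex is a strict complex of free \(A\)-modules obtained by base change from \(\Z\). Over \(\Z\), its cohomology is \(\wedge^\bullet \Z^r\) in the respective degrees, hence free. In each cosimplicial degree the terms \(\cO(E^{\sharp,n})\) are flat (indeed free) \(\Z\)-modules. Therefore the totalization commutes with \(-\otimes^L_{\Z}A'\): we compare level-wise via the affine base change \(A'\otimes_A^LK^n\simeq K'^n\), and then check that \(\Tot\) commutes with this tensor product. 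The latter holds because both sides are computed by the normalized cochain complex, a complex of free modules whose formation commutes with base change termwise, and finite-rank free cohomology over \(\Z\) guarantees that no \(\lim^1\) issues arise.

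This step is the \textbf{main obstacle}. \Cref{TotalizationBaseChange} requires flatness of \(A\to A'\), so it cannot be cited directly, and one must argue with the explicit cochain complex. I would first check this for \(r=1\), where \(\cO(\mathbb{G}_a^\sharp)=\Z\langle t\rangle^\vee\simeq\Z[x]\), and the cohomology is \(\Z\) in degree \(0\) and \(\Z\cdot x\) in degree \(1\). I would then use the Künneth formula \(B(E_1\oplus E_2)^\sharp=BE_1^\sharp\times BE_2^\sharp\), which reduces to rank one with compatible base change.

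\textbf{Step 3: conclude.} Combining Step 2 with the identification of the base change morphism with the natural map \(A'\otimes_A\bigl(\bigoplus_i\wedge^iE^\vee[-i]\bigr)\to\bigoplus_i\wedge^iE_{A'}^\vee[-i]\) gives an isomorphism in the affine case. By Step 1, this proves the claim in general.
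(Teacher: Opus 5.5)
\textbf{Overall.} Your Steps~1 and~3 follow the paper's proof. The paper makes the same reduction: fpqc covers, \Cref{FlatBaseChange}, and conservativity of flat pullback bring everything down to the split gerbe \(B_UE_U^\sharp\to U\) pulled back along a map of affines. It then simply cites \cite{bhatt2022Prismatic} for \(Rf_*\cO_{BE^\sharp}\simeq\bigoplus_i\wedge^iE^\vee[-i]\) and concludes by naturality of this formula under arbitrary pullback. The paper never carries out your Step~2. Your attempt at Step~2, however, contains a genuine error, and its base-change justification is not the right one.

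\textbf{The coordinate ring is misidentified.} Since \(E^\sharp\) is the PD hull of the zero section, \(\cO(E^\sharp)\) is the divided power algebra \(\Gamma(E^\vee)\), not \(\mathrm{Sym}(E^\vee)\). For \(r=1\) it is \(\Z\langle t\rangle\) with \(\Delta(t^{[n]})=\sum_{i+j=n}t^{[i]}\otimes t^{[j]}\), and it is its \emph{graded dual} that is \(\Z[x]\). The Hopf algebra \(\mathrm{Sym}(E^\vee)\) with its usual coalgebra structure is \(\cO\) of the vector group \(\mathbb{V}(E)\), and its cobar complex does not compute \(\bigoplus_i\wedge^iE^\vee[-i]\). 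Concretely, for \(r=1\) put \(c=\sum_{0<i<p}\tfrac1p\binom pi x^i\otimes x^{p-i}\). This is a \(2\)-cocycle, and \(pc\) is the coboundary of \(x^p\). But \(c\) is not a coboundary, since otherwise \(x^p-pf\) would be primitive for some \(f\in x\Z[x]\). So \(H^2\) has \(p\)-torsion, and your rank-one check fails as written. The correct statement is this: the cobar complex of the coalgebra \(\Gamma(E^\vee)\) is, in each internal degree, the linear dual of the bar complex of the polynomial algebra \(\mathrm{Sym}(E)\). Hence its cohomology is \(\Ext_{\mathrm{Sym}(E)}(\cO,\cO)\simeq\bigoplus_i\wedge^iE^\vee[-i]\) by the Koszul resolution.

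\textbf{The base-change step.} Saying there are ``no \(\lim^1\) issues'' does not identify the mechanism. Over a general ring, a cohomologically bounded-below complex of free modules need not be K-flat. For example, over \(\Z/4\) the complex \(\Z/4\xrightarrow{2}\Z/4\xrightarrow{2}\cdots\) is quasi-isomorphic to \(\Z/2[0]\). Tensoring it termwise with \(\Z/2\) gives \(\Z/2\) in every degree \(\geq0\), whereas \(\Z/2\otimes^L_{\Z/4}\Z/2\) lives in degrees \(\leq0\). So termwise base change of the cochain complex does not in general compute \(A'\otimes^L_A\Tot\).

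Your reduction to \(\Z\) is what makes it work. Let \(N\) be a complex of free abelian groups and \(0\to P_1\to P_0\to M\to0\) a free resolution of an abelian group \(M\). Termwise flatness of \(N\) makes \(0\to N\otimes P_1\to N\otimes P_0\to N\otimes M\to0\) exact, so \(N\otimes^L_\Z M\simeq N\otimes_\Z M\). Apply this to the cobar complex \(N_\Z\) of \(E^\sharp_\Z\), with \(E\) free. It gives \(\Tot(K_A)\simeq N_\Z\otimes^L_\Z A\) for every \(A\). Hence \(A'\otimes^L_A\Tot(K_A)\simeq N_\Z\otimes^L_\Z A'\simeq\Tot(K_{A'})\) for every \(A\to A'\), and one identifies this map with \(\BC\) as in \Cref{AffineGerbePushforwardCalculation}(3).

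This argument needs neither the cohomology computation nor the explicit formula. With these two corrections, your Step~2 becomes a self-contained replacement for both the citation and the naturality claim in Step~3. That is the one point where your route genuinely differs from the paper's.
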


\begin{proof}
Choose a representable fpqc cover
\[
u\colon U\longrightarrow \mcalX
\]
by a scheme which trivializes the gerbe $\mcalY$. Writing
$E_U \defeq u^*E$, there is an equivalence
\[
\mcalY\times_{\mcalX}U
\simeq
B_UE_U^\sharp
\]
over $U$.

Set
\[
U' \defeq U\times_{\mcalX}\mcalX'.
\]
Choose a representable fpqc cover
\[
v\colon T\longrightarrow U'
\]
by a scheme $T$, and denote by
\[
h\colon T\longrightarrow \mcalX',
\qquad
a\colon T\longrightarrow U
\]
the induced morphisms. 
Then $h$ is an fpqc cover.

Let
\[
\BC_{f,g}(\cO_{\mcalY})\colon
Lg^*Rf_*\cO_{\mcalY}
\longrightarrow
Rf'_*\cO_{\mcalY'}
\]
be the base change morphism. 
Since $h^*\colon \mcalD_{\qcoh}(\mcalX') \to \mcalD_{\qcoh}(T)$ is conservative, it suffices to show that $Lh^*\BC_{f,g}(\cO_{\mcalY})$ is an equivalence.
Furthermore, by the flatness of $h$ and \Cref{FlatBaseChange}, we have
\[
Lh^*\BC_{f,g}(\cO_{\mcalY}) \simeq \BC_{f,g \circ h}(\cO_{\mcalY}) \simeq \BC_{f,u \circ a}(\cO_{\mcalY}).
\]
Since $u$ is flat, the morphism $\BC_{f, u}$ is an equivalence by \Cref{FlatBaseChange}.
Therefore, it suffices to show that the base change morphism for the diagram
\[
\begin{tikzcd}
\mcalY \times_{\mcalX} T \simeq B_TE_T^{\sharp} \arrow[r] \arrow[d,"f_T"] & \mcalY \times_{\mcalX} U \simeq B_UE_U^{\sharp} \arrow[d,"f_U"] \\
T \arrow[r,"a"] & U
\end{tikzcd}
\]
with respect to the structure sheaf, that is, the morphism
\[
\BC_{f_U,a}(\cO_{B_UE_U^{\sharp}}) \colon La^*R(f_U)_*\cO_{B_UE_U^{\sharp}} \to R(f_T)_*\cO_{B_TE_T^{\sharp}}
\]
is an equivalence.

By \cite{bhatt2022Prismatic}*{Remark~2.4.6} and \cite{bhatt2022Prismatic}*{Lemma~7.8}, we obtain
\begin{equation*}
    Rf_{U,*}\cO_{B_UE_U^\sharp} \simeq \bigoplus_{i=0}^r \bigwedge^iE_U^\vee[-i] \quad \text{and} \quad Rf_{T,*}\cO_{B_TE_T^\sharp} \simeq \bigoplus_{i=0}^r\bigwedge^iE_T^\vee[-i].
\end{equation*}
Since $E_U$ is finite locally free, arbitrary derived pullback commutes with duals and exterior powers. 
Hence
\[
\begin{aligned}
La^*Rf_{U,*}\cO_{B_UE_U^\sharp}
&\simeq
\bigoplus_{i=0}^r
La^*\bigl(\bigwedge^iE_U^\vee\bigr)[-i] \\
&\simeq
\bigoplus_{i=0}^r
\bigwedge^iE_T^\vee[-i] \\
&\simeq
Rf_{T,*}\cO_{B_TE_T^\sharp}.
\end{aligned}
\]
By the naturality of the above descriptions, the morphism $\BC_{f_U,a}(\cO_{B_UE_U^{\sharp}})$ is an equivalence, as desired.
\end{proof}

\begin{proposition}
  \label{prop:tensor-formula-for-banded-gerbe}
  Let \(\mcalX\) and \(\mcalY\) be fpqc algebraic stacks, let \(G\) be a flat affine commutative group scheme over \(\mcalX\), and let \(\pi \colon \mcalY \to \mcalX\) be a \(B_{\mcalX}G\)-torsor. Let \(q \colon B_{\mcalX}G \to \mcalX\) be the structure morphism of the classifying stack.
  Assume that both \(R\pi_*\mcalO_{\mcalY}\) and \(Rq_*\mcalO_{B_{\mcalX}G}\) are perfect objects of \(\mcalD_{\qcoh}(\mcalX)\).
  Then there is an isomorphism
  \begin{equation*}
    (R\pi_*\mcalO_{\mcalY}) \otimes^L_{\mcalO_{\mcalX}} (R\pi_*\mcalO_{\mcalY}) \simeq (R\pi_*\mcalO_{\mcalY}) \otimes^L_{\mcalO_{\mcalX}} (Rq_*\mcalO_{B_{\mcalX}G})
  \end{equation*}
  in \(\Mod_{R\pi_*\mcalO_{\mcalY}}(\mcalD_{\qcoh}(\mcalX))\), where \(R\pi_*\mcalO_{\mcalY}\) acts through the first tensor factor.
\end{proposition}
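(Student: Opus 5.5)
The approach is to compute \(R\pi_*\mcalO_{\mcalY}\otimes^L R\pi_*\mcalO_{\mcalY}\) as the pushforward of the structure sheaf of \(\mcalY\times_{\mcalX}\mcalY\), and to identify that fiber product with \(\mcalY\times_{\mcalX}B_{\mcalX}G\) using the banded \(G\)-gerbe structure.

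\textbf{Step 1: trivialise the fiber product.} Since \(G\) is commutative and \(\pi\) is a \(B_{\mcalX}G\)-torsor, the group stack \(B_{\mcalX}G\) acts on \(\mcalY\). The action map gives an equivalence
\[
\mcalY\times_{\mcalX}B_{\mcalX}G\xrightarrow{\sim}\mcalY\times_{\mcalX}\mcalY,\qquad (y,P)\mapsto (y,P\cdot y),
\]
over the first factor \(\mcalY\). This is the usual shearing isomorphism for torsors. I would check that it is an equivalence fpqc locally on \(\mcalX\), where \(\mcalY\simeq B G\) and the map is the group-theoretic shear on \(BG\times BG\).

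\textbf{Step 2: Künneth for the product with \(B_{\mcalX}G\).} Let \(p_1\colon\mcalY\times_{\mcalX}\mcalY\to\mcalY\) be the first projection. I claim
\[
R\pi_*Rp_{1*}\mcalO \simeq R\pi_*\mcalO_{\mcalY}\otimes^L R\pi_*\mcalO_{\mcalY}.
\]
I would get this from a base change along \(\pi\) combined with a projection formula. Base change along \(\pi\) (\Cref{FlatBaseChange} if \(\pi\) is flat, or \Cref{E-sharp-gerbe-arbitrary-base-change}-style arguments) gives \(Rp_{1*}\mcalO\simeq L\pi^*R\pi_*\mcalO_{\mcalY}\). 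The projection formula then gives \(R\pi_*L\pi^*M\simeq M\otimes R\pi_*\mcalO_{\mcalY}\), and it holds for perfect \(M\) by dualizability. This is where perfectness of \(R\pi_*\mcalO_{\mcalY}\) is used. Running the same argument on the other side, with \(q\) in place of the second copy of \(\pi\), identifies the pushforward of \(\mcalO\) on \(\mcalY\times_{\mcalX}B_{\mcalX}G\) with \(R\pi_*\mcalO_{\mcalY}\otimes Rq_*\mcalO\), using perfectness of \(Rq_*\mcalO\).

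\textbf{Step 3: module structure.} Transporting along the equivalence of Step 1 yields the desired isomorphism. Because that equivalence is compatible with the first projection to \(\mcalY\), the identification is \(R\pi_*\mcalO_{\mcalY}\)-linear for the action through the first factor.

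\textbf{Main obstacle.} The delicate point is the base change in Step 2 for a non-flat gerbe. \Cref{FlatBaseChange} needs \(g\) flat, and here \(g=\pi\) itself may not be flat. I would instead argue fpqc locally on \(\mcalX\), after which \(\mcalY\) is trivial. Alternatively, I would avoid base change altogether: push forward along \(\mcalY\times\mcalY\to\mcalX\) in two steps and use only the projection formula, which holds because the relevant pushforwards are perfect. I would also check that the descent identifications glue; \Cref{lemma:CAlg-qcoh-fpqc-hyperdescent} handles this.
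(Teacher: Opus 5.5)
Your proposal is correct and follows essentially the same route as the paper. You identify $\mcalY\times_{\mcalX}\mcalY$ with $\mcalY\times_{\mcalX}B_{\mcalX}G\simeq B_{\mcalY}(\pi^*G)$ over the first factor; the paper does this by neutralizing the gerbe $\pr_1$ via the diagonal section, which amounts to your shearing map. You then apply \Cref{FlatBaseChange} to both Cartesian squares and conclude with the projection formula for the perfect objects. The obstacle you flag does not arise: $\pi$ is flat, because fpqc locally it is $B_UG_U\to U$, and its composite with the faithfully flat atlas $U\to B_UG_U$ is $\id_U$. So \Cref{FlatBaseChange} applies directly with $g=\pi$, exactly as in the paper, and no local or base-change-free workaround is needed.
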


\begin{proof}
  Consider the Cartesian diagram
  \begin{equation*}
    \begin{tikzcd}
      \mcalY\times_{\mcalX}\mcalY \arrow[r, "\pr_2"] \arrow[d, "\pr_1"'] & \mcalY \arrow[d, "\pi"] \\
      \mcalY \arrow[r, "\pi"'] & \mcalX.
    \end{tikzcd}
  \end{equation*}
  Since \(\pr_1\) is the base change of the \(G\)-banded gerbe \(\pi\), it is a gerbe banded by \(\pi^*G\). The relative diagonal
  \begin{equation*}
    \Delta_{\pi}\colon\mcalY\to\mcalY\times_{\mcalX}\mcalY
  \end{equation*}
  is a section of \(\pr_1\). Hence this gerbe is neutral, and the given banding together with \(\Delta_{\pi}\) induces an equivalence
  \begin{equation*}
    \mcalY\times_{\mcalX}\mcalY\simeq B_{\mcalY}(\pi^*G)
  \end{equation*}
  over \(\mcalY\).

  Write
  \begin{equation*}
    q_{\mcalY}\colon B_{\mcalY}(\pi^*G)\to\mcalY
  \end{equation*}
  for the structure morphism. Formation of the classifying stack commutes with base change, so there is a Cartesian diagram
  \begin{equation*}
    \begin{tikzcd}
      B_{\mcalY}(\pi^*G) \arrow[r] \arrow[d, "q_{\mcalY}"'] & B_{\mcalX}G \arrow[d, "q"] \\
      \mcalY \arrow[r, "\pi"'] & \mcalX.
    \end{tikzcd}
  \end{equation*}
  Applying flat base change from \Cref{FlatBaseChange} to this diagram gives
  \begin{equation*}
    Rq_{\mcalY,*}\mcalO_{B_{\mcalY}(\pi^*G)}\simeq L\pi^*Rq_*\mcalO_{B_{\mcalX}G}
  \end{equation*}
  in \(\mcalD_{\qcoh}(\mcalY)\).
  Under the equivalence \(\mcalY\times_{\mcalX}\mcalY\simeq B_{\mcalY}(\pi^*G)\), the morphism \(\pr_1\) corresponds to \(q_{\mcalY}\). Consequently,
  \begin{equation*}
    R\pr_{1,*}\mcalO_{\mcalY\times_{\mcalX}\mcalY}\simeq L\pi^*Rq_*\mcalO_{B_{\mcalX}G}
  \end{equation*}
  holds in \(\mcalD_{\qcoh}(\mcalY)\).

  On the other hand, applying flat base change from \Cref{FlatBaseChange} to the first Cartesian diagram gives
  \begin{equation*}
    R\pr_{1,*}\mcalO_{\mcalY\times_{\mcalX}\mcalY}\simeq L\pi^*R\pi_*\mcalO_{\mcalY}
  \end{equation*}
  in \(\mcalD_{\qcoh}(\mcalY)\).
  Combining the preceding two isomorphisms yields
  \begin{equation*}
    L\pi^*R\pi_*\mcalO_{\mcalY}\simeq L\pi^*Rq_*\mcalO_{B_{\mcalX}G}
  \end{equation*}
  in \(\mcalD_{\qcoh}(\mcalY)\).

  Applying \(R\pi_*\) and using the projection formula for perfect objects \(R\pi_*\mcalO_{\mcalY}\) and \(Rq_*\mcalO_{B_{\mcalX}G}\) as proved in \citeSta{0944}, we obtain
  \begin{align*}
    R\pi_*\mcalO_{\mcalY}\otimes^L_{\mcalO_{\mcalX}}R\pi_*\mcalO_{\mcalY} &\simeq R\pi_*L\pi^*R\pi_*\mcalO_{\mcalY} \simeq R\pi_*L\pi^*Rq_*\mcalO_{B_{\mcalX}G} \simeq R\pi_*\mcalO_{\mcalY}\otimes^L_{\mcalO_{\mcalX}}Rq_*\mcalO_{B_{\mcalX}G}
  \end{align*}
  in \(\mcalD_{\qcoh}(\mcalX)\).
  The base-change isomorphisms and the neutralization by \(\Delta_{\pi}\) are compatible with the commutative algebra structures. Hence the resulting isomorphism is \(R\pi_*\mcalO_{\mcalY}\)-linear for the actions through the first tensor factors.
\end{proof}

We next turn to the following general result on derived algebra.

\begin{lemma}
  \label{automatic-postnikov-compatibility}
  Let \(X\) be a stack and let
  \begin{equation*}
    A\in\CAlg\parenlr{\mcalD_{\qcoh}(X)}
  \end{equation*}
  be coconnective. Let \(K,L\in\mcalD_{\qcoh}(X)\), and suppose that \(\mcalH^i(L)\) is flat for every \(i\in\setZ\). Let
  \begin{equation*}
    \alpha\colon A\otimes^L K\longrightarrow A\otimes^L L
  \end{equation*}
  be a morphism in \(\Mod_A\parenlr{\mcalD_{\qcoh}(X)}\). Then \(\alpha\) canonically induces a morphism of filtered objects
  \begin{equation*}
    \alpha_{\leq\bullet}\colon\bracelr{A\otimes^L\tau_{\leq m}K}_{m\in\setZ}\longrightarrow\bracelr{A\otimes^L\tau_{\leq m}L}_{m\in\setZ}
  \end{equation*}
  in
  \begin{equation*}
    \Fun\parenlr{(\setZ,\leq),\Mod_A\parenlr{\mcalD_{\qcoh}(X)}}.
  \end{equation*}
  If, in addition, \(\mcalH^i(K)\) is flat for every \(i\in\setZ\) and \(\alpha\) is an isomorphism, then \(\alpha_{\leq\bullet}\) is an isomorphism of filtered objects.
\end{lemma}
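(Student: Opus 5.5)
The plan is to use the adjunction between free $A$-modules and $\mcalD_{\qcoh}(X)$ together with a $t$-structure vanishing argument. Since $A\otimes^L(\mathord{-})\colon\mcalD_{\qcoh}(X)\to\Mod_A(\mcalD_{\qcoh}(X))$ is left adjoint to the forgetful functor, $\alpha$ corresponds to a morphism $\beta\colon K\to A\otimes^L L$ in $\mcalD_{\qcoh}(X)$. To produce $\alpha_{\leq\bullet}$ it suffices to show, for each $m$, that the composite $\tau_{\leq m}K\to K\to A\otimes^L L$ factors essentially uniquely through $A\otimes^L\tau_{\leq m}L\to A\otimes^L L$. Then we pass back through the adjunction to get $A$-module maps $A\otimes^L\tau_{\leq m}K\to A\otimes^L\tau_{\leq m}L$. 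Since the factorization is essentially unique (the relevant mapping space is contractible), these maps assemble functorially in $m$ into a morphism of filtered objects.

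The key step is to understand the cofiber
\[
A\otimes^L\tau_{>m}L
\]
of the map $A\otimes^L\tau_{\leq m}L\to A\otimes^L L$. I claim it lies in $\mcalD^{\geq m+1}_{\qcoh}(X)$. To prove this, work locally on affine points, using that the $t$-structure is detected there (\Cref{BoundedBelow}). Write $\tau_{>m}L$ as the limit of its Postnikov pieces, with graded pieces $\mcalH^i(L)[-i]$ for $i>m$. Each $\mcalH^i(L)$ is flat and $A$ is coconnective, so $A\otimes^L\mcalH^i(L)[-i]\simeq A\otimes\mcalH^i(L)[-i]$ lies in degrees $\geq i\geq m+1$. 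To handle the possibly infinite tower, I would use that $\tau_{>m}L=\colim_n\tau_{[m+1,n]}L$. Tensor products commute with colimits, each finite stage lies in $\mcalD^{\geq m+1}$ by induction on extensions, and $\mcalD^{\geq m+1}$ is closed under filtered colimits in $\mcalD_{\qcoh}$ of an affine. This colimit exchange is the step I expect to be the main technical obstacle: making sure the coconnectivity estimate survives arbitrary unbounded $L$, and that the whole argument globalizes from affines to the stack $X$.

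Granting the claim, we have $\Map(\tau_{\leq m}K,C)\simeq *$ and $\Map(\tau_{\leq m}K,C[-1])\simeq*$ for $C=A\otimes^L\tau_{>m}L$, because $\tau_{\leq m}K\in\mcalD^{\leq m}$ and $C[-1]\in\mcalD^{\geq m+2}$. The fiber sequence then gives
\[
\Map\bigl(\tau_{\leq m}K,A\otimes^L\tau_{\leq m}L\bigr)\xrightarrow{\sim}\Map\bigl(\tau_{\leq m}K,A\otimes^L L\bigr).
\]
More precisely, the functor $M\mapsto\Map(\tau_{\leq m}M,A\otimes^L\tau_{\leq m}L)$ is compatible with the transition maps in $m$. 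So we obtain a natural transformation of $\setZ$-indexed diagrams, and hence the canonical $\alpha_{\leq\bullet}$, whose colimit over $m$ recovers $\alpha$.

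For the final assertion, suppose $\mcalH^i(K)$ is flat and $\alpha$ is an isomorphism. Apply the same construction to $\alpha^{-1}$ with the roles of $K$ and $L$ exchanged; this is allowed because flatness of the $\mcalH^i(K)$ now gives the needed coconnectivity bound. We get $(\alpha^{-1})_{\leq\bullet}$. By the uniqueness of factorizations, the composites $(\alpha^{-1})_{\leq\bullet}\circ\alpha_{\leq\bullet}$ and $\alpha_{\leq\bullet}\circ(\alpha^{-1})_{\leq\bullet}$ are the filtered maps induced by the identity, hence are the identity. Therefore $\alpha_{\leq\bullet}$ is an isomorphism of filtered objects.
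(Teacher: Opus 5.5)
Your proposal is correct and follows essentially the paper's route: first the estimate $A\otimes^L\tau_{\geq m+1}L\in\mcalD_{\qcoh}(X)^{\geq m+1}$, which the paper gets from a hyper-Tor spectral sequence and you get by a Postnikov/filtered-colimit argument on affines; this local check should be made along a flat cover, since only flat pullback is $t$-exact, and that is available in the fpqc algebraic setting where the lemma is applied. Then come the Hom--tensor adjunction, contractibility of the space of lifts, and the same $\alpha^{-1}$ argument for the isomorphism statement. The step you leave informal, assembling the levelwise lifts into a map in $\Fun((\setZ,\leq),\Mod_A(\mcalD_{\qcoh}(X)))$, is handled in the paper by the end formula for mapping spaces in functor categories; that formula needs $\Map(\tau_{\leq r}K,A\otimes^L\tau_{\geq s+1}L)\simeq *$ for all pairs $r\leq s$, not just $r=s$, and this follows at once from your estimate.
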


\begin{proof}
  Set
  \begin{equation*}
    \mcalM_A\defeq\Mod_A\parenlr{\mcalD_{\qcoh}(X)}.
  \end{equation*}
  Fix \(m\in\setZ\). By the hyper-Tor spectral sequence
  \begin{equation*}
    E_2^{-s,t}=\bigoplus_{i+j=t}\operatorname{Tor}_s\parenlr{\mcalH^i(A),\mcalH^j\parenlr{\tau_{\geq m+1}L}}\Longrightarrow\mcalH^{t-s}\parenlr{A\otimes^L\tau_{\geq m+1}L},
  \end{equation*}
  we have
  \begin{equation*}
    A\otimes^L\tau_{\geq m+1}L\in\mcalD_{\qcoh}(X)^{\geq m+1}.
  \end{equation*}
  Indeed, \(A\) is coconnective, and the cohomology sheaves of \(\tau_{\geq m+1}L\) are flat and vanish in degrees less than \(m+1\).
  Therefore, for every pair of integers \(r \leq s\), we have
  \begin{equation} \label{TruncationEstimates}
    \tau_{\leq r}K\in\mcalD_{\qcoh}(X)^{\leq r}\qquad\text{and}\qquad A\otimes^L\tau_{\geq s+1}L\in\mcalD_{\qcoh}(X)^{\geq s+1}\subseteq\mcalD_{\qcoh}(X)^{\geq r+1}.
  \end{equation}
  
  Regard \(\setZ\) as a category by its usual order, and define objects
  \begin{equation*}
    P_K,P_L,Q_L\in\Fun\parenlr{(\setZ,\leq),\mcalM_A}
  \end{equation*}
  by
  \begin{equation*}
    P_K(m)\defeq A\otimes^L\tau_{\leq m}K,\qquad P_L(m)\defeq A\otimes^L\tau_{\leq m}L,\qquad Q_L(m)\defeq A\otimes^L\tau_{\geq m+1}L.
  \end{equation*}
  For \(M\in\mcalM_A\), write \(\underline{M}\) for the constant \(\setZ\)-indexed diagram with value \(M\). The truncation triangles give a pointwise cofiber sequence
  \begin{equation} \label{TruncationCofiberSequence}
    P_L\longrightarrow\underline{A\otimes^L L}\longrightarrow Q_L
  \end{equation}
  in \(\Fun\parenlr{(\setZ,\leq),\mcalM_A}\). Moreover, \(\alpha\) determines a natural morphism
  \begin{equation*}
    \overline{\alpha}\colon P_K\longrightarrow\underline{A\otimes^L K}\xrightarrow{\underline{\alpha}}\underline{A\otimes^L L}.
  \end{equation*}

  By the end formula of mapping spaces of functor categories (\cite{gepner2017Lax}*{Proposition~5.1}), for \(F,G\in\Fun\parenlr{(\setZ,\leq),\mcalM_A}\), there is a natural equivalence
  \begin{equation*}
    \Map_{\Fun((\setZ,\leq),\mcalM_A)}(F,G)\simeq\lim_{u \in \operatorname{Tw}(\setZ)}\Map_{\mcalM_A}\parenlr{F(r),G(s)}.
  \end{equation*}
  Here an object \(u\colon r\to s\) of \(\operatorname{Tw}(\setZ)\) is equivalently a pair of integers \(r\leq s\). For every such pair, the Hom-tensor adjunction gives
  \begin{equation*}
    \Map_{\mcalM_A}\parenlr{P_K(r),Q_L(s)} \simeq\Map_{\mcalD_{\qcoh}(X)}\parenlr{\tau_{\leq r}K,A\otimes^L\tau_{\geq s+1}L} \simeq *
  \end{equation*}
  by \eqref{TruncationEstimates}.
  It follows from the displayed end formula that
  \begin{equation*}
    \Map_{\Fun((\setZ,\leq),\mcalM_A)}(P_K,Q_L)\simeq *.
  \end{equation*}

  Applying \(\Map_{\Fun((\setZ,\leq),\mcalM_A)}(P_K,-)\) to the pointwise cofiber sequence \eqref{TruncationCofiberSequence} above gives a fiber sequence
  \begin{equation*}
    \Map_{\Fun((\setZ,\leq),\mcalM_A)}(P_K,P_L)\longrightarrow\Map_{\Fun((\setZ,\leq),\mcalM_A)}\parenlr{P_K,\underline{A\otimes^L L}}\longrightarrow\Map_{\Fun((\setZ,\leq),\mcalM_A)}(P_K,Q_L).
  \end{equation*}
  Since the last mapping space is contractible, the first morphism is an equivalence. Therefore \(\overline{\alpha}\) admits a contractible space of lifts through \(P_L\). We denote the resulting canonical lift by
  \begin{equation*}
    \alpha_{\leq\bullet}\colon P_K\longrightarrow P_L.
  \end{equation*}
  Its component at \(m\) agrees with the factorization of
  \begin{equation*}
    \overline{\alpha}_{\leq m} \colon A\otimes^L\tau_{\leq m}K \to A \otimes^L K \xrightarrow{\alpha} A \otimes^L L
  \end{equation*}
  through \(A\otimes^L\tau_{\leq m}L\) by \eqref{TruncationEstimates}.

  The same contractibility also shows that the construction preserves identity morphisms and composition. Indeed, whenever the constructions are defined, both \((\beta\circ\alpha)_{\leq\bullet}\) and \(\beta_{\leq\bullet}\circ\alpha_{\leq\bullet}\) are lifts of the same morphism to the corresponding constant diagram, and hence are canonically homotopic.

  Finally, suppose that \(\mcalH^i(K)\) is flat for every \(i\in\setZ\) and that \(\alpha\) is an isomorphism. Applying the construction to \(\alpha^{-1}\) gives a morphism
  \begin{equation*}
    (\alpha^{-1})_{\leq\bullet}\colon P_L\longrightarrow P_K.
  \end{equation*}
  Compatibility with identities and composition gives
  \begin{equation*}
    (\alpha^{-1})_{\leq\bullet}\circ\alpha_{\leq\bullet}\simeq\id_{P_K}\qquad\text{and}\qquad\alpha_{\leq\bullet}\circ(\alpha^{-1})_{\leq\bullet}\simeq\id_{P_L}.
  \end{equation*}
  Hence \(\alpha_{\leq\bullet}\) is an isomorphism in \(\Fun\parenlr{(\setZ,\leq),\mcalM_A}\).
\end{proof}

\subsection{Quasi-\texorpdfstring{\(F\)}{F}-splitting for projective bundles}

\begin{proposition}\label{prop:qFs-torus-torsor}
Let \(k\) be a perfect field of characteristic \(p>0\), and let
\(X\) be a \(k\)-scheme.
Let $T \defeq \setG_m^r$ and let $q\colon Q \to  X$ be a \(T\)-torsor.
If \(X\) is \(n\)-quasi-\(F\)-split, then \(Q\) is \(n\)-quasi-\(F\)-split.
\end{proposition}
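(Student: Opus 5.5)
Recall that \(X\) is \(n\)-quasi-\(F\)-split if the map \(W_n\mcalO_X \to Q_{X,n} \defeq F_*W_n\mcalO_X \otimes_{W_n\mcalO_X}\mcalO_X\) (the pushout of \(F\colon W_n\mcalO_X\to F_*W_n\mcalO_X\) along \(R^{n-1}\colon W_n\mcalO_X\to\mcalO_X\)) splits as a map of \(W_n\mcalO_X\)-modules. Equivalently, the evaluation map
\[
\Hom_{W_n\mcalO_X}(Q_{X,n},W_n\omega_X)\to H^0(X,W_n\omega_X)
\]
hits the element corresponding to \(1\) in \(\Hom(W_n\mcalO_X,W_n\mcalO_X)\). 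I will work directly with the splitting definition, since it is local-to-global friendly.

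The plan is to reduce to \(X\) affine, or more precisely to work Zariski locally and then glue. A \(T\)-torsor with \(T=\setG_m^r\) is Zariski locally trivial by Hilbert 90. Moreover \(q\) is affine, so \(Q=\Spec_X\mcalA\) with
\[
\mcalA=\bigoplus_{\chi\in\setZ^r}\mcalL_\chi,
\]
the decomposition into \(T\)-weight spaces. Each \(\mcalL_\chi\) is a line bundle, with \(\mcalL_0=\mcalO_X\), and the multiplication is given by isomorphisms \(\mcalL_\chi\otimes\mcalL_{\chi'}\simeq\mcalL_{\chi+\chi'}\). The first key step is to understand \(W_n\mcalA\) and \(Q_{Q,n}\) in terms of this grading. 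The Teichmüller-type structure lets \(W_n\mcalA\) carry a \(\setZ[1/p]^r\)-grading, or at least a \(\setZ^r\)-grading coarsened appropriately. Concretely, the torus action \(T\times Q\to Q\) induces an action of the group scheme \(W_nT\)-ish on \(W_nQ\). Simpler: the \(T\)-action gives a coaction \(\mcalA\to\mcalA\otimes k[T]\), and functoriality of \(W_n\) and of \(Q_{-,n}\) produces a compatible coaction on \(W_n\mcalA\) and on \(q_*Q_{Q,n}\).

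The second step is to construct the splitting. Let \(\sigma\colon Q_{X,n}\to W_n\mcalO_X\) be a splitting. I would define a splitting of \(W_n\mcalO_Q\to Q_{Q,n}\) by a ``degree-zero projection plus base change'' trick. Since \(q\) is flat and \(T\)-equivariant with \(\mcalA^T=\mcalO_X\), \(Q_{Q,n}\) is a module over \(W_n\mcalA\). Locally, where \(Q\simeq X\times T\), we can use a product-type argument. Tori are \(F\)-split: \(k[t^{\pm1}]\) is split by the map sending \(t^{i}\mapsto t^{i/p}\) if \(p\mid i\) and to \(0\) otherwise. Tori are also quasi-\(F\)-split compatibly with every weight. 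Moreover, quasi-\(F\)-splitting of a product with an \(F\)-split factor follows from a Künneth-type decomposition of \(Q_{X\times T,n}\). The essential point is to choose the local splittings canonically, in a way invariant under the transition automorphisms \(t\mapsto u t\) with \(u\in\mcalO_X^\times\), so that they glue. The natural candidate is the ``monomial'' splitting that kills all non-\(p\)-divisible weights and uses \(\sigma\) twisted by Teichmüller lifts \([t]^{\chi/p}\) on \(p\)-divisible weights. Under \(t\mapsto ut\), the element \([t]^{\chi}\) changes by \([u]^\chi\). The \(W_n\mcalO_X\)-linearity of \(\sigma\) after Frobenius twist, i.e. \(\sigma(F_*([u]^{p}m))=[u]\sigma(F_*m)\), should give exactly the required invariance.

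I expect the main obstacle to be this gluing/well-definedness. The Witt vector structure of \(W_n(\mcalA)\) is not simply the graded sum \(\bigoplus W_n(\mcalL_\chi)\), since Witt addition mixes weights. I would handle this by first proving the statement in the split case using the Künneth formula for quasi-\(F\)-splitting heights of products, where one factor is the \(F\)-split torus. Then I would verify, in the explicit description via Teichmüller-weight decomposition of \(F_*W_n\), that the canonical splitting is equivariant for the \(\setG_m^r(\mcalO_X)\)-transition functions. Descent along the Zariski cover then produces a global splitting.
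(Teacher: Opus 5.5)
Your proposal is correct and follows essentially the paper's route. The paper trivializes $Q$ on an affine cover and takes Yobuko's product splitting, built via Cartier's tensor-product presentation from $\sigma_X$ and the monomial Frobenius splitting $\tau$ of $T$. It then checks invariance under the transition maps $\chi^m\mapsto u_m^{-1}\chi^m$ using the Frobenius-twisted linearity of $\sigma_X$, and glues.

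When writing it up, fix two slips. First, the map to be split is $\Phi_{X,n}\colon\mcalO_X\to Q_{X,n}=F_*\overline W_n\mcalO_X$, with retraction $Q_{X,n}\to\mcalO_X$. Your map $W_n\mcalO_X\to Q_{X,n}$ kills $VW_{n-1}\mcalO_X$, so it cannot split for $n\geq 2$. Second, on a generator $(a\otimes b)V^j$ the torus factor enters through $\tau^{j+1}$, so the relevant condition is $m\in p^{j+1}M$, not mere $p$-divisibility. The invariance check then uses $u_m=u_\ell^{p^{j+1}}$ together with $V^j([u_\ell]^{p^{j+1}}b)=[u_\ell]^pV^j(b)$.
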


\begin{proof}
Let $M \defeq X^*(T)\simeq\setZ^r$ be the character lattice of \(T\).
The coordinate ring of \(T\) is
\[
k[M]
=
\bigoplus_{m\in M}k\chi^m.
\]
We equip \(T\) with its standard Frobenius splitting $\tau\colon F_*k[M] \to  k[M]$ defined by
\[
\tau(\chi^m)
=
\begin{cases}
\chi^{m/p} & \text{if }m\in pM,\\
0 & \text{if }m\notin pM.
\end{cases}
\]

Fix a splitting
\[
\sigma_X\colon
F_{*}\overline W_n\mcalO_X
 \to 
\mcalO_X
\]
of $\Phi_X \colon \cO_X \to F_{*}\overline W_n\mcalO_X$.
Choose an affine open covering $X=\bigcup_\alpha U_\alpha$ which trivializes \(Q\).
Thus we have isomorphisms
\[
Q|_{U_\alpha}\simeq U_\alpha\times T.
\]

The restriction of
\(\sigma_X\) to \(U_\alpha\), together with the Frobenius splitting
\(\tau\) of \(T\), induces a splitting
\[
\Sigma_\alpha\colon
F_*\overline W_n
\mcalO_{U_\alpha\times T}
 \to 
\mcalO_{U_\alpha\times T}
\]
constructed in \cite{YobukoHodgeWitt}.
We verify that these local splittings are compatible with the transition functions of the torsor.

We recall the explicit form of the product splitting.
Put
\[
A \defeq k[M],
\qquad
B_\alpha \defeq \mcalO_X(U_{\alpha}).
\]
The Cartier tensor-product presentation identifies
\(\overline W_n(A\otimes_k B_\alpha)\) with a quotient of
\[
\bigl(W(A)\otimes_{W(k)}W(B_\alpha)\bigr)[V]/V^n
\]
by the relations
\[
(a\otimes Vb)V^j=(Fa\otimes b)V^{j+1},
\qquad
(Va\otimes b)V^j=(a\otimes Fb)V^{j+1}.
\]
In particular, it is additively generated by the symbols
\[
(a\otimes b)V^j,
\qquad
a\in W(A),\quad b\in W(B_\alpha),\quad 0\leq j<n.
\]
Then $\Sigma_{\alpha}$ is given by
\[
\Sigma_\alpha\bigl((a\otimes b)V^j\bigr)
=
\tau^{j+1}(Ra)\,
\sigma_X(V^j b),
\qquad
0\leq j<n,
\]
where $R\colon W(A) \to  A$ is the restriction morphism.

On an overlap
\[
U_{\alpha\beta} \defeq U_\alpha\cap U_\beta,
\]
write
\[
\theta_\alpha\colon
Q|_{U_\alpha}\xrightarrow{\sim}U_\alpha\times T
\]
for the chosen trivializations. 
For \(m\in M\), define $u_{\alpha\beta,m} \in \Gamma(U_{\alpha\beta},\mcalO_{U_{\alpha\beta}}^\times)$ by
\[
(\theta_\beta\circ\theta_\alpha^{-1})^{\sharp}(\chi^m)=u_{\alpha\beta,m} ^{-1}\chi^m.
\]

It suffices to show the commutativity of
\[
\begin{tikzcd}
\left(
W(k[M])\otimes_{W(k)}
W(\cO_X(U_{\alpha\beta}))
\right)[V]/V^n
\arrow[r, "\widetilde h_{\alpha\beta}^{\sharp}"]
\arrow[d, "\Sigma_\beta"']
&
\left(
W(k[M])\otimes_{W(k)}
W(\cO_X(U_{\alpha\beta}))
\right)[V]/V^n
\arrow[d, "\Sigma_\alpha"]
\\
k[M]\otimes_k \cO_X(U_{\alpha\beta})
\arrow[r, "h_{\alpha\beta}^{\sharp}"']
&
k[M]\otimes_k \cO_X(U_{\alpha\beta}),
\end{tikzcd}
\]
where $h_{\alpha\beta}
 \defeq 
\theta_\beta\circ\theta_\alpha^{-1}$ and $\widetilde h_{\alpha\beta}^{\sharp}$ is a natural lift of $h_{\alpha\beta}^{\sharp}$.

It is therefore enough to verify that
\begin{equation}\label{eq:Sigma}
h_{\alpha\beta}^{\sharp}\left(\Sigma_\beta
\left(
\bigl([\chi^m]\otimes [u_{\alpha\beta,m}]b\bigr)V^j
\right)\right)
=
\Sigma_\alpha
\left(
\bigl([\chi^m]\otimes b\bigr)V^j
\right)    
\end{equation}
for every $m \in M$.

If $m\notin p^{j+1}M$, then both sides in \eqref{eq:Sigma} are zero.

Suppose that $m=p^{j+1}\ell$.
Then $u_{\alpha\beta,m}=u_{\alpha\beta,\ell}^{p^{j+1}}$.
Thus, the left hand side in \eqref{eq:Sigma} is
\begin{align*}
h_{\alpha\beta}^{\sharp}\left(\Sigma_\beta
\left(
\bigl([\chi^m]\otimes [u_{\alpha\beta,m}]b\bigr)V^j
\right)\right)
&=h_{\alpha\beta}^{\sharp}\left(\chi^\ell \sigma_X(V^j([u_{\alpha\beta,m}]b))\right) \\
&=h_{\alpha\beta}^{\sharp}\left(\chi^\ell u_{\alpha\beta,\ell} \sigma_X(V^j(b))\right) \\
&=\chi^\ell \sigma_X(V^jb) \\
&=\Sigma_\alpha
\left(
\bigl([\chi^m]\otimes b\bigr)V^j
\right),
\end{align*}
as desired.

Hence the local splittings \(\Sigma_\alpha\) agree on all overlaps and
glue to an \(n\)-quasi-\(F\)-splitting
\[
\Sigma_Q\colon
F_{Q*}\overline W_n\mcalO_Q
 \to 
\mcalO_Q.
\]
Thus \(Q\) is \(n\)-quasi-\(F\)-split.
\end{proof}

\begin{corollary}\label{cor:qFs-split-projective-bundle}
Let \(k\) be a perfect field of characteristic \(p>0\), and let
\(X\) be a \(k\)-scheme.
Let $L_1,\ldots,L_r$ be line bundles on $X$.
If \(X\) is \(n\)-quasi-\(F\)-split, then
\[
\setP_X\left(\bigoplus_{i=1}^r L_i\right)
\]
is \(n\)-quasi-\(F\)-split.
\end{corollary}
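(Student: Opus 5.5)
The plan is to realize \(P\defeq\setP_X(\bigoplus_{i=1}^r L_i)\) as a good quotient of an open subset of a torus torsor over \(X\), and then pass quasi-\(F\)-splitting down along that quotient.

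\textbf{Step 1: the torus torsor.} Let \(Q\to X\) be the \(\setG_m^r\)-torsor \(\mathrm{Isom}(\mcalO^{\oplus r},\bigoplus L_i)\) of fiberwise trivializations of the split bundle. Equivalently, \(Q\) is the fiber product of the \(\setG_m\)-torsors \(L_i\setminus 0\) over \(X\). By \Cref{prop:qFs-torus-torsor}, \(Q\) is \(n\)-quasi-\(F\)-split.

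\textbf{Step 2: an intermediate torsor \(Q'\).} Let \(E\defeq\bigoplus L_i\) and \(V\defeq\mathbb{V}(E^\vee)\), the total space. Let \(U=V\setminus 0_X\) be the complement of the zero section; then \(P=U/\setG_m\), and \(U\to P\) is the \(\setG_m\)-torsor \(\mcalO_P(-1)\setminus 0\). Locally on \(X\), where each \(L_i\) is trivial, we have \(U\simeq X\times(\setA^r\setminus 0)\). So it is more convenient to work directly with \(Q'\defeq U\to P\), which is a \(\setG_m\)-torsor. The strategy is to show that \(P\) is quasi-\(F\)-split whenever the total space \(U\) of a \(\setG_m\)-torsor over it is. This requires a descent statement, not the ascent statement of \Cref{prop:qFs-torus-torsor}, so I will argue via the grading.

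Since \(U\to P\) is affine with \(\pi_*\mcalO_U=\bigoplus_{m\in\setZ}\mcalO_P(m)\), the degree-zero part is a direct summand. Moreover, \(\overline W_n\) and Frobenius respect the \(\setZ\)-grading, with \(F\) multiplying degrees by \(p\). Projecting a splitting \(\sigma\colon F_*\overline W_n\mcalO_U\to\mcalO_U\) onto degree-zero components should therefore give a splitting \(F_*\overline W_n\mcalO_P\to\mcalO_P\).

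\textbf{Step 3: quasi-\(F\)-splitting of \(U\).} Here \(U\) is an open subset of \(V\). The variety \(V\) is a vector bundle that is a product of line bundles, so it is Zariski-locally \(X\times\setA^r\). Its quasi-\(F\)-splitting can be built as in the proof of \Cref{prop:qFs-torus-torsor}: use the standard monomial Frobenius splitting \(\tau\) of \(k[x_1,\dots,x_r]\), defined by \(x^m\mapsto x^{m/p}\) or \(0\). This splitting is compatible with the diagonal transition functions \(x_i\mapsto u_i x_i\), because the computation \eqref{eq:Sigma} only uses monomiality. Restricting a splitting to an open subset preserves it, so \(U\) is \(n\)-quasi-\(F\)-split. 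Alternatively, Step 3 can be replaced by the argument of the previous proposition run with \(M=\setN^r\) in place of \(\setZ^r\).

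\textbf{Main obstacle.} The hard step is the graded descent in Step 2. The construction of \(\overline W_n\) involves \(V\) and Teichmüller lifts, so I must check that the splitting can be chosen homogeneous, that is, compatible with the \(\setG_m\)-action. This is plausible here because the monomial splitting \(\tau^{j+1}\) sends degree \(p^{j+1}\ell\) to degree \(\ell\). I must also check that \(F_*\overline W_n\) of the degree-zero part is the degree-zero summand of \(F_*\overline W_n\mcalO_U\); this holds because the \(\setG_m\)-action on \(\overline W_n\mcalO_U\) decomposes accordingly. A cleaner route would be to check these points directly for the monomial product splitting constructed above, and then descend it by restricting to degree \(0\).
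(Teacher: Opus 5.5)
Your argument is correct, but it reaches $P=\mathbb{P}_X(\bigoplus_i L_i)$ through a different intermediate space than the paper does.

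\textbf{The paper's route.} It uses the $\mathbb{G}_m^r$-torsor $Q$ from your Step~1 directly. By \Cref{prop:qFs-torus-torsor}, $Q$ is $n$-quasi-$F$-split. The product theorem for quasi-$F$-splittings then makes $Q\times\mathbb{P}^{r-1}$ $n$-quasi-$F$-split. Finally, $P\simeq Q\times^{T}\mathbb{P}^{r-1}$ is obtained by descent along the $T$-torsor $Q\times\mathbb{P}^{r-1}\to P$.

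\textbf{Your route.} You use the punctured total space $U$ and descend along the single $\mathbb{G}_m$-torsor $U\to P$. Quasi-$F$-splitting of the total space $V$ comes from rerunning the gluing computation of \Cref{prop:qFs-torus-torsor} with $\mathbb{N}^r$ in place of $\mathbb{Z}^r$. This goes through verbatim, because the transition maps still multiply each monomial $x^m$ by a unit that is multiplicative in $m$. The sign convention for $\mathbb{V}(E^\vee)$ is harmless, since $E^\vee=\bigoplus L_i^{-1}$ is again split.

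\textbf{Comparison.} Your route avoids the product theorem with the non-affine factor $\mathbb{P}^{r-1}$, at the cost of repeating that gluing computation. The paper's route uses the proposition exactly as stated. Note that your Step~1 plays no role in your final argument.

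\textbf{Your ``main obstacle''.} It is not actually an obstacle, and the paper uses the same easy descent tacitly when it writes ``its $T$-quotient''. You need neither a homogeneous splitting nor any grading on $\overline W_n\mathcal{O}_U$. Let $\sigma$ split $\Phi_U$, and let $\theta\colon\pi_*\mathcal{O}_U\to\mathcal{O}_P$ be the $\mathcal{O}_P$-linear projection onto the degree-zero summand. Then the composite
\[
F_*\overline W_n\mathcal{O}_P\longrightarrow\pi_*F_*\overline W_n\mathcal{O}_U\xrightarrow{\pi_*\sigma}\pi_*\mathcal{O}_U\xrightarrow{\theta}\mathcal{O}_P,
\]
whose first arrow is induced by $W_n(\pi^\sharp)$, is $\mathcal{O}_P$-linear. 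Precomposing with $\Phi_P$ gives $\theta\circ\pi^\sharp=\mathrm{id}$, because the first arrow composed with $\Phi_P$ equals $\pi_*\Phi_U\circ\pi^\sharp$. This is exactly the argument of \Cref{finite-cover}(1), transposed to quasi-$F$-splittings.
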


\begin{proof}
Let
\[
q\colon
Q \defeq 
L_1^\times\times_X\cdots\times_X L_r^\times
 \to  X
\]
be the \(T \defeq \setG_m^r\)-torsor associated with \((L_1,\ldots,L_r)\).
By \Cref{prop:qFs-torus-torsor}, the scheme \(Q\) is \(n\)-quasi-\(F\)-split.

The torus \(T\) acts diagonally on \(\setP^{r-1}\) by
\[
(\lambda_1,\ldots,\lambda_r)
\cdot
[x_1:\cdots:x_r]
=
[\lambda_1x_1:\cdots:\lambda_rx_r].
\]
There is a natural isomorphism
\[
\setP_X\left(\bigoplus_{i=1}^r L_i\right)
\simeq
Q\times^T\setP^{r-1}.
\]

The projective space \(\setP^{r-1}\) is Frobenius split.
Hence the product theorem for quasi-\(F\)-splittings implies that $Q\times\setP^{r-1}$ is \(n\)-quasi-\(F\)-split.
Thus, its $T$-quotient $\setP_X\left(\bigoplus_{i=1}^r L_i\right)$ is $n$-quasi-$F$-split.
\end{proof}

\section{Hodge--Tate splitting in positive characteristic} \label{SectionHTSplitDef}

In this section, we introduce relative Hodge--Tate splitting for stacks and prove a decomposition theorem for the de Rham complex in the smooth representable case.

\subsection{Relative Hodge--Tate splitting for stacks}

Let us recall the notion of the de Rham stack of derived \(\setF_p\)-schemes from \cite{bhatt2022Prismatic}*{\S 2.5}, \cite{petrov2025Decomposition}*{\S 3} and \cite{barz2025Logarithmic}*{\S 2.2}.

\begin{definition} \label{DefdeRhamStack}
Let \(\iota \colon X \to S\) be a morphism of $\F_p$-stacks.
Let
\[
X^{(1)} \defeq X\times_{S,F_S}S
\]
be the relative Frobenius twist, and let
\[
F_{X/S}\colon X \to  X^{(1)}
\]
be the relative Frobenius.
  \begin{itemize}
      \item   The \emph{(absolute) de Rham stack} of \(X\) is defined as a functor
  \begin{equation*}
    X^{\dR} \colon \CAlg^{an}_{\setF_p} \to \Ani; \  R \mapsto X(W(R)/^L p),
  \end{equation*}
  where \(W(R)\) is the ring of Witt vectors of \(R\), \(-/^L p\) is the derived quotient by \(p\), and \(X(W(R)/^L p)\) is the mapping space \(\Map_{\dSch_{\setF_p}}(\Spec(W(R)/^L p), X)\) in the \(\infty\)-category of derived schemes over \(\setF_p\).
  Since this is isomorphic to the relative Hodge--Tate stack \((X/\setZ_p)^{\HT}\) of \(X\) relative to the crystalline prism \((\setZ_p, (p))\) introduced in \cite{bhatt2022Prismatization}*{Construction 7.1}, this is a stack on \(\CAlg^{an}_{\setF_p}\) (\cite{bhatt2022Prismatization}*{Lemma 7.3}).

  This is equipped with natural morphisms
  \begin{equation*}
    \pi_{X} \colon X \to X^{\dR} \quad \text{and} \quad \nu_X \colon X^{\dR} \to X
  \end{equation*}
  of \(\setF_p\)-stacks: For each \(R \in \CAlg^{an}_{\setF_p}\), each morphism is given by precomposition of morphisms
  \begin{equation*}
    R \xrightarrow{\Phi_R} W(R)/^L p \quad \text{and} \quad W(R)/^L p \xrightarrow{\res} R,
  \end{equation*}
  where \(\Phi_R\) is the animation of the morphism \(a \mapsto [a^p] \bmod p\) and \(\res\) is the restriction morphism.
  Note that the composition \(\nu_X \circ \pi_X\) is the absolute Frobenius morphism \(F_X \colon X \to X\) of \(X\).
\item The \emph{de Rham stack} of $X$ over $S$ is defined by
\[
(X/S)^{\dR} \defeq X^{\dR} \times_{S^{\dR},\pi_S} S.
\]
We define natural morphisms
  \begin{equation*}
    \pi_{X/S} \colon X \to (X/S)^{\dR} \quad \text{and} \quad \nu_{X/S} \colon (X/S)^{\dR} \to X^{(1)}
  \end{equation*}
  of \(\setF_p\)-stacks as follows:
The morphism $\pi_{X/S}$ is defined by
\[
\begin{tikzcd}
X \arrow[r,"\pi_{X/S}"'] \arrow[rd,bend right,"\iota"] \arrow[rr,bend left,"\pi_X"] & X^{\dR} \times_{S^{\dR}} S \arrow[r,"\pr_1"] \arrow[d,"\pr_2"] & X^{\dR} \arrow[d,"\iota^{\dR}"] \\
& S \arrow[r,"\pi_S"] & S^{\dR}.
\end{tikzcd}
\]
The morphism $\nu_{X/S} \colon (X/S)^{\dR} \to X^{(1)}$ is induced by the following commutative diagram
\[
\begin{tikzcd}
X^{\dR} \times_{S^{\dR}} S \arrow[r,"\pr_1"] \arrow[d,"\pr_2"] & X^{\dR} \arrow[d,"\iota^{\dR}"] \arrow[r,"\nu_X"] & X \arrow[d,"\iota"] \\
S \arrow[r,"\pi_S"] \arrow[rr,bend right,"F"] & S^{\dR} \arrow[r,"\nu_S"] & S.
\end{tikzcd}
\]
\item We assume that $\iota$ is smooth representable.
Then the cotangent complex \(L_{X/S}\) is a locally free \(\cO_X\)-module concentrated in degree \(0\).  
We set
\[
\Omega^1_{X/S} \defeq L_{X/S},
\qquad
\Omega^i_{X/S} \defeq 
\bigwedge_{\cO_X}^i\Omega^1_{X/S}.
\]
The universal \(S\)-derivation
\[
d_{X/S}\colon \cO_X \to  \Omega^1_{X/S}
\]
extends uniquely to the relative de Rham complex
\[
\Omega^\bullet_{X/S}
=
\left[
\cO_X
\xrightarrow{d_{X/S}}
\Omega^1_{X/S}
\xrightarrow{d_{X/S}}
\Omega^2_{X/S}
 \to \cdots
\right].
\]

We define a morphism of complexes of $\cO_{X^{(1)}}$-modules
\[
F^{\dR}_{X/S} \colon \cO_{X^{(1)}} \to F_{X/S,*}\Omega^{\bullet}_{X/S}
\]
as follows:
We have
\[
d_{X/S}\circ F_{X/S}^{\sharp}=0
\]
by checking smooth locally on \(X\).  
Therefore, the morphism
\[
F_{X/S}^{\sharp}\colon
\cO_{X^{(1)}} \to  F_{X/S,*}\cO_X
\]
defines a morphism of complexes
\[
F_{X/S}^{\dR}\colon
\cO_{X^{(1)}}
 \to 
F_{X/S,*}\Omega^\bullet_{X/S},
\]
whose degree \(0\) component is \(F_{X/S}^{\sharp}\) and whose
positive-degree components are zero.
\end{itemize}
\end{definition}

\begin{definition} \label{HTSplitMorphism}
Let \(\iota \colon X \to S\) be a morphism of $\F_p$-stacks.
We say that $X$ is \emph{Hodge--Tate split} (\emph{HT-split}, for short) over $S$ if the homomorphism
\[
\nu_{X/S}^{\sharp} \colon \cO_{X^{(1)}} \to R\nu_{X/S,*}\cO_{(X/S)^{\dR}} \eqdef \OdR{X/S}
\]
is ind-split in $\mcalD_{\qcoh}(X^{(1)})$, where \(X^{(1)}\) is the Frobenius twist of \(X\) over \(S\).

We say that an \(\setF_p\)-stack \(X\) is \emph{HT-split} if $X$ is HT-split over $\Spec(\F_p)$.
\end{definition}

\begin{remark}\label{rmk:perfect-base}
    Let \(X \to S\) be a morphism of \(\setF_p\)-stacks.
    If \(S\) is a perfect \(\setF_p\)-scheme, then \(X\) is HT-split over \(S\) if and only if \(X\) is HT-split. This follows from the isomorphism \((X/S)^{\dR} \xrightarrow{\cong} X^{\dR}\) proved in \cite{petrov2025Decomposition}*{Remark 3.5 (2)}.
\end{remark}

\begin{proposition}\label{smooth-HT-split-equiv}
Let \(\iota\colon X\to S\) be a smooth representable morphism of fpqc algebraic \(\F_p\)-stacks.
Then \(X\) is HT-split over \(S\) if and only if \(F^{\dR}_{X/S}\) is ind-split in \(\mcalD_{\qcoh}(X^{(1)})\).
\end{proposition}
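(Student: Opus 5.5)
The plan is to show that the two morphisms \(\nu_{X/S}^{\sharp}\colon\cO_{X^{(1)}}\to\OdR{X/S}\) and \(F^{\dR}_{X/S}\colon\cO_{X^{(1)}}\to F_{X/S,*}\Omega^\bullet_{X/S}\) are equivalent as objects of the arrow category under \(\cO_{X^{(1)}}\) in \(\mcalD_{\qcoh}(X^{(1)})\). Once we have an equivalence \(\OdR{X/S}\simeq F_{X/S,*}\Omega^\bullet_{X/S}\) under which \(\nu^{\sharp}_{X/S}\) corresponds to \(F^{\dR}_{X/S}\), the claim follows. Ind-splitness of a morphism \(\cO\to M\) depends only on the equivalence class of the arrow, so it transfers across this identification in both directions.

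\textbf{Step 1: a comparison map.} I would first construct a global comparison morphism of commutative algebras, rather than only comparing local pieces. For \(R\in\CAlg^{an}_{\setF_p}\), the object \(W(R)/^Lp\) is a (derived) PD-thickening of \(R\). Using this, the crystalline/de Rham comparison of Bhatt--Lurie gives a natural map
\[
F_{X/S,*}\Omega^\bullet_{X/S}\to \OdR{X/S}
\]
in \(\CAlg(\mcalD_{\qcoh}(X^{(1)}))\). This is the map used in \cite{bhatt2022Prismatic}*{\S 2.5} and \cite{petrov2025Decomposition}*{\S 3} in the affine case. In degree \(0\) it is compatible with the unit maps from \(\cO_{X^{(1)}}\), and these unit maps are \(F^{\sharp}_{X/S}\) and \(\nu^\sharp_{X/S}\) respectively. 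This compatibility holds because \(\nu_{X/S}\circ\pi_{X/S}=F_{X/S}\), which follows from \(\nu_X\circ\pi_X=F_X\).

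If a natural global map is awkward to write down directly, I would instead glue. Both sides are commutative algebra objects, so by \Cref{lemma:CAlg-qcoh-fpqc-hyperdescent} it suffices to give compatible identifications on an fpqc hypercover of \(X^{(1)}\) by affines that are étale over affine opens of \(\mathbb{A}^n_S\), with \(S\) itself replaced by an affine cover. The identifications must be functorial in the hypercover.

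\textbf{Step 2: check it is an equivalence locally.} Whether the map is an equivalence can be checked fpqc locally on \(X^{(1)}\). The base change results \Cref{FlatBaseChange} and \Cref{E-sharp-gerbe-arbitrary-base-change} allow us to compute \(R\nu_{X/S,*}\) after flat base change, so we may take \(S=\Spec A\) affine and \(X\) affine and smooth over \(A\). In that setting \(\nu_{X/S}\) is a \(BE^\sharp\)-torsor for \(E=T_{X^{(1)}/S}\), the Frobenius-twisted tangent bundle, by \cite{bhatt2022Prismatic}.

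\Cref{E-sharp-gerbe-arbitrary-base-change} together with \cite{bhatt2022Prismatic}*{Lemma 7.8} gives
\[
\mcalH^i(\OdR{X/S})\simeq\Omega^i_{X^{(1)}/S}.
\]
On the other side, the Cartier isomorphism identifies \(\mcalH^i(F_{X/S,*}\Omega^\bullet_{X/S})\) with \(\Omega^i_{X^{(1)}/S}\). It then remains to check that the comparison map induces these identifications on cohomology sheaves. This is a computation for \(\mathbb{A}^n\), which reduces to \(\mathbb{A}^1\) by multiplicativity, where it is \cite{bhatt2022Prismatic}*{Theorem 2.5.6}.

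\textbf{Main obstacle.} I expect the main difficulty to be Step 1: producing the identification globally and canonically over a non-affine stack \(S\), compatibly with the unit maps, rather than just abstractly. My first approach is to use the affine comparison, which is functorial in pairs (\(A\), smooth \(A\)-algebra), and then apply hyperdescent via \Cref{lemma:CAlg-qcoh-fpqc-hyperdescent}.
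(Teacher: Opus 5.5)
\textbf{Assessment: a valid but heavier route than the paper's.} Your plan is sound in outline, but it proves a stronger statement than the proposition needs. The step you call the main obstacle is exactly what the paper avoids.

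You want a canonical comparison $F_{X/S,*}\Omega^\bullet_{X/S}\to\mathcal{O}^{\dR}_{X/S}$ in $\mathrm{CAlg}(\mathcal{D}_{\mathrm{qcoh}}(X^{(1)}))$, so that compatibility with the units $F^{\dR}_{X/S}$ and $\nu^\sharp_{X/S}$ is automatic. You then check locally, via the gerbe computation, the Cartier isomorphism and the $\mathbb{A}^1$ case, that it is an equivalence.

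The paper instead imports \emph{some} isomorphism $\alpha\colon\mathcal{O}^{\dR}_{X/S}\simeq F_{X/S,*}\Omega^\bullet_{X/S}$ in $\mathcal{D}_{\mathrm{qcoh}}(X^{(1)})$, from Bhatt--Lurie plus Terentiuk for the relative setting. It makes no claim that $\alpha$ respects units, and repairs unit compatibility with a $t$-structure argument:
\begin{itemize}
\item By the degree-zero Cartier isomorphism, $\mathcal{H}^0(F^{\dR}_{X/S})$ and $\mathcal{H}^0(\alpha\circ\nu^\sharp_{X/S})$ are both isomorphisms onto $\mathcal{H}^0(F_{X/S,*}\Omega^\bullet_{X/S})\simeq\mathcal{O}_{X^{(1)}}$, so they differ by a unit $c$.
\item Replacing $\alpha$ by $c\cdot\alpha$ makes them agree on $\mathcal{H}^0$.
\item Since $F_{X/S,*}\Omega^\bullet_{X/S}\in\mathcal{D}^{\geq 0}$, a map from $\mathcal{O}_{X^{(1)}}$ into it is determined by its effect on $\mathcal{H}^0$. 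Hence $\alpha\circ\nu^\sharp_{X/S}\simeq F^{\dR}_{X/S}$.
\end{itemize}
This needs no multiplicativity, no gluing, and no identification of a comparison map on cohomology sheaves.

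Your route gives more: a canonical multiplicative identification. Its cost is the two inputs you only sketch. First, the affine comparison must be natural in a homotopy-coherent sense; pointwise compatible maps do not descend along \Cref{lemma:CAlg-qcoh-fpqc-hyperdescent}. Second, it must exist relative to a non-perfect, stacky base such as $[\mathbb{A}^1/\mathbb{G}_m]^n$, which is precisely the input the paper takes from Terentiuk.

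A small correction: if your comparison is a map of commutative algebras, unit compatibility is automatic. The identity $\nu_{X/S}\circ\pi_{X/S}=F_{X/S}$ only compares the two units after composing with $\pi^\sharp_{X/S}$. Deducing equality from it would again go through the $\mathcal{H}^0$ argument.
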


\begin{proof}
By the degree-zero Cartier isomorphism
\begin{equation} \label{DegreeZeroCartier}
  \mcalH^0\left(F_{X/S,*}\Omega^\bullet_{X/S}\right)
=
\ker\left(
F_{X/S,*}\cO_X
\xrightarrow{d}
F_{X/S,*}\Omega^1_{X/S}
\right)
\simeq
\cO_{X^{(1)}},
\end{equation}
the morphism
\[
F^{\dR}_{X/S}\colon
\cO_{X^{(1)}}
 \to 
F_{X/S,*}\Omega^\bullet_{X/S}
\]
induces an isomorphism on zeroth cohomology.

By \cite{bhatt2022Prismatic}*{Corollary~2.7.2~(3)} and \cite{terentiuk2026OgusVologodsky}*{Lemma~3.28 and Remark~3.29}, there exists an isomorphism
\[
\alpha\colon
R\nu_{X/S,*}\cO_{(X/S)^{\dR}}
\xrightarrow{\sim}
F_{X/S,*}\Omega^\bullet_{X/S}
\]
in \(\mcalD_{\qcoh}(X^{(1)})\).
Combining this with \eqref{DegreeZeroCartier}, we obtain a morphism
\[
m\colon
\cO_{X^{(1)}}
\xrightarrow{\mcalH^0(\nu_{X/S}^{\sharp})}
\mcalH^0\left(
R\nu_{X/S,*}\cO_{(X/S)^{\dR}}
\right) \xrightarrow{\mcalH^0(\alpha)} \cO_{X^{(1)}}
\]
which is an isomorphism since $\mcalH^0(\nu_{X/S}^{\sharp})$ is \(\mcalO_{X^{(1)}}\)-linear and \(1\) goes to \(1\) under the composition.
This shows that \(\mcalH^0(\alpha\circ \nu_{X/S}^{\sharp})\) is an isomorphism and that
\[
\mcalH^0(\alpha\circ \nu_{X/S}^{\sharp})\colon
\cO_{X^{(1)}}
\xrightarrow{\sim}
\mcalH^0\left(
F_{X/S,*}\Omega^\bullet_{X/S}
\right)
\]
is an isomorphism.

Therefore there is a unique automorphism
\[
g\colon
\mcalH^0\left(
F_{X/S,*}\Omega^\bullet_{X/S}
\right)
\xrightarrow{\sim}
\mcalH^0\left(
F_{X/S,*}\Omega^\bullet_{X/S}
\right)
\]
such that
\[
g\circ\mcalH^0(\alpha\circ \nu_{X/S}^{\sharp}) \simeq \mcalH^0(F^{\dR}_{X/S}).
\]
Under the Cartier identification \eqref{DegreeZeroCartier} above, the automorphism \(g\) is multiplication by a section $c$ of \(\cO_{X^{(1)}}^\times\).  
Then we define an automorphism
\[
\times c\colon
F_{X/S,*}\Omega^\bullet_{X/S}
\xrightarrow{\sim}
F_{X/S,*}\Omega^\bullet_{X/S}
\]
in \(\mcalD_{\qcoh}(X^{(1)})\).
Replacing \(\alpha\) by \(\times c\circ\alpha\), we may assume that
\[
\mcalH^0(\alpha\circ \nu_{X/S}^{\sharp}) \simeq \mcalH^0(F^{\dR}_{X/S}).
\]

We claim that then
\[
\alpha\circ \nu_{X/S}^{\sharp} \simeq F^{\dR}_{X/S}
\]
holds in \(\mcalD_{\qcoh}(X^{(1)})\): Since \(X \to S\) is smooth representable, the morphism \(F^{\dR}_{X/S}\) is concentrated in nonnegative cohomological degrees.
By using the \(t\)-structure, there is a natural identification
\begin{equation*}
  \Hom_{\mcalD_{\qcoh}(X^{(1)})}(\mcalO_{X^{(1)}}, F_{X/S,*}\Omega^\bullet_{X/S}) \simeq \Hom_{\mcalD_{\qcoh}(X^{(1)})}(\mcalO_{X^{(1)}}, \mcalH^0(F_{X/S,*}\Omega^\bullet_{X/S})).
\end{equation*}
Hence two morphisms from \(\mcalO_{X^{(1)}}\) to
\(F_{X/S,*}\Omega^\bullet_{X/S}\) are equal if and only if they induce
the same element of zeroth cohomology.  The claim follows.

We have therefore chosen the comparison isomorphism \(\alpha\) so that
\[
\alpha\circ \nu_{X/S}^{\sharp} \simeq F^{\dR}_{X/S}.
\]
Since \(\alpha\) is an isomorphism, \(\nu_{X/S}^{\sharp}\) is ind-split if and only if
\(F^{\dR}_{X/S}\) is ind-split.  Thus \(X\) is HT-split over \(S\) if and
only if \(F^{\dR}_{X/S}\) is ind-split in \(\mcalD_{\qcoh}(X^{(1)})\).
\end{proof}

\subsection{Decomposition of the de Rham complex}

\begin{theorem}\label{decomposition-after-pullback}
Let \(\iota\colon X\to S\) be a smooth representable morphism of fpqc algebraic \(\F_p\)-stacks.
Then there is an isomorphism
\begin{equation*}
  \OdR{X/S} \otimes^L_{\cO_{X^{(1)}}} F_{X/S,*}\Omega^\bullet_{X/S} \cong \OdR{X/S} \otimes^L_{\cO_{X^{(1)}}} \left( \bigoplus_{i \geq 0} \Omega^i_{X^{(1)}/S}[-i] \right)
\end{equation*}
in $\Mod_{\OdR{X/S}} \bigl(\mcalD_{\qcoh}(X^{(1)})\bigr)$. 
\end{theorem}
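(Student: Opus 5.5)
We apply \Cref{prop:tensor-formula-for-banded-gerbe} to the morphism \(\pi\defeq\nu_{X/S}\colon (X/S)^{\dR}\to X^{(1)}\).

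\textbf{Step 1: the gerbe structure.} By \cite{bhatt2022Prismatic}*{\S 2.5} (and its relative version in \cite{petrov2025Decomposition}*{\S 3}), for \(\iota\) smooth representable the morphism \(\nu_{X/S}\) is a gerbe banded by the commutative flat affine group scheme \(G\defeq T^\sharp_{X^{(1)}/S}\). This is the PD-hull of zero in the tangent bundle \(E\defeq T_{X^{(1)}/S}\), the dual of \(\Omega^1_{X^{(1)}/S}\). The gerbe is fpqc-locally trivial, so \(\nu_{X/S}\) is a \(BE^\sharp\)-torsor in the sense of \Cref{BGTorsorDefinition}. I would establish this by checking it étale locally on \(X\), where \(X\to S\) factors through an affine space and the de Rham stack is computed explicitly as \(\mathbb{G}_a^{\dR}=\mathbb{G}_a/\mathbb{G}_a^\sharp\). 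The gluing into a band on \(X^{(1)}\) is the statement cited from Bhatt and Petrov.

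\textbf{Step 2: the two pushforwards are perfect.} Let \(q\colon B_{X^{(1)}}G\to X^{(1)}\) be the structure morphism. Using \cite{bhatt2022Prismatic}*{Lemma~7.8}, together with the base change of \Cref{E-sharp-gerbe-arbitrary-base-change} to reduce to a local computation, I would show
\[
Rq_*\cO_{B G}\simeq\bigoplus_{i\geq 0}\bigwedge^i E^\vee[-i]=\bigoplus_{i\geq0}\Omega^i_{X^{(1)}/S}[-i].
\]
This is a finite sum of finite locally free modules, hence perfect. Likewise \(R\pi_*\cO\simeq F_{X/S,*}\Omega^\bullet_{X/S}\), by the comparison isomorphism \(\alpha\) used in \Cref{smooth-HT-split-equiv}. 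This complex has finite length, and its terms are finite locally free over \(\cO_{X^{(1)}}\) because the relative Frobenius is finite flat for smooth \(X\to S\). Hence it is perfect as well.

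\textbf{Step 3: apply the tensor formula.} \Cref{prop:tensor-formula-for-banded-gerbe} now gives an \(\OdR{X/S}\)-linear isomorphism
\[
\OdR{X/S}\otimes^L\OdR{X/S}\simeq\OdR{X/S}\otimes^L Rq_*\cO_{BG}.
\]
Replacing the second tensor factor on the left by \(F_{X/S,*}\Omega^\bullet_{X/S}\) via \(\alpha\) does not affect linearity for the action through the first factor. Replacing the right-hand factor by \(\bigoplus_i\Omega^i_{X^{(1)}/S}[-i]\) via Step 2 gives the stated isomorphism.

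\textbf{Main obstacle.} I expect the hard part to be Step 1 and the identification in Step 2 globally on a stack \(S\), rather than only on affine pieces. The band must be canonical, so that \(\nu_{X/S}\) is genuinely a \(G\)-banded gerbe and not merely locally a \(BE^\sharp\)-gerbe; only then does the neutralization via the diagonal in \Cref{prop:tensor-formula-for-banded-gerbe} apply. Likewise, the splitting of \(Rq_*\cO_{BG}\) into exterior powers must hold globally. That splitting I would obtain from the \(\mathbb{G}_m\)-equivariance (grading) of \(B E^\sharp\) over the base, which is global, rather than by gluing local splittings.
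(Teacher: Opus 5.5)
Your proposal is correct and follows the paper's proof essentially step for step. The paper identifies \(\nu_{X/S}\) as a gerbe banded by \(T^\sharp_{X^{(1)}/S}\), computes \(Rq_*\mcalO_{BT^\sharp_{X^{(1)}/S}}\simeq\bigoplus_i\Omega^i_{X^{(1)}/S}[-i]\) via Lemma~7.8, uses \(\OdR{X/S}\simeq F_{X/S,*}\Omega^\bullet_{X/S}\), notes that both complexes are perfect, and applies \Cref{prop:tensor-formula-for-banded-gerbe}. The global issues you flag (a canonical band over a stacky base, and a global splitting of \(Rq_*\mcalO\)) are settled in the paper by citation: Barz's Theorem~2.46 together with Bhatt's Proposition~2.7.1 for the band, and the global form of Lemma~7.8 for the splitting. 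Your \(\mathbb{G}_m\)-weight argument is therefore a harmless supplement rather than a required step.
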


\begin{proof}
  By \cite{barz2025Logarithmic}*{Theorem~2.46} and \cite{bhatt2022Prismatic}*{Proposition 2.7.1}, the morphism
\begin{equation*}
  \nu_{X/S} \colon (X/S)^{\dR}  \to  X^{(1)}
\end{equation*}
is a gerbe banded by \(T^{\sharp}_{X^{(1)}/S}\).

As mentioned in the proof of \Cref{smooth-HT-split-equiv}, we have an isomorphism
\begin{equation*}
  \OdR{X/S} \cong F_{X/S,*}\Omega^\bullet_{X/S}
\end{equation*}
in \(\mcalD_{\qcoh}(X^{(1)})\).
Using \cite{bhatt2022Prismatization}*{Lemma~7.8} to compute the pushforward of the structure sheaf of the classifying stack, we obtain
\begin{equation*}
  Rq_*\mcalO_{BT^{\sharp}_{X^{(1)}/S}} \cong \parenlr{\bigoplus_{i\geq 0}\Omega^i_{X^{(1)}/S}[-i]}
\end{equation*}
in \(\mcalD_{\qcoh}(X^{(1)})\).
Since \(\iota \colon X \to S\) is smooth representable, these objects are perfect complexes of \(\cO_{X^{(1)}}\)-modules.

Then we can apply \Cref{prop:tensor-formula-for-banded-gerbe} to the gerbe \(\nu_{X/S}\) and obtain an isomorphism
\begin{equation*}
  \OdR{X/S} \otimes^L_{\cO_{X^{(1)}}} \OdR{X/S} \cong \OdR{X/S} \otimes^L_{\cO_{X^{(1)}}} Rq_*\mcalO_{BT^{\sharp}_{X^{(1)}/S}}
\end{equation*}
in \(\Mod_{\OdR{X/S}}(\mcalD_{\qcoh}(X^{(1)}))\), where \(q \colon BT^{\sharp}_{X^{(1)}/S} \to X^{(1)}\) is the natural projection.

Combining these isomorphisms, we obtain the desired isomorphism.
\end{proof}

\begin{lemma}\label{refinement-splitting-bounded-complex}
  Let \(X\) be an fpqc algebraic stack and let \(K \in \mcalD_{\qcoh}(X)\).
  Assume that \(\mcalH^i(K)\) is a flat \(\cO_X\)-module for every \(i \in \Z\).
  Let \(E_i\) be a quasi-coherent \(\cO_X\)-module with an isomorphism
  \begin{equation*}
    c_i\colon \mcalH^i(K) \xrightarrow{\simeq} E_i
  \end{equation*}
  for every \(i \in \Z\).

  Let \(A \in \CAlg(\mcalD_{\qcoh}(X))\).
  Suppose that $A$ is coconnective and there is an isomorphism
  \begin{equation*}
    \alpha\colon A\otimes_{\cO_X}^{L}K \xrightarrow{\simeq} A\otimes_{\cO_X}^{L}\parenlr{\bigoplus_{i \in \Z}E_i[-i]}
  \end{equation*}
  in \(\Mod_A\parenlr{\mcalD_{\qcoh}(X)}\).

  Then there exists an isomorphism
  \begin{equation*}
    \varphi(\alpha)\colon A\otimes_{\cO_X}^{L}K \xrightarrow{\simeq} A\otimes_{\cO_X}^{L}\parenlr{\bigoplus_{i \in \Z}E_i[-i]}
  \end{equation*}
  in \(\Mod_A\parenlr{\mcalD_{\qcoh}(X)}\) with the following properties.
  \begin{enumerate}
    \item Equipped with the filtered structures on both sides given by the canonical truncations
  \begin{equation*}
    \bracelr{\tau_{\leq m}K}_{m \in \Z} \qquad\text{and}\qquad \bracelr{\bigoplus_{i \leq m}E_i[-i]}_{m \in \Z},
  \end{equation*}
  the isomorphism \(\varphi(\alpha)\) induces a filtered isomorphism
  \begin{equation*}
    \bracelr{\varphi(\alpha)_{\leq m}\colon A\otimes_{\cO_X}^{L}\tau_{\leq m}K \xrightarrow{\simeq} A\otimes_{\cO_X}^{L}\parenlr{\bigoplus_{i \leq m}E_i[-i]}}_{m \in \Z}
  \end{equation*}
  in \(\Mod_A\parenlr{\mcalD_{\qcoh}(X)}\).

  \item For every \(m \in \Z\), the associated graded part
  \begin{equation*}
    \gr_m\parenlr{\varphi(\alpha)_{\leq\bullet}}\colon A\otimes_{\cO_X}^{L}\mcalH^m(K)[-m] \xrightarrow{\simeq} A\otimes_{\cO_X}^{L}E_m[-m]
  \end{equation*}
  coincides with
  \begin{equation*}
    \parenlr{\id_A\otimes c_m}[-m].
  \end{equation*}
  Equivalently, the induced isomorphism
  \begin{equation*}
    \varphi(\alpha)_m\colon A\otimes_{\cO_X}^{L}\mcalH^m(K) \xrightarrow{\simeq} A\otimes_{\cO_X}^{L}E_m
  \end{equation*}
  coincides with \(\id_A\otimes c_m\).
  \end{enumerate}
\end{lemma}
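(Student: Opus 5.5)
The plan is to reduce everything to \Cref{automatic-postnikov-compatibility} and then correct the graded pieces by a diagonal automorphism. Put \(L\defeq\bigoplus_{i\in\Z}E_i[-i]\). Since \(E_i\simeq\mcalH^i(K)\) is flat, both \(K\) and \(L\) have flat cohomology sheaves. Moreover, \(\tau_{\leq m}L\simeq\bigoplus_{i\leq m}E_i[-i]\), because truncation commutes with direct sums. So \Cref{automatic-postnikov-compatibility}, applied to \(\alpha\) and \(A\), gives a canonical filtered isomorphism
\begin{equation*}
\alpha_{\leq\bullet}\colon\bracelr{A\otimes^L\tau_{\leq m}K}_{m}\xrightarrow{\simeq}\bracelr{A\otimes^L\textstyle\bigoplus_{i\leq m}E_i[-i]}_{m}
\end{equation*}
in \(\Fun((\Z,\leq),\Mod_A(\mcalD_{\qcoh}(X)))\). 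The same lemma also gives that this construction is compatible with composition and identities, by the contractibility of lifts.

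Next I pass to associated graded pieces. The functor \(A\otimes^L(-)\) is exact, so it preserves cofibers. Hence \(\gr_m\) of the source filtration is \(A\otimes^L\mcalH^m(K)[-m]\), the cofiber of \(\tau_{\leq m-1}K\to\tau_{\leq m}K\) tensored with \(A\). Likewise \(\gr_m\) of the target filtration is \(A\otimes^L E_m[-m]\). So \(\alpha_{\leq\bullet}\) induces isomorphisms
\begin{equation*}
\beta_m\defeq\gr_m(\alpha_{\leq\bullet})\colon A\otimes^L\mcalH^m(K)[-m]\xrightarrow{\simeq}A\otimes^L E_m[-m]
\end{equation*}
in \(\Mod_A\). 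In general \(\beta_m\) need not equal \(\id_A\otimes c_m\), so I correct it. Define an \(A\)-linear automorphism
\begin{equation*}
\psi_m\defeq(\id_A\otimes c_m)[-m]\circ\beta_m^{-1}
\end{equation*}
of \(A\otimes^L E_m[-m]\). Put \(\Psi\defeq\bigoplus_m\psi_m\), an automorphism of \(A\otimes^L L\simeq\bigoplus_m A\otimes^L E_m[-m]\), and set \(\varphi(\alpha)\defeq\Psi\circ\alpha\).

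It remains to verify (1) and (2) for \(\varphi(\alpha)\). The automorphism \(\Psi\) is diagonal, so it visibly preserves each subobject \(\bigoplus_{i\leq m}\). It therefore defines a filtered automorphism \(\Psi_{\leq\bullet}\) whose \(m\)-th graded piece is \(\psi_m\). By uniqueness of lifts in \Cref{automatic-postnikov-compatibility}, \(\Psi_{\leq\bullet}\) is the canonical lift of \(\Psi\) (applying the lemma with \(K=L\)). Compatibility with composition then gives \(\varphi(\alpha)_{\leq\bullet}\simeq\Psi_{\leq\bullet}\circ\alpha_{\leq\bullet}\). This is a filtered isomorphism, which proves (1). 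Its \(m\)-th graded piece is \(\psi_m\circ\beta_m=(\id_A\otimes c_m)[-m]\), which proves (2).

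I expect the main subtle point to be the identification of the graded pieces. One must check that the cofiber of \(A\otimes^L\tau_{\leq m-1}L\to A\otimes^L\tau_{\leq m}L\) is identified with \(A\otimes^L E_m[-m]\) compatibly with the summand inclusion used to define \(\Psi\). Only then is the graded piece of \(\Psi_{\leq\bullet}\) really \(\psi_m\). This holds because the truncation filtration of \(L\) is split by the direct sum decomposition, so the check reduces to bookkeeping.
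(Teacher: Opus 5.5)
Your proposal is correct and follows the paper's proof essentially step by step. You apply \Cref{automatic-postnikov-compatibility} to \(\alpha\), take graded pieces, correct by the diagonal automorphism \(\bigoplus_m(\id_A\otimes c_m)[-m]\circ\beta_m^{-1}\), and set \(\varphi(\alpha)\) to be the composite. The only difference is cosmetic: you identify the filtered lift of \(\Psi\circ\alpha\) with \(\Psi_{\leq\bullet}\circ\alpha_{\leq\bullet}\) via uniqueness of lifts, whereas the paper simply defines \(\varphi(\alpha)_{\leq m}\) as this composite and checks compatibility directly.
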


\begin{proof}
Set
\[
E \defeq \bigoplus_{i\in\Z}E_i[-i].
\]
Since each $E_i$ is isomorphic to $\mcalH^i(K)$, it is flat.
Hence the cohomology sheaves of $E$ are flat.

Applying \Cref{automatic-postnikov-compatibility} to the isomorphism $\alpha$, we obtain an isomorphism of filtered objects
\begin{equation*}
    \{\alpha_{\leq m}\}_{m \in \setZ} \colon \bracelr{A \otimes^L \tau_{\leq m} K}_{m \in \setZ} \xrightarrow{\cong} \bracelr{A \otimes^L \tau_{\leq m} E}_{m \in \setZ} \cong \bracelr{A \otimes^L \parenlr{\bigoplus_{i \leq m} E_i[-i]}}_{m \in \setZ}
\end{equation*}
of \(\Mod_A(\mcalD_{\qcoh}(X))\).

Moreover, the morphisms $\alpha_{\leq m}$ are compatible with the
transition morphisms in $m$. Taking associated graded pieces, we obtain
equivalences
\[
\alpha_m\colon
A\otimes^L\mcalH^m(K)
\xrightarrow{\simeq}
A\otimes^L E_m.
\]

For each $i\in\Z$, define an automorphism
\[
\beta_i
 \defeq 
(\id_A\otimes c_i)\circ\alpha_i^{-1}
\colon
A\otimes^L E_i
\xrightarrow{\simeq}
A\otimes^L E_i.
\]
Since tensor product with $A$ preserves direct sums, these automorphisms
define an automorphism
\[
\beta
 \defeq 
\bigoplus_{i\in\Z}\beta_i[-i]
\colon
A\otimes^L E
\xrightarrow{\simeq}
A\otimes^L E.
\]
We set
\[
\varphi(\alpha) \defeq \beta\circ\alpha.
\]

For every $m\in\Z$, let
\[
\beta_{\leq m}
 \defeq 
\bigoplus_{i\leq m}\beta_i[-i]
\]
and define
\[
\varphi(\alpha)_{\leq m}
 \defeq 
\beta_{\leq m}\circ\alpha_{\leq m}.
\]
Thus
\[
\varphi(\alpha)_{\leq m}\colon
A\otimes^L\tau_{\leq m}K
\xrightarrow{\simeq}
A\otimes^L\parenlr{\bigoplus_{i\leq m}E_i[-i]}.
\]
Since the morphisms $\alpha_{\leq m}$ are compatible with the
transition morphisms and the automorphisms $\beta_{\leq m}$ are
compatible under the natural inclusions, the morphisms
$\varphi(\alpha)_{\leq m}$ are compatible with the transition
morphisms in $m$.
Moreover, they are compatible with the global morphism
$\varphi(\alpha)$.
Hence $\varphi(\alpha)$ gives the required filtered isomorphism.

Finally, the $i$-th associated graded piece is
\[
\begin{aligned}
\gr_i\parenlr{\varphi(\alpha)_{\leq\bullet}}
&=
\parenlr{\beta_i\circ\alpha_i}[-i]\\
&=
\parenlr{
(\id_A\otimes c_i)
\circ\alpha_i^{-1}
\circ\alpha_i
}[-i]\\
&=
(\id_A\otimes c_i)[-i].
\end{aligned}
\]
Equivalently, the induced equivalence
\[
A\otimes^L\mcalH^i(K)
\xrightarrow{\simeq}
A\otimes^L E_i
\]
is $\id_A\otimes c_i$, as desired.
\end{proof}

\begin{theorem}\label{decomposition-positive}
Let \(\iota\colon X\to S\) be a smooth representable morphism of fpqc algebraic \(\F_p\)-stacks of relative dimension \(d\).
Assume that  $F^{\dR}_{X/S}$ splits. 
Then there is an isomorphism
\[
F_{X/S,*}\Omega_{X/S}^{\bullet}
\simeq
\bigoplus_{i=0}^d
\Omega_{X^{(1)}/S}^i[-i]
\]
in $\mcalD_{\qcoh}(X^{(1)})$.
This isomorphism is canonically obtained from a given retraction
\[
(\varphi \colon F_{X/S,*}\Omega^\bullet_{X/S}
\longrightarrow
\mcalO_{X^{(1)}},\, \varphi \circ F^{\dR}_{X/S} \simeq \id_{\mcalO_{X^{(1)}}})
\]
in \(\mcalD_{\qcoh}(X^{(1)})\).

In particular, if \(\iota\) is projective, then, for every
\(\iota\)-ample line bundle \(L\) on \(X\), we have
\[
R^i\iota_*
\bigl(\Omega_{X/S}^j\otimes L^{-1}\bigr)=0
\qquad
(i+j<d) \quad \text{and} \quad R^i\iota_*
\bigl(\Omega_{X/S}^j\otimes L\bigr)=0
\qquad
(i+j>d) 
\]
\end{theorem}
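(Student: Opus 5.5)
The idea is to base change the isomorphism of \Cref{decomposition-after-pullback} along the retraction \(\varphi\). Write \(K\defeq F_{X/S,*}\Omega^\bullet_{X/S}\) and \(E_i\defeq\Omega^i_{X^{(1)}/S}\), and let \(A\defeq\OdR{X/S}\simeq K\), viewed as a coconnective commutative algebra in \(\mcalD_{\qcoh}(X^{(1)})\).

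\textbf{Step 1: an \(A\)-linear decomposition.} By the Cartier isomorphism, \(\mcalH^i(K)\simeq E_i\) are finite locally free, hence flat. \Cref{decomposition-after-pullback} provides an \(A\)-linear isomorphism \(\alpha\colon A\otimes^L K\simeq A\otimes^L\bigl(\bigoplus_i E_i[-i]\bigr)\). Applying \Cref{refinement-splitting-bounded-complex} with the Cartier isomorphisms \(c_i\), we replace \(\alpha\) by \(\varphi(\alpha)\). This version is filtered with respect to the canonical truncations, and its graded pieces are \(\id_A\otimes c_i\).

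\textbf{Step 2: descend along the retraction.} By \Cref{smooth-HT-split-equiv} and its proof, the unit \(\cO_{X^{(1)}}\to A\) is identified with \(F^{\dR}_{X/S}\). Hence \(\varphi\) transports to an \(\cO_{X^{(1)}}\)-linear retraction \(r\colon A\to\cO_{X^{(1)}}\) of the unit. Base change along \(r\), applying \(\cO_{X^{(1)}}\otimes_{A}(-)\) via \(r\), turns \(A\otimes^L M\) into \(M\) for any \(M\). This uses only the underlying map \(r\) together with the module structure: the composite \(M\to A\otimes M\to M\), built from \(\mathrm{unit}\otimes\id\) and \(r\otimes\id\), is the identity.

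Concretely, I would define
\[
\psi\colon K\xrightarrow{\mathrm{unit}\otimes\id}A\otimes^L K\xrightarrow{\varphi(\alpha)}A\otimes^L\Bigl(\bigoplus E_i[-i]\Bigr)\xrightarrow{r\otimes\id}\bigoplus E_i[-i].
\]
Since \(\psi\) is compatible with truncation filtrations, its graded pieces are \((r\otimes\id)\circ(\id_A\otimes c_i)\circ(\mathrm{unit}\otimes\id)=c_i\). These are isomorphisms, so \(\psi\) induces isomorphisms on every \(\mcalH^i\) and is therefore an isomorphism.

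The main obstacle is checking that the filtered structure survives the composite with the non-\(A\)-linear maps unit and \(r\). This holds because \(\mathrm{unit}\otimes\id\) and \(r\otimes\id\) are natural in the second variable, so they commute with the truncation inclusions \(\tau_{\leq m}\). The resulting filtered map on \(\mathrm{gr}_m\) is computed pointwise as above. Since only \(\mcalH^i\) is needed, it suffices to observe that \(\mcalH^m(\psi)\) factors through \(\mathrm{gr}_m\). Canonicity then follows from the canonical choices made in \Cref{automatic-postnikov-compatibility} and \Cref{refinement-splitting-bounded-complex}.

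\textbf{Step 3: vanishing.} Assume \(\iota\) is projective. We have \(R\iota^{(1)}_*\bigl(K\otimes L'^{-1}\bigr)\), where \(L'\) is the pullback of \(L\) to \(X^{(1)}\) so that \(F_{X/S}^*L'=L^p\). The projection formula gives \(F_{X/S,*}(\Omega^\bullet\otimes L^{-p})\). Arguing as in \cite{DI87}*{Lemma~2.9}, we compare the hypercohomology spectral sequences for \(\Omega^\bullet\otimes L^{-p^n}\) for large \(n\), using Serre vanishing and the relative Frobenius twist \(\Omega^j_{X^{(1)}/S}\simeq\) the base change of \(\Omega^j_{X/S}\). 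An inductive rank/length comparison, done fiberwise or via a descending induction on \(n\), then yields \(R^i\iota_*(\Omega^j\otimes L^{-1})=0\) for \(i+j<d\). The \(i+j>d\) statement follows by Grothendieck duality, using \(\Omega^j{}^\vee\otimes\omega\simeq\Omega^{d-j}\).
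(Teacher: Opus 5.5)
Your Steps 1 and 2 coincide with the paper's proof. The paper's filtered map $\beta_{\le m}=(\varphi\otimes\mathrm{id})\circ\alpha_{\le m}\circ(\nu^\sharp_{X/S}\otimes\mathrm{id})$ is your $\psi$, and the paper concludes exactly as you do, from the graded pieces being the Cartier isomorphisms. One wording point: ``base change along $r$'' and $\cO_{X^{(1)}}\otimes_A(-)$ are not meaningful, because $r$ is only $\cO_{X^{(1)}}$-linear and not a map of algebras. Your concrete composite only uses $r\otimes\mathrm{id}$, though, and that is all the argument needs.

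The gap is in Step 3.

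\emph{Passing from fibers back to $S$.} A Deligne--Illusie rank/length comparison only makes sense over a field, so you are forced onto the fibers of $\iota$. You never explain how fiberwise vanishing gives $R^i\iota_*(\Omega^j_{X/S}\otimes L^{-1})=0$ over an arbitrary base stack $S$. The paper fills this in two steps.
\begin{enumerate}
\item The decomposition base changes to every geometric point $\bar s\to S$: relative Frobenius, and the pushforward of the de Rham complex (a bounded complex of flat modules pushed along an affine map), commute with base change. Since $\bar k$ is perfect, Deligne--Illusie's Lemma~2.9 applies on $X_{\bar s}$. It gives both $H^i(X_{\bar s},\Omega^j\otimes L^{-1})=0$ for $i+j<d$ and $H^i(X_{\bar s},\Omega^j\otimes L)=0$ for $i+j>d$.
\item For each affine $T=\Spec(A)\to S$, the complex $R\iota_{T,*}(\Omega^j_{X_T/T}\otimes L_T^{\pm1})$ is perfect and commutes with arbitrary base change. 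The fiberwise criterion for Tor amplitude then places it in $\mathcal{D}^{\ge d-j}(A)$, respectively $\mathcal{D}^{\le d-j}(A)$.
\end{enumerate}
Your alternative of a purely relative ``descending induction on $n$'' needs the same ingredient. You would have to track Tor amplitude, not just cohomological degree. That is needed both to pass back from $LF_S^*$ of a complex (since $R\iota^{(1)}_*$ on $X^{(1)}=X\times_{S,F_S}S$ is a non-flat base change) and to get the base case from relative Serre vanishing.

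\emph{The duality step.} Your deduction of the $i+j>d$ case by Grothendieck duality does not work from cohomological vanishing alone. For a perfect complex $M\in\mathcal{D}^{\ge j}(A)$, the dual $R\Hom_A(M,A)$ need not lie in $\mathcal{D}^{\le -j}(A)$. For example, take $M=A/f$ with $f$ a nonzerodivisor and $j=0$; its dual is $A/f[-1]$. What you need is that $R\iota_*(\Omega^{d-j}\otimes L^{-1})$ has Tor amplitude $\ge j$, and that again comes from the fiberwise statement. Alternatively, apply Serre duality on each geometric fiber before globalizing, which is what the paper effectively does by taking both vanishings from Deligne--Illusie on the fibers.
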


\begin{proof}
Combining \Cref{decomposition-after-pullback} and \Cref{refinement-splitting-bounded-complex}, there exists an isomorphism
\[
\alpha\colon
\OdR{X/S}
\otimes_{\cO_{X^{(1)}}}^{L}
F_{X/S,*}\Omega_{X/S}^{\bullet}
\xrightarrow{\simeq}
\OdR{X/S}
\otimes_{\cO_{X^{(1)}}}^{L}
\left(
\bigoplus_{i=0}^d
\Omega_{X^{(1)}/S}^i[-i]
\right)
\]
such that, for every \(m\), it induces an isomorphism
\[
\alpha_{\leq m}\colon
\OdR{X/S}
\otimes_{\cO_{X^{(1)}}}^{L}
\tau_{\leq m}F_{X/S,*}\Omega_{X/S}^{\bullet}
\xrightarrow{\simeq}
\OdR{X/S}
\otimes_{\cO_{X^{(1)}}}^{L}
\left(
\bigoplus_{i=0}^m
\Omega_{X^{(1)}/S}^i[-i]
\right).
\]
Moreover, the induced isomorphism
\[
\alpha_m\colon
\OdR{X/S}
\otimes_{\cO_{X^{(1)}}}^{L}
\mcalH^m\bigl(F_{X/S,*}\Omega_{X/S}^{\bullet}\bigr)
\xrightarrow{\simeq}
\OdR{X/S}
\otimes_{\cO_{X^{(1)}}}^{L}
\Omega_{X^{(1)}/S}^m
\]
coincides with $\id\otimes C_{X/S}$,
where
\[
C_{X/S}\colon
\mcalH^m\bigl(F_{X/S,*}\Omega_{X/S}^{\bullet}\bigr)
\xrightarrow{\simeq}
\Omega_{X^{(1)}/S}^m
\]
is the Cartier isomorphism.

By using $\alpha_{\leq m}$, we define a filtered morphism
\[
\beta_{\leq m}\colon
\tau_{\leq m}F_{X/S,*}\Omega_{X/S}^{\bullet}
 \to 
\bigoplus_{i=0}^m
\Omega_{X^{(1)}/S}^i[-i]
\]
as the composition
\[
\begin{aligned}
\tau_{\leq m}F_{X/S,*}\Omega_{X/S}^{\bullet}
&\xrightarrow{\nu_{X/S}^{\sharp}\otimes\id}
\OdR{X/S}
\otimes_{\cO_{X^{(1)}}}^{L}
\tau_{\leq m}F_{X/S,*}\Omega_{X/S}^{\bullet} \\
&\xrightarrow{\alpha_{\leq m}}
\OdR{X/S}
\otimes_{\cO_{X^{(1)}}}^{L}
\left(
\bigoplus_{i=0}^m
\Omega_{X^{(1)}/S}^i[-i]
\right) \\
&\xrightarrow{\varphi\otimes\id}
\bigoplus_{i=0}^m
\Omega_{X^{(1)}/S}^i[-i],
\end{aligned}
\]
where $\varphi \colon \OdR{X/S} \to \cO_{X^{(1)}}$ is a retraction of $F^{\dR}_{X/S}$.
Then the graded part of \(\beta_{\leq m}\) is given by the (\(-m\))-shift of the composition
\[
\begin{aligned}
\mcalH^m\bigl(F_{X/S,*}\Omega_{X/S}^{\bullet}\bigr)
&\xrightarrow{\nu^{\sharp}_{X/S}\otimes\id}
\OdR{X/S}
\otimes_{\cO_{X^{(1)}}}^{L}
\mcalH^m\bigl(F_{X/S,*}\Omega_{X/S}^{\bullet}\bigr) \\
&\xrightarrow{\alpha_m}
\OdR{X/S}
\otimes_{\cO_{X^{(1)}}}^{L}
\Omega_{X^{(1)}/S}^m \\
&\xrightarrow{\varphi\otimes\id}
\Omega_{X^{(1)}/S}^m.
\end{aligned}
\]
Since \(\alpha_m=\id\otimes C_{X/S}\) and \(\varphi\circ \nu^{\sharp}_{X/S} \simeq \id_{\cO_{X^{(1)}}}\), this is the Cartier isomorphism \(C_{X/S}\).
Since the filtration is finite and each graded part is an equivalence, we conclude that \(\beta \defeq \beta_{\leq d}\) is an equivalence, and hence
\[
F_{X/S,*}\Omega_{X/S}^{\bullet}
\simeq
\bigoplus_{i=0}^d
\Omega_{X^{(1)}/S}^i[-i]
\]
holds in \(\mcalD_{\qcoh}(X^{(1)})\).

Finally, assume that \(\iota\) is projective, and let \(L\) be an
\(\iota\)-ample line bundle on \(X\).
For every geometric point
\[
\bar{s}\colon \Spec(\bar{k}) \to  S,
\]
the above decomposition is preserved under base change and gives
\[
F_{X_{\bar{s}}/\bar{k},*}
\Omega_{X_{\bar{s}}/\bar{k}}^{\bullet}
\simeq
\bigoplus_{i=0}^d
\Omega_{X_{\bar{s}}^{(1)}/\bar{k}}^i[-i].
\]
Since \(\bar{k}\) is perfect, the argument of
\cite{DI87}*{Lemma~2.9} applies and yields
\[
H^i\left(
X_{\bar{s}},
\Omega_{X_{\bar{s}}/\bar{k}}^j
\otimes L_{\bar{s}}^{-1}
\right)=0
\quad
(i+j<d) \quad \text{and} \quad H^i\left(
X_{\bar{s}},
\Omega_{X_{\bar{s}}/\bar{k}}^j
\otimes L_{\bar{s}}
\right)=0
\quad
(i+j>d).
\]

Let $T=\Spec(A)\to S$ be any morphism from an affine scheme, and let $\iota_T:X_T\to T$ be the resulting smooth morphism. Writing the pullback of $L$ to $X_T$ as $L_T$, for either
\[
  E_{j,T}^{\pm} \defeq \Omega^j_{X_T/T}\otimes L_T^{\pm1},
\]
the complex $R\iota_{T,*}E_{j,T}^{\pm}$ is perfect over $A$ and its formation commutes with arbitrary base change. The preceding fiberwise vanishing therefore implies, by the fiberwise criterion for the Tor-amplitude of a perfect complex,
\[
  R\iota_{T,*}E_{j,T}^{-}\in D^{\ge d-j}(A),
  \qquad
  R\iota_{T,*}E_{j,T}^{+}\in D^{\le d-j}(A).
\]
Hence
\[
  R^i\iota_{T,*}E_{j,T}^{-}=0 \quad (i+j<d),
  \qquad
  R^i\iota_{T,*}E_{j,T}^{+}=0 \quad (i+j>d).
\]
Since this holds for every affine $T\to S$, the asserted vanishing holds in $\mcalD_{\qcoh}(S)$.
\end{proof}

\begin{corollary}\label{first-chara-HT-split}
Let \(\iota\colon X\to S\) be a smooth representable morphism of fpqc algebraic \(\F_p\)-stacks.
Then $F^{\dR}_{X/S}$ splits  if and only if there is an isomorphism
\[
F_{X/S,*}\Omega_{X/S}^{\bullet}
\simeq
\bigoplus_{i=0}^d
\Omega_{X^{(1)}/S}^i[-i]
\]
in $\mcalD_{\qcoh}(X^{(1)})$.
\end{corollary}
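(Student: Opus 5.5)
The forward implication is exactly the first assertion of \Cref{decomposition-positive}: if \(F^{\dR}_{X/S}\) splits, then a chosen retraction produces the isomorphism \(F_{X/S,*}\Omega^\bullet_{X/S}\simeq\bigoplus_{i=0}^d\Omega^i_{X^{(1)}/S}[-i]\) in \(\mcalD_{\qcoh}(X^{(1)})\). So only the converse needs an argument, and the plan is to extract a retraction directly from a given decomposition.

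Suppose we are given an isomorphism
\[
\gamma\colon F_{X/S,*}\Omega^\bullet_{X/S}\xrightarrow{\sim}\bigoplus_{i=0}^d\Omega^i_{X^{(1)}/S}[-i].
\]
Let \(\mathrm{pr}_0\) denote the projection of the right-hand side onto the summand \(\Omega^0_{X^{(1)}/S}=\cO_{X^{(1)}}\). Set \(\psi\defeq\mathrm{pr}_0\circ\gamma\colon F_{X/S,*}\Omega^\bullet_{X/S}\to\cO_{X^{(1)}}\), and consider \(u\defeq\psi\circ F^{\dR}_{X/S}\in\End(\cO_{X^{(1)}})=\Gamma(X^{(1)},\cO_{X^{(1)}})\). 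It suffices to show that \(u\) is a unit, since then \(u^{-1}\psi\) is a retraction of \(F^{\dR}_{X/S}\).

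To see that \(u\) is a unit, apply \(\mcalH^0\). By \eqref{DegreeZeroCartier}, \(\mcalH^0(F^{\dR}_{X/S})\) is an isomorphism \(\cO_{X^{(1)}}\xrightarrow{\sim}\mcalH^0(F_{X/S,*}\Omega^\bullet_{X/S})\). Next, \(\mcalH^0(\gamma)\) is an isomorphism onto \(\mcalH^0\) of the direct sum. Since the summands with \(i\geq1\) sit in positive degrees, this \(\mcalH^0\) is just \(\cO_{X^{(1)}}\), and \(\mcalH^0(\mathrm{pr}_0)\) is the identity. Hence \(\mcalH^0(u)\) is an automorphism of \(\cO_{X^{(1)}}\). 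Moreover, \(u\) and \(\mcalH^0(u)\) are the same element of \(\End(\cO_{X^{(1)}})\), because both source and target are discrete in degree \(0\). So \(u\) is an automorphism, i.e.\ multiplication by a global unit, as required.

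The only point needing care is the meaning of ``splits'' in \mcalD_{\qcoh}(X^{(1)}) for stacks. Here it means the existence of a retraction \(\varphi\) with \(\varphi\circ F^{\dR}_{X/S}\simeq\id\), which is exactly what the construction above produces, so no ind-splitting subtleties arise. I expect no real obstacle beyond checking the degree bookkeeping in the previous paragraph: that \(\mcalH^0\) of the decomposition is the \(i=0\) summand. This uses only that the \(\Omega^i_{X^{(1)}/S}\) are sheaves placed in degree \(i\geq0\).
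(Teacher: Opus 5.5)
Your proposal is correct and matches the paper's proof: you project the given decomposition onto the degree-zero summand, check on $\mcalH^0$ that composing with $F^{\dR}_{X/S}$ gives an automorphism of $\cO_{X^{(1)}}$, and compose with its inverse to get the retraction. Citing \Cref{decomposition-positive} for the forward direction is also what the paper does.
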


\begin{proof}
The ``only if'' part follows from \Cref{decomposition-positive}.
We prove the ``if'' part.
We consider the morphism
\[
\varphi \colon F_{X/S,*}\Omega_{X/S}^{\bullet}
\xrightarrow{\simeq}
\bigoplus_{i=0}^d
\Omega_{X^{(1)}/S}^i[-i] \xrightarrow{\proj} \cO_{X^{(1)}}.
\]
We note that $\mcalH^0(\varphi)$ is an isomorphism by construction.
Then the composition
\[
\alpha \colon \cO_{X^{(1)}} \xrightarrow{F^{\dR}_{X/S}} F_{X/S,*}\Omega_{X/S}^{\bullet}
\xrightarrow{\varphi}\cO_{X^{(1)}}
\]
induces
\[
\cO_{X^{(1)}} \xrightarrow{\mcalH^0(F^{\dR}_{X/S})} \mcalH^0(F_{X/S,*}\Omega_{X/S}^{\bullet})
\xrightarrow{\mcalH^0(\varphi)}\cO_{X^{(1)}}.
\]
Since $\mcalH^0(F^{\dR}_{X/S})$ is an isomorphism as explained in \Cref{smooth-HT-split-equiv}, the morphism $\alpha$ is an isomorphism too.
Thus, the morphism $\alpha^{-1} \circ \varphi$ gives a splitting of $F^{\dR}_{X/S}$.
By \Cref{smooth-HT-split-equiv}, $X$ is HT-split over $S$, as desired.
\end{proof}

\subsection{Logarithmic HT-splitting for smooth schemes}

\begin{notation}\label{notation:log-dR}
Let $k$ be a perfect field of characteristic $p$. 
Let $X$ be a qcqs smooth $k$-scheme of dimension $d \geq 1$, and let $D$ be a simple normal crossings divisor on $X$ over $k$, possibly with $D=0$.
\begin{itemize}
\item Let $F_{X/k} \colon X \to X' \defeq X \times_{\Spec(k),F} \Spec(k)$ be the relative Frobenius morphism.
Set the Frobenius twist $D' \defeq (X' \to X)^*D$ of \(D\) relative to \(k\).

We define a morphism of complexes
\[
F_{(X,D)/k}^{\dR}\colon
\cO_{X'}
 \to 
F_{X/k,*}\Omega^\bullet_{X/k}(\log D),
\]
whose degree \(0\) component is \(F_{X/k}^{\sharp}\) and whose
positive-degree components are zero. 
\item Let $D=D_1+\cdots+D_n$ be the irreducible decomposition of $D$.
By \cite{barz2025Logarithmic}*{Theorem~5.4}, building on work of Olsson \cites{olsson2003Logarithmic,olsson2018Crystalline}, there exists a smooth representable morphism
\[
\iota \colon X \to (\setA^1_k/\setG_{m,k})^n \eqdef S
\]
such that $\Omega^{\bullet}_{X/S} \simeq \Omega_{X/k}^{\bullet}(\log D)$.
Let
\[
X^{(1)} \defeq X\times_{S,F_S}S
\]
be the relative Frobenius twist, and let
\[
F_{X/S}\colon X \to  X^{(1)}
\]
be the relative Frobenius.
We use the same notation in \Cref{DefdeRhamStack}.
Then we have the natural morphism
\[
\pi \colon X^{(1)} \to X'.
\]
We note that $F_{X/k}=\pi \circ F_{X/S}$.
\end{itemize}  
\end{notation}

\begin{definition} \label{Def-log-HT-split}
We use the notation in \Cref{notation:log-dR}.
We say that $(X,D)$ is \emph{log HT-split} if the morphism
\[
F^{\dR}_{(X,D)/k} \colon \cO_{X'} \to F_{X/k,*}\Omega_{X/k}^{\bullet}(\log D)
\]
splits in $\mcalD_{\qcoh}(X')$.
The equivalence of HT-splitting in \Cref{HTSplitMorphism} and this definition will be proved in \Cref{equivalence-log-relative}.
\end{definition}

\begin{remark}\label{relative-vs-absolute}
We can define the homomorphism $F^{\dR}_{(X,D)}$ by
\[
F^{\dR}_{(X,D)} \colon \cO_X \to F_*\Omega_X^{\bullet}(\log D).
\]
Since \(k\) is perfect and thus $X \simeq X'$, the splitting of $F^{\dR}_{(X,D)}$ is equivalent to the log HT-splitting of $(X,D)$. 

Moreover, \((X, 0)\) is log HT-split if and only if \(X\) itself is HT-split in the sense of \Cref{HTSplitMorphism}.
\end{remark}

We give a description of the derived pushforward of the structure sheaf of the log de Rham stack \((X/S)^{\dR}\) in terms of the log de Rham complex \(\Omega_{X/k}^{\bullet}(\log D)\) in \Cref{pullback-log-dR} below.
To show this, we first describe the structure of the Frobenius twist \(X^{(1)}\) as follows.

Here and below, a coarse moduli space is understood in the standard sense recalled, for example, in the paragraph preceding \cite{cadman2007Using}*{Corollary~2.3.7}: it is universal for morphisms to algebraic spaces and induces a bijection on geometric points modulo isomorphism.

\begin{proposition}\label{root-stack-description}
  We use the notation in \Cref{notation:log-dR}.
  Then the canonical morphism
  \begin{equation*}
    \pi\colon X^{(1)} \to  X'
  \end{equation*}
  induced from the structure morphism \(S\to\Spec(k)\) is the coarse moduli morphism of \(X^{(1)}\).

  Moreover, let \(\bar{x}\to X'\) be a geometric point lying on precisely the components \(D_{i_1}',\ldots,D_{i_r}'\), where \(D_i'\) is the pullback of \(D_i\) to \(X'\).
  Then the fiber of \(X^{(1)}\) over \(\bar{x}\) has a unique isomorphism class of geometric points, and the stabilizer group scheme of any representative is canonically isomorphic to
  \begin{equation*}
    \prod_{a=1}^r\mu_p\cong\mu_p^r.
  \end{equation*}
\end{proposition}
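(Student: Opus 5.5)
We reduce to a local computation and then glue, since both assertions are local on \(X'\) (for the étale topology) and compatible with flat base change.

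\textbf{Step 1: local model.} Étale locally on \(X\) we may choose coordinates so that \(X\to \setA^n_k\times\setA^{d-n}_k\) is étale and \(D_i\) is the pullback of \(\{t_i=0\}\). The morphism \(\iota\colon X\to S=(\setA^1/\setG_m)^n\) is then the composite of the étale map to \(\setA^n\times\setA^{d-n}\) with the projection \(\setA^n\to(\setA^1/\setG_m)^n\). The absolute Frobenius of \(\setA^1/\setG_m\) is induced by \(t\mapsto t^p\) on \(\setA^1\) together with \(\lambda\mapsto\lambda^p\) on \(\setG_m\). Hence
\[
\setA^1\times_{\setA^1/\setG_m,F}\setA^1/\setG_m\simeq[\setA^1/\mu_p],
\]
where \(\mu_p\) acts by scaling. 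This is the \(p\)-th root stack of \(\setA^1\) along \(0\), via \(s\mapsto s^p\). Since Frobenius twist commutes with étale base change (the relative Frobenius of an étale map is an isomorphism), we obtain, étale locally on \(X'\), an identification of \(X^{(1)}\) with the root stack \(\sqrt[p]{D'_1/X'}\times_{X'}\cdots\times_{X'}\sqrt[p]{D'_n/X'}\). Concretely, it is locally the quotient \([\Spec \cO_{X'}[s_1,\dots,s_n]/(s_i^p-t_i)\,/\,\mu_p^n]\). Under this identification \(\pi\) is the canonical root-stack map, because both are induced by the structure morphism \(S\to\Spec k\) composed with the twist.

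\textbf{Step 2: coarse moduli.} For \(V=\Spec B\) with \(B=A[s_1,\dots,s_n]/(s_i^p-t_i)\), the \(\mu_p^n\)-invariants of \(B\) form the degree-\(0\) part of the \((\setZ/p)^n\)-grading, which is \(A\). Since \(\mu_p^n\) is linearly reductive, \([V/\mu_p^n]\to\Spec A\) is a good moduli space, and in fact a coarse moduli space, because the quotient is tame with finite inertia (Keel–Mori/Abramovich–Olsson–Vistoli for tame stacks). Formation of the coarse space commutes with flat, and in particular étale, base change for tame stacks. The local identifications therefore glue, showing that \(\pi\) is the coarse moduli morphism. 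I would cite the standard fact that root stacks along Cartier divisors have the base as coarse space (e.g. \cite{cadman2007Using}). This is valid here even though \(p\) equals the characteristic, since \(\mu_p\) is still linearly reductive; checking that the cited results allow \(\mu_p\) rather than only étale groups is the main point to verify.

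\textbf{Step 3: geometric fibers and stabilizers.} Base change to \(\bar x\), where \(t_{i_1},\dots,t_{i_r}\) vanish and the other \(t_j\) are units. The fiber becomes
\[
\prod_{a}[\Spec \bar k[s]/(s^p)/\mu_p]\times\prod_{j\notin\{i_a\}}[\Spec \bar k[s]/(s^p-t_j)/\mu_p].
\]
For \(j\notin\{i_a\}\), the scheme \(\Spec\bar k[s]/(s^p-t_j)\) is a \(\mu_p\)-torsor over \(\bar k\), hence its quotient is \(\Spec\bar k\). For each \(a\), the scheme \(\Spec\bar k[s]/(s^p)\) has a single point, so \([\Spec\bar k[s]/(s^p)/\mu_p]\) has a unique geometric point \(s=0\), whose stabilizer is all of \(\mu_p\) since the action fixes the origin. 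Taking products gives the stabilizer \(\mu_p^r\), canonically indexed by the components \(D'_{i_a}\). This index is independent of the chosen chart because the \(i\)-th factor comes from the \(i\)-th factor of \(S\).

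The main obstacle is Step 1: identifying the fiber product \(X\times_{S,F_S}S\) with the root stack in a way canonical enough to glue and to match \(\pi\). I would first handle \(n=1\), \(X=\setA^1\) by direct computation of groupoids of line bundles with section, \((L,s)\mapsto(L^{\otimes p},s^{p})\), and then deduce the general case by products and étale base change.
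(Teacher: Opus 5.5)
Your proposal is correct and follows essentially the paper's argument. Both identify \(X^{(1)}\) with the \((p,\ldots,p)\)-multiroot stack of \((X',D')\); the paper cites Barz for this, and you rightly note it is essentially formal from \(F_S\colon(L,s)\mapsto(L^{\otimes p},s^p)\). Both then use the local presentation \([\Spec B/\mu_p^n]\) with \(B=A[z_1,\ldots,z_n]/(z_i^p-f_i)\) and \(B^{\mu_p^n}=A\) to get the coarse space (you via tameness of \(\mu_p^n\)-quotients rather than citing Cadman), and compute the geometric fiber and stabilizer directly.

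Two minor fixes:
\begin{enumerate}
\item Phrase the Step~1 chart with local equations \(D_i'|_U=V(f_i)\), as the paper does. An \'etale chart in which every \(D_i\) is a coordinate hyperplane need not exist, for example if \(n>d\).
\item In Step~3, state that \(\mu_p\)-torsors over an algebraically closed field are trivial before concluding that the geometric point is unique. Alternatively, do as the paper does and use the bijection on geometric points given by the coarse moduli space.
\end{enumerate}
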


\begin{proof}
  By \cite{barz2025Logarithmic}*{Proposition~5.9}, there is a canonical isomorphism
  \begin{equation*}
    X^{(1)}\simeq\sqrt[p]{(X',D_1')}\times_{X'}\cdots\times_{X'}\sqrt[p]{(X',D_n')}\defeq X'\times_{c_{D'},S,([p],\ldots,[p])}S,
  \end{equation*}
  where the right-hand side is the \((p,\ldots,p)\)-multiroot stack of the morphism \(c_{D'}\colon X'\to S\) induced by the simple normal crossings divisor \(D_1',\ldots,D_n'\).
  Under this equivalence, \(\pi\) is identified with the canonical projection from the multiroot stack.

  We first identify its coarse moduli space.
  Let \(U=\Spec(A)\to X'\) be an étale morphism such that \(D_i'|_U=V(f_i)\) for some \(f_i\in A\).
  By the local calculation of root stacks in \cite{cadman2007Using}*{Example~2.4.1}, applied to the \(n\) root constructions, there is an isomorphism
  \begin{equation*}
    X^{(1)}\times_{X'}U\simeq\bracketlr{\Spec(B)/\mu_p^n},
  \end{equation*}
  where
  \begin{equation*}
    B\defeq A[z_1,\ldots,z_n]/(z_1^p-f_1,\ldots,z_n^p-f_n),
  \end{equation*}
  and \(\mu_p^n\) acts by
  \begin{equation*}
    (\lambda_1,\ldots,\lambda_n)\cdot z_i=\lambda_i z_i.
  \end{equation*}
  The evident \((\setZ/p\setZ)^n\)-grading of \(B\), given by \(\deg(z_i)=e_i\), has degree-zero part \(A\), and hence \(B^{\mu_p^n}=A\).  
  Using the product of the quotient presentations of root stacks in \cite{cadman2007Using}*{Proposition~2.3.5 and Corollary~2.3.7}, we can identify the induced morphism \(\bracketlr{\Spec(B)/\mu_p^n}\to U\) with the coarse moduli morphism.  These local identifications are compatible on étale overlaps, so \(\pi\colon X^{(1)}\to X'\) is the coarse moduli morphism.

  We next compute the geometric fiber and its stabilizer.
  Write
  \begin{equation*}
    \bar{x}\colon\Spec(\Omega) \to  X'
  \end{equation*}
  and set \(I\defeq\bracelr{i_1,\ldots,i_r}\).
  Using the preceding quotient presentation, the geometric fiber over \(\bar{x}\) is
  \begin{equation*}
    \bracketlr{\Spec\parenlr{\Omega[z_1,\ldots,z_n]/(z_1^p-f_1(\bar{x}),\ldots,z_n^p-f_n(\bar{x}))}/\mu_{p,\Omega}^n}.
  \end{equation*}
  Since \(\Omega\) is algebraically closed of characteristic \(p\), its Frobenius is bijective, so each equation \(z_i^p=f_i(\bar{x})\) has exactly one solution in \(\Omega\).  Hence the affine scheme in the numerator has a unique \(\Omega\)-point \(y=(\zeta_1,\ldots,\zeta_n)\).
  By the geometric-point condition in the definition of a coarse moduli space, the morphism \(\pi\) induces a bijection
  \begin{equation*}
    X^{(1)}(\Omega)/{\cong}\xrightarrow{\sim}X'(\Omega).
  \end{equation*}
  Hence the inverse image of \(\bar{x}\) consists of a unique isomorphism class.
 
  Finally, \(\zeta_i=0\) if and only if \(i\in I\). The scheme-theoretic stabilizer of \(y\) consists of those \((\lambda_1,\ldots,\lambda_n)\in\mu_{p,\Omega}^n\) satisfying \(\lambda_i\zeta_i=\zeta_i\) for every \(i\).  If \(i\in I\), this condition imposes no restriction on \(\lambda_i\), whereas if \(i\notin I\), then \(\zeta_i\in\Omega^\times\) and the condition forces \(\lambda_i=1\).  Consequently,
  \begin{equation*}
    \operatorname{Stab}(y)\cong\prod_{i\in I}\mu_{p,\Omega}\cong\mu_{p,\Omega}^r,
  \end{equation*}
  which proves the assertion.
\end{proof}

\begin{proposition}
  \label{pullback-log-dR}
  We use \Cref{notation:log-dR}. Then the natural morphism
  \begin{equation*}
    L\pi^*F_{X/k,*}\Omega_{X/k}^{\bullet}(\log D) \to  F_{X/S,*}\Omega_{X/S}^{\bullet}
  \end{equation*}
  is an equivalence in \(\mcalD_{\qcoh}(X^{(1)})\).
\end{proposition}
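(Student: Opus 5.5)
The morphism in question is the adjoint of the identification
\[
F_{X/k,*}\Omega^\bullet_{X/k}(\log D)\simeq R\pi_*F_{X/S,*}\Omega^\bullet_{X/S}.
\]
This identification comes from \(F_{X/k}=\pi\circ F_{X/S}\) together with \(\Omega^\bullet_{X/S}\simeq\Omega^\bullet_{X/k}(\log D)\) from \Cref{notation:log-dR}. My plan is to prove it is an equivalence by comparing canonical filtrations on both sides and reducing to a statement about cohomology sheaves, where the Cartier isomorphisms make everything explicit.

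\textbf{Step 1: the left-hand side.} Apply the log Cartier isomorphism for \((X,D)\) over the perfect field \(k\). It gives
\[
\mcalH^i\bigl(F_{X/k,*}\Omega^\bullet_{X/k}(\log D)\bigr)\simeq\Omega^i_{X'/k}(\log D'),
\]
and these sheaves are locally free. Therefore the canonical filtration \(\tau_{\leq m}\) has graded pieces \(\Omega^i_{X'/k}(\log D')[-i]\) with locally free terms. The morphism \(\pi\) is not flat, since it is a multiroot stack by \Cref{root-stack-description}. Nevertheless, \(L\pi^*\) is exact on triangles and agrees with the underived \(\pi^*\) on locally free sheaves. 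So \(L\pi^*\tau_{\leq\bullet}\) is a finite filtration of \(L\pi^*F_{X/k,*}\Omega^\bullet_{X/k}(\log D)\) with graded pieces \(\pi^*\Omega^i_{X'/k}(\log D')[-i]\).

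\textbf{Step 2: the right-hand side.} The relative Cartier isomorphism for the smooth representable \(X\to S\) (as used in \Cref{decomposition-positive}) gives
\[
\mcalH^i\bigl(F_{X/S,*}\Omega^\bullet_{X/S}\bigr)\simeq\Omega^i_{X^{(1)}/S}.
\]
Since \(X^{(1)}\to S\) is the base change of \(X\to S\) along \(F_S\), we have \(\Omega^1_{X^{(1)}/S}\simeq w^*\Omega^1_{X/S}\), where \(w\colon X^{(1)}\to X\) is the projection. This map \(w\) factors as \(X^{(1)}\xrightarrow{\pi}X'\to X\), so
\[
\Omega^i_{X^{(1)}/S}\simeq\pi^*\Omega^i_{X'/k}(\log D').
\]

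\textbf{Step 3: compatibility.} The comparison map respects the canonical filtrations, because the target's truncations are also flat and the source is built from \(\tau_{\leq m}\). It therefore suffices to show that on each graded piece the induced map
\[
\pi^*\Omega^i_{X'/k}(\log D')\to\Omega^i_{X^{(1)}/S}
\]
is the isomorphism of Step 2, i.e.\ that the two Cartier isomorphisms are compatible under \(\pi^*\). Once this holds, a finite five-lemma induction on \(m\) finishes the proof.

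\textbf{Step 4: local check (main obstacle).} I expect Step 3 to be the main obstacle. I would check it fpqc locally as follows.
\begin{enumerate}
\item Take an étale chart \(U=\Spec(A)\to X'\) coming from coordinates \(t_1,\dots,t_d\) on \(X\) with \(D=V(t_1\cdots t_r)\), so that \(f_i=t_i\otimes1\).
\item Pull back further along the flat cover \(\Spec(B)\to[\Spec(B)/\mu_p^n]\) from the proof of \Cref{root-stack-description}, with \(z_i\mapsto t_i\) under Frobenius.
\item On \(\Spec(B)\), use the Cartier maps on generators. The log Cartier map sends \(\mathrm{dlog}\,t_i'\mapsto[\mathrm{dlog}\,t_i]\) and \(dt_j'\mapsto[t_j^{p-1}dt_j]\). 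The relative Cartier map for \(X/S\) sends \(\mathrm{dlog}\,z_i\mapsto[\mathrm{dlog}\,t_i]\).
\item Conclude by multiplicativity of both Cartier maps together with \(\mathrm{dlog}\,f_i=p\,\mathrm{dlog}\,z_i\)-type bookkeeping. Concretely, \(\pi^*\mathrm{dlog}\,t_i'\) corresponds to \(\mathrm{dlog}\,z_i\) in \(\Omega^1_{X^{(1)}/S}\).
\end{enumerate}
Both Cartier isomorphisms are characterized by the same formulas on these generators, so they agree locally. Because the relevant maps are determined on cohomology sheaves, this local agreement descends, and the natural morphism is an equivalence.
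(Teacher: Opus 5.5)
\textbf{Step 4 does not establish the compatibility it is meant to check.}

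Steps 1--3 are a sound reduction. One correction there: the factorization of $L\pi^*\tau_{\leq m}F_{X/k,*}\Omega^\bullet_{X/k}(\log D)\to F_{X/S,*}\Omega^\bullet_{X/S}$ through $\tau_{\leq m}$ comes from right $t$-exactness of $L\pi^*$, not from flatness of the target's truncations, but the conclusion holds. So everything rests on Step 4, which you rightly flag as the crux.

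The compatibility of the two Cartier isomorphisms is true, but your local computation is set up in the wrong module.
\begin{itemize}
\item The chart $q\colon\Spec(B)\to X^{(1)}$ is a $\mu_p^n$-torsor, so it is flat but not smooth. Consequently $q^*\Omega^1_{X^{(1)}/S}$ is not the module of log differentials of $B$ along the $z_i$.
\item Under the natural map from $q^*\Omega^1_{X^{(1)}/S}$ to those log differentials, the basis element $\pi^*\mathrm{dlog}\,t_i'=w^*\mathrm{dlog}\,t_i$ goes to $\mathrm{dlog}(z_i^p)=p\,\mathrm{dlog}\,z_i=0$. It spans the image of $H^{-1}(L_{\Spec(B)/X^{(1)}})$, which is the conormal contribution of $\mu_p^n$.
\item Meanwhile $\mathrm{dlog}\,z_i$ maps nontrivially to $H^0(L_{\Spec(B)/X^{(1)}})$, so it is not in the image of $q^*\Omega^1_{X^{(1)}/S}$ at all.
\end{itemize}
Hence ``$\pi^*\mathrm{dlog}\,t_i'$ corresponds to $\mathrm{dlog}\,z_i$'' and ``$C^{-1}_{X/S}(\mathrm{dlog}\,z_i)=[\mathrm{dlog}\,t_i]$'' are not statements about the objects involved. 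Your own bookkeeping $\mathrm{dlog}\,f_i=p\,\mathrm{dlog}\,z_i$ contradicts the first.

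There is a second issue: to evaluate $C_{X/S}$ on generators you must know how the relative Cartier isomorphism over the stack $S$ is defined, presumably by descent along the smooth atlas $\setA^n_k\to S$. That is where a correct check lives. On $X\times_S\setA^n_k$, locally $y_i=\lambda_i t_i$ with $\lambda_i$ units. The relative differentials are free on the $\mathrm{dlog}\,\lambda_i$ and the remaining $dt_j$, with $\mathrm{dlog}\,t_i=-\mathrm{dlog}\,\lambda_i$. The classical formula $(\mathrm{dlog}\,\lambda_i)'\mapsto[\mathrm{dlog}\,\lambda_i]$ then gives the compatibility you want.

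\textbf{How the paper avoids this.} The paper never needs to compare Cartier maps. By \Cref{root-stack-description}, $\pi$ is the coarse moduli map of a root stack whose stabilizers $\mu_p^r$ are linearly reductive. Hence $X^{(1)}$ is tame: $\pi_*$ is exact on quasi-coherent sheaves and $\pi_*\cO_{X^{(1)}}\simeq\cO_{X'}$. Write $M=F_{X/S,*}\Omega^\bullet_{X/S}$ and $E_i=\Omega^i_{X'/k}(\log D')$. The projection formula gives $\mcalH^i(R\pi_*M)\simeq\pi_*\pi^*E_i\simeq E_i$. Then $\mcalH^i$ of the counit $L\pi^*R\pi_*M\to M$ is the counit $\pi^*\pi_*\pi^*E_i\to\pi^*E_i$. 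This is an isomorphism by the triangle identity, because the unit $E_i\to\pi_*\pi^*E_i$ is one.

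Only an abstract isomorphism $\mcalH^i(M)\simeq\pi^*E_i$ is used, which is your Step 2. Neither the log Cartier map on $X'$ nor its compatibility with $C_{X/S}$ enters. In your own framework, exactness of $\pi_*$ identifies your graded map $\pi^*\mcalH^m(R\pi_*M)\to\mcalH^m(M)$ with exactly this counit. That makes Step 4 unnecessary.
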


\begin{proof}
  Set \(M\defeq F_{X/S,*}\Omega_{X/S}^{\bullet}\). Since \(F_{X/k}=\pi\circ F_{X/S}\) and \(\Omega_{X/S}^{\bullet}\simeq\Omega_{X/k}^{\bullet}(\log D)\), we have
  \begin{equation*}
    R\pi_*M\simeq F_{X/k,*}\Omega_{X/k}^{\bullet}(\log D).
  \end{equation*}
  It therefore suffices to prove that the adjunction counit
  \begin{equation*}
    \epsilon_M\colon L\pi^*R\pi_*M \to  M
  \end{equation*}
  is an equivalence.

  By \Cref{root-stack-description}, the morphism \(\pi\colon X^{(1)}\to X'\) is the coarse moduli morphism of the multiroot stack obtained by taking the \(p\)-th roots of the irreducible components of \(D'\), and its geometric stabilizers are products of copies of \(\mu_p\). Since \(\mu_p\) is diagonalizable, all these stabilizers are linearly reductive. Hence \(X^{(1)}\) is tame by \cite{abramovich2008Tame}*{Theorem~3.2}, and the (underived) pushforward \(\pi_* \colon \QCoh(X^{(1)}) \to \QCoh(X')\) is exact by \cite{abramovich2008Tame}*{Definition~3.1}. Moreover, the invariant-ring calculation in the proof of \Cref{root-stack-description} gives
  \begin{equation*}
    \pi_*\cO_{X^{(1)}}\simeq\cO_{X'}.
  \end{equation*}

  By the relative Cartier isomorphism and the natural base-change isomorphisms for relative differential forms, we have
  \begin{equation*}
    \mcalH^i(M)\simeq\Omega_{X^{(1)}/S}^i\simeq F_S^*\Omega_{X/S}^i\simeq F_S^*\Omega_{X/k}^i(\log D)\simeq\pi^*\Omega_{X'/k}^i(\log D'),
  \end{equation*}
  where the third isomorphism follows from \Cref{notation:log-dR}.
  Set \(E_i\defeq\Omega_{X'/k}^i(\log D')\). Since \(\pi_*\) is exact, the projection formula gives
  \begin{equation*}
    \mcalH^i(R\pi_*M)\simeq\pi_*\mcalH^i(M)\simeq\pi_*\pi^*E_i\simeq E_i\otimes_{\cO_{X'}}\pi_*\cO_{X^{(1)}}\simeq E_i.
  \end{equation*}
  Since \(E_i\) is locally free, the spectral sequence for derived pullback has no nonzero higher Tor terms, and hence
  \begin{equation*}
    \mcalH^i(L\pi^*R\pi_*M)\simeq\pi^*E_i.
  \end{equation*}
  Under the identifications
  \begin{equation*}
    \mcalH^i(L\pi^*R\pi_*M)\simeq\pi^*E_i\simeq\mcalH^i(M),
  \end{equation*}
  the morphism \(\mcalH^i(\epsilon_M)\) is identified with the counit
  \begin{equation*}
    \pi^*\pi_*\pi^*E_i \to \pi^*E_i.
  \end{equation*}
  Since \(\pi_*\pi^*E_i\simeq E_i\), this is the identity morphism of \(\pi^*E_i\). Thus \(\epsilon_M\) induces an isomorphism on every cohomology sheaf and is therefore an equivalence.
\end{proof}

\begin{proposition}\label{equivalence-log-relative}
We use \Cref{notation:log-dR}.
Then the following conditions are equivalent:
\begin{enumerate}
    \item $X$ is HT-split over $S$.
    \item $(X,D)$ is log HT-split over $k$.
    \item The morphism
    \[
    F^{\dR}_{X/S}\colon
    \cO_{X^{(1)}} \to F_{X/S,*}\Omega^\bullet_{X/S}
    \]
    splits in $\mcalD_{\qcoh}(X^{(1)})$.
\end{enumerate}
\end{proposition}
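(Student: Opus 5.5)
(1)$\Leftrightarrow$(3) is exactly \Cref{smooth-HT-split-equiv}. Here $\iota\colon X\to S=(\setA^1_k/\setG_{m,k})^n$ is smooth representable and both $X$ and $S$ are fpqc algebraic $\F_p$-stacks. Since $X$ is qcqs, ind-splitting in $\mcalD_{\qcoh}(X^{(1)})$ of a map out of the compact object $\cO_{X^{(1)}}$ amounts to splitting. So the real content is (2)$\Leftrightarrow$(3), and I would compare the two maps using $\pi\colon X^{(1)}\to X'$.

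First I record the compatibility of the two Frobenius maps. We have $F_{X/k}=\pi\circ F_{X/S}$ and $F^{\sharp}_{X/k}=F_{X/S}^{\sharp}\circ\pi^\sharp$. Hence under the equivalence of \Cref{pullback-log-dR},
\[
L\pi^*F_{X/k,*}\Omega^\bullet_{X/k}(\log D)\simeq F_{X/S,*}\Omega^\bullet_{X/S},
\]
the pullback $L\pi^*F^{\dR}_{(X,D)/k}$ is identified with $F^{\dR}_{X/S}$. Both are maps out of $\cO_{X^{(1)}}=L\pi^*\cO_{X'}$. To see they agree in $\mcalD_{\qcoh}$, it suffices, as in the proof of \Cref{smooth-HT-split-equiv}, to check that they agree on $\mcalH^0$, because the targets lie in $\mcalD^{\geq 0}$. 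On $\mcalH^0$ both send $1$ to $1$.

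For (2)$\Rightarrow$(3), I apply $L\pi^*$ to a retraction $\varphi$ of $F^{\dR}_{(X,D)/k}$. This gives a retraction of $L\pi^*F^{\dR}_{(X,D)/k}\simeq F^{\dR}_{X/S}$.

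For (3)$\Rightarrow$(2), I apply $R\pi_*$ to a retraction $\psi$ of $F^{\dR}_{X/S}$. By the proof of \Cref{pullback-log-dR}, $R\pi_*F_{X/S,*}\Omega^\bullet_{X/S}\simeq F_{X/k,*}\Omega^\bullet_{X/k}(\log D)$. Moreover $R\pi_*\cO_{X^{(1)}}\simeq\cO_{X'}$, using that $\pi_*$ is exact (tameness) and $\pi_*\cO_{X^{(1)}}=\cO_{X'}$. So $R\pi_*F^{\dR}_{X/S}$ is a map $\cO_{X'}\to F_{X/k,*}\Omega^\bullet_{X/k}(\log D)$, and $R\pi_*\psi$ retracts it.

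The main obstacle is checking that $R\pi_*F^{\dR}_{X/S}$ really is $F^{\dR}_{(X,D)/k}$ under these identifications. I would handle it by the same $\mcalH^0$ rigidity argument: maps from $\cO_{X'}$ into a coconnective complex are determined by the induced map on $\mcalH^0$. Both maps send $1\mapsto1$ in $\mcalH^0=\cO_{X'}$, the latter via the degree-zero Cartier isomorphism. If needed, I would first rescale by a unit as in \Cref{smooth-HT-split-equiv}, and then conclude.
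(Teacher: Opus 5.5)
The gap is in the step you use for (1)$\Rightarrow$(3). Your handling of (2)$\Rightarrow$(3) and (3)$\Rightarrow$(2) via $L\pi^*$, $R\pi_*$ and $\mcalH^0$-rigidity is fine and matches the paper. You are even a bit more careful than the paper about identifying $R\pi_*F^{\dR}_{X/S}$ with $F^{\dR}_{(X,D)/k}$.

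\Cref{smooth-HT-split-equiv} only tells you that $F^{\dR}_{X/S}$ is \emph{ind}-split. You upgrade this to a splitting by asserting that an ind-split map out of the compact object $\cO_{X^{(1)}}$ splits. That principle is false, because compactness of the source does not produce a retraction. For example, by Lazard's theorem $\Z_{(p)}\to\Z_p$ is a filtered colimit of maps $\Z_{(p)}\to F_i$ into finite free modules. Each of these is split, since the image of $1$ in $F_i$ cannot lie in $pF_i$. Yet $\Z_{(p)}\to\Z_p$ does not split: $\Z_p/\Z_{(p)}$ is a nonzero $\mathbb{Q}$-vector space, while $\Z_p$ has no nonzero divisible submodule.

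What you need is compactness of the target, equivalently of the cofiber. Suppose $f\colon M\to N=\colim N_i$ with each $M\to N_i$ split, and $C=\mathrm{cofib}(f)$ is compact. Then $\id_C$ factors through some $\mathrm{cofib}(M\to N_i)$, whose boundary map to $M[1]$ is zero. Hence the boundary map $C\to M[1]$ is zero, and $f$ splits.

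The repair is the paper's route: do not upgrade on the stack $X^{(1)}$. Instead, apply $R\pi_*$ to the ind-split map $F^{\dR}_{X/S}$. Since $\pi$ is the coarse moduli map of a tame stack, $R\pi_*$ preserves split maps and filtered colimits. So $R\pi_*F^{\dR}_{X/S}\simeq F^{\dR}_{(X,D)/k}$ is ind-split on the qcqs scheme $X'$. There its target $F_{X/k,*}\Omega^\bullet_{X/k}(\log D)$ is perfect, hence compact, so the map splits. This gives (1)$\Rightarrow$(2), and your (2)$\Rightarrow$(3)$\Rightarrow$(1) closes the cycle.

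Alternatively, you can keep your ordering, but then you must prove that the perfect complex $F_{X/S,*}\Omega^\bullet_{X/S}$ is compact in $\mcalD_{\qcoh}(X^{(1)})$. This again follows from tameness of $X^{(1)}$. Note that the qcqs property of $X$ is not the relevant input, since the object in question lives on $X^{(1)}$.
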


\begin{proof}
We first prove $(1)\Rightarrow(2)$.
By \Cref{smooth-HT-split-equiv}, the HT-splitting of $X$ over $S$
implies that
\[
F^{\dR}_{X/S}\colon
\cO_{X^{(1)}} \to F_{X/S,*}\Omega^\bullet_{X/S}
\]
is ind-split.
By \Cref{root-stack-description} and  \Cref{pullback-log-dR}, we have
\[
R\pi_*F^{\dR}_{X/S}
\simeq
F^{\dR}_{(X,D)}\colon
\cO_{X'}\to
F_{X/k,*}\Omega^\bullet_{X/k}(\log D).
\]
Thus $F^{\dR}_{(X,D)}$ is ind-split.

Since $X'$ is a qcqs scheme and
$F_{X/k,*}\Omega^\bullet_{X/k}(\log D)$ is perfect over $X'$, it is compact.
Therefore $F^{\dR}_{(X,D)}$ splits, and hence $(X,D)$ is log HT-split.

Next, we prove $(2)\Rightarrow(3)$.
By \Cref{pullback-log-dR}, there is a natural equivalence
\[
L\pi^*
F_{X/k,*}\Omega^\bullet_{X/k}(\log D)
\simeq
F_{X/S,*}\Omega^\bullet_{X/S},
\]
under which
\[
L\pi^*F^{\dR}_{(X,D)}
\simeq
F^{\dR}_{X/S}.
\]
Hence a splitting of $F^{\dR}_{(X,D)}$ pulls back to a splitting of
$F^{\dR}_{X/S}$.

Finally, $(3)\Rightarrow(1)$ follows immediately from
\Cref{smooth-HT-split-equiv}, since every split morphism is
ind-split.
\end{proof}

\begin{theorem}\label{criterion-decomp-log}
We use \Cref{notation:log-dR}.
Then the following conditions are equivalent:
\begin{enumerate}
\item The morphism $F^{\dR}_{(X,D)}$ splits in $\mcalD_{\qcoh}(X')$.
\item We have an isomorphism
\[
F_{X/k,*}\Omega_{X/k}^{\bullet}(\log D) \simeq \bigoplus_{i=0}^d \Omega_{X'/k}^i(\log D')[-i]
\]
in $\mcalD_{\qcoh}(X')$.
\item We have an isomorphism
\[
\tau_{\leq d-1}F_{X/k,*}\Omega_{X/k}^{\bullet}(\log D) \simeq \bigoplus^{d-1}_{i=0} \Omega_{X'/k}^i(\log D')[-i].
\]
\end{enumerate}
\end{theorem}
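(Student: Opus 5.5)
The plan is to prove the cycle of implications \((1)\Rightarrow(2)\Rightarrow(3)\Rightarrow(1)\). The first two implications come almost directly from earlier results. The real content is \((3)\Rightarrow(1)\), where we must produce a splitting from a decomposition of only the truncation \(\tau_{\leq d-1}\).

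For \((1)\Rightarrow(2)\): by \Cref{equivalence-log-relative}, condition (1) means \(F^{\dR}_{X/S}\) splits in \(\mcalD_{\qcoh}(X^{(1)})\). Then \Cref{decomposition-positive} applies to the smooth representable morphism \(\iota\colon X\to S\), giving
\[
F_{X/S,*}\Omega^\bullet_{X/S}\simeq\bigoplus_{i=0}^d\Omega^i_{X^{(1)}/S}[-i].
\]
Apply \(R\pi_*\) to this. By \Cref{pullback-log-dR} and \(F_{X/k}=\pi\circ F_{X/S}\), the left side becomes \(F_{X/k,*}\Omega^\bullet_{X/k}(\log D)\). On the right side, the proof of \Cref{pullback-log-dR} gives \(\Omega^i_{X^{(1)}/S}\simeq\pi^*\Omega^i_{X'/k}(\log D')\). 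Since \(\pi_*\) is exact (tameness), the projection formula and \(\pi_*\cO_{X^{(1)}}=\cO_{X'}\) yield \(R\pi_*\pi^*E_i\simeq E_i\), which gives (2).

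Alternatively, one can apply \(R\pi_*\) directly to the retraction of \(F^{\dR}_{X/S}\) and rerun the argument of \Cref{decomposition-positive} on \(X'\). The first route is cleaner.

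\((2)\Rightarrow(3)\) is immediate: apply \(\tau_{\leq d-1}\) to the isomorphism in (2). The right side truncates to \(\bigoplus_{i\le d-1}\Omega^i_{X'/k}(\log D')[-i]\) because the summands are sheaves placed in degree \(i\).

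For \((3)\Rightarrow(1)\), write \(K\defeq F_{X/k,*}\Omega^\bullet_{X/k}(\log D)\). Since \(d\geq1\), composing the given isomorphism with the projection onto the \(i=0\) summand gives
\[
\psi\colon\tau_{\leq d-1}K\to\cO_{X'},
\]
and \(\mcalH^0(\psi)\) is an isomorphism. As in \Cref{first-chara-HT-split}, the map \(F^{\dR}_{(X,D)}\colon\cO_{X'}\to K\) factors through \(\tau_{\leq d-1}K\), because \(\cO_{X'}\) sits in degree \(0\le d-1\). The composite \(\cO_{X'}\to\tau_{\leq d-1}K\to\cO_{X'}\) is an automorphism, since it is one on \(\mcalH^0\) by the degree-zero Cartier isomorphism. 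The remaining task is to extend \(\psi\) along \(\tau_{\leq d-1}K\to K\).

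The obstruction to this extension lies in
\[
\Hom\bigl(\mcalH^d(K)[-d-1],\cO_{X'}\bigr)=\Ext^{d+1}\bigl(\Omega^d_{X'/k}(\log D'),\cO_{X'}\bigr)=H^{d+1}\bigl(X',\omega_{X'}(D')^{-1}\bigr),
\]
which follows from the triangle \(\tau_{\leq d-1}K\to K\to\mcalH^d(K)[-d]\). This group vanishes because \(X'\) is a \(d\)-dimensional quasi-compact separated (qcqs) scheme, so its Zariski cohomological dimension is at most \(d\), and \(\Omega^d(\log D')\) is a line bundle. Hence \(\psi\) extends to \(\tilde\psi\colon K\to\cO_{X'}\), and \((\tilde\psi\circ F^{\dR}_{(X,D)})^{-1}\circ\tilde\psi\) is the desired splitting.

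The main obstacle is exactly this extension step. It relies on the vanishing of \(H^{d+1}\) on a possibly non-separated qcqs scheme. If \(X'\) is only qcqs, the bound on cohomological dimension must be justified via Grothendieck vanishing for Noetherian spaces of dimension \(d\). That bound is fine since \(X'\) is of finite type over \(k\).
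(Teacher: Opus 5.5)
Your proof is correct and follows the paper's argument essentially step for step. The paper proves $(1)\Rightarrow(2)$ via \Cref{equivalence-log-relative}, the decomposition on $X^{(1)}$, and $R\pi_*$ together with \Cref{pullback-log-dR}; it proves $(2)\Rightarrow(3)$ by truncation; and it proves $(3)\Rightarrow(1)$ by splitting the truncated map as in \Cref{first-chara-HT-split} and lifting along $\tau_{\leq d-1}K\to K$, using the vanishing of $\Ext^{d+1}(\Omega^d_{X'/k}(\log D'),\cO_{X'})=H^{d+1}(X',\omega_{X'}(D')^{-1})$. Your appeal to Grothendieck vanishing on the $d$-dimensional Noetherian space $X'$ is the right justification for that last vanishing; the phrase ``quasi-compact separated (qcqs)'' is a slip, which you correct yourself.
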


\begin{proof}
First, we prove (1) $\Rightarrow$ (2).
It follows from \Cref{equivalence-log-relative} that $F^{\dR}_{X/S}$ splits. 
By \Cref{first-chara-HT-split}, we have the decomposition
\[
F_{X/S,*}\Omega^{\bullet}_{X/S} \simeq \bigoplus_{i=0}^d \Omega^i_{X^{(1)}/S}[-i].
\]
Taking $R\pi_*$, we have an isomorphism
\[
F_{X/k,*}\Omega_{X/k}^{\bullet}(\log D) \simeq \bigoplus_{i=0}^d \Omega_{X'/k}^i(\log D')[-i]
\]
in $\mcalD_{\qcoh}(X')$ by \Cref{pullback-log-dR}, as desired.

The implication (2) $\Rightarrow$ (3) follows from taking $\tau_{\leq d-1}$.

Finally, we prove the implication (3) $\Rightarrow$ (1).
By the proof of \Cref{first-chara-HT-split}, the morphism 
\[
F^{\dR}_{(X,D),\leq d-1} \colon \cO_{X'} \to \tau_{\leq d-1} F_{X/k,*}\Omega^{\bullet}_{X/k}(\log D)
\]
splits in $\mcalD_{\qcoh}(X')$.
We consider the fiber sequence
\[
\tau_{\leq d-1} F_{X/k,*}\Omega^{\bullet}_{X/k}(\log D) \to  F_{X/k,*}\Omega^{\bullet}_{X/k}(\log D) \to \Omega_{X'/k}^d(\log D')[-d].
\]
Taking $\Hom_{\cO_{X'}}(-,\cO_{X'})$, we obtain the surjection
\[
\Hom_{\cO_{X'}}(F_{X/k,*}\Omega^{\bullet}_{X/k}(\log D),\cO_{X'}) \to \Hom_{\cO_{X'}}(\tau_{\leq d-1}F_{X/k,*}\Omega^{\bullet}_{X/k}(\log D),\cO_{X'})
\]
since
\[
\Ext^1(\Omega_{X'/k}^d(\log D')[-d],\cO_{X'}) \simeq H^{d+1}(X',\cHom(\Omega_{X'/k}^d(\log D'),\cO_{X'}))=0.
\]
Therefore, the lift of the splitting map of $F^{\dR}_{(X,D),\leq d-1}$ defines a splitting map of $F^{\dR}_{(X,D)}$, as desired.
\end{proof}

\begin{corollary}\label{vanishing}
We use \Cref{notation:log-dR}.
We assume $X$ is projective over $k$ and $F^{\dR}_{(X,D)}$ splits in $\mcalD_{\qcoh}(X')$.
Then  for every ample line bundle \(L\) on \(X\), we have
\[
H^i(X,\Omega^j_{X/k}(\log D)\otimes L^{-1})=H^i(X,\Omega^j_{X/k}(\log D)(-D)\otimes L^{-1})=0 \qquad
(i+j<d) 
\]
and
\[
H^i(X,\Omega^j_{X/k}(\log D)\otimes L)=H^i(X,\Omega^j_{X/k}(\log D)(-D)\otimes L)=0
\qquad
(i+j>d).
\]
Furthermore, the logarithmic Hodge-to-de Rham spectral sequence is $E_1$-degenerate.
\end{corollary}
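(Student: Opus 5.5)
By \Cref{criterion-decomp-log}, the splitting of \(F^{\dR}_{(X,D)}\) gives a decomposition
\[
F_{X/k,*}\Omega^\bullet_{X/k}(\log D)\simeq\bigoplus_{i=0}^d\Omega^i_{X'/k}(\log D')[-i]
\]
in \(\mcalD_{\qcoh}(X')\). All three conclusions will come from this by the argument of \cite{DI87}*{Lemma~2.9}.

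\textbf{Vanishing with \(L^{-1}\).} Let \(L'\) be the pullback of \(L\) to \(X'\); it is ample, and \(F_{X/k}^*L'\simeq L^{p}\). Tensor the decomposition with \(L'^{-1}\) and use the projection formula. This gives
\[
H^n\bigl(X,\Omega^\bullet_{X/k}(\log D)\otimes L^{-p}\bigr)\simeq\bigoplus_{i}H^{n-i}\bigl(X',\Omega^i_{X'/k}(\log D')\otimes L'^{-1}\bigr),
\]
where the left side uses the twisted complex whose differential is the Gauss--Manin-type connection on \(L^{-p}\). Since \(k\) is perfect, \(X'\simeq X\) as abstract schemes, with \(L'\) corresponding to \(L\). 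Hence \(h^{b}(\Omega^a(\log D)\otimes L^{-1})\le h^{a+b}(\Omega^\bullet(\log D)\otimes L^{-p})\).

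The hypercohomology spectral sequence bounds the right-hand side by \(\sum_{a+b=n}h^{b}(\Omega^a(\log D)\otimes L^{-p})\). Iterating gives, for \(n=i+j\),
\[
\sum_{a+b=n}h^b(\Omega^a(\log D)\otimes L^{-1})\le\sum_{a+b=n}h^b(\Omega^a(\log D)\otimes L^{-p^e})
\]
for all \(e\). For \(n<d\), Serre duality gives \(H^b(\Omega^a(\log D)\otimes L^{-p^e})\simeq H^{d-b}(\Omega^{d-a}(\log D)(-D)\otimes L^{p^e})^\vee\), using \(\Omega^a(\log D)^\vee\otimes\omega_X\simeq\Omega^{d-a}(\log D)(-D)\). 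Since \(d-b>0\), Serre vanishing kills this for \(e\gg0\).

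\textbf{Vanishing with \(L\).} The \(i+j>d\) statements for \(\Omega^j(\log D)(-D)\otimes L\) are Serre dual to the \(L^{-1}\) statements for \(\Omega^{d-j}(\log D)\). For the other two families one needs the analogous decomposition for the complex \(\Omega^\bullet(\log D)(-D)\). I would obtain it by Grothendieck duality for the finite flat \(F_{X/k}\): the dual of \(F_*\Omega^\bullet(\log D)\) is \(F_*\Omega^\bullet(\log D)(-D)[d]\), the log Cartier pairing being perfect. Dualizing the decomposition then yields
\[
F_*\Omega^\bullet(\log D)(-D)\simeq\bigoplus_i\Omega^i(\log D')(-D')[-i].
\]
Running the same Deligne--Illusie argument, with \(L^{\pm1}\) and Serre vanishing for large powers, gives the remaining cases. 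This duality step needs the most care: one must check that the dual of the log de Rham complex under Frobenius pushforward is indeed the \((-D)\) complex. This holds by the standard log Cartier duality (Katz/Kato).

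\textbf{\(E_1\)-degeneration.} Take hypercohomology of the decomposition. The left side is \(H^n_{\dR}(X,\log D)\), and the right side is \(\bigoplus_{i+j=n}H^j(X',\Omega^i(\log D'))\). Since \(k\) is perfect, \(\dim_k H^j(X',\Omega^i(\log D'))=h^j(X,\Omega^i(\log D))\). Equality of dimensions forces the Hodge-to-de Rham spectral sequence to degenerate at \(E_1\), as all cohomologies are finite-dimensional because \(X\) is projective.
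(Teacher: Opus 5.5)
Your proposal is correct and follows the paper's route: the paper deduces the corollary directly from the decomposition in \Cref{criterion-decomp-log} via the argument of \cite{DI87}*{Lemma~2.9}, exactly as you do, including the dimension count for $E_1$-degeneration. The only deviation is your Grothendieck-duality detour through the $(-D)$ complex, which is correct but not needed: running the same iteration with $L$ in place of $L^{-1}$ on the original decomposition already gives $H^i(X,\Omega^j_{X/k}(\log D)\otimes L)=0$ for $i+j>d$ (every term $H^b(\Omega^a(\log D)\otimes L^{p^e})$ with $a+b>d$ has $b\geq 1$ and dies by Serre vanishing), and Serre duality then yields the $\Omega^j_{X/k}(\log D)(-D)\otimes L^{-1}$ family.
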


\begin{proof}
It follows from the condition (2) in \Cref{criterion-decomp-log}.
\end{proof}

\section{Criteria and permanence properties for HT-splitting} \label{SectionDescent}
In this section, we study criteria and permanence properties for HT-splitting of schemes and pairs.
We recall the definition of HT-splitting for schemes and simple normal crossings pairs (\Cref{Def-log-HT-split} and \Cref{relative-vs-absolute}):
Let $X$ be a $\F_p$-scheme.
We set $\OdR{X} \defeq R\nu_{X,*}\cO_{X^{\dR}}$.
\begin{itemize}
\item We say that $X$ is \emph{HT-split} if the natural homomorphism 
\[
\nu_X^{\sharp} \colon \cO_X \to \OdR{X}
\]
is ind-split in $\mcalD_{\qcoh}(X)$.
\item
We assume that $X$ is a qcqs smooth scheme over a perfect field and $D$ is a simple normal crossings divisor on $X$.
Then we say that $(X,D)$ is log HT-split if the homomorphism
\[
F^{\dR}_{(X,D)} \colon \cO_X \to F_*\Omega_X^{\bullet}(\log D)
\]
splits in $\mcalD_{\qcoh}(X)$.
\end{itemize}

\subsection{Basic criteria and permanence properties}

\begin{proposition}\label{chara-injectivity}
Let $k$ be a perfect field of characteristic $p>0$, and let
$X$ be a smooth proper $k$-scheme of dimension $d$.
Let $D$ be a simple normal crossings divisor on $X$.
Then the following conditions are equivalent:
\begin{enumerate}
\item The pair $(X,D)$ is log HT-split.
\item The homomorphism
\[
H^d(X,\omega_X)
\xrightarrow{
  F^{\dR}_{(X,D)}\otimes\omega_X
}
H^d\left(
X,
F_*\Omega^\bullet_X(\log D)\otimes\omega_X
\right)
\]
is injective.
\end{enumerate}
\end{proposition}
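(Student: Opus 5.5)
The plan is to use Grothendieck--Serre duality on the proper smooth \(k\)-scheme \(X\) to turn the question of whether \(F^{\dR}_{(X,D)}\) splits into a question about the map it induces on top cohomology after twisting by \(\omega_X\). Write \(M\defeq F_*\Omega^\bullet_X(\log D)\), which is a perfect complex on \(X\); here \(F\) is finite, so \(F_*\) of the bounded complex of locally free sheaves is perfect. The morphism \(F^{\dR}_{(X,D)}\colon\cO_X\to M\) splits in \(\mcalD_{\qcoh}(X)\) if and only if the induced map
\[
\Hom(M,\cO_X)\longrightarrow\Hom(\cO_X,\cO_X)=H^0(X,\cO_X)
\]
hits \(1\). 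In fact it is enough to show that this map is surjective.

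The key step is to rewrite this map using Serre duality. Since \(M\) is perfect, \(\Hom(M,\cO_X)\simeq H^0(X,M^\vee)=H^0(X,M^\vee\otimes\omega_X\otimes\omega_X^{-1})\). By Serre duality on \(X\), there is a perfect pairing between \(\mathrm{Ext}^0(M,\cO_X)\) and \(H^d(X,M\otimes\omega_X)\), and likewise between \(H^0(X,\cO_X)\) and \(H^d(X,\omega_X)\), functorial in the perfect complex. Concretely, \(\Hom(M,\cO_X)\simeq \Hom(M\otimes\omega_X,\omega_X)\), and duality gives \(R\Hom(N,\omega_X[d])\simeq R\Gamma(X,N)^\vee\) for perfect \(N\). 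Taking \(H^0\) of this with \(N=M\otimes\omega_X\) or \(N=\omega_X\), shifted by \(d\), identifies our map with the \(k\)-linear dual of
\[
H^d(X,\omega_X)\longrightarrow H^d(X,M\otimes\omega_X).
\]
There is one subtlety here. We need \(H^0\) of the dual of \(R\Gamma(X, M\otimes\omega_X)\), which is the dual of \(H^d(X,M\otimes\omega_X)\) placed in the right degree. This holds because duality of \(k\)-vector spaces is exact, so the dual of a cohomology group is the cohomology of the dual complex, and the whole computation takes place over a field.

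A linear map between finite-dimensional \(k\)-vector spaces is surjective if and only if its dual is injective; finiteness holds because \(X\) is proper and \(M\otimes\omega_X\) is perfect. So surjectivity of \(\Hom(M,\cO_X)\to H^0(X,\cO_X)\) is equivalent to condition (2). If the map is surjective, some \(\varphi\) maps to \(1\), and this \(\varphi\) is a splitting, which proves (2) \(\Rightarrow\) (1). Conversely, a splitting makes the map surjective, because \(H^0(X,\cO_X)\)-multiples of a splitting give preimages of all elements. Alternatively, one can argue directly: a splitting of \(F^{\dR}_{(X,D)}\) makes the \(H^d\) map split injective after tensoring with \(\omega_X\). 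This proves (1) \(\Rightarrow\) (2).

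The main obstacle is checking that the naturality of Serre duality really identifies the map induced by precomposition with \(F^{\dR}_{(X,D)}\) with the dual of the map \(F^{\dR}_{(X,D)}\otimes\omega_X\) on \(H^d\). This should follow from the functoriality of the duality isomorphism \(R\Hom(N,\omega_X[d])\simeq R\Gamma(X,N)^\vee\) in the perfect complex \(N\). We also use that \(\mathcal{H}om(M,\cO_X)\otimes\omega_X\simeq\mathcal{H}om(M\otimes\omega_X^{-1},\cO_X)\) is compatible with the maps.
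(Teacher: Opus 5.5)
Your proof is correct. It follows a different route from the paper only in the sense that the paper gives no argument here: it invokes the general splitting criterion \cite{ishizuka2026Localglobal}*{Proposition~3.6}. You instead give a self-contained Grothendieck--Serre duality proof specialized to $M=F_*\Omega^\bullet_X(\log D)$.

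Your argument makes the mechanism explicit. Precomposition with $F^{\dR}_{(X,D)}$ on $\Hom(M,\cO_X)\simeq\Hom(M\otimes\omega_X,\omega_X)$ is, by naturality of the duality pairing in the perfect complex, the $k$-dual of $H^d(X,\omega_X)\to H^d(X,M\otimes\omega_X)$. The step ``$1$ in the image $\Leftrightarrow$ surjective'' is justified, as you say, by $H^0(X,\cO_X)$-linearity of that map under post-composition. This version relies on $M$ being perfect, so that the relevant notion is an honest splitting. The cited criterion is presumably formulated so as not to need this, and it packages the same duality reasoning once for the several uses in the paper (\Cref{Fano-chara}, \Cref{chara-CY}, and the fibration criterion).

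Tighten two small points. First, perfectness of $M$ does not follow from $F$ merely being finite. Use instead that $F$ is finite flat on the regular scheme $X$ (Kunz), so each $F_*\Omega^i_X(\log D)$ is locally free and the differentials become $\cO_X$-linear after $F_*$. Alternatively, use that every bounded coherent complex on the regular scheme $X$ is perfect. Second, using $\omega_X[d]$ as the dualizing complex requires $X$ to be equidimensional of dimension $d$, which is implicit in the statement. Finite-dimensionality is not actually needed at the end, since over a field a linear map is injective if and only if its dual is surjective.
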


\begin{proof}
It follows from \cite{ishizuka2026Localglobal}*{Proposition~3.6}.
\end{proof}

\begin{proposition}\label{finite-cover}
Let $f \colon Y \to X$ be a qcqs morphism of qcqs $\F_p$-schemes.
\begin{enumerate}
\item If $Y$ is HT-split and $\cO_X \to Rf_*\cO_Y$ splits, then $X$ is HT-split.
\item If $X$ is HT-split and $f$ is \'etale, then $Y$ is HT-split.
\end{enumerate}
\end{proposition}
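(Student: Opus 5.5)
For both parts we use the definition at the start of \Cref{SectionDescent}: $X$ is HT-split if $\nu_X^{\sharp}\colon \cO_X\to\OdR{X}=R\nu_{X,*}\cO_{X^{\dR}}$ is ind-split in $\mcalD_{\qcoh}(X)$. The plan is to exploit the functoriality of the de Rham stack. The morphism $f$ induces $f^{\dR}\colon Y^{\dR}\to X^{\dR}$ with $\nu_X\circ f^{\dR}=f\circ\nu_Y$, since $\nu$ is given by precomposition with $\res$ and is natural in the stack. Pushing forward $\cO_{Y^{\dR}}$ along both factorizations of $Y^{\dR}\to X$ yields a commutative square in $\mcalD_{\qcoh}(X)$:
\[
\begin{tikzcd}
\cO_X \arrow[r,"\nu_X^\sharp"] \arrow[d,"f^\sharp"'] & \OdR{X} \arrow[d] \\
Rf_*\cO_Y \arrow[r,"Rf_*\nu_Y^\sharp"'] & Rf_*\OdR{Y}.
\end{tikzcd}
\]
Here the right vertical arrow is $R\nu_{X,*}$ applied to $\cO_{X^{\dR}}\to Rf^{\dR}_*\cO_{Y^{\dR}}$.

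For (1), assume $\nu_Y^\sharp$ is ind-split. Then $Rf_*\nu_Y^\sharp$ is ind-split, because $Rf_*$ is an exact functor preserving filtered colimits ($f$ is qcqs), and ind-splitness is preserved by such functors. Composing the lower path with the given retraction of $f^\sharp$, we get $\cO_X\to Rf_*\OdR Y$. Composing further with a retraction of $f^\sharp$ shows that $\cO_X\to Rf_*\cO_Y\to Rf_*\OdR Y$ is ind-split. By commutativity of the square, this composite factors through $\nu_X^\sharp$. If $g=h\circ\nu_X^\sharp$ is ind-split, then $\nu_X^\sharp$ is ind-split: an ind-retraction of $g$ precomposed with $h$ gives an ind-retraction of $\nu_X^\sharp$. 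The only care needed here is to formulate ind-splitness concretely (for instance, as the map $\Hom(\mathrm{target},-)\to\Hom(\cO_X,-)$ hitting the identity after passing to a pro-system, or as splitting in $\mathrm{Ind}$). With that formulation, both closure properties above are formal.

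For (2), the key input is étale base change for the de Rham stack: for $f$ étale, the square formed by $Y^{\dR}\to X^{\dR}$ and $\nu_Y,\nu_X$ is Cartesian, i.e. $Y^{\dR}\simeq X^{\dR}\times_X Y$. This holds on points: for $R$ an $\F_p$-algebra, $W(R)/^Lp\to R$ is a nilpotent thickening. Indeed, its kernel is generated by $V$-type elements, which is nilpotent in $\pi_0$ mod $p$ because $V(a)^p=p^{p-1}V(a^p)$, and it is a square-zero extension up to derived issues. By the infinitesimal lifting property of étale maps, $\Map(\Spec W(R)/^Lp,Y)\simeq \Map(\Spec W(R)/^Lp,X)\times_{X(R)}Y(R)$. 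Since $\nu_X$ is representable-enough base change, we then apply flat base change, or more carefully the base change formula for $\nu_X$ along the flat map $f$. Over a smooth cover of $X$, the map $\nu_X$ is a gerbe-like morphism, so the base change holds by \Cref{FlatBaseChange}-style arguments; for general $X$ one uses affine descent instead. This gives $f^*\OdR X\simeq\OdR Y$ compatibly with $\nu^\sharp$. Since $f^*$ preserves colimits and is exact, it carries ind-splittings to ind-splittings, so $\nu_Y^\sharp\simeq f^*\nu_X^\sharp$ is ind-split.

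The main obstacle is (2). There are two issues: justifying the derived étale lifting for the non-discrete ring $W(R)/^Lp$ (handled by étaleness being detected on $\pi_0$, topological invariance of the étale site), and justifying base change of $R\nu_{X,*}$ along an étale map for arbitrary qcqs $X$. For the latter I would reduce to affines, where $\nu$ is computed by Witt-vector/animated descriptions commuting with étale localization, e.g. via $W(B)\simeq W(A)\otimes_{W(A)}\!W(B)$ being étale over $W(A)$ by van der Kallen's theorem.
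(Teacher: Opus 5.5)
Your proposal is essentially the paper's proof. For (1) the paper uses exactly your commutative square together with the fact that $Rf_*$ preserves ind-split maps because $f$ is qcqs. For (2) it takes the Cartesian square $Y^{\dR}\simeq X^{\dR}\times_X Y$ from \cite{barz2025Logarithmic}*{Proposition~2.38} (which you instead rederive from the nilpotent thickening $W(R)/^Lp\to R$ and \'etale lifting) and concludes via $\nu_Y^{\sharp}\simeq Lf^*\nu_X^{\sharp}$. The paper passes from that square to $Lf^*\OdR{X}\simeq\OdR{Y}$ without comment, so your discussion of base change for $R\nu_{X,*}$ goes beyond what it writes; as stated, though, it is only a sketch outside the case where $X$ is smooth and $\nu_X$ is a $T^\sharp$-gerbe.
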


\begin{proof}
The assertion (1) follows from the commutative diagram
\[
\begin{tikzcd}
\cO_X \arrow[r,"\nu_X^{\sharp}"] \arrow[d,"f^{\sharp}"] & \OdR{X} \arrow[d] \\
Rf_*\cO_Y \arrow[r,"Rf_*\nu_Y^{\sharp}"] & Rf_*\OdR{Y}
\end{tikzcd}
\]
and $Rf_*\nu_Y^{\sharp}$ is ind-split since $f$ is qcqs.

We prove (2).
Since $f$ is \'etale, we have
\[
Y^{\dR} \simeq X^{\dR} \times_X Y
\]
by \cite{barz2025Logarithmic}*{Proposition~2.38}.
Therefore, we have
\[
\nu_Y^{\sharp} \simeq Lf^*\nu_X^{\sharp}.
\]
Thus, if $\nu_X^{\sharp}$ splits, then so does $\nu_Y^{\sharp}$, as desired.
\end{proof}

\begin{proposition}\label{W_2-lift-to-HT-split}
Let $k$ be a perfect field and $X$ a qcqs smooth $k$-scheme.
Let $D$ be a simple normal crossings divisor on $X$.
If $(X,D)$ is $W_2$-liftable and $p \geq \dim X$, then $(X,D)$ is log HT-split.
\end{proposition}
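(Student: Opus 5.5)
The plan is to reduce to the truncated criterion (3) in \Cref{criterion-decomp-log}. Under the hypotheses, the logarithmic Deligne--Illusie theorem (\cite{DI87}, in the logarithmic form \cite{Kat89}*{Theorem~(4.12)}) gives the truncated decomposition we need. Concretely, a lifting of \((X,D)\) over \(W_2(k)\), together with the lifting of Frobenius of \(W_2(k)\), gives a quasi-isomorphism
\[
\tau_{\leq p-1}F_{X/k,*}\Omega^\bullet_{X/k}(\log D)\simeq\bigoplus_{i=0}^{p-1}\Omega^i_{X'/k}(\log D')[-i]
\]
in \(\mcalD_{\qcoh}(X')\). To build it, we first construct the degree-one map \(\varphi_1\colon\Omega^1_{X'/k}(\log D')[-1]\to F_{X/k,*}\Omega^\bullet_{X/k}(\log D)\), which induces the inverse Cartier isomorphism on \(\mcalH^1\). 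Locally this map comes from log Frobenius lifts, and the local choices are glued in the derived category via a Čech argument. We then extend it multiplicatively to degrees \(i<p\), using the antisymmetrization \(\frac{1}{i!}\sum_\sigma \operatorname{sgn}(\sigma)\,\varphi_1^{\otimes i}\circ\sigma\), which makes sense because \(i!\) is invertible for \(i\leq p-1\).

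Set \(d=\dim X\). If \(d\geq 1\) (the case \(d=0\) being trivial, since \(F^{\dR}_{(X,D)}\) is then an isomorphism for \(X\) étale over a perfect field), the hypothesis \(p\geq d\) gives \(d-1\leq p-1\). Applying \(\tau_{\leq d-1}\) to the displayed isomorphism therefore yields
\[
\tau_{\leq d-1}F_{X/k,*}\Omega^\bullet_{X/k}(\log D)\simeq\bigoplus_{i=0}^{d-1}\Omega^i_{X'/k}(\log D')[-i],
\]
which is condition (3) of \Cref{criterion-decomp-log}. That theorem then gives (1), namely that \(F^{\dR}_{(X,D)}\) splits. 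By \Cref{relative-vs-absolute}, this means \((X,D)\) is log HT-split. Note that the boundary case \(p=d\) is handled precisely because \Cref{criterion-decomp-log} only requires the truncation up to \(d-1\); the extension over the top degree was already arranged there through the vanishing of \(\Ext^1\) on the qcqs scheme \(X'\).

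The main point to check is that the cited logarithmic Deligne--Illusie result applies in the generality stated: \(X\) is only qcqs smooth, not necessarily quasi-projective, and the lift is a lift of the pair. Deligne--Illusie's Čech construction of \(\varphi_1\) only uses an affine open cover, which is available for any qcqs scheme. For the boundary, we will use the interpretation via \Cref{notation:log-dR}: a lifting of \((X,D)\) is a lifting of the smooth representable morphism \(X\to S\) over \(W_2\). In that form the relative Deligne--Illusie argument runs verbatim, using the relative Cartier isomorphism for \(X/S\), and then we descend along \(\pi\) by \Cref{pullback-log-dR}.
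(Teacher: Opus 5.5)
Your proposal is correct and is essentially the paper's own proof. The paper also takes the truncated logarithmic Deligne--Illusie decomposition of $\tau_{\leq p-1}F_{X/k,*}\Omega^\bullet_{X/k}(\log D)$ from \cite{DI87} and \cite{Kat89}*{Theorem~(4.12)}, and then concludes via condition~(3) of \Cref{criterion-decomp-log}, using $d-1\leq p-1$. The stacky re-derivation over $S$ in your last paragraph is not needed, since Kato's theorem already applies directly to a $W_2$-lift of the pair $(X,D)$.
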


\begin{proof}
It follows from \cite{DI87}, \cite{Kat89}*{Theorem~(4.12)}, and \Cref{criterion-decomp-log}.
\end{proof}

\begin{proposition}\label{Decomposable-case}
Let $k$ be a perfect field and $X$ a qcqs smooth $k$-scheme.
Let $D$ be a simple normal crossings divisor on $X$.
We assume that $(X,D)$ is $W_2$-liftable and $\Omega^1_X(\log D)$ is a direct sum of subbundles of rank less than $p$.
Then $(X,D)$ is log HT-split.
\end{proposition}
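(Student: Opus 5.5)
The plan is to reduce to the Deligne--Illusie decomposition in the ``decomposable'' form (the variant used by Deligne--Illusie and Kato). Then \Cref{criterion-decomp-log} turns the resulting decomposition of the log de Rham complex into a splitting of \(F^{\dR}_{(X,D)}\).

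First I fix a \(W_2(k)\)-lift \((\widetilde X,\widetilde D)\). By \cite{DI87} and \cite{Kat89}*{Theorem~(4.12)}, after shrinking to an open cover with Frobenius lifts and gluing the local quasi-isomorphisms by homotopies, this gives a morphism
\[
\varphi^1\colon \Omega^1_{X'/k}(\log D')[-1]\to F_{X/k,*}\Omega^\bullet_{X/k}(\log D)
\]
in \(\mcalD_{\qcoh}(X')\) inducing the Cartier isomorphism on \(\mcalH^1\). Write \(\Omega^1_{X'/k}(\log D')=\bigoplus_\lambda E_\lambda\) with \(\operatorname{rk}E_\lambda<p\), using the Frobenius twist of the given decomposition.

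For each \(\lambda\) and each \(0\le i\le \operatorname{rk}E_\lambda\), the antisymmetrization \(E_\lambda^{\otimes i}\to \bigwedge^iE_\lambda\) has a section \(a\), namely \(1/i!\) times the alternation. This makes sense because \(i<p\). I define
\[
\varphi_\lambda^i\colon \textstyle\bigwedge^iE_\lambda[-i]\xrightarrow{a}(E_\lambda[-1])^{\otimes i}\xrightarrow{(\varphi^1)^{\otimes i}}(F_*\Omega^\bullet(\log D))^{\otimes i}\xrightarrow{\mathrm{mult}}F_*\Omega^\bullet(\log D),
\]
using the (commutative) algebra structure of \(F_*\Omega^\bullet(\log D)\). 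Then, with \(\bigwedge^j\Omega^1=\bigoplus_{\sum i_\lambda=j}\bigotimes_\lambda\bigwedge^{i_\lambda}E_\lambda\), I take products
\[
\textstyle\bigotimes_\lambda\bigwedge^{i_\lambda}E_\lambda[-i_\lambda]\xrightarrow{\otimes\varphi^{i_\lambda}_\lambda}(F_*\Omega^\bullet(\log D))^{\otimes}\xrightarrow{\mathrm{mult}}F_*\Omega^\bullet(\log D).
\]
Summing over all multi-indices gives a map \(\bigoplus_j\Omega^j_{X'/k}(\log D')[-j]\to F_{X/k,*}\Omega^\bullet_{X/k}(\log D)\).

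Next I check that this map is a quasi-isomorphism on \(\mcalH^j\). The Cartier isomorphism is multiplicative, so on \(\mcalH^j\) the map equals \(C^{-1}\) precomposed with the composite \(\bigwedge^{i}E\to E^{\otimes i}\to\bigwedge^iE\). That composite is the identity by the normalization \(1/i!\). Hence the map is an isomorphism on all cohomology sheaves, and \Cref{criterion-decomp-log} (2)\(\Rightarrow\)(1) gives log HT-splitting.

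The main obstacle is the multiplication step. \(F_*\Omega^\bullet(\log D)\) must be treated as a commutative algebra in \(\mcalD_{\qcoh}(X')\), and the products \((\varphi^1)^{\otimes i}\) must be compatible with the algebra structure on cohomology. I would handle this with the strict model: on a Čech cover with Frobenius lifts, \(\varphi^1\) is given by an explicit map \(\Omega^1\to\check C(F_*\Omega^\bullet)\). The Čech complex with the Alexander--Whitney cup product is associative but only graded-commutative up to homotopy, and this is exactly why one can only antisymmetrize up to degree \(<p\). To avoid rigor issues it suffices to work in the homotopy category, since only a map on cohomology sheaves must be checked. Graded commutativity of the cohomology ring \(\bigoplus\mcalH^j\) then makes the computation go through.
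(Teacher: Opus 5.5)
Your argument is correct, but it is not the paper's route. The paper's proof is a bare citation: \cite{ZhangDeRhamHiggsComparison}*{Corollary~1.3} supplies the decomposition of $F_{X/k,*}\Omega^\bullet_{X/k}(\log D)$ for $W_2$-liftable pairs with $\Omega^1_X(\log D)$ a direct sum of subbundles of rank $<p$, and \Cref{criterion-decomp-log} then converts that decomposition into log HT-splitting.

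You instead reprove the decomposition directly. You extend Kato's $\varphi^1$ multiplicatively in the Deligne--Illusie manner, antisymmetrizing only inside each summand $E_\lambda$. The rank hypothesis enters exactly through the invertibility of $i!$ for $i\le\operatorname{rk}E_\lambda<p$. The splitting $\bigwedge^j\Omega^1=\bigoplus\bigotimes_\lambda\bigwedge^{i_\lambda}E_\lambda$, together with multiplicativity of the logarithmic Cartier isomorphism, then shows that your map induces $C^{-1}$ on every $\mcalH^j$.

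What each route buys: yours makes the statement self-contained given only \cite{Kat89}*{Theorem~(4.12)}. Moreover, the degree-zero component of your map is the unit $F^{\dR}_{(X,D)/k}$, so the retraction is visible directly, without invoking \Cref{criterion-decomp-log}. The paper's citation is shorter and rests on a more general comparison theorem.

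One simplification: the worry in your last paragraph is unnecessary. $F_{X/k,*}\Omega^\bullet_{X/k}(\log D)$ is a strictly graded-commutative dga whose terms are locally free $\mcalO_{X'}$-modules, since $F_{X/k}$ is finite flat. Hence $\otimes=\otimes^L$, and the wedge product is an honest chain map. The composite with $(\varphi^1)^{\otimes^L i}$ is therefore already a morphism in $\mcalD_{\qcoh}(X')$, and all that remains is a computation on cohomology sheaves. No \v{C}ech model or cup-product commutativity is needed. The bound $i<p$ comes only from the factor $1/i!$, not from any homotopy-commutativity issue.
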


\begin{proof}
It follows from \cite{ZhangDeRhamHiggsComparison}*{Corollary~1.3}.
\end{proof}

\begin{lemma}\label{relative-dR-base-change}
Let $\iota\colon X \to  S$ be a smooth representable morphism of \(\setF_p\)-stacks, where \(S\) is an fpqc algebraic stack. 
Let $h\colon S' \to  S$ be a morphism of fpqc algebraic stacks, and set
\[
X' \defeq X\times_S S'.
\]
Let $g\colon (X')^{(1)}_{/S'} \to  X^{(1)}_{/S}$ be the induced morphism between the relative Frobenius twists. 
Then we have
\[
(X'/S')^{\dR}
\simeq
(X/S)^{\dR}
\times_{X^{(1)}_{/S}}
(X')^{(1)}_{/S'}.
\]
In particular, if \(X\) is HT-split over \(S\), then \(X'\) is
HT-split over \(S'\).
\end{lemma}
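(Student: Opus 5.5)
The plan is to prove the fiber product formula directly from the definitions, by a formal manipulation of pullback squares, and then to deduce HT-splitting by pulling back the ind-splitting along \(g\).

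\textbf{Step 1: factor \((X'/S')^{\dR}\).} By \Cref{DefdeRhamStack}, \((X'/S')^{\dR}=X'^{\dR}\times_{S'^{\dR},\pi_{S'}}S'\). The functor \(Y\mapsto Y^{\dR}\), given by \(R\mapsto Y(W(R)/^Lp)\), preserves all limits because it is evaluation of a functor at a fixed ring. Hence \(X'^{\dR}\simeq X^{\dR}\times_{S^{\dR}}S'^{\dR}\), and therefore
\[
(X'/S')^{\dR}\simeq X^{\dR}\times_{S^{\dR}}S'^{\dR}\times_{S'^{\dR}}S'\simeq X^{\dR}\times_{S^{\dR},\pi_S\circ h}S'.
\]
Using \(\pi_S\circ h=h^{\dR}\circ\pi_{S'}\), this becomes
\[
(X'/S')^{\dR}\simeq (X^{\dR}\times_{S^{\dR}}S)\times_S S'=(X/S)^{\dR}\times_S S'.
\]

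\textbf{Step 2: pass to the Frobenius twists.} The twists satisfy
\[
(X')^{(1)}_{/S'}=X'\times_{S',F_{S'}}S'\simeq X\times_{S,F_S}S'\simeq X^{(1)}_{/S}\times_{S}S',
\]
where \(S'\to S\) is the map \(h\), using \(F_S\circ h=h\circ F_{S'}\). Then
\[
(X/S)^{\dR}\times_{X^{(1)}_{/S}}(X')^{(1)}_{/S'}\simeq (X/S)^{\dR}\times_{X^{(1)}_{/S}}X^{(1)}_{/S}\times_S S'\simeq (X/S)^{\dR}\times_S S'.
\]
Comparing with Step 1 gives the stated equivalence. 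One must check that the structure maps agree: the composite \((X/S)^{\dR}\to X^{(1)}_{/S}\to S\) should be \(\pr_2\) from the definition of \((X/S)^{\dR}\). This holds because \(\nu_{X/S}\) is built from \(\pr_2\) and \(\iota\circ\nu_X\) in the diagram of \Cref{DefdeRhamStack}.

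\textbf{Step 3: deduce HT-splitting.} Because \(\iota\) is smooth representable, \(\nu_{X/S}\) is a \(BT^\sharp_{X^{(1)}/S}\)-gerbe, as used in \Cref{decomposition-after-pullback}. By Step 2, \(\nu_{X'/S'}\) is its base change along \(g\). \Cref{E-sharp-gerbe-arbitrary-base-change} then gives \(Lg^*\OdR{X/S}\simeq\OdR{X'/S'}\) compatibly with the unit maps, so \(Lg^*\nu_{X/S}^{\sharp}\simeq\nu_{X'/S'}^{\sharp}\). Since \(Lg^*\) is exact and commutes with filtered colimits, it carries ind-split morphisms to ind-split morphisms. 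Hence \(X'\) is HT-split over \(S'\).

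\textbf{Main obstacle.} The hard part is the coherence bookkeeping: the identifications of Steps 1 and 2 must be compatible with \(\nu\) and \(\pi\), so that the base-change square used in Step 3 is really the one involving \(\nu_{X'/S'}\). The limit-preservation of \((-)^{\dR}\) and the Frobenius twist identities are formal.
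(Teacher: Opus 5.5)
Your proposal is correct and follows the paper's proof essentially step for step. Both arguments use limit-preservation of \((-)^{\dR}\) to get \((X'/S')^{\dR}\simeq (X/S)^{\dR}\times_S S'\), the identification \((X')^{(1)}_{/S'}\simeq X^{(1)}_{/S}\times_S S'\), and then \Cref{E-sharp-gerbe-arbitrary-base-change} together with the fact that \(Lg^*\) preserves filtered colimits and split maps. Your explicit check that \((X/S)^{\dR}\to X^{(1)}_{/S}\to S\) equals \(\pr_2\) simply spells out what the paper summarizes as ``the compatibility of the morphisms \(\nu_{X/S}\) with base change.''
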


\begin{proof}
Since the construction \(T\mapsto T^{\dR}\) is defined by precomposition with
\[
R \mapsto  W(R)/^L p,
\]
it preserves fiber products. 
Hence
\[
(X')^{\dR}
\simeq
X^{\dR}\times_{S^{\dR}}(S')^{\dR}.
\]
It follows that
\[
\begin{aligned}
(X'/S')^{\dR}
&=
(X')^{\dR}\times_{(S')^{\dR}}S'\\
&\simeq
\left(
X^{\dR}\times_{S^{\dR}}(S')^{\dR}
\right)
\times_{(S')^{\dR}}S'\\
&\simeq
X^{\dR}\times_{S^{\dR}}S'\\
&\simeq
(X/S)^{\dR}\times_S S'.
\end{aligned}
\]
On the other hand, the naturality of the Frobenius gives
\[
(X')^{(1)}_{/S'}
\simeq
X^{(1)}_{/S}\times_S S'.
\]
The compatibility of the morphisms \(\nu_{X/S}\) with base change therefore gives the asserted Cartesian diagram.

Applying \Cref{E-sharp-gerbe-arbitrary-base-change} for the pullback diagram
\[
\begin{tikzcd}
(X'/S')^{\dR} \arrow[r] \arrow[d,"\nu_{X'/S'}"] & (X/S)^{\dR} \arrow[d,"\nu_{X/S}"] \\
(X')^{(1)}_{/S'} \arrow[r, "g"] & X^{(1)}_{/S},
\end{tikzcd}
\]
we obtain an equivalence
\[
\mcalO^{\dR}_{X'/S'}
\simeq
Lg^*\mcalO^{\dR}_{X/S} \in \mcalD_{\qcoh}((X')^{(1)}_{/S'}),
\]
and the natural homomorphism
\[
\nu^\sharp_{X'/S'}\colon
\mcalO_{(X')^{(1)}_{/S'}}
 \to 
\mcalO^{\dR}_{X'/S'}
\]
identifies with \(Lg^*\nu^\sharp_{X/S}\).
Since \(Lg^*\) preserves filtered colimits and carries split morphisms to split morphisms, it preserves ind-split morphisms.
Hence the ind-splitting of \(\nu^\sharp_{X/S}\) induces that of \(\nu^\sharp_{X'/S'}\).
\end{proof}

\begin{proposition}\label{HT-fiber-product}
Let \(S\) be an fpqc algebraic \(\F_p\)-stack, and let
\[
\iota_X\colon X \to  S,
\qquad
\iota_Y\colon Y \to  S
\]
be smooth representable morphisms.
Assume that \(X\) and \(Y\) are HT-split over \(S\).
Then
\[
Z \defeq X\times_S Y
\]
is HT-split over \(S\).
\end{proposition}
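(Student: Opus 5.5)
We will use the stacky description of HT-splitting together with the compatibility of de Rham stacks with fiber products. Since \(T\mapsto T^{\dR}\) is defined by precomposition with \(R\mapsto W(R)/^Lp\), it preserves fiber products. Hence
\[
(Z/S)^{\dR}\simeq (X/S)^{\dR}\times_S (Y/S)^{\dR}.
\]
Similarly, the relative Frobenius twist satisfies \(Z^{(1)}\simeq X^{(1)}\times_S Y^{(1)}\). Under these identifications, \(\nu_{Z/S}\) is the product \(\nu_{X/S}\times_S\nu_{Y/S}\). We factor it as
\[
(X/S)^{\dR}\times_S (Y/S)^{\dR}\xrightarrow{a} X^{(1)}\times_S (Y/S)^{\dR}\xrightarrow{b} X^{(1)}\times_S Y^{(1)}.
\]
Each of \(a\) and \(b\) is a base change of a \(BT^\sharp\)-gerbe (\(\nu_{X/S}\), resp.\ \(\nu_{Y/S}\)) along a projection. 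By \Cref{E-sharp-gerbe-arbitrary-base-change}, the pushforward of the structure sheaf commutes with these base changes. This gives a Künneth-type formula
\[
\OdR{Z/S}\simeq Lp_1^*\OdR{X/S}\otimes^L_{\cO_{Z^{(1)}}}Lp_2^*\OdR{Y/S},
\]
where \(p_1,p_2\) are the projections from \(Z^{(1)}\). Under this identification, \(\nu_{Z/S}^\sharp\) is the tensor product \(Lp_1^*\nu^\sharp_{X/S}\otimes Lp_2^*\nu^\sharp_{Y/S}\), both being induced by the units of the algebras. Concretely, we compute \(Rb_*R a_*\cO\): first \(Ra_*\cO\simeq Lb'^*\OdR{X/S}\) by base change, and then the projection formula along \(b\) applies because \(\OdR{X/S}\) is perfect (it is identified with \(F_{X/S,*}\Omega^\bullet_{X/S}\)) and \(\OdR{Y/S}\) likewise.

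With the formula in hand, the splitting is formal. First, by \Cref{smooth-HT-split-equiv} and perfectness, we expect to upgrade ind-splittings to genuine splittings, at least after pullback along fpqc covers. More simply, we argue directly with ind-splittings: \(Lp_i^*\) preserves ind-split morphisms, as noted in the proof of \Cref{relative-dR-base-change}. The tensor product of two ind-split morphisms \(\cO\to A\) and \(\cO\to B\) is ind-split, since the tensor product commutes with filtered colimits in each variable and the tensor of split maps is split (via \(r_A\otimes r_B\)). Hence \(\nu^\sharp_{Z/S}\) is ind-split, and \(Z\) is HT-split over \(S\).

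The main obstacle is the Künneth identification and its compatibility with the unit maps. Specifically, one must check that the composite of the two base-change equivalences and the projection formula sends \(\nu^\sharp_{Z/S}\) to the tensor of the units. We would handle this by working in \(\CAlg(\mcalD_{\qcoh}(Z^{(1)}))\): the natural map \(Lp_1^*\OdR{X/S}\otimes Lp_2^*\OdR{Y/S}\to\OdR{Z/S}\) is a map of commutative algebras, adjoint to the two pullback maps. Unit compatibility is then automatic, and only its being an equivalence needs proof. That can be checked fpqc locally on \(S\) and on \(X^{(1)},Y^{(1)}\), where both gerbes become \(BT^\sharp\). There the claim reduces to the explicit computation \(Rq_*\cO_{BE^\sharp}\simeq\bigoplus\bigwedge^iE^\vee[-i]\) of \Cref{E-sharp-gerbe-arbitrary-base-change}, together with \(\bigwedge^\bullet(E_1\oplus E_2)^\vee\simeq\bigwedge^\bullet E_1^\vee\otimes\bigwedge^\bullet E_2^\vee\).
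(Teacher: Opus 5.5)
Your argument is correct, but it proves the key Künneth step on the stack side rather than on the complex side, so it differs from the paper's route.

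The paper's proof uses the classical Künneth isomorphism for relative de Rham complexes,
$F_{Z/S,*}\Omega^\bullet_{Z/S}\simeq L(p_X^{(1)})^*F_{X/S,*}\Omega^\bullet_{X/S}\otimes^L L(p_Y^{(1)})^*F_{Y/S,*}\Omega^\bullet_{Y/S}$.
Under this isomorphism $F^{\dR}_{Z/S}$ is visibly the tensor product of the pulled-back $F^{\dR}_{X/S}$ and $F^{\dR}_{Y/S}$, since the degree-zero component is just $F^\sharp_{X/S}\otimes F^\sharp_{Y/S}$. The paper then passes through \Cref{smooth-HT-split-equiv} twice: once to convert HT-splitting of $X$ and $Y$ into ind-splitting of $F^{\dR}$, and once to return for $Z$. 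In between it uses the same formal fact you do, that tensor products of ind-split maps out of $\cO$ are ind-split.

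You instead work with $\nu^\sharp$ throughout. You identify $(Z/S)^{\dR}\simeq (X/S)^{\dR}\times_S(Y/S)^{\dR}$ and factor $\nu_{Z/S}$ into two base changes of $T^\sharp$-gerbes. You then use \Cref{E-sharp-gerbe-arbitrary-base-change} together with either the projection formula, or the canonical commutative-algebra map checked locally against $Rq_*\cO_{BE^\sharp}\simeq\bigoplus_i\bigwedge^iE^\vee[-i]$. This gives $\OdR{Z/S}\simeq Lp_1^*\OdR{X/S}\otimes^L Lp_2^*\OdR{Y/S}$ as commutative algebras, so compatibility with the units is automatic.

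What your route buys: you never transport $\nu^\sharp$ to $F^{\dR}$ as in \Cref{smooth-HT-split-equiv}. The de Rham comparison enters only to know that $\OdR{X/S}$ is perfect, which the projection formula needs. You also obtain a multiplicative Künneth formula for relative de Rham stacks that is useful in its own right.

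What it costs: you must check that the base-change/projection-formula isomorphism agrees with the canonical algebra map, or else verify the latter is an equivalence locally. The paper gets this compatibility for free from the explicit chain-level formula.

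Your aside about upgrading ind-splittings to genuine splittings is unnecessary, because HT-splitting over $S$ is defined as ind-splitting of $\nu^\sharp$. You can drop it.
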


\begin{proof}
Let
\[
p_X\colon Z \to  X,
\qquad
p_Y\colon Z \to  Y
\]
be the projections.
There is a canonical isomorphism
\[
Z^{(1)}_{/S}
\simeq
X^{(1)}_{/S}\times_S Y^{(1)}_{/S}.
\]
We denote by
\[
p_X^{(1)}\colon Z^{(1)}_{/S} \to  X^{(1)}_{/S},
\qquad
p_Y^{(1)}\colon Z^{(1)}_{/S} \to  Y^{(1)}_{/S}
\]
the induced projections.

The K\"unneth isomorphism for relative de Rham complexes and the
compatibility of relative Frobenius with fiber products give a
canonical isomorphism
\[
F_{Z/S,*}\Omega^\bullet_{Z/S}
\simeq
L(p_X^{(1)})^*
F_{X/S,*}\Omega^\bullet_{X/S}
\otimes_{\mcalO_{Z^{(1)}_{/S}}}^{L}
L(p_Y^{(1)})^*
F_{Y/S,*}\Omega^\bullet_{Y/S}
\]
in
\(\mcalD_{\qcoh}(Z^{(1)}_{/S})\).
Under this isomorphism, the morphism
\[
F^{\dR}_{Z/S}\colon
\mcalO_{Z^{(1)}_{/S}}
 \to 
F_{Z/S,*}\Omega^\bullet_{Z/S}
\]
is identified with the composition of the tensor product of the pullbacks of \(F^{\dR}_{X/S}\) and \(F^{\dR}_{Y/S}\).

By \Cref{smooth-HT-split-equiv}, the morphisms \(F^{\dR}_{X/S}\) and \(F^{\dR}_{Y/S}\) are ind-split.
Hence \(F^{\dR}_{Z/S}\) is ind-split, and
\Cref{smooth-HT-split-equiv} shows that \(Z\) is HT-split over
\(S\).
\end{proof}

\begin{corollary}\label{log-HT-product}
Let \(k\) be a perfect field of characteristic \(p>0\).
For \(i=1,2\), let \(X_i\) be a smooth qcqs \(k\)-scheme and let
\(D_i\) be a simple normal crossings divisor on \(X_i\).
Assume that both pairs
\[
(X_1,D_1)
\qquad\text{and}\qquad
(X_2,D_2)
\]
are log HT-split.
Then the pair
\[
\left(
X_1\times_k X_2,\,
\pr_1^*D_1+\pr_2^*D_2
\right)
\]
is log HT-split.
\end{corollary}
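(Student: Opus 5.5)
The plan is to reduce the statement to \Cref{HT-fiber-product} by means of the logarithmic stacky description in \Cref{notation:log-dR} and the equivalence \Cref{equivalence-log-relative}. Write \(D_i=D_{i,1}+\cdots+D_{i,n_i}\) for the irreducible decompositions. Then take the smooth representable morphisms \(\iota_i\colon X_i\to S_i\defeq(\setA^1_k/\setG_{m,k})^{n_i}\) with \(\Omega^\bullet_{X_i/S_i}\simeq\Omega^\bullet_{X_i/k}(\log D_i)\). By \Cref{equivalence-log-relative}, each \(X_i\) is HT-split over \(S_i\).

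Next, set \(S\defeq S_1\times_k S_2=(\setA^1_k/\setG_{m,k})^{n_1+n_2}\). The base changes \(X_1\times_k S_2\to S\) and \(S_1\times_k X_2\to S\) are smooth representable. By \Cref{relative-dR-base-change}, applied to \(h=\pr_1\colon S\to S_1\) and \(h=\pr_2\colon S\to S_2\), they are HT-split over \(S\). Their fiber product over \(S\) is \(X_1\times_k X_2\), mapping to \(S\) via \(\iota_1\times\iota_2\). Hence \Cref{HT-fiber-product} shows that \(X_1\times_k X_2\) is HT-split over \(S\).

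It remains to identify this morphism with the one attached in \Cref{notation:log-dR} to the pair \((X_1\times_kX_2,\pr_1^*D_1+\pr_2^*D_2)\). The irreducible components of \(\pr_1^*D_1+\pr_2^*D_2\) are the \(\pr_1^*D_{1,a}\) and \(\pr_2^*D_{2,b}\); this holds after passing to connected components, or using the construction componentwise if \(X_i\) is not geometrically connected. The map \(\iota_1\times\iota_2\) is exactly the morphism classifying these line bundles with sections. Moreover, \(\Omega^\bullet_{X_1\times X_2/S}\) is the tensor product of the pullbacks of \(\Omega^\bullet_{X_i/S_i}\), and this is the log de Rham complex of the product pair by the Künneth formula for log differentials. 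Applying \Cref{equivalence-log-relative} again, (1)\(\Rightarrow\)(2), we conclude that the product pair is log HT-split.

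The main obstacle is this last identification. \Cref{equivalence-log-relative} is stated for the morphism produced by \cite{barz2025Logarithmic}*{Theorem~5.4}, so one must check that \(\iota_1\times\iota_2\) is such a morphism. This includes checking that \Cref{root-stack-description} and \Cref{pullback-log-dR} apply to it, and both depend only on \(\iota\) being the classifying map of the components. If this proves awkward, the fallback is to argue directly on \(X_1\times_kX_2\). In that case I would use the Künneth isomorphism
\[
F_*\Omega^\bullet_{X_1\times X_2}(\log D)\simeq \pr_1^*F_*\Omega^\bullet_{X_1}(\log D_1)\otimes^L\pr_2^*F_*\Omega^\bullet_{X_2}(\log D_2),
\]
and tensor the two pulled-back splittings, exactly as in the proof of \Cref{HT-fiber-product}.
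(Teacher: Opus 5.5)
Your proposal is correct and follows the paper's proof essentially step for step. The paper also sets $S=S_1\times_k S_2$, uses \Cref{relative-dR-base-change} to make $X_1\times_k S_2$ and $S_1\times_k X_2$ HT-split over $S$, applies \Cref{HT-fiber-product} to their fiber product $X_1\times_k X_2$, and then invokes \Cref{equivalence-log-relative}; the identification of $X_1\times_kX_2\to S$ with the classifying morphism of $\pr_1^*D_1+\pr_2^*D_2$, which you flag as the main obstacle, is simply asserted there without further comment.
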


\begin{proof}
Write
\[
D_i=D_{i,1}+\cdots+D_{i,r_i}
\]
and set
\[
S_i \defeq 
\left[\mathbb{A}^1_k/\mathbb{G}_{m,k}\right]^{r_i}.
\]
The divisor \(D_i\) determines a smooth representable morphism
\[
\iota_i\colon X_i \to  S_i
\]
as in \Cref{notation:log-dR}.
By \Cref{equivalence-log-relative}, the log HT-splitting of
\((X_i,D_i)\) is equivalent to the HT-splitting of \(X_i\) over
\(S_i\).

Set
\[
S \defeq S_1\times_k S_2.
\]
By \Cref{relative-dR-base-change},
\[
X_1\times_k S_2 \to  S
\qquad\text{and}\qquad
S_1\times_k X_2 \to  S
\]
are HT-split over \(S\).
Their fiber product over \(S\) is canonically isomorphic to
\[
X_1\times_k X_2.
\]
Hence \Cref{HT-fiber-product} shows that
\(X_1\times_k X_2\) is HT-split over \(S\).

The induced morphism
\[
X_1\times_k X_2 \to  S_1\times_k S_2
\]
is precisely the morphism associated with the simple normal
crossings divisor
\[
\pr_1^*D_1+\pr_2^*D_2.
\]
Therefore \Cref{equivalence-log-relative} implies that
\[
\left(
X_1\times_k X_2,\,
\pr_1^*D_1+\pr_2^*D_2
\right)
\]
is log HT-split.
\end{proof}

\subsection{Comparison with quasi-\texorpdfstring{\(F\)}{F}-splitting}

For an integer \(r\geq 1\), let \(W_r(-)\) denote the fpqc
sheafification of the left Kan extension of the functor
\[
\Spec A \mapsto  \Spec W_r(A)
\]
from affine \(\setF_p\)-schemes. We put
\[
W_rT_{\setF_p}
 \defeq 
W_rT\times_{\Spec(\setZ/p^r)}\Spec(\setF_p)
\]
for every \(\setF_p\)-stack \(T\).

Recall the following construction from \cite{petrov2025Decomposition}*{Corollary~4.4}:

\begin{construction} \label{ConstPetrov}
    Let \(T\) be an algebraic fpqc \(\setF_p\)-stack. Then we construct a morphism
    \begin{equation*}
      \sigma_T \colon WT_{\setF_p} \to T^{\dR}
    \end{equation*}
    and a commutative diagram
    \begin{equation*}
      \begin{tikzcd}
      WT_{\setF_p} \arrow[r, "s_T"] \arrow[rd, "\sigma_T"'] & T                           \\
                                                            & T^{\dR} \arrow[u, "\nu_T"']
      \end{tikzcd}
    \end{equation*}
    of \(\setF_p\)-stacks, where \(\nu_T \colon T^{\dR} \to T\) is the natural morphism given in \Cref{DefdeRhamStack} and \(s_T \colon WT_{\setF_p} \to T\) is the morphism induced by the morphism \(\Phi_R \colon R \to W(R)/^L p\) sending  \(r \in R\) to \([r^p]\) in \(W(R)\).

    We recall this construction: Since \(T\) is an algebraic fpqc \(\setF_p\)-stack, we can consider its standard simplicial affine presentation and we may assume that \(T = \Spec(R)\) is an affine \(\setF_p\)-scheme.
    Fix any animated \(\setF_p\)-algebra \(S\). By the argument using \(\setW T_{\setF_p}\) in \cite{petrov2025Decomposition}*{Corollary~4.4}, the point of \(WT_{\setF_p}\) over \(\Spec(S)\) gives rise to a morphism \(f \colon W(R)/^L p \to S\) in \(\CAlg^{an}_{\setF_p}\).
    The composition \(s_T \colon WT_{\setF_p} \to T\) with \(\Spec(S) \to WT_{\setF_p}\) corresponds to the morphism
    \begin{equation*}
      R \xrightarrow{\Phi_R} W(R)/^L p \xrightarrow{f} S
    \end{equation*}
    in \(\CAlg^{an}_{\setF_p}\).

    On the other hand, we can apply \cite{petrov2025Decomposition}*{Proposition~4.2} to produce the morphism
    \begin{equation*}
      \Spec(S) \to WT_{\setF_p} \xrightarrow{\exists \widetilde{s}_{WT}} (WT_{\setF_p})^{\dR}
    \end{equation*}
    by the \(\delta\)-structure on \(W(R)\). More precisely, the \(\delta\)-structure on \(W(R)\) uniquely gives rise to a morphism \(\widetilde{f} \colon W(R) \to W(S)\) of animated \(\delta\)-rings, which is characterized by the property that its composition with the restriction morphism \(W(S) \to S\) is equal to the composition \(W(R) \to W(R)/^L p \xrightarrow{f} S\).
    Then we obtain a morphism
    \begin{equation*}
      R \xrightarrow{\Phi_R} W(R)/^L p \xrightarrow{\widetilde{f}} W(S)/^L p
    \end{equation*}
    in \(\CAlg^{an}_{\setF_p}\), which corresponds to the morphism \(\sigma_T \colon WT_{\setF_p} \to T^{\dR}\) over \(\Spec(S)\).
    The composition with \(\nu_T \colon T^{\dR} \to T\) is equal to the morphism \(R \xrightarrow{\Phi_R} W(R)/^L p \xrightarrow{f} S\), which corresponds to the morphism \(\Spec(S) \to WT_{\setF_p} \xrightarrow{s_T}  T\).

    Consequently, we have a commutative diagram
    \begin{equation} \label{eq:WT-dR}
      \begin{tikzcd}
      WT_{\setF_p} \arrow[r, "s_T"] \arrow[rd, "{\sigma_{T}}"'] \arrow[d, "\widetilde{s}_{WT}"'] & T                           \\
      (WT_{\setF_p})^{\dR} \arrow[r, "(s_T)^{\dR}"']                                                & T^{\dR} \arrow[u, "\nu_T"']
      \end{tikzcd}
    \end{equation}
    of \(\setF_p\)-stacks, where \(\widetilde{s}_{WT} \colon WT_{\setF_p} \to (WT_{\setF_p})^{\dR}\) is the natural morphism arising from the \(\delta\)-structure on \(WT\).
\end{construction}

\begin{proposition} \label{compatibility-delta-lift}
  Let \(k\) be a perfect field of characteristic \(p\) and let \(T\) be an algebraic fpqc \(k\)-stack.
  Assume that \(T\) admits an algebraic fpqc \(W(k)\)-\(\delta\)-lifting \(\widetilde T\), together with an isomorphism $\widetilde T_{\F_p}\simeq T$.
  Then there exists a morphism
  \begin{equation*}
    s' \colon WT_{\setF_p} \to T
  \end{equation*}
  and a commutative diagram
  \begin{equation*}
    \begin{tikzcd}
    WT_{\setF_p} \arrow[d, "s'"'] \arrow[rd, "\sigma_{T}"] &         \\
    T \arrow[r, "\pi_T"']                                  & T^{\dR}
    \end{tikzcd}
  \end{equation*}
  of \(\setF_p\)-stacks, where \(\sigma_T\) is the morphism constructed in \Cref{ConstPetrov}.
\end{proposition}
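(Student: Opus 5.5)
Following \Cref{ConstPetrov}, we reduce to $T=\Spec(R)$ affine. The morphisms are functorial in $T$, so they descend along the standard simplicial affine presentation of $T$ and of its $\delta$-lift $\widetilde T$. Write $\widetilde T=\Spf$ or $\Spec$ of a $p$-torsion-free $\delta$-$W(k)$-algebra $\widetilde R$ with $\widetilde R/p\simeq R$. The $\delta$-structure on $\widetilde R$ gives, by the universal property of Witt vectors (Cartier–Dieudonné–Dwork), a $\delta$-ring map
\[
\lambda\colon \widetilde R\to W(\widetilde R)
\]
that is a section of the restriction map $W(\widetilde R)\to \widetilde R$. Composing with $W(\widetilde R)\to W(R)$ gives a $\delta$-map $\widetilde R\to W(R)$. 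Reducing mod $p$ yields
\[
\bar\lambda\colon R=\widetilde R/p\to W(R)/^Lp.
\]
Define $s'\colon WT_{\setF_p}\to T$ on $S$-points by precomposing the classifying map $f\colon W(R)/^Lp\to S$ with $\bar\lambda$, so $s'$ corresponds to $f\circ\bar\lambda$.

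Next we compare $\pi_T\circ s'$ with $\sigma_T$ on an $S$-point. By definition, $\pi_T$ sends a point $g\colon R\to S$ to $R\xrightarrow{\Phi_R}W(R)/^Lp\xrightarrow{W(g)}W(S)/^Lp$. So $\pi_T\circ s'$ corresponds to
\[
W(f\circ\bar\lambda)\circ\Phi_R .
\]
By \Cref{ConstPetrov}, $\sigma_T$ corresponds to $\widetilde f\circ\Phi_R$, where $\widetilde f\colon W(R)\to W(S)$ is the unique $\delta$-lift of $W(R)\to W(R)/^Lp\xrightarrow f S$. The key claim is that the two $\delta$-maps
\[
\widetilde f\circ\lambda,\qquad W(f\circ \bar\lambda)\circ\lambda_{0}\colon \widetilde R\to W(S)
\]
agree, where $\lambda_0\colon\widetilde R\to W(\widetilde R)$ again denotes the Cartier map. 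Both are $\delta$-maps out of $\widetilde R$, so by the universal property of $W(S)$ it suffices to check that their ghost/restriction components to $S$ agree. Both restrictions equal $f\circ(\widetilde R\to W(R)\to W(R)/^Lp)$; this follows from $\operatorname{res}\circ\lambda=\id$ and the naturality of $\lambda$ with respect to $W(\widetilde R)\to W(R)$. Then precompose with Frobenius. On $R$, $\Phi_R(r)=[r^p]$ is the reduction of $\varphi$ on $W(R)$ restricted along Teichmüller, and $\lambda$ intertwines $\varphi_{\widetilde R}$ with the Witt Frobenius. Reducing mod $p$, $\widetilde f\circ\Phi_R$ and $W(f\circ\bar\lambda)\circ\Phi_R$ both become the mod $p$ reduction of $\widetilde f\circ\lambda\circ\varphi_{\widetilde R}$. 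Concretely, both equal the map $R\to W(S)/^Lp$ induced by $x\mapsto \widetilde f(\lambda(\varphi(\tilde x)))$, which is well defined mod $p$ because $\varphi(\tilde x)\equiv \tilde x^p$.

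The main obstacle is carrying this out in the animated setting: $W(R)/^Lp$ and the maps $\Phi_R$ are animated, so the pointwise identities above must be made homotopy-coherent and natural in $S$. We plan to argue first for $R$ a polynomial (or ind-smooth, $\delta$-lifted) algebra, where everything is discrete and the identities hold on elements. Both sides are then left Kan extended, using the uniqueness statement for $\delta$-lifts recalled in \Cref{ConstPetrov} to pin down the homotopy. Finally, we glue over the simplicial presentation of $\widetilde T$; this gluing is valid because all maps are natural in $\delta$-morphisms of lifts.
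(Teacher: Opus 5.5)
Your construction of $s'$ is the paper's. The composite $\widetilde R\to W(\widetilde R)\to W(R)$ is the unique $\delta$-map $\Psi_{\widetilde R}$ lifting $\widetilde R\to R$, and $s'$ is precomposition with $\bar\lambda=\Psi_{\widetilde R}/^Lp$. Your elementwise check is also correct for discrete rings. The congruence it rests on, $\lambda(\varphi(\tilde r))\equiv[r^p]\pmod p$ in $W(R)$, is exactly the identity $\Phi_R\circ\res\simeq F_{W(R)/^Lp}$ that the paper uses.

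The gap is your last paragraph, which is where the content of the statement (a homotopy-commutative diagram of stacks on animated $\F_p$-algebras) actually lies.
\begin{itemize}
\item Restricting to polynomial $R$ does not make ``everything discrete''. The test algebra $S$ is an arbitrary animated $\F_p$-algebra. Even at the universal point $S=W(R)/^Lp$, the target $W(S)/^Lp$ has $\pi_1=W(S)[p]\neq 0$: for $R=k[x]$, $S=W(R)/p$ is non-reduced, since $V([x])\neq 0$ there but $V([x])^2=pV([x^2])$.
\item Consequently, agreement on elements gives at most the existence of a homotopy for each polynomial $R$ separately. These homotopies are not unique, so they do not automatically assemble into the $R$-natural family you need to left Kan extend and to glue along the simplicial presentation.
\item Uniqueness of $\delta$-lifts cannot supply the canonical choice. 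It proves your key claim, which is true but not needed. However, $\widetilde f$ and $W(f\circ\bar\lambda)$ lift \emph{different} maps $W(R)\to S$. They are identified only after precomposing with $\Phi_R$ and reducing mod $p$, which is a Frobenius statement, not a $\delta$-uniqueness statement.
\end{itemize}

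The missing idea is to bypass elements entirely. From the quasi-ideal diagram relating $p$, $V$ and $F$ on $W(S)$, the paper derives, for every animated $\F_p$-algebra $S$ and naturally in $S$, that $\Phi_S\circ\res\simeq F_{W(S)/^Lp}$ (and $\res\circ\Phi_S\simeq F_S$). It combines this with three further facts: the Frobenius of animated $\F_p$-algebras is natural, $\res\circ\bar\lambda\simeq\mathrm{id}_R$, and $\res\circ(\widetilde f/^Lp)\simeq f$. This gives
\[
\Phi_S\circ f\circ\bar\lambda\simeq F\circ(\widetilde f/^Lp)\circ\bar\lambda\simeq(\widetilde f/^Lp)\circ F\circ\bar\lambda\simeq(\widetilde f/^Lp)\circ\Phi_R\circ\res\circ\bar\lambda\simeq(\widetilde f/^Lp)\circ\Phi_R .
\]
Each step is a natural equivalence. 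So naturality in $S$ and $f$, and hence compatibility with the reduction to affine $T$, comes for free, and no discreteness or left Kan extension is needed. The $\delta$-structures enter only through the restriction compatibilities of $\Psi_{\widetilde R}$ and $\widetilde f$.
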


\begin{proof}
  As in the proof of \cite{petrov2025Decomposition}*{Corollary~4.4}, we may assume that \(T = \Spec(R)\) is an affine \(k\)-scheme.
  Then the \(\delta\)-structure on \(\widetilde{T} = \Spec(\widetilde{R})\) gives rise to a unique morphism \(\Psi_{\widetilde{R}} \colon \widetilde{R} \to W(R)\) of \(\delta\)-rings whose composition with the restriction morphism \(\res \colon W(R) \to R\) is equal to the canonical morphism \(\widetilde{R} \to \widetilde{R}/^L p \cong R\).
  This morphism induces a morphism
  \begin{equation*}
    R \cong \widetilde{R}/^L p \xrightarrow{\Psi_{\widetilde{R}}/^L p} W(R)/^L p
  \end{equation*}
  of animated \(k\)-algebras, which corresponds to the morphism \(s' \colon WT_{\setF_p} \to T\) over \(\Spec(k)\).

  It remains to show that, for any \(f \colon W(R)/^L p \to S\), the composition
  \begin{equation*}
    R \xrightarrow{\Psi_{\widetilde{R}}/^L p} W(R)/^L p \xrightarrow{f} S \xrightarrow{\Phi_S} W(S)/^L p,
  \end{equation*}
  which is the image of \(f\) via the composition \(WT_{\setF_p} \xrightarrow{s'} T \xrightarrow{\pi_T} T^{\dR}\), is equal to the morphism
  \begin{equation*}
    R \xrightarrow{\Phi_R} W(R)/^L p \xrightarrow{\widetilde{f}} W(S)/^L p,
  \end{equation*}
  which is the image of \(f\) via the composition \(WT_{\setF_p} \xrightarrow{\sigma_T} T^{\dR}\) by \eqref{eq:WT-dR}.
  Consider the following diagram of animated \(k\)-algebras:
  \begin{equation*}
    \begin{tikzcd}
    R \arrow[r, "\Psi_{\widetilde{R}}/^L p"] \arrow[rd, "\Phi_R"'] & W(R)/^L p \arrow[d, "F_{W(R)/^L p}"] \arrow[r, "f"] & S \arrow[d, "\Phi_S"] \\
                                                    & W(R)/^L p \arrow[r, "\widetilde{f}"]                & W(S)/^L p.            
    \end{tikzcd}
  \end{equation*}
  Using the following diagram of quasi-ideals for the \(\setF_p\)-algebra \(S\)
  \begin{equation*}
    \begin{tikzcd}
    W(S) \arrow[d, "F"] \arrow[r, "\times p"] & W(S) \arrow[d, "\id"] \arrow[r] & W(S)/^L p \arrow[d, "\res"] \\
    W(S) \arrow[d, "\id"] \arrow[r, "V"]      & W(S) \arrow[d, "F"] \arrow[r]   & S \arrow[d, "\Phi_S"]       \\
    W(S) \arrow[r, "\times p"]                & W(S) \arrow[r]                  & W(S)/^L p,                  
    \end{tikzcd}
  \end{equation*}
  we can show that
  \begin{equation*}
    S \xrightarrow{\Phi_S} W(S)/^L p \xrightarrow{\res} S \quad \text{and} \quad W(S)/^L p \xrightarrow{\res} S \xrightarrow{\Phi_S} W(S)/^L p
  \end{equation*}
  are equivalent to the Frobenius morphism on \(S\) and \(W(S)/^L p\), respectively.
  Therefore, the above diagram of animated \(\setF_p\)-algebras is commutative as desired.
\end{proof}

\begin{corollary}\label{compatibility-pi-sigma}
  Let \(k\) be a perfect field and let
  \begin{equation*}
    S \defeq \bracketlr{\setA^1_k/\setG_{m,k}}^n.
  \end{equation*}
  This admits a \(\delta\)-lifting
  \begin{equation*}
    S_{W(k)} \defeq \bracketlr{\setA^1_{W(k)}/\setG_{m,W(k)}}^n
  \end{equation*}
  over \(W(k)\) with Frobenius lift given on the standard coordinates by
  \begin{equation*}
    t_i  \mapsto  t_i^p, \qquad u_i  \mapsto  u_i^p.
  \end{equation*}
  Then, for every integer \(r \geq 1\), there exists a morphism
  \begin{equation*}
    s'_r \colon W_rS_{\setF_p} \to S
  \end{equation*}
  and a commutative diagram
  \begin{equation*}
    \begin{tikzcd}
    W_rS_{\setF_p} \arrow[d, "s'_r"'] \arrow[rd, "\sigma_{S,r}"] &         \\
    S \arrow[r, "\pi_S"']                                  & S^{\dR}
    \end{tikzcd}
  \end{equation*}
  of \(\setF_p\)-stacks, where \(\pi_S \colon S \to S^{\dR}\) is the natural morphism given in \Cref{DefdeRhamStack} and \(\sigma_{S,r} \colon W_rS_{\setF_p} \to S^{\dR}\) is the morphism obtained by \cite{petrov2025Decomposition}*{Corollary~4.4} (\Cref{ConstPetrov}).
\end{corollary}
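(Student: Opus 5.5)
The plan is to deduce the statement from \Cref{compatibility-delta-lift}, applied to the algebraic fpqc \(k\)-stack \(T=S\) with the \(\delta\)-lifting \(\widetilde T=S_{W(k)}\), and then to descend from \(W\) to \(W_r\).

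First I check that \(S_{W(k)}\) really is a \(\delta\)-lifting of \(S\) in the sense required there. Write \(S_{W(k)}=[\Spec W(k)[t_1,\dots,t_n]/\setG_m^n]\), where \(u_i\) denotes the coordinate on the \(i\)-th torus factor. The assignment \(t_i\mapsto t_i^p\), \(u_i\mapsto u_i^p\), together with the Witt Frobenius on \(W(k)\), is a Frobenius lift on the polynomial ring and on the Hopf algebra \(W(k)[u_i^{\pm1}]\). Since these rings are \(p\)-torsion free, it comes from a unique \(\delta\)-structure. This \(\delta\)-structure is compatible with the coaction \(t_i\mapsto u_it_i\), because both composites send \(t_i\) to \(u_i^pt_i^p\). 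Hence the \(\delta\)-structure lives on the whole standard simplicial affine presentation (the Čech nerve of \(\setA^n\to S\), whose terms are \(\setA^n\times\setG_m^{nm}\)), compatibly with all face and degeneracy maps. Reducing mod \(p\) recovers the presentation of \(S\), which gives the isomorphism \(\widetilde T_{\setF_p}\simeq S\). \Cref{compatibility-delta-lift} then yields \(s'\colon WS_{\setF_p}\to S\) with \(\pi_S\circ s'=\sigma_S\).

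Next I pass to finite level \(r\). On each affine term \(\Spec\widetilde R\) of the presentation, the map \(\Psi_{\widetilde R}\colon\widetilde R\to W(R)\) is explicit: it sends \(t_i\mapsto[t_i]\) and \(u_i\mapsto[u_i]\), since Teichmüller lifts are the \(\delta\)-compatible lifts of monomials whose Frobenius is the \(p\)-th power map. Composing with \(W(R)\to W_r(R)\) gives \(\Psi_r\colon R\cong\widetilde R/^Lp\to W_r(R)/^Lp\). This is the component of \(s'_r\), and it is compatible with the simplicial structure because Teichmüller maps are multiplicative. The morphism \(\sigma_{S,r}\) of \cite{petrov2025Decomposition}*{Corollary~4.4} is likewise constructed through \(W_r\); concretely, \(W_rS_{\setF_p}\to S^{\dR}\) uses \(\Phi_R\) followed by the \(\delta\)-lift \(W_r(R)/^Lp\to W(S')/^Lp\) for test algebras \(S'\). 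So I would rerun the diagram chase in the proof of \Cref{compatibility-delta-lift} with \(W(R)\) replaced by \(W_r(R)\). The chase needs two inputs. The first is the identity \(F\circ\Psi=\Phi\) mod \(p\), which holds at finite level because \(F([t])=[t^p]=[t]^p\). The second is the quasi-ideal diagram for \(W(S')\), which is independent of \(r\). Finally, the morphisms obtained on the affine terms glue to the quotient stacks via the fpqc descent of \Cref{lemma:CAlg-qcoh-fpqc-hyperdescent}, or more simply via the functoriality of \(T\mapsto T^{\dR}\) and \(T\mapsto W_rT_{\setF_p}\).

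The main obstacle is the finite-level step. Petrov's \(\sigma_{S,r}\) involves the \(\delta\)-lift \(\widetilde f\), which a priori requires the full \(W\), so I must check that it factors through \(W_r(R)\) on the test-algebra side. The first thing I would try is to reduce to the fact that Petrov's morphism for \(W_r\) is the restriction of the one for \(W\) along \(W_rS_{\setF_p}\to WS_{\setF_p}\), which is a closed immersion via \(W\to W_r\). With that in hand, \(s'_r\) can be defined as the restriction of \(s'\) composed with \(\Psi_r\), and commutativity follows from the \(W\)-level statement. The explicit monomial form of \(\Psi\) makes the compatibility of \(s'\) with this restriction transparent.
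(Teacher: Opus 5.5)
Your proposal is correct and is essentially the paper's argument: apply \Cref{compatibility-delta-lift} to the \(\delta\)-lifting \(S_{W(k)}\) to obtain \(s'\colon WS_{\setF_p}\to S\) with \(\pi_S\circ s'\simeq\sigma_S\), then precompose with \(W_rS_{\setF_p}\to WS_{\setF_p}\), since \(\sigma_{S,r}\) is by construction the restriction of \(\sigma_S\) along this map. Your explicit check of the \(\delta\)-structure is harmless, and the attempted finite-level rerun of the diagram chase is unnecessary once you use this restriction, as you conclude yourself; that rerun would not work verbatim anyway, because \(W_r(R)\) carries no \(\delta\)-structure.
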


\begin{proof}
  Since \(S\) admits an algebraic fpqc \(\delta\)-lifting \(S_{W(k)}\) over \(W(k)\) and we can combine the morphism \(W_rS_{\setF_p} \to WS_{\setF_p}\), the assertion follows from \Cref{compatibility-delta-lift}.
\end{proof}

\begin{proposition}\label{sigma-over-S}
We use \Cref{notation:log-dR}.
We assume that $k$ is perfect.
Then there exists a morphism $\sigma_{X/S,r} \colon W_rX_{\F_p} \to (X/S)^{\dR}$ over $X$ for every $r \in \Z_{\geq 1}$.
\end{proposition}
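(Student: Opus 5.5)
The morphism \(\sigma_{X/S,r}\) will be produced from the universal property of the fiber product \((X/S)^{\dR}=X^{\dR}\times_{S^{\dR},\pi_S}S\). Its two components will be \(\sigma_{X,r}\) and a morphism to \(S\) built from \Cref{compatibility-pi-sigma}.

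For the first component, take \(\sigma_{X,r}\colon W_rX_{\F_p}\to X^{\dR}\) from \Cref{ConstPetrov}, in its truncated form \cite{petrov2025Decomposition}*{Corollary~4.4}. For the second, take the composite
\[
W_rX_{\F_p}\xrightarrow{W_r\iota} W_rS_{\F_p}\xrightarrow{s'_r} S .
\]
To obtain a map into the fiber product, one must check that the two composites to \(S^{\dR}\) agree, i.e.
\[
\iota^{\dR}\circ\sigma_{X,r}\simeq \pi_S\circ s'_r\circ W_r\iota .
\]
By \Cref{compatibility-pi-sigma}, \(\pi_S\circ s'_r\simeq\sigma_{S,r}\). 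So the key identity is the naturality
\[
\iota^{\dR}\circ\sigma_{X,r}\simeq\sigma_{S,r}\circ W_r\iota .
\]
This should follow from the construction in \Cref{ConstPetrov}, which is functorial in \(T\). On an affine \(T=\Spec R\), a point \(f\colon W(R)/^Lp\to S'\) is sent to \(\widetilde f\circ\Phi_R\), and \(\widetilde f\) is characterized by a universal property of animated \(\delta\)-rings. That universal property is compatible with precomposition along \(W(R_S)\to W(R_X)\), where \(R_S\) and \(R_X\) denote local coordinate rings of \(S\) and \(X\).

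Since \(S\) is a stack rather than a scheme, this compatibility must be promoted from affine charts via the simplicial presentations used in \cite{petrov2025Decomposition}*{Corollary~4.4}. That promotion is routine once the construction is known to be a natural transformation of functors on algebraic stacks. This gives \(\sigma_{X/S,r}\).

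It remains to check that \(\sigma_{X/S,r}\) lies over \(X\). The phrase is read via the structure maps \(s_X\colon W_rX_{\F_p}\to X\) and \(\nu_X\circ\mathrm{pr}_1\colon (X/S)^{\dR}\to X\). With this reading, the claim is immediate from the commutative triangle \(\nu_X\circ\sigma_X=s_X\) in \Cref{ConstPetrov}, because the first component of \(\sigma_{X/S,r}\) is \(\sigma_{X,r}\). If instead ``over \(X\)'' is meant via \(\nu_{X/S}\) and \(X^{(1)}\), one composes with \(\pi\colon X^{(1)}\to X'\simeq X\) (\(k\) perfect) and uses the same triangle.

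The main obstacle is making the naturality square canonical in the \(\infty\)-categorical sense for the stack \(S=[\setA^1/\setG_m]^n\) and the truncated Witt vectors \(W_r\). In particular, the homotopy from \Cref{compatibility-pi-sigma} must be compatible with the one from naturality. I would handle this by working on the smooth affine chart \(\setA^n\to S\) with its \(\delta\)-structure \(t_i\mapsto t_i^p\). I would also choose \(s'_r\) compatibly with \(W_r\iota\) there, descending along the \(\setG_m^n\)-action, whose Frobenius lift \(u_i\mapsto u_i^p\) is a \(\delta\)-map.
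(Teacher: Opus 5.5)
Your proposal is correct and follows the paper's proof. The paper likewise obtains $\sigma_{X/S,r}$ from the universal property of $X^{\dR}\times_{S^{\dR}}S$, with components $\sigma_{X,r}$ and $s'_r\circ W\iota_{\F_p}$, and gets the required homotopy from the naturality square $\iota^{\dR}\circ\sigma_{X,r}\simeq\sigma_{S,r}\circ W\iota_{\F_p}$ together with \Cref{compatibility-pi-sigma}; the compatibility over $X$ then comes from $\nu_X\circ\sigma_{X,r}\simeq s_X$ in Petrov's construction. Your extra discussion of coherence via the atlas $\setA^n\to S$ goes beyond what the paper spells out (it simply asserts that the diagram commutes), but it is not needed for the argument as presented.
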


\begin{proof}
We consider the commutative diagram:
\[
\begin{tikzcd}
W_rX_{\F_p} \arrow[r,"\sigma_{X,r}"] \arrow[d,"W\iota_{\F_p}"] & X^{\dR} \arrow[dd,"\iota^{\dR}"] \\
W_rS_{\F_p} \arrow[rd,"\sigma_{S,r}"] \arrow[d,"s_r'"] & \\
S \arrow[r,"\pi_S"] & S^{\dR},
\end{tikzcd}
\]
where $s'_r$ is defined in \Cref{compatibility-pi-sigma} and $\sigma_{X,r}$ and $\sigma_{S,r}$ are defined in \cite{petrov2025Decomposition}*{Corollary~4.4}.
Therefore, the commutative diagram induces
\[
\sigma_{X/S,r} \colon W_rX_{\F_p} \to X^{\dR} \times_{S^{\dR}} S = (X/S)^{\dR}
\]
such that the following diagram commutes:
\[
\begin{tikzcd}
    W_rX_{\F_p} \arrow[r,"\sigma_{X/S,r}"] \arrow[rrd,"\alpha_{X,r}"'] & (X/S)^{\dR} \arrow[r] & X^{\dR} \arrow[d,"\nu_X"]\\
    & & X
\end{tikzcd}
\]
by \cite{petrov2025Decomposition}*{Corollary~4.4}, where $\alpha_{X,r}$ is the composition of \(s_X \colon WX_{\setF_p} \to X\) and \(W_rX_{\setF_p} \to WX_{\setF_p}\) defined by \(\Phi_R\) and \(\res\) by \eqref{eq:WT-dR}.
\end{proof}

\begin{theorem}\label{qFs-to-HT}
Let $k$ be a perfect field and $X$ a qcqs $k$-scheme.
We assume that $X$ is quasi-$F$-split.
Then $X$ is HT-split.
Furthermore, if $X$ is smooth over $k$ and $D$ is a simple normal crossings divisor on $X$, then $(X,D)$ is log HT-split.
\end{theorem}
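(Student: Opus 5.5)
The plan is to push structure sheaves forward along the commutative triangle of \Cref{ConstPetrov} and its logarithmic refinement \Cref{sigma-over-S}. Suppose \(X\) is \(n\)-quasi-\(F\)-split and put \(r\defeq n\). Truncating \Cref{ConstPetrov} to \(W_r\) gives a factorization \(s_{X,r}=\nu_X\circ\sigma_{X,r}\colon W_rX_{\setF_p}\to X^{\dR}\to X\). Taking global sections of structure sheaves yields a factorization
\[
\cO_X\xrightarrow{\nu_X^{\sharp}}\OdR{X}\to R(s_{X,r})_*\cO_{W_rX_{\setF_p}}.
\]
So it suffices to show that \(\cO_X\to R(s_{X,r})_*\cO_{W_rX_{\setF_p}}\) is split. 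Indeed, a splitting of the composite restricts along \(\OdR{X}\to R(s_{X,r})_*\cO\) to a splitting of \(\nu_X^{\sharp}\), which in particular is ind-split.

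Next I identify this composite with the quasi-\(F\)-splitting map. Work affine-locally with \(X=\Spec R\). The morphism \(s_{X,r}\) corresponds to \(R\xrightarrow{\Phi_R}W_r(R)/^Lp\), i.e.\ \(a\mapsto[a^p]\bmod p\). The derived quotient \(W_r(R)/^Lp\) has \(H^0=W_r(R)/p\) and \(H^{-1}=W_r(R)[p]\). Since \(R\) is an \(\setF_p\)-algebra, \(p=VF\) on \(W_r(R)\), so \(W_r(R)/p\) maps onto \(\overline W_r R\defeq W_r(R)/pW_r(R)\). In Yobuko's/KTTWYY's convention the map \(\Phi_{X,r}\colon\cO_X\to F_*\overline W_r\cO_X\) is \(a\mapsto[a^p]\). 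Hence on \(H^0\) the composite is \(\cO_X\to F_*W_r\cO_X/p\to F_*\overline W_r\cO_X\), and the latter is exactly \(\Phi_{X,r}\). Because \(\cO_X\) is discrete, the \(t\)-structure argument of \Cref{smooth-HT-split-equiv} shows that any map \(\cO_X\to M\) with \(M\in\mcalD^{\leq 0}\) factors through \(H^0\). In fact I need the reverse direction: a splitting \(\sigma\colon F_*\overline W_r\cO_X\to\cO_X\) of \(\Phi_{X,r}\) gives the retraction
\[
R(s_{X,r})_*\cO\to\tau_{\geq0}R(s_{X,r})_*\cO= \mcalH^0\to F_*\overline W_r\cO_X\xrightarrow{\sigma}\cO_X.
\]
This requires \(R(s_{X,r})_*\cO_{W_rX_{\setF_p}}\) to be connective (in degrees \(\leq 0\)). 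That holds since \(W_rX_{\setF_p}\to X\) is affine, its pushforward being the animated ring \(W_r(R)/^Lp\) regarded as an \(R\)-module. It is the fpqc sheafification in the definition of \(W_r(-)\) that needs care here, but locally it is affine. Gluing the affine-local identifications should be formal, because the maps are global morphisms of stacks and we only need the existence of one global retraction built from \(\sigma\) and the truncation \(\tau_{\geq0}\).

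For the logarithmic statement, \Cref{sigma-over-S} supplies \(\sigma_{X/S,r}\colon W_rX_{\setF_p}\to(X/S)^{\dR}\) over \(X\). Composing with \(\nu_{X/S}\) and then \(\pi\colon X^{(1)}\to X'\simeq X\) recovers \(\alpha_{X,r}=s_{X,r}\). Pushing forward to \(X'\) gives
\[
\cO_{X'}\to R\pi_*\OdR{X/S}\to R(s_{X,r})_*\cO_{W_rX_{\setF_p}}.
\]
By \Cref{pullback-log-dR}, the first arrow is identified with \(F^{\dR}_{(X,D)}\), since the proof of \Cref{equivalence-log-relative} gives \(R\pi_*F^{\dR}_{X/S}\simeq F^{\dR}_{(X,D)}\). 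The same retraction as above then splits \(F^{\dR}_{(X,D)}\), so \((X,D)\) is log HT-split by \Cref{Def-log-HT-split}.

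The main obstacle is the identification of the \(H^0\) of \(R(s_{X,r})_*\cO\) with \(F_*\overline W_r\cO_X\), or at least the existence of a compatible surjection onto it. This comes down to checking that \(W_r(R)/^Lp\to\overline W_rR\) is compatible with the Witt-vector conventions in Petrov's construction, including the \(W_rX_{\setF_p}\) base change. If the naive truncation \(W_r(R)/p\) is not literally \(\overline W_r R\), I would fall back on the fact that \(\overline W_r R\) is a quotient of \(H^0(W_r(R)/^Lp)\) compatibly with \(\Phi\), which is enough for the retraction.
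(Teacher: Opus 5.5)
Your proposal is correct and follows the paper's argument. You factor the quasi-\(F\)-splitting map \(\cO_X\to F_*W_r\cO_X/p\) through \(\nu_X^{\sharp}\); in the log case you factor it through \(F^{\dR}_{(X,D)}\) using \(\sigma_{X/S,r}\) from \Cref{sigma-over-S} and the identification \(R\pi_*\OdR{X/S}\simeq F_*\Omega^\bullet_X(\log D)\). You then split the composite with the given quasi-\(F\)-splitting. Your extra step, projecting the connective object \(R(s_{X,r})_*\cO_{W_rX_{\F_p}}\) to \(\tau_{\geq 0}(\cdot)=\mcalH^0=F_*W_r\cO_X/p\) before applying the splitting, spells out a point the paper leaves implicit when it writes \(F_*W_r\cO_X/p\) as the target of \(\sigma^{\sharp}_{X/S,r}\). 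Your worry about \(W_r(R)/p\) versus \(\overline W_r R\) is moot, since they are the same ring.
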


\begin{proof}
The first assertion follows from \cite{petrov2025Decomposition}*{Corollary~4.4}.

We prove the second assertion.
We use the notation \Cref{notation:log-dR}.
By \Cref{sigma-over-S}, the canonical morphism \(\Phi_X \colon \mcalO_X \to F_*W_r\mcalO_X/p\) can be identified with a composition
\begin{equation*}
    \mcalO_X \xrightarrow{F_{(X, D)}{^{\dR}}} F_*\Omega_X^{\bullet}(\log D) \cong R\pi_*\OdR{X/S} \xrightarrow{\sigma_{X/S, r}^{\sharp}} F_*W_r\mcalO_X/p
\end{equation*}
in \(\mcalD_{\qcoh}(X)\), where the middle isomorphism follows from \Cref{pullback-log-dR}.
So the splitting of the composition gives the splitting of the first morphism which shows that the pair \((X, D)\) is log HT-split.
\end{proof}

\subsection{Blow-ups along logarithmic strata}

Let \(k\) be a perfect field of characteristic \(p>0\).

We apply the preceding lemma to blow-ups along strata of an SNC divisor.

\begin{proposition}
  \label{HT-ascent-stratum-blowup}
  Let \(X\) be a qcqs smooth \(k\)-scheme and let \(D=D_1+\cdots+D_n\) be a simple normal crossings divisor. Let \(\varnothing\neq I\subseteq\{1,\ldots,n\}\), and assume that \(D_I\defeq\bigcap_{i\in I}D_i\) is nonempty. Let \(\pi\colon Y\defeq\Bl_{D_I}X\to X\) be the blow-up and put \(D_Y\defeq(\pi^{-1}D)_{\red}\). If the pair \((X,D)\) is log HT-split, then the pair \((Y,D_Y)\) is log HT-split. In particular, the underlying scheme \(Y\) is HT-split.
\end{proposition}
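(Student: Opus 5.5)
The plan is to realize the blow-up as a base change of a toric blow-up of the log base stack, and then transport the splitting using \Cref{relative-dR-base-change}.

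\textbf{Step 1 (the blow-up as a base change).} Let \(\iota\colon X\to S=[\setA^1_k/\setG_{m,k}]^n\) be the smooth representable morphism of \Cref{notation:log-dR} attached to \(D\). After reordering, write \(S=S_I\times S_{I^c}\) with \(S_I=[\setA^I/\setG_m^I]\). The closed substack \(B\setG_m^I\times S_{I^c}\) (the origin of \(\setA^I\)) pulls back to \(D_I\) under \(\iota\). Let
\[
S_Y\defeq[\Bl_0\setA^I/\setG_m^I]\times S_{I^c}\longrightarrow S
\]
be the stacky blow-up along this closed substack. This is the toric stack of the star subdivision of the cone \(\setR_{\geq0}^I\). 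Since \(\iota\) is smooth, hence flat, blow-up commutes with base change along \(\iota\), and we get
\[
Y\simeq X\times_S S_Y.
\]
The projection \(\iota_Y\colon Y\to S_Y\) is again smooth representable. Since \(S_Y\) has a smooth fan, I expect \(\Omega^1_{Y/S_Y}\simeq\Omega^1_{Y/k}(\log D_Y)\): the toric boundary of \(S_Y\) pulls back to \(D_Y\), which consists of the strict transforms of the \(D_i\) together with the exceptional divisor. I would check this isomorphism étale locally, where \(S_Y\) is covered by open substacks \([\setA^{r}/\setG_m^{r}]\times(\text{torus part})\) corresponding to the maximal cones of the subdivision.

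\textbf{Step 2 (ascent to \(S_Y\)).} By \Cref{equivalence-log-relative}, \(X\) is HT-split over \(S\). By \Cref{relative-dR-base-change}, \(Y\) is HT-split over \(S_Y\). Then \Cref{smooth-HT-split-equiv} shows that \(F^{\dR}_{Y/S_Y}\colon\cO_{Y^{(1)}}\to F_{Y/S_Y,*}\Omega^\bullet_{Y/S_Y}\) is ind-split in \(\mcalD_{\qcoh}(Y^{(1)})\).

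\textbf{Step 3 (descent to the coarse space).} Let \(\pi_Y\colon Y^{(1)}=Y\times_{S_Y,F}S_Y\to Y'\) be the natural map. The statements and proofs of \Cref{root-stack-description} and \Cref{pullback-log-dR} are étale local on \(Y'\). On each toric chart of \(S_Y\), the stack \(S_Y\) is isomorphic to \([\setA^1/\setG_m]^m\times\setG_m^{m'}\), and relative Frobenius on the torus factor just twists by Frobenius, which is harmless. So \(Y^{(1)}\) is locally a \(p\)-th multiroot stack over \(Y'\) along \(D_Y'\). Consequently \(\pi_Y\) is a tame coarse moduli map with \(\pi_{Y,*}\cO=\cO\), and \(\pi_{Y,*}\) is exact. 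The cohomology-sheaf computation in \Cref{pullback-log-dR} then gives
\[
R\pi_{Y,*}F^{\dR}_{Y/S_Y}\simeq F^{\dR}_{(Y,D_Y)/k}.
\]
Applying \(R\pi_{Y,*}\) to the ind-splitting from Step 2 shows that \(F^{\dR}_{(Y,D_Y)/k}\) is ind-split. Since \(Y'\) is qcqs and the target is perfect, hence compact, \(F^{\dR}_{(Y,D_Y)/k}\) actually splits, exactly as in \Cref{equivalence-log-relative}. The final assertion, that \(Y\) is HT-split, follows from \(\cO_Y\to F_*\Omega^\bullet_Y\to F_*\Omega^\bullet_Y(\log D_Y)\): a retraction of the composite restricts to a retraction of the first map.

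\textbf{Main obstacle.} The hardest part is Step 1 together with the local analysis in Step 3, because \(S_Y\) is no longer of the form \([\setA^1/\setG_m]^m\). So \Cref{notation:log-dR} and \Cref{root-stack-description} do not apply verbatim. The key difficulty is that splittings do not glue, so one cannot simply cover \(S_Y\) by charts and split locally. My first approach is to prove only the comparison \(R\pi_{Y,*}F_{Y/S_Y,*}\Omega^\bullet_{Y/S_Y}\simeq F_{Y/k,*}\Omega^\bullet_{Y/k}(\log D_Y)\), compatibly with \(F^{\dR}\). Being an isomorphism can be checked locally on charts, while the splitting itself is produced globally on \(Y^{(1)}\) by base change, so no gluing of splittings is required.
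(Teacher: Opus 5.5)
Your proof is correct, and Steps 1--2 are the paper's argument. The paper also forms $\widetilde{S}=\Bl_{\mcalZ_I}S$ and identifies $Y\simeq X\times_S\widetilde{S}$ by flat base change. It then gets HT-splitting of $Y$ over $\widetilde{S}$ from \Cref{equivalence-log-relative} and \Cref{relative-dR-base-change}.

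You diverge in Step 3, and the paper removes your ``main obstacle'' globally rather than locally. Using the Cox-type presentation $\Bl_{V_I}\setA^n_k\simeq[U/\setG_{m,k}]$ with $U=\setA^{n+1}_k\setminus V(y_i\mid i\in I)$, plus a change of basis of the torus, it shows $\widetilde{S}\simeq[U/\setG_{m,k}^{n+1}]$. So $\widetilde{S}$ is an open substack of $T_Y=\mcalA^{n+1}$. It then matches line bundles with sections to show that $Y\to\widetilde{S}\hookrightarrow T_Y$ is the divisor-classifying morphism of $(\widetilde{D}_1,\dots,\widetilde{D}_n,E)$. An open immersion is étale, so $(Y/\widetilde{S})^{\dR}\simeq(Y/T_Y)^{\dR}$ and the Frobenius twists agree. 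Hence $Y$ is HT-split over $T_Y$, and \Cref{equivalence-log-relative} applies verbatim, with no need to revisit \Cref{root-stack-description} or \Cref{pullback-log-dR}.

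Your route keeps $S_Y$ as the base and re-runs the coarse-moduli and pushforward comparison chart by chart. This is legitimate for the reason you give: only $\Omega^\bullet_{Y/S_Y}\simeq\Omega^\bullet_{Y/k}(\log D_Y)$ and $R\pi_{Y,*}F^{\dR}_{Y/S_Y}\simeq F^{\dR}_{(Y,D_Y)/k}$ must be verified locally, while the ind-splitting is produced globally. The compactness step is then the same as in \Cref{equivalence-log-relative}. The cost is that on each chart you must check that $Y_V\to V\simeq\mcalA^n$ is the divisor-classifying map of $D_Y|_{Y_V}$, and that the local identifications glue. That is essentially the paper's computation done locally.

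Your charts are exactly the open substacks $[D(y_i)/\setG_{m,k}^{n+1}]\simeq\mcalA^n$ of $T_Y$. They have no extra $\setG_m^{m'}$ factors, because the full torus is quotiented out; this slip is harmless to your argument. Seeing that $S_Y$ embeds openly in some $\mcalA^N$ is what lets the paper avoid all local re-verification.

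Your justification of the final ``in particular'', restricting a retraction along $F_*\Omega^\bullet_Y\to F_*\Omega^\bullet_Y(\log D_Y)$, is correct. It makes explicit a step the paper leaves implicit.
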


\begin{proof}
  If \(|I|=1\), then \(D_I\) is an effective Cartier divisor, so its blow-up is the identity by \citeSta{085V}; hence the assertion is immediate. We therefore assume that \(|I|\geq2\).

  Put \(\mcalA\defeq[\setA_k^1/\setG_{m,k}]\) and \(S\defeq\mcalA^n\). By \cite{barz2025Logarithmic}*{Theorem~5.4}, the divisor \(D\) determines a smooth representable morphism \(\iota_D\colon X\to S\) whose \(i\)-th component classifies the pair \((\mcalO_X(-D_i),s_{D_i})\), where \(s_{D_i}\colon\mcalO_X(-D_i)\to\mcalO_X\) is the canonical inclusion. Let \(j_0\colon B\setG_{m,k}\hookrightarrow\mcalA\) be the closed immersion induced by the origin, let \(\pr_I\colon S\to\mcalA^I\) be the projection, and set
  \begin{equation*}
    \mcalZ_I\defeq S\times_{\mcalA^I}(B\setG_{m,k})^I.
  \end{equation*}

  We explain its pullback to \(X\). For a line bundle \(L\) on a scheme \(T\) and an \(\mcalO_T\)-linear morphism \(s\colon L\to\mcalO_T\), its zero scheme is the closed subscheme
  \begin{equation*}
    Z(s)\defeq V(\operatorname{im}(s))\subseteq T.
  \end{equation*}
  After trivializing \(L\), the morphism \(s\) is multiplication by a function \(f\), and this definition becomes \(Z(s)=V(f)\). The closed immersion \(j_0\colon B\setG_{m,k}=[\{0\}/\setG_{m,k}]\to\mcalA\) is the universal zero locus: if \(c_{(L,s)}\colon T\to\mcalA\) classifies \((L,s)\), then there is a \(2\)-Cartesian square
  \begin{equation*}
    \begin{tikzcd} Z(s) \arrow[r] \arrow[d] & B\setG_{m,k} \arrow[d,"j_0"] \\ T \arrow[r,"c_{(L,s)}"'] & \mcalA. \end{tikzcd}
  \end{equation*}
  Indeed, this assertion may be checked fpqc locally after trivializing \(L\), where it reduces to the ordinary pullback square obtained by pulling back \(\{0\}\hookrightarrow\setA_k^1\) along the function \(f\colon T\to\setA_k^1\).

  Let \(\iota_{D,i}\colon X\to\mcalA\) denote the \(i\)-th component of \(\iota_D\). Applying the preceding universal property to \((\mcalO_X(-D_i),s_{D_i})\), we obtain
  \begin{equation*}
    X\times_{\iota_{D,i},\mcalA,j_0}B\setG_{m,k}\simeq Z(s_{D_i}) \cong V(\mcalI_{D_i})=D_i.
  \end{equation*}
  Writing \(I=\{i_1,\ldots,i_r\}\), associativity of fiber products therefore gives
  \begin{equation*}
    X\times_S\mcalZ_I\simeq X\times_{\mcalA^I}(B\setG_{m,k})^I\simeq D_{i_1}\times_X\cdots\times_XD_{i_r}.
  \end{equation*}
  The last fiber product is the scheme-theoretic intersection of \(D_{i_1},\ldots,D_{i_r}\): its ideal sheaf is \(\mcalI_{D_{i_1}}+\cdots+\mcalI_{D_{i_r}}\). Since \(D\) is a simple normal crossings divisor, étale locally these ideals are generated by distinct members of a regular system of parameters, and hence this scheme-theoretic intersection is precisely \(D_I\). Thus
  \begin{equation*}
    X\times_S\mcalZ_I\simeq D_I
  \end{equation*}
  as closed subschemes of \(X\).

  Set \(\rho\colon\widetilde{S}\defeq\Bl_{\mcalZ_I}S\to S\). Since \(\iota_D\) is flat, compatibility of blow-ups with flat base change, checked after a smooth atlas of \(S\) and using \citeSta{0805}, gives
  \begin{equation*}
    X\times_S\widetilde{S}\simeq\Bl_{X\times_S\mcalZ_I}X\simeq\Bl_{D_I}X=Y.
  \end{equation*}
  Thus there is a Cartesian diagram
  \begin{equation*}
    \begin{tikzcd} Y \arrow[r,"\widetilde{\iota}"] \arrow[d,"\pi"'] & \widetilde{S} \arrow[d,"\rho"] \\ X \arrow[r,"\iota_D"'] & S. \end{tikzcd}
  \end{equation*}
  Since \((X,D)\) is log HT-split, \Cref{equivalence-log-relative} shows that \(X\) is HT-split over \(S\). Hence \Cref{relative-dR-base-change} shows that \(Y\) is HT-split over \(\widetilde{S}\).

  We now identify the boundary target after the blow-up. Let \(q\colon\setA_k^n\to S=[\setA_k^n/\setG_{m,k}^n]\) be the standard atlas, with coordinates \(x_1,\ldots,x_n\). The pullback of \(B\setG_{m,k}\hookrightarrow\mcalA\) along the \(i\)-th coordinate morphism \(\setA_k^n\to\mcalA\) is \(V(x_i)\). Consequently,
  \begin{equation*}
    \setA_k^n\times_S\mcalZ_I\simeq V(x_i\mid i\in I)\defeq V_I.
  \end{equation*}
  Since \(q\) is a \(\setG_{m,k}^n\)-torsor, flat base change of blow-ups gives
  \begin{equation*}
    \widetilde{S}\times_S\setA_k^n\simeq\Bl_{V_I}\setA_k^n.
  \end{equation*}
  The descent datum on this blow-up is the lift of the coordinatewise \(\setG_{m,k}^n\)-action, because the center \(V_I\) is invariant. Therefore
  \begin{equation*}
    \widetilde{S}\simeq\bracketlr{(\Bl_{V_I}\setA_k^n)/\setG_{m,k}^n}.
  \end{equation*}

  Let \(U\defeq\setA_k^{n+1}\setminus V(y_i\mid i\in I)\), with coordinates \(y_1,\ldots,y_n,e\), and fix an action of \(\setG_{m,k}\) given by \(y_i\mapsto\lambda y_i\) for \(i\in I\), by \(y_i\mapsto y_i\) for \(i\notin I\), and by \(e\mapsto\lambda^{-1}e\). Define a morphism
  \begin{equation*}
    \psi\colon U \to \setA_k^n,\qquad x_i=y_ie\ \text{for \(i\in I\)},\qquad x_i=y_i\ \text{for \(i\notin I\)}.
  \end{equation*}
  This morphism is invariant under the \(\setG_{m,k}\)-action and induces a morphism \(\bracketlr{U/\setG_{m,k}}\to\setA_k^n\). To identify its quotient, cover \(U\) by the invariant open subsets \(U_i=D(y_i)\) for \(i\in I\). On \(U_i\), each orbit has a unique representative with \(y_i=1\), and hence
  \begin{equation*}
    \bracketlr{U_i/\setG_{m,k}}\simeq\Spec\left(k\left[x_i,\left(u_j\right)_{j\in I\setminus\bracelr{i}},\left(x_\ell\right)_{\ell\notin I}\right]\right),\qquad u_j=\frac{y_j}{y_i},\quad x_i=y_ie,\quad x_\ell=y_\ell.
  \end{equation*}
  Under the induced morphism to \(\setA_k^n\), one has \(x_j=u_jx_i\) for \(j\in I\setminus\{i\}\). By the Rees-algebra description in \citeSta{01OF}, this is exactly the standard affine chart of \(\Bl_{V_I}\setA_k^n\) on which the generator \(x_i\) of the center becomes principal. The transition maps on the intersections \(U_i\cap U_j\) are the usual changes of ratios \(x_\ell/x_i\), so these local identifications glue and give an isomorphism
  \begin{equation*}
    \Bl_{V_I}\setA_k^n\simeq\bracketlr{U/\setG_{m,k}}
  \end{equation*}
  over \(\setA_k^n\).

  The coordinatewise action of \(\setG_{m,k}^n\) on \(\setA_k^n\) lifts to \(U\) by \(y_i\mapsto t_iy_i\) and \(e\mapsto e\), and it commutes with the above \(\setG_{m,k}\)-action on \(U\). The resulting action of \(\setG_{m,k}^n\times\setG_{m,k}\) is given by
  \begin{equation*}
    (t_1,\ldots,t_n,\lambda)\cdot y_i=\begin{cases}t_i\lambda y_i&\text{if \(i\in I\)},\\t_iy_i&\text{if \(i\notin I\)},\end{cases}\qquad (t_1,\ldots,t_n,\lambda)\cdot e=\lambda^{-1}e.
  \end{equation*}
  Let \(\varepsilon_1,\ldots,\varepsilon_n,\varepsilon\) denote the standard basis of the character lattice of \(\setG_{m,k}^n\times\setG_{m,k}\). The coordinate characters of \(y_1,\ldots,y_n,e\) are
  \begin{equation*}
    \chi_i=\varepsilon_i+\varepsilon\ \text{for \(i\in I\)},\qquad \chi_i=\varepsilon_i\ \text{for \(i\notin I\)},\qquad \chi_e=-\varepsilon.
  \end{equation*}
  These characters form a basis, since
  \begin{equation*}
    \varepsilon=-\chi_e,\qquad \varepsilon_i=\chi_i+\chi_e\ \text{for \(i\in I\)},\qquad \varepsilon_i=\chi_i\ \text{for \(i\notin I\)}.
  \end{equation*}
  Hence an automorphism of the torus \(\setG_{m,k}^n\times\setG_{m,k}\) identifies this action on \(U\) with the standard coordinatewise action of \(\setG_{m,k}^{n+1}\). Combining the preceding quotient descriptions, we obtain
  \begin{equation*}
    \widetilde{S}\simeq\bracketlr{(\Bl_{V_I}\setA_k^n)/\setG_{m,k}^n}\simeq\bracketlr{U/(\setG_{m,k}^n\times\setG_{m,k})}\simeq\bracketlr{U/\setG_{m,k}^{n+1}}.
  \end{equation*}

  Set \(T_Y\defeq\mcalA^{n+1}=[\setA_k^{n+1}/\setG_{m,k}^{n+1}]\). Since \(U\subseteq\setA_k^{n+1}\) is an invariant open subscheme, it induces a representable open immersion
  \begin{equation*}
    j\colon\widetilde{S}=\bracketlr{U/\setG_{m,k}^{n+1}} \to \bracketlr{\setA_k^{n+1}/\setG_{m,k}^{n+1}}=T_Y.
  \end{equation*}

  Let \((\mcalL_i,s_i^{\mathrm{univ}})\) for \(1\leq i\leq n\) and \((\mcalL_E,s_E^{\mathrm{univ}})\) denote the \(n+1\) universal line bundles with sections on \(T_Y\). Their zero loci are the coordinate divisors
  \begin{equation*}
    \mcalH_i\defeq\bracketlr{V(y_i)/\setG_{m,k}^{n+1}},\qquad \mcalH_E\defeq\bracketlr{V(e)/\setG_{m,k}^{n+1}}
  \end{equation*}
  of \(T_Y\).
  The restrictions \(j^*\mcalH_i\) and \(j^*\mcalH_E\) are represented as quotients of \(V(y_i)\cap U\) and \(V(e)\cap U\) by \(\setG_{m, k}^{n+1}\), respectively.
  We denote them by \(V(y_i)\) and \(V(e)\) for simplicity.

  Write \(D_Y=\widetilde D_1+\cdots+\widetilde D_n+E\), where \(E\) is the exceptional divisor and \(\widetilde D_i\) is the strict transform of \(D_i\). On the Cox presentation, the blow-up formulas give
  \begin{equation*}
    \rho^*V(x_i)=V(y_i)+V(e)\ \text{for \(i\in I\)},\qquad \rho^*V(x_i)=V(y_i)\ \text{for \(i\notin I\)}.
  \end{equation*}
  For \(i\in I\), the divisor \(V(y_i)\) is the closure of the inverse image of \(V(x_i)\setminus V_I\), and hence is the strict transform of \(V(x_i)\), while \(V(e)\) is the inverse image of the center and hence is the exceptional divisor. For \(i\notin I\), the center \(V_I\) is not contained in \(V(x_i)\), and the equality \(x_i=y_i\) shows that \(V(y_i)\) is again the strict transform of \(V(x_i)\). After base change along \(\iota_D\), the coordinate divisor \(V(x_i)\) pulls back to \(D_i\), so
  \begin{equation*}
    (j\circ\widetilde{\iota})^*\mcalH_i=\widetilde D_i,\qquad (j\circ\widetilde{\iota})^*\mcalH_E=E.
  \end{equation*}
  Equivalently, one has isomorphisms of line bundles with sections
  \begin{equation*}
    (j\circ\widetilde{\iota})^*(\mcalL_i,s_i^{\mathrm{univ}})\simeq(\mcalO_Y(-\widetilde D_i),s_{\widetilde D_i}),\qquad (j\circ\widetilde{\iota})^*(\mcalL_E,s_E^{\mathrm{univ}})\simeq(\mcalO_Y(-E),s_E).
  \end{equation*}
  Notice that \(E\) occurs with multiplicity \(|I|\) in the total transform \(\pi^*D\), since \(\pi^*D_i=\widetilde D_i+E\) for every \(i\in I\), but it occurs only once in the reduced total transform. Hence
  \begin{equation*}
    D_Y=(\pi^{-1}D)_{\red}=\widetilde D_1+\cdots+\widetilde D_n+E.
  \end{equation*}
  A morphism to \(T_Y=\mcalA^{n+1}\) is equivalent to an ordered collection of \(n+1\) line bundles with sections. The preceding isomorphisms therefore give a canonical \(2\)-isomorphism
  \begin{equation*}
    \iota_{D_Y}\simeq j\circ\widetilde{\iota}\colon Y \to \widetilde{S} \to  T_Y,
  \end{equation*}
  where \(\iota_{D_Y}\) is the divisor-classifying morphism associated with the ordered components \(\widetilde D_1,\ldots,\widetilde D_n,E\). The standard blow-up charts also show that \(Y\) is smooth and that \(D_Y\) is a simple normal crossings divisor.

  Since \(j\) is a representable open immersion, it is étale. Its relative Frobenius is an isomorphism by \citeSta{0EBS}, after checking on a scheme atlas, and \cite{barz2025Logarithmic}*{Proposition~2.40} gives an equivalence
  \begin{equation*}
    \widetilde{S}\xrightarrow{\sim}(\widetilde{S}/T_Y)^{\dR}.
  \end{equation*}
  Unwinding the definition of relative de Rham stacks and relative Frobenius twists, we obtain natural equivalences
  \begin{equation*}
    (Y/\widetilde{S})^{\dR}\simeq(Y/T_Y)^{\dR},\qquad Y^{(1)}_{/\widetilde{S}}\simeq Y^{(1)}_{/T_Y},
  \end{equation*}
  under which \(\nu_{Y/\widetilde{S}}\) and \(\nu_{Y/T_Y}\) agree by functoriality. The retraction expressing that \(Y\) is HT-split over \(\widetilde{S}\) therefore gives a retraction expressing that \(Y\) is HT-split over \(T_Y\). Applying \Cref{equivalence-log-relative} to the divisor-classifying morphism \(\iota_{D_Y}\), we conclude that the pair \((Y,D_Y)\) is log HT-split.
\end{proof}




\begin{example}\label{example:b-up}
Let \(k\) be an algebraically closed field of characteristic \(p>2\), and take $n \geq p+2$.
Write
\[
S \defeq k[x_0,\ldots,x_n],
\qquad
\mfrakm \defeq (x_0,\ldots,x_n)
\]
and
\[
f \defeq x_0^{p+1}+\cdots+x_n^{p+1}\in S_{p+1}, \qquad D \defeq V(f)\subseteq \setP^n
\]

Set
\[
V
 \defeq 
\sum_{i=0}^n x_i^p S_2
\subseteq
S_{p+2}.
\]
Choose a general element
\[
g=\sum_{i=0}^n x_i^p q_i\in V,
\qquad
q_i\in S_2,
\]
and put $B \defeq V(g)\subseteq\setP^n$.

We claim that $B+D$ is a simple normal crossings divisor.
Indeed, fix a point \(P\in\setP^n\), and choose \(i\) such that
\(x_i(P)\neq 0\). On the standard affine open subset \(D_+(x_i)\), the
subspace
\[
x_i^pS_2\subseteq V
\]
generates all first-order jets at \(P\), because multiplication by \(x_i^p\)
is invertible near \(P\), while the global sections of
\(\mcalO_{\setP^n}(2)\) generate all first-order jets.
Consequently, the natural map
\[
V
 \to 
\mcalO_{\setP^n}(p+2)
\otimes_{\mcalO_{\setP^n}}
\mcalO_{\setP^n,P}/\mfrakm_P^2
\]
is surjective for every \(P\in\setP^n\).
Thus, for a general choice of \(g\), the divisor $B+D$ is simple normal crossings. 

Let
\[
\pi\colon
X \defeq \operatorname{Bl}_{D\cap B}\setP^n
 \to 
\setP^n
\]
be the blow-up along $D\cap B$. 
Denote by \(E\) its exceptional divisor and set
\[
H \defeq \pi^*\mcalO_{\setP^n}(1).
\]

Since \(\setP^n\) is \(F\)-split,  the pair $(\setP^n,D+B)$ is log HT-split by \Cref{qFs-to-HT}.  
Hence \Cref{HT-ascent-stratum-blowup} shows that
\[
\left(
X,
\widetilde D+\widetilde B+E
\right)
\]
is log HT-split, where $E$ is the exceptional divisor on $X$.
In particular, \(X\) is HT-split.

It remains to prove that \(X\) is not quasi-\(F\)-split.
Let
\[
[Z_0:\cdots:Z_n:T]
\]
be the homogeneous coordinates on \(\setP^{n+1}\).
We consider the rational map
\[
\P^n \dashrightarrow \P^{n+1}, \qquad [x_0:\cdots:x_n]
\mapsto
[x_0f:\cdots:x_nf:-g].
\]
Then the rational map is resolved by \(\pi\) and we obtain the morphism
\[
\varphi\colon X \to \setP^{n+1}.
\]
We set
\[
Y
 \defeq 
V\bigl(Tf(Z_0,\ldots,Z_n)+g(Z_0,\ldots,Z_n)\bigr)
\subseteq
\setP^{n+1}.
\]
Since we have
\[
(-g)f(x_0f,\ldots,x_nf)
+
g(x_0f,\ldots,x_nf)=-gf^{p+2}+gf^{p+2}
=0.
\]
Thus, the morphism $\varphi$ induces $\varphi \colon X \to Y$.

Set
\[
U \defeq \setP^n\setminus V(f).
\]
Since \(V(f,g)\subseteq V(f)\), the blow-up morphism induces an
isomorphism
\[
\pi^{-1}(U)\simeq U.
\]
One can show that
\[
U\xrightarrow{\ \sim\ }Y_f,
\qquad
Y_f \defeq Y\cap\{f(Z_0,\ldots,Z_n)\neq0\}.
\]
Consequently, \(\varphi\colon X\to Y\) is surjective birational.

We verify that \(Y\) is smooth away from \(P \defeq (0\colon \cdots \colon 0 \colon 1)\).
We take a closed point \(Q \defeq [z_0:\cdots:z_n:t]\neq P\) of $Y$. 
Differentiating the defining equation with respect to \(T\)
gives
\[
f(z_0,\ldots,z_n)=0.
\]
The defining equation then gives \(g(z_0,\ldots,z_n)=0\), so $[z_0:\ldots:z_n] \in V(f,g)$.
The remaining partial derivatives give $(T\,df+dg)(Q)=0$.
This contradicts the transversality of \(D\) and \(B\).
Thus
\[
\operatorname{Sing}(Y)=\{P\}.
\]
Since \(Y\) is a hypersurface and its singular locus has codimension at least two, \(Y\) is normal.
In particular, if $X$ is quasi-$F$-split, then so is $Y$.

Thus, it suffices to show that $\cO_{Y,P}$ is not quasi-$F$-split.
On the affine chart \(T\neq0\), the local equation of \(Y\) at \(P\) is
\[
h \defeq f+g
\in
k[Z_0,\ldots,Z_n].
\]
By construction,
\[
f=\sum_{i=0}^n Z_i^{p+1}, \quad g=\sum_{i=0}^n Z_i^pq_i
\in
\mfrakm^{[p]}.
\]
Therefore $h\in\mfrakm^{[p]}$.
Since \(p>2\), we have $h^{p-2}\in\mfrakm^{[p]}$.
It follows from the Fedder-type criterion \cite{KTY22}*{Corollary~4.19} that
\[
\mcalO_{Y,P}
\simeq
\left(
k[Z_0,\ldots,Z_n]/(h)
\right)_{\mfrakm}
\]
is not quasi-\(F\)-split.

Consequently,
\[
X=\operatorname{Bl}_{V(f,g)}\setP^{n}
\]
is HT-split but not quasi-\(F\)-split.
\end{example}

\subsection{HT-splitting for locally complete intersection rings}
Complete intersection rings provide a basic class of affine HT-split schemes, including singular ones.
We first establish the following general result for locally complete intersection rings.
In the next subsection, we apply it to homogeneous complete intersections in Cox rings.

\begin{proposition}\label{lci-to-HT-split}
Let $R$ be a Noetherian $\F_p$-algebra such that $R$ is locally a complete intersection.
Then $\Spec(R)$ is HT-split.
\end{proposition}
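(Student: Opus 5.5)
We want to show that $\nu_X^{\sharp}\colon \cO_X\to \OdR{X}$ is ind-split for $X=\Spec(R)$. Since $X$ is affine, $\mcalD_{\qcoh}(X)\simeq\mcalD(R)$, and $\OdR{X}=R\Gamma(X^{\dR},\cO)$ is the derived de Rham cohomology $\dR_{R/\F_p}$, viewed as an $R$-module through the Frobenius-type map $\nu_X^{\sharp}$. The idea is to reduce to the smooth case via the conjugate filtration. We will show that $\nu_X^{\sharp}$ identifies $R$ with $\mcalH^0$ of $\OdR{X}$, and that the higher cohomology does not obstruct a splitting.

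\textbf{Reduction.} Being ind-split can be tested after faithfully flat base change only with care. We therefore work directly and globally on the Noetherian ring $R$, but first reduce to the local-on-$\Spec R$ structure of $L_{R/\F_p}$. Because $R$ is lci, $L_{R/\F_p}$ has Tor-amplitude in $[-1,0]$.

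\textbf{Conjugate filtration.} $\OdR{X}$ carries an increasing exhaustive conjugate filtration $\Fil^{\mathrm{conj}}_i$ with $\gr_i\simeq \wedge^i L_{R^{(1)}/\F_p}[-i]$. Here $R^{(1)}\simeq R$ as an abstract ring via Frobenius on $\F_p$, and the cohomological Cartier isomorphism realises this on the Hodge--Tate/de Rham stack side as $\nu$ being a $BT^\sharp$-gerbe-like object. Since $L_{R/\F_p}$ has Tor-amplitude in $[-1,0]$, the derived exterior power $\wedge^iL$ lies in homological degrees $[0,i]$, so $\wedge^iL[-i]$ lies in cohomological degrees $[0,i]$. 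Hence $\OdR{X}\in\mcalD^{\geq0}(R)$ and $\Fil_0=R$. This gives a fiber sequence
\[
R\xrightarrow{\nu_X^{\sharp}}\OdR{X}\to Q,
\]
where $Q$ is filtered with graded pieces $\wedge^iL[-i]$ for $i\geq1$, each lying in $\mcalD^{\geq0}$ and built from perfect complexes. The map $\nu_X^{\sharp}$ splits precisely when the connecting map $Q\to R[1]$ vanishes. We must therefore show that $\Hom(\wedge^iL[-i],R[1])=\Ext^{i+1}_R(\wedge^iL,R)$ vanishes, or at least that the obstruction dies in the ind-system.

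\textbf{Main obstacle.} In general this Ext group does not vanish. For $i=1$, for instance, $\Ext^2(L,R)$ can be nonzero for lci $R$ of positive embedding codimension. So I would not argue through vanishing. Instead I would use the ind-structure: write $R$ locally as a quotient $P/(f_1,\ldots,f_c)$ of a smooth algebra $P$ by a regular sequence, and compare with the perfection or with the $\delta$-lift. Concretely, an lci $\F_p$-algebra that is locally $P/(f)$ with $P$ a polynomial ring admits, locally, a lift $\widetilde P/(\widetilde f)$ over $\Z_p$ with a $\delta$-structure. Choose the lifts with $\delta(\widetilde f_j)=0$, for example $\widetilde f_j$ generated as a $\delta$-ideal. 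Then \Cref{compatibility-delta-lift} produces a factorisation of $\sigma$ through $\pi_X$. Combined with \Cref{ConstPetrov}, this factors $\Phi_R\colon R\to W_r(R)/p$ through $\nu_X^{\sharp}$ in a way that exhibits a retraction, since for an actual $\delta$-lift the composite $R\to\OdR{X}\to R$ is the identity.

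The hard part is globalising. Local $\delta$-lifts exist only Zariski-locally, and splittings need not glue. I expect to handle this by noting that $\OdR{X}$ is connected by the filtration to perfect complexes in a bounded range, so that ind-splitting reduces to splitting, and then controlling the gluing obstructions. These live in $\Ext^{1}$ groups of $\mcalD(R)$ on an affine scheme, which I will try to show vanish by the degree estimate above, applied to a Čech refinement. If that fails, I would fall back on choosing the complete-intersection presentation globally: a Noetherian lci ring is locally of this form, and the splitting question is Zariski-local in the sense that $\Ext^1_R(Q,R)$ obstructions can be computed locally when $Q$ is built from pseudo-coherent pieces. Local-to-global then follows from the Noetherian hypothesis and \Cref{finite-cover}(2) applied to Zariski localisations.
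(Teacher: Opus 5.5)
\textbf{There is a genuine gap: singular lci rings need not have local $\delta$-lifts, and the gluing step is unsupported.}

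Your argument ultimately rests on the claim that a singular lci $\F_p$-algebra admits a $\delta$-lift, at least Zariski-locally. This is false in general.
\begin{itemize}
\item A $p$-torsion-free $\delta$-lift of $R$ gives a Frobenius lift on a flat $W_2$-lift. For a normal ring of finite type over a perfect field, this forces $F$-splitting: the Frobenius lift yields an $F$-splitting on the smooth locus, which extends by normality.
\item Concretely, take the cone $x^3+y^3+z^3=0$ with $p\equiv 2\pmod 3$. It is a normal hypersurface, not $F$-split at the vertex by Fedder's criterion. So it has no $\delta$-lift on any neighbourhood of the vertex.
\item Your recipe does not get around this. For lifts $\widetilde f_j$ of the given $f_j$, you cannot in general arrange $\delta(\widetilde f_j)=0$, or even $\delta(\widetilde f_j)\in(\widetilde f)$.
\item Dividing by the $\delta$-ideal generated by the $\widetilde f_j$ also kills $\delta(\widetilde f_j),\delta^2(\widetilde f_j),\dots$. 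So in general the quotient is not a flat lift of $R$.
\end{itemize}

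The globalisation step is also unsupported:
\begin{itemize}
\item Local retractions in $\mcalD(R)$ do not glue automatically. You give no argument that the gluing obstructions vanish, and you had just asserted that the relevant $\Ext$ groups do not vanish.
\item Ind-splitting does not reduce to splitting. For singular lci $R$ the conjugate filtration never terminates ($\wedge^iL_{R/\F_p}\neq 0$ for all $i$), and $\OdR{X}$ is in general not perfect.
\item \Cref{finite-cover}(2) goes from $X$ to \'etale $X$-schemes, which is the wrong direction for a local-to-global argument.
\end{itemize}

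\textbf{The missing idea.} You never need a $\delta$-lift of $R$ itself, only a faithfully flat $R$-algebra that is the reduction of a $\delta$-ring. The paper argues as follows.
\begin{itemize}
\item Take a quasisyntomic cover $R\to S$ with $S$ quasiregular semiperfectoid. Then $A\defeq\prism_S$ is a discrete $\delta$-ring and $S\to A/pA$ is faithfully flat.
\item The $\delta$-structure gives $s_A\colon A\to W(A)$ with $\res\circ s_A=\id$. Hence you get $A/pA\to W(A/pA)/^Lp$, i.e.\ a section of $\nu_{\Spec(A/pA)}$.
\item Mapping to $\Spec(R)^{\dR}$ shows that $R\to A/pA$ factors through $\nu^\sharp\colon R\to\OdR{\Spec(R)}$.
\item $R\to A/pA$ is faithfully flat, hence pure, hence ind-split. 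Therefore $\nu^\sharp$ is ind-split.
\end{itemize}
This is global, needs no gluing, and explains why only ind-splitting is asserted.

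\textbf{Your abandoned route.} The conjugate-filtration argument you dropped does work when $L_{R/\F_p}$ is perfect, e.g.\ for $R$ essentially of finite type.
\begin{itemize}
\item Then $\wedge^iL_{R/\F_p}$ is perfect of Tor amplitude $[-i,0]$. So $\Ext^{i+1}_R(\wedge^iL_{R/\F_p},R)=0$ for all $i\geq 1$; in particular $\Ext^2_R(L_{R/\F_p},R)=0$, contrary to what you wrote.
\item Hence each $R\to\mathrm{Fil}^{\mathrm{conj}}_n$ splits, and $R\to\OdR{X}=\colim_n\mathrm{Fil}^{\mathrm{conj}}_n$ is ind-split.
\item For a general Noetherian lci $R$, $L_{R/\F_p}$ only has flat amplitude $[-1,0]$ and this $\Ext$-vanishing is not available. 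The prismatic argument is what covers that generality.
\end{itemize}
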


\begin{proof}
Since $R$ is a local complete intersection $\F_p$-algebra, it is quasisyntomic.
By \cite{bhatt2019Topologicala}*{Lemma~4.28}, there exists a quasisyntomic cover
\[
R\longrightarrow S
\]
with $S$ quasiregular semiperfectoid. 
By \cite{bhatt2022Prismsa}*{Proposition~7.10}, the absolute prismatic cohomology \(A \defeq \prism_S\) of $S$ is concentrated in degree zero, and
\[
S\longrightarrow \overline{\prism}_S=A/pA
\]
is faithfully flat.
This shows that the composition \(R \to  A/pA\) is faithfully flat and hence is ind-split in \(\mcalD_{\qcoh}(\Spec(R))\).

We construct a section
\[
\rho \colon \Spec(A/pA) \to \Spec(A/pA)^{\dR}
\]
of $\nu_{A/pA}$ as follows:
Since $A$ is $\delta$-ring, we have the homomorphism
\[
s_A \colon A \to W(A)
\]
such that $\res \circ s_A=\id_A$ by the right adjointness of the Witt functor.
Put
\[
\var{s}_A\colon A/pA \xrightarrow{s_A/^Lp} W(A)/^L p \to W(A/pA)/^Lp,
\]
where the second morphism comes from the natural quotient morphism \(A \to A/pA\).
We define a morphism $\rho$ corresponding to
\[
\var{s}_A \in \Map_{\CAlg^{an}_{\F_p}}(A/pA,W(A/pA)/^Lp) =\Spec(A/pA)^{\dR}(A/pA).
\]

Combining the morphism
\[
\Spec(A/pA)^{\dR} \to \Spec(R)^{\dR},
\]
we obtain the homomorphism
\[
R \to \OdR{\Spec(R)} \to A/pA.
\]
Since the composition is ind-split, so is $R \to \OdR{\Spec(R)}$.
Therefore, \(\Spec(R)\) is HT-split.
\end{proof}

\begin{remark}
By the proof of \Cref{lci-to-HT-split}, for a quasisyntomic \(\setF_p\)-algebra \(R\), the affine scheme \(\Spec(R)\) is HT-split.
\end{remark}

\subsection{Complete intersections in toric varieties}
Let \(k\) be a perfect field of characteristic \(p>0\).
Let \(X=X_\Sigma\) be a toric variety over \(k\).
We write
\[
S_X
=
\mathrm{Cox}(X)
=
k[x_\rho\mid \rho\in\Sigma(1)]
\]
for its Cox ring, and let
\[
B_X
 \defeq 
\left(
x^{\widehat{\sigma}}
\mid
\sigma\in\Sigma
\right)
\subset
S_X
\]
be the irrelevant ideal. 
Set
\[
U_X
 \defeq 
\Spec(S_X)\setminus V(B_X).
\]
By the Cox quotient construction
\cite{Cox-HomogeneousCoordinateRing}*{Theorem~2.1}, there is a good quotient
\[
q_X\colon U_X \to  X
\]
by the diagonalizable group scheme $G_X \defeq \Spec(k[\Cl(X)])$.

\begin{proposition}\label{complete-intersection}
Let $f_1,\ldots,f_r\in S_X$ be a homogeneous regular sequence, and set
\[
A \defeq S_X/(f_1,\ldots,f_r).
\]
Let
\[
\widehat Z
 \defeq 
V(f_1,\ldots,f_r)\cap U_X
\subset
\operatorname{Spec}(A)
\]
and $Z\defeq\widehat Z/G_X\subseteq X$.

Then \(Z\) is HT-split.
In particular, if \(Z\) is smooth, then there is an isomorphism
\[
F_{*}\Omega^\bullet_{Z/k}
\simeq
\bigoplus_{i=0}^{\dim Z}
\Omega^i_{Z/k}[-i]
\]
in \(\mcalD_{\qcoh}(Z)\).
Consequently, the Hodge-to-de Rham spectral sequence of \(Z\)
degenerates at \(E_1\), and \(Z\) satisfies the
Akizuki--Nakano-type vanishing theorem.
\end{proposition}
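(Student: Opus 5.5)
The plan is to combine \Cref{lci-to-HT-split} with the descent statement \Cref{finite-cover}(1), applied to the good quotient $\widehat Z\to Z$. Since $f_1,\ldots,f_r$ is a regular sequence in the polynomial ring $S_X$, the ring $A$ is a complete intersection. By \Cref{lci-to-HT-split}, $\Spec(A)$ is HT-split. The open subscheme $\widehat Z\subseteq\Spec(A)$ is then HT-split by \Cref{finite-cover}(2), since an open immersion is étale.

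Next we descend to $Z$. Because the $f_i$ are homogeneous, $V(f_1,\ldots,f_r)$ is $G_X$-stable, and $\widehat Z$ is a $G_X$-stable closed subscheme of $U_X$. The quotient $q_X\colon U_X\to X$ is a good quotient by the diagonalizable, hence linearly reductive, group $G_X$. Good quotients by linearly reductive groups restrict to good quotients on invariant closed subschemes, so $q\colon\widehat Z\to Z$ is a good quotient with $q_*\cO_{\widehat Z}^{G_X}=\cO_Z$.

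To apply \Cref{finite-cover}(1) we need $\cO_Z\to Rq_*\cO_{\widehat Z}$ to split. The morphism $q$ is affine, so $Rq_*\cO_{\widehat Z}=q_*\cO_{\widehat Z}$. This sheaf is $\Cl(X)$-graded, and its degree-zero part is $\cO_Z$. Projection onto the degree-zero component is the Reynolds operator, which gives an $\cO_Z$-linear retraction. Hence \Cref{finite-cover}(1) shows that $Z$ is HT-split. All the schemes involved are qcqs: $\widehat Z$ is quasi-compact because $U_X$ is the complement of a finitely generated ideal in a Noetherian affine scheme, and $Z$ is separated of finite type.

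For the remaining assertions, assume $Z$ is smooth. The map $\nu_Z^\sharp$ is ind-split. By \Cref{smooth-HT-split-equiv} and \Cref{rmk:perfect-base}, $F^{\dR}$ is ind-split. Since $F_*\Omega^\bullet_{Z/k}$ is perfect on the qcqs scheme $Z$, it is compact, so $F^{\dR}$ actually splits, exactly as in the proof of \Cref{equivalence-log-relative}. \Cref{criterion-decomp-log} with $D=0$ then gives the decomposition, using $Z\simeq Z'$ because $k$ is perfect. Finally, \Cref{vanishing} yields $E_1$-degeneration and Akizuki--Nakano vanishing when $Z$ is projective.

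The step I expect to need the most care is identifying $Z\to\widehat Z/G_X$ as a good quotient whose invariants are exactly $\cO_Z$. This requires exactness of $G_X$-invariants, which holds since $G_X$ is diagonalizable, applied to the surjection $\cO_{U_X}\to\cO_{\widehat Z}$.
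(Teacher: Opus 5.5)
Your proposal is correct and follows the same route as the paper. You apply \Cref{lci-to-HT-split} to $\Spec(A)$, pass to the open subscheme $\widehat Z$ via \Cref{finite-cover}(2), and descend along the linearly reductive good quotient $\widehat Z\to Z$ via \Cref{finite-cover}(1), while also making explicit three points the paper leaves implicit: the degree-zero Reynolds retraction $q_*\cO_{\widehat Z}\to\cO_Z$, the upgrade from ind-split to split via compactness of $F_*\Omega^\bullet_{Z/k}$, and the projectivity needed to invoke \Cref{vanishing}.
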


\begin{proof}
By \Cref{lci-to-HT-split}, the affine scheme $\Spec(A)$ is HT-split.
Thus, the open subset $\widehat{Z}$ is HT-split.
Since $G_X$ is linearly reductive, the scheme $Z$ is HT-split by \Cref{finite-cover}, as desired.
\end{proof}

\begin{example}[Godeaux--Serre varieties]\label{Godeaux-Serre-quotients}
Let \(G\) be a finite flat group scheme over \(k\).
By the Godeaux--Serre construction, one can choose a complete
intersection \(X\) in a projective space endowed with a free
\(G\)-action such that the quotient
\[
q\colon X \to  Y \defeq X/G
\]
is smooth and projective; the quotient $Y$ is called a \emph{Godeaux--Serre variety}; see
\cite{KothariDeRhamJumps}*{Lemma~3.4}.

By \Cref{complete-intersection} and \Cref{finite-cover}, the Godeaux--Serre variety $Y$ is HT-split if $G$ is linearly reductive.
Since $Y$ is smooth and HT-split, the Hodge-to-de Rham spectral sequence degenerates at \(E_1\), and the Akizuki--Nakano vanishing holds on \(Y\).

A particularly interesting case is $G=\mu_p$.
Since \(\mu_p\) is diagonalizable, it is linearly reductive.
Antieau, Bhatt, and Mathew apply the Godeaux--Serre construction to $\mu_p$ and obtain a smooth projective \(2p\)-dimensional variety $Y$ such that the Hochschild--Kostant--Rosenberg spectral sequence does not degenerate. 
More precisely, the differential
\[
d_p\colon
H^0(Y,\Omega^1_{Y/k})
 \to 
H^p(Y,\Omega^p_{Y/k})
\]
is nonzero; see
\cite{AntieauBhattMathewHKR}*{Theorem~1.1 and \S 6}.

On the other hand, \Cref{complete-intersection} and \Cref{finite-cover} imply that \(Y\) is HT-split. 
Hence its Hodge-to-de Rham spectral sequence degenerates at \(E_1\), and it satisfies Akizuki--Nakano vanishing, although its HKR spectral sequence does not degenerate.

Moreover, \cite{AntieauBhattMathewHKR}*{Theorem~4.6 and Theorem~1.2} implies that
\[
H^2_{\crys}(Y/W(k))
\]
contains \(p\)-torsion.

Therefore, \(Y\) is an HT-split smooth projective variety whose Hodge-to-de Rham spectral sequence degenerates at \(E_1\), while its crystalline cohomology is not torsion-free.
\end{example}

\subsection{HT-splitting for fibrations}

\begin{proposition}\label{prop:log-HT-split-fibration-over-liftable-base}
Let \(k\) be a perfect field of characteristic \(p>0\), and let
\[
\pi\colon Y \to  X
\]
be a smooth projective morphism between smooth projective \(k\)-schemes.
Let \(D\) be a simple normal crossings divisor on \(X\), and let \(E\)
be a relative simple normal crossings divisor on \(Y\) over \(X\).
Set $B \defeq E+\pi^*D$ and
\[
d \defeq \dim X,\qquad
n \defeq \dim(Y/X),\qquad
N \defeq \dim Y=d+n.
\]
Suppose that the following conditions hold:
\begin{enumerate}
\item the pair \((Y,B)\) is \(W_2(k)\)-liftable;
\item \(d\leq p\);
\item for every \(0\leq b\leq n\), one has
\[
R^q\pi_*
\left(
\Omega^b_{Y/X}(\log E)\otimes\omega_{Y/X}
\right)=0
\qquad(q<n).
\]
\end{enumerate}
Then the pair \((Y,B)\) is log HT-split.
\end{proposition}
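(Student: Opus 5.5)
The plan is to verify criterion (2) of \Cref{chara-injectivity}, which characterizes log HT-splitting by an injectivity statement in top-degree cohomology. Set
\[
K\defeq F_*\Omega^\bullet_Y(\log B)\otimes\omega_Y .
\]
We must show that \(H^N(Y,\omega_Y)\to H^N(Y,K)\) is injective. Since \(k\) is perfect, we identify \(Y\) with \(Y'\) throughout. If \(p\geq N\), the claim already follows from \Cref{W_2-lift-to-HT-split}, so the interesting case is \(p<N\). We treat all cases uniformly by cutting \(F_*\Omega^\bullet_Y(\log B)\) at degree \(p-1\).

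\emph{Step 1 (the bottom part via Deligne--Illusie).} Since \((Y,B)\) lifts to \(W_2(k)\), the logarithmic Deligne--Illusie theorem (\cite{DI87}, \cite{Kat89}*{Theorem~(4.12)}) gives a decomposition
\[
\tau_{\leq p-1}F_*\Omega^\bullet_Y(\log B)\simeq\bigoplus_{i<p}\Omega^i_Y(\log B)[-i].
\]
Arguing as in the proof of \Cref{first-chara-HT-split}, the map \(\mcalO_Y\to\tau_{\leq p-1}F_*\Omega^\bullet_Y(\log B)\) therefore splits. Tensoring with \(\omega_Y\), the map \(H^N(Y,\omega_Y)\to H^N(Y,\tau_{\leq p-1}K)\) is injective.

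\emph{Step 2 (reduction to a vanishing statement).} From the triangle \(\tau_{\leq p-1}K\to K\to\tau_{\geq p}K\), injectivity of \(H^N(Y,\omega_Y)\to H^N(Y,K)\) follows once we know
\[
H^{N-1}(Y,\tau_{\geq p}K)=0 .
\]
The complex \(\tau_{\geq p}K\) is a finite extension of its cohomology sheaves. By the Cartier isomorphism these are \(\Omega^j_Y(\log B)\otimes\omega_Y\), placed in degree \(j\), for \(j\geq p\). It thus suffices to prove
\[
H^{N-1-j}\bigl(Y,\Omega^j_Y(\log B)\otimes\omega_Y\bigr)=0\qquad\text{for all } j\geq p .
\]

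\emph{Step 3 (Leray and hypothesis (3)).} Because \(\pi\) is smooth and \(E\) is a relative simple normal crossings divisor, there is a short exact sequence
\[
0\to\pi^*\Omega^1_X(\log D)\to\Omega^1_Y(\log B)\to\Omega^1_{Y/X}(\log E)\to0 .
\]
This gives a finite filtration of \(\Omega^j_Y(\log B)\) whose graded pieces are \(\pi^*\Omega^a_X(\log D)\otimes\Omega^b_{Y/X}(\log E)\) with \(a+b=j\). Write \(\omega_Y=\omega_{Y/X}\otimes\pi^*\omega_X\). By the projection formula, the graded pieces twisted by \(\omega_Y\) satisfy
\[
R\pi_*\bigl(\pi^*\Omega^a_X(\log D)\otimes\Omega^b_{Y/X}(\log E)\otimes\omega_Y\bigr)
\simeq \Omega^a_X(\log D)\otimes\omega_X\otimes R\pi_*\bigl(\Omega^b_{Y/X}(\log E)\otimes\omega_{Y/X}\bigr).
\]
By hypothesis (3), this complex is concentrated in degrees \(\geq n\). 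Hence the Leray spectral sequence expresses \(H^{N-1-j}\) of each graded piece through groups \(H^{N-1-j-q}(X,-)\) with \(q\geq n\). These vanish as soon as \(N-1-j-n<0\), that is, when \(j\geq d\). By hypothesis (2) we have \(j\geq p\geq d\), so all such groups vanish. The filtration then yields the required vanishing, and \Cref{chara-injectivity} concludes.

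The step I expect to need the most care is Step 2. One must check that the cohomology sheaves of \(\tau_{\geq p}K\) are indeed \(\Omega^j_Y(\log B)\otimes\omega_Y\) with no Frobenius twist of \(\omega_Y\): the tensor with \(\omega_Y\) is taken outside \(F_*\), and \(k\) is perfect. One must also confirm that the splitting produced in Step 1 is compatible with the map used in \Cref{chara-injectivity}. The rest is bookkeeping with the Leray spectral sequence and the finite filtrations.
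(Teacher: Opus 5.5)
Your proposal is correct and follows the paper's proof essentially step for step. Both arguments take the logarithmic Deligne--Illusie splitting of $\mathcal{O}_Y\to\tau_{\leq p-1}F_*\Omega^\bullet_Y(\log B)$, use \Cref{chara-injectivity} and the truncation triangle to reduce to $H^{N-1}(Y,\tau_{\geq p}F_*\Omega^\bullet_Y(\log B)\otimes\omega_Y)=0$, pass through the log Cartier isomorphism, and then combine the filtration from the log cotangent sequence, the projection formula, hypothesis (3) and Leray to get $H^i(Y,\Omega^j_Y(\log B)\otimes\omega_Y)=0$ for $i<n$, which covers $i+j=N-1$ because $j\geq p\geq d$.
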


\begin{proof}
Since the pair \((Y,B)\) is \(W_2(k)\)-liftable, the logarithmic
Deligne--Illusie decomposition gives a splitting
\[
\mcalO_Y
 \to 
\tau_{\leq p-1}
F_*\Omega_Y^\bullet(\log B).
\]
By the truncation triangle
\[
\tau_{\leq p-1}
F_*\Omega_Y^\bullet(\log B)
 \to 
F_*\Omega_Y^\bullet(\log B)
 \to 
\tau_{\geq p}
F_*\Omega_Y^\bullet(\log B)
\xrightarrow{+1}
\]
and \Cref{chara-injectivity}, it suffices to show that
\[
H^{N-1}
\left(
Y,
\tau_{\geq p}
F_*\Omega_Y^\bullet(\log B)
\otimes\omega_Y
\right)=0.
\]
By the logarithmic Cartier isomorphism,
\[
\mcalH^j
\left(
\tau_{\geq p}
F_*\Omega_Y^\bullet(\log B)
\right)
\simeq
\Omega_Y^j(\log B)
\qquad(j\geq p).
\]
Thus, it suffices to show that
\[
H^i
\left(
Y,
\Omega_Y^j(\log B)\otimes\omega_Y
\right)=0 \quad (i+j=N-1, j\geq p).
\]

The logarithmic cotangent sequence
\[
0 \to 
\pi^*\Omega_X^1(\log D) \otimes \omega_Y
 \to 
\Omega_Y^1(\log B) \otimes \omega_Y
 \to 
\Omega_{Y/X}^1(\log E) \otimes \omega_Y
 \to 0
\]
and $\omega_Y
\simeq
\omega_{Y/X}\otimes\pi^*\omega_X$ 
induce a filtration on \(\Omega_Y^j(\log B) \otimes \omega_Y \) whose associated graded
pieces are
\[
\pi^*
\left(
\Omega_X^a(\log D)\otimes\omega_X
\right)
\otimes
\Omega_{Y/X}^{j-a}(\log E)
\otimes
\omega_{Y/X}
\]
by \cite{stacks-project}*{Tag 0FIC}
By the projection formula and assumption~(3), their higher direct images
under \(\pi\) vanish in degrees \(q<n\). The Leray spectral sequence
therefore gives
\[
H^i
\left(
Y,
\pi^*
\left(
\Omega_X^a(\log D)\otimes\omega_X
\right)
\otimes
\Omega_{Y/X}^{j-a}(\log E)
\otimes
\omega_{Y/X}
\right)=0
\qquad(i<n).
\]
Using the filtration, we obtain
\[
H^i
\left(
Y,
\Omega_Y^j(\log B)\otimes\omega_Y
\right)=0
\qquad(i<n).
\]

Now suppose that
\[
i+j=N-1
\qquad\text{and}\qquad
j\geq p.
\]
Since \(p\geq d\), we have \(j\geq d\), and hence
\[
i
=
d+n-1-j
\leq
n-1<n.
\]
It follows that
\[
H^i
\left(
Y,
\Omega_Y^j(\log B)\otimes\omega_Y
\right)=0,
\]
as desired.
\end{proof}

\begin{corollary}\label{W_2-liftable-projective-bundle}
Let $k$ be a perfect field of characteristic $p$.
Let $X$ be a smooth projective variety over $k$ of dimension $d \leq p$ and $E$ be a vector bundle on $X$.
We assume that there exists a \(W_2\)-lift \(\widetilde X\) of \(X\) such that \(E\) lifts to a vector bundle on \(\widetilde X\).
Then $\P_X(E)$ is HT-split.
\end{corollary}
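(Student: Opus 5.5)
We apply \Cref{prop:log-HT-split-fibration-over-liftable-base} to the projective bundle \(\pi\colon Y\defeq\P_X(E)\to X\), with \(D=0\) and \(E_{\mathrm{rel}}=0\), i.e.\ trivial boundaries. Then \(B=0\), and log HT-splitting of \((Y,0)\) is HT-splitting of \(Y\) by \Cref{relative-vs-absolute}. Writing \(r\defeq\operatorname{rank}E\), we have \(n=r-1\), and \(\pi\) is a smooth projective morphism between smooth projective \(k\)-schemes. Hypothesis~(2), \(d\leq p\), is assumed. It remains to verify (1) and (3).

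\textbf{Hypothesis (1).} Let \(\widetilde X\) be the given \(W_2(k)\)-lift and \(\widetilde E\) the lift of \(E\). Then \(\P_{\widetilde X}(\widetilde E)\to\Spec W_2(k)\) is smooth, being a projective bundle over a smooth scheme. Its reduction mod \(p\) is \(\P_X(E)\), since projectivization commutes with base change. Hence \(Y\) is \(W_2(k)\)-liftable.

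\textbf{Hypothesis (3).} This is the relative Bott-type vanishing
\[
R^q\pi_*\bigl(\Omega^b_{Y/X}\otimes\omega_{Y/X}\bigr)=0\qquad(q<n,\ 0\leq b\leq n).
\]
By cohomology and base change, it suffices to prove the corresponding vanishing on the fibers. Each geometric fiber is \(\P^{n}\) over a field \(K\), and we need \(H^q(\P^n,\Omega^b\otimes\omega)=0\) for \(q<n\). By Serre duality on \(\P^n\),
\[
H^q(\P^n,\Omega^b\otimes\omega)\simeq H^{n-q}\bigl(\P^n,(\Omega^b)^\vee\bigr)^\vee\simeq H^{n-q}(\P^n,\Omega^{n-b}(n+1))^\vee,
\]
using \((\Omega^b)^\vee\simeq\Omega^{n-b}\otimes\omega^{-1}=\Omega^{n-b}(n+1)\). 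Since \(n-q\geq1\) and the twist \(n+1\) is positive, Bott vanishing on \(\P^n\) in any characteristic gives \(H^{j}(\P^n,\Omega^c(m))=0\) for \(j>0\), \(m>0\); this follows from the Euler sequence computation. Thus all the fiber cohomology groups vanish.

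The fiberwise vanishing holds uniformly in the fibers, and \(\Omega^b_{Y/X}\otimes\omega_{Y/X}\) is locally free and flat over \(X\). Descending induction on \(q\) via cohomology and base change then gives \(R^q\pi_*=0\) for \(q<n\). Alternatively, this can be computed Zariski-locally on \(X\), where \(Y\simeq X\times\P^{n}\), by flat base change.

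The step needing the most care is this Bott-vanishing input, namely that it holds in positive characteristic with the twist exactly \(n+1\). It is classical (Bott's formula for \(\P^n\) is characteristic-free), and once it is in place \Cref{prop:log-HT-split-fibration-over-liftable-base} shows that \(\P_X(E)\) is HT-split.
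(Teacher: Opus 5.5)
Your proposal is correct and follows the paper's argument. The paper proves this corollary simply by invoking \Cref{prop:log-HT-split-fibration-over-liftable-base} with both boundaries zero, and you supply the routine checks of its hypotheses: $W_2$-liftability via $\P_{\widetilde X}(\widetilde E)$, the assumption $d\leq p$, and the relative vanishing $R^q\pi_*(\Omega^b_{Y/X}\otimes\omega_{Y/X})=0$ for $q<n$, which follows from characteristic-free Bott vanishing on the fibers $\P^n$. (The descending induction is not needed, since fiberwise vanishing of $H^q$ already gives $R^q\pi_*=0$ by cohomology and base change, or directly from the Zariski-local product structure you mention.)
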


\begin{proof}
It follows from \Cref{prop:log-HT-split-fibration-over-liftable-base}.
\end{proof}

\begin{proposition}\label{prop:HT-split-fibration-over-curve}
Let \(k\) be a perfect field of characteristic \(p>0\), let \(C\) be a
smooth projective curve over \(k\), and let
\[
\pi\colon Y \to  C
\]
be a smooth projective morphism of relative dimension \(n\).
Assume that
\[
R^{q}\pi_*
\left(
\Omega^b_{Y/C}\otimes\omega_{Y/C}
\right)=0
\]
for every \(0\leq b\leq n\) and every \(q<n\).
Then \(Y\) is HT-split.
\end{proposition}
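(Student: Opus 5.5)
This is the special case \(X=C\), \(D=0\), \(E=0\) of \Cref{prop:log-HT-split-fibration-over-liftable-base}, once its hypotheses are checked; conclusion (2) there then gives that \((Y,0)\) is log HT-split, i.e.\ \(Y\) is HT-split by \Cref{relative-vs-absolute}. Condition (2), \(d=\dim C=1\leq p\), is automatic. Condition (3) with \(E=0\) reads \(R^q\pi_*(\Omega^b_{Y/C}\otimes\omega_{Y/C})=0\) for \(q<n\), which is exactly our hypothesis. The only real issue is condition (1), \(W_2(k)\)-liftability of \(Y\), which is not assumed. So I would not invoke the proposition verbatim but rerun its proof, replacing the liftability input by an argument that uses only what is needed.

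In that proof, liftability is used solely to produce a splitting of \(\mcalO_Y\to\tau_{\leq p-1}F_*\Omega^\bullet_Y\). The remaining steps only need a splitting of \(\mcalO_Y\to\tau_{\leq m}F_*\Omega^\bullet_Y\) for some \(m\geq d-1=0\). Indeed, with \(m=0\) the argument via \Cref{chara-injectivity} reduces to showing \(H^{N-1}(Y,\tau_{\geq 1}F_*\Omega^\bullet_Y\otimes\omega_Y)=0\). Since \(\omega_Y\) is a line bundle, the projection formula gives \(F_*\Omega^\bullet_Y\otimes\omega_Y\simeq F_*(\Omega^\bullet_Y\otimes F^*\omega_Y)\), and its cohomology sheaves are \(\Omega^j_Y\otimes\omega_Y\) via Cartier. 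By the hypercohomology spectral sequence it then suffices to show \(H^i(Y,\Omega^j_Y\otimes\omega_Y)=0\) for \(i+j=N-1\) and \(j\geq 1\).

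The splitting of \(\mcalO_Y\to\tau_{\leq 0}F_*\Omega^\bullet_Y\simeq\mcalH^0\simeq\mcalO_Y\) is tautological by the degree-zero Cartier isomorphism \eqref{DegreeZeroCartier}, so no lift is needed. The vanishing comes from the same filtration as before. The cotangent sequence \(0\to\pi^*\Omega^1_C\to\Omega^1_Y\to\Omega^1_{Y/C}\to0\) filters \(\Omega^j_Y\otimes\omega_Y\) with graded pieces \(\pi^*(\Omega^a_C\otimes\omega_C)\otimes\Omega^{j-a}_{Y/C}\otimes\omega_{Y/C}\) for \(a\in\{0,1\}\). By the projection formula and the hypothesis, these have \(R^q\pi_*=0\) for \(q<n\). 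Leray then gives \(H^i=0\) for \(i<n\), because all surviving terms \(H^s(C,R^q\pi_*)\) have \(q\geq n\) and hence total degree at least \(n\). Since \(j\geq1\) forces \(i=N-1-j\leq n-1<n\), the vanishing holds.

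The only step needing care is confirming that the proof of \Cref{prop:log-HT-split-fibration-over-liftable-base} really works with truncation level \(m=0\) in place of \(p-1\). That proof uses only \(j\geq p\geq d\), and here \(j\geq1=d\) plays the same role, so this should go through directly. One must also check that the extension from \(\tau_{\leq 0}\) to the whole complex is exactly what \Cref{chara-injectivity} supplies: the injectivity of \(H^N(\omega_Y)\to H^N(F_*\Omega^\bullet_Y\otimes\omega_Y)\) follows from the vanishing of \(H^{N-1}\) of the cone together with the splitting at level \(\tau_{\leq0}\).
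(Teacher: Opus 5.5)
Your argument is correct, but it follows a different route from the paper's proof.

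The paper keeps the liftability input. It shows that $H^2(Y,T_Y)$, which by Serre duality is dual to $H^{n-1}(Y,\Omega^1_Y\otimes\omega_Y)$, vanishes. It does this by the same cotangent-sequence and Leray computation you use, but only for $j=1$. This vanishing kills the obstruction, so $Y$ lifts to $W_2(k)$. The paper then applies \Cref{prop:log-HT-split-fibration-over-liftable-base} verbatim: Deligne--Illusie splits $\cO_Y\to\tau_{\leq p-1}F_*\Omega^\bullet_Y$, and the Leray vanishing is needed only for $j\geq p$.

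You instead note that, over a curve, the Leray vanishing $H^i(Y,\Omega^j_Y\otimes\omega_Y)=0$ for $i<n$ holds for every $j$. It therefore already covers every pair with $j\geq 1$ and $i+j=N-1$. So you can truncate at level $0$, where the splitting is just the degree-zero Cartier isomorphism. This is precisely the implication (2)$\Rightarrow$(1) of \Cref{Fano-chara}, whose proof never uses anti-ampleness of $\omega_X$.

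Your route is shorter and avoids deformation theory, Serre duality and Deligne--Illusie entirely. It shows that liftability plays no role when the base is a curve. The paper's route gives a $W_2(k)$-lift of $Y$ as a byproduct and stays within the framework of \Cref{prop:log-HT-split-fibration-over-liftable-base}. That framework is what is needed for bases of dimension $2\leq d\leq p$. There, truncating at level $0$ would require vanishing for $i$ up to $N-2=n+d-2>n-1$, so your shortcut does not extend.
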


\begin{proof}
By \Cref{prop:log-HT-split-fibration-over-liftable-base}, it suffices to show that $Y$ is $W_2$-liftable.
In particular, it suffices to show that
\[
H^2(Y,T_Y) \simeq H^{n-1}(\Omega^1_Y \otimes \omega_Y)=0.
\]
Combining the cotangent sequence
\[
0 \to 
\pi^*\Omega_C^1
 \to 
\Omega_Y^1
 \to 
\Omega_{Y/C}^1
 \to 0
\]
and
\[
\omega_Y
\simeq
\omega_{Y/C}\otimes\pi^*\omega_C,
\]
the desired vanishing follows from
\[
R^{a}\pi_*\left(
\Omega_{Y/C}^1\otimes\omega_{Y/C}
\otimes\pi^*\omega_C
\right)=R^a\pi_*\left(
\omega_{Y/C}
\otimes
\pi^*(\Omega_C^1\otimes\omega_C)
\right)=0
\]
for $a \leq n-1$.
\end{proof}

\begin{corollary}\label{cor:HT-split-fiberwise-vanishing-over-curve}
Let \(k\) be a perfect field of characteristic \(p>0\), let \(C\) be a smooth projective curve over \(k\), and let
\[
\pi\colon Y \to  C
\]
be a smooth projective morphism of relative dimension \(n\).
Assume that, for every geometric point $\overline{c} \to  C$, one has
\[
H^i\left(
Y_{\overline{c}},
\Omega^j_{Y_{\overline{c}}}
\otimes\omega_{Y_{\overline{c}}}
\right)=0
\]
for every  \(i<n\) and \(0\leq j\leq n\).
Then \(Y\) is HT-split.
\end{corollary}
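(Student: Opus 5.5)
The plan is to deduce the corollary from \Cref{prop:HT-split-fibration-over-curve}. That reduces everything to the relative vanishing
\[
R^{q}\pi_*\left(\Omega^b_{Y/C}\otimes\omega_{Y/C}\right)=0\qquad(0\leq b\leq n,\ q<n),
\]
which I will derive from the fiberwise hypothesis by cohomology and base change. Set \(\mcalF_b\defeq\Omega^b_{Y/C}\otimes\omega_{Y/C}\). Since \(\pi\) is smooth, \(\Omega^b_{Y/C}\) and \(\omega_{Y/C}\) are locally free on \(Y\), so \(\mcalF_b\) is a vector bundle and in particular flat over \(C\).

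Their formation commutes with base change: for a geometric point \(\overline{c}\to C\) the restriction \(\mcalF_b|_{Y_{\overline{c}}}\) is \(\Omega^b_{Y_{\overline{c}}}\otimes\omega_{Y_{\overline{c}}}\). Because \(\pi\) is proper and \(\mcalF_b\) is \(C\)-flat, \(R\pi_*\mcalF_b\) is a perfect complex on \(C\) whose derived fibers compute \(R\Gamma(Y_{\overline{c}},\mcalF_b|_{Y_{\overline{c}}})\).

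By hypothesis these derived fibers lie in \(D^{\geq n}\) for every geometric point. I then apply the same fiberwise criterion for perfect complexes already used in the proof of \Cref{decomposition-positive}, namely that a perfect complex whose derived fibers all lie in \(D^{\geq n}\) itself lies in \(D^{\geq n}\). Concretely I would argue by descending induction on \(q\) using the cohomology and base change theorem: \(R^q\pi_*\mcalF_b=0\) for \(q>n\) by fiber dimension. Suppose \(R^{q+1}\pi_*\mcalF_b\) is locally free (vacuously at the top) and the fiber \(H^q\) vanishes. Then the base change map in degree \(q\) is surjective, hence an isomorphism, so \(R^q\pi_*\mcalF_b\otimes k(\overline{c})=0\). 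Nakayama then gives \(R^q\pi_*\mcalF_b=0\), which is locally free, and the induction continues down to \(q=0\). This yields \(R^q\pi_*\mcalF_b=0\) for all \(q<n\). Closed points suffice since \(C\) is of finite type over \(k\), so geometric points over closed points are enough.

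With the vanishing established, \Cref{prop:HT-split-fibration-over-curve} applies directly and gives that \(Y\) is HT-split. The only delicate step is the base change argument. Its subtlety is that the top-degree sheaf \(R^n\pi_*\) need not be locally free, but the descending induction starts with \(q\geq n+1\) where everything vanishes, and the first fiberwise vanishing used is at \(q=n-1\). Since the induction only needs local freeness of \(R^{q+1}\) with \(q+1\leq n\), I must first handle \(q=n\). To avoid that issue, I will instead phrase the step entirely in terms of Tor-amplitude of the perfect complex \(R\pi_*\mcalF_b\) over the regular one-dimensional base, as in \Cref{decomposition-positive}. This formulation is insensitive to the behaviour of \(R^n\pi_*\).
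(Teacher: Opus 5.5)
Your proposal is correct and is exactly the paper's argument: reduce to \Cref{prop:HT-split-fibration-over-curve} and obtain $R^q\pi_*(\Omega^b_{Y/C}\otimes\omega_{Y/C})=0$ for $q<n$ from the fiberwise vanishing via cohomology and base change. Your worry about $R^n\pi_*$ is unfounded. For each $q<n$ the base change map $\varphi^q$ is surjective simply because its target vanishes, so part (a) of the theorem already makes it an isomorphism, and Nakayama then gives $R^q\pi_*=0$. No descending induction or local-freeness input is needed, and your Tor-amplitude reformulation works equally well.
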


\begin{proof}
It follows from \Cref{prop:HT-split-fibration-over-curve} and the cohomology and base change theorem.
\end{proof}

\begin{corollary}\label{projective-bundle-over-smooth-curve}
Let $k$ be a perfect field of characteristic $p$.
Let $C$ be a smooth projective curve over $k$.
Then every projective bundle over $C$ is HT-split.
\end{corollary}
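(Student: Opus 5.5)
The plan is to deduce the statement from \Cref{cor:HT-split-fiberwise-vanishing-over-curve}. A projective bundle \(\pi\colon Y=\setP_C(E)\to C\) of relative dimension \(n\) is smooth and projective, and each geometric fiber is \(Y_{\overline c}\simeq\setP^n_{\overline k}\). It therefore suffices to check that
\[
H^i\bigl(\setP^n,\Omega^j_{\setP^n}\otimes\omega_{\setP^n}\bigr)=0
\qquad (i<n,\ 0\leq j\leq n)
\]
over an algebraically closed field of characteristic \(p\). (Alternatively one could apply \Cref{W_2-liftable-projective-bundle}, since a curve \(C\) lifts to \(W_2\) with vanishing obstruction \(H^2(C,T_C)=0\), and vector bundles on a curve lift because \(H^2(C,\mathcal{E}nd(E))=0\); the fiberwise route avoids lifting bundles.)

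The required vanishing is Bott vanishing on projective space. Since \(\omega_{\setP^n}\simeq\cO(-n-1)\), I need \(H^i(\setP^n,\Omega^j(-n-1))=0\) for \(i<n\). Bott's formula holds in every characteristic, because the cohomology of \(\Omega^j(m)\) is computed from the Koszul/Euler sequence
\[
0\to\Omega^j\to\textstyle\bigwedge^j\bigl(\cO(-1)^{\oplus n+1}\bigr)\to\Omega^{j-1}\to0,
\]
which is characteristic free. It gives \(H^i(\Omega^j(m))=0\) for \(0<i<n\) except when \(i=j\) and \(m=0\), and \(H^0(\Omega^j(m))=0\) for \(m\leq j\). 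With \(m=-n-1<0\), the degree-zero group vanishes, and the exceptional case \(m=0\) never occurs. Hence all \(H^i\) with \(i<n\) vanish.

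Alternatively, Serre duality gives \(H^i(\Omega^j\otimes\omega)\simeq H^{n-i}(\Omega^{n-j})^\vee\), and \(H^{n-i}(\setP^n,\Omega^{n-j})\) vanishes unless \(n-i=n-j\). So the fiber condition would fail only for \(i=j<n\), since \(H^j(\setP^n,\Omega^j\otimes\omega)\simeq H^{n-j}(\Omega^{n-j})^\vee=k\neq0\). This means I must recheck that the hypothesis of \Cref{cor:HT-split-fiberwise-vanishing-over-curve} is actually met.

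That check is the main obstacle. The hypothesis as literally stated fails on \(\setP^n\), so I would instead go back to \Cref{prop:HT-split-fibration-over-curve} and its proof. What that proof really uses is \(R^a\pi_*(\Omega^1_{Y/C}\otimes\omega_{Y/C})=0\) for \(a\leq n-1\), which gives \(W_2\)-liftability, and, through \Cref{prop:log-HT-split-fibration-over-liftable-base}, vanishing of \(H^i(Y,\Omega^j_Y\otimes\omega_Y)\) only for \(i+j=N-1\) with \(j\geq p\geq 1=d\). For the fiberwise graded pieces with \(b=j-a\), this forces \(i<n\) but constrains \(b\) relative to \(i\). Precisely, with \(N=n+1\) and \(i=n-j\), the pieces have \(b\in\{j,j-1\}\) and relative cohomological degree at most \(i=n-j\). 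The offending case \(b=i\) would need \(n-j=j\) or \(n-j=j-1\) inside the Leray spectral sequence. Here I would compute directly with \(\omega_{Y/C}\) relatively anti-ample, and use that \(C\) contributes at most \(H^0\) and \(H^1\), to show that the total degree never reaches the nonvanishing class.

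Failing that, the cleanest route is liftability. A projective bundle over a curve lifts to \(W_2(k)\) together with the bundle, and \(\dim C=1\leq p\), so \Cref{W_2-liftable-projective-bundle} applies directly. I would present that argument as the final proof.
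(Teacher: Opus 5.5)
Your final argument is correct, but the step that made you abandon your first route is wrong. That first route is exactly the paper's proof, which is a one-line citation of \Cref{cor:HT-split-fiberwise-vanishing-over-curve}. The Serre duality you wrote, $H^i(\Omega^j\otimes\omega)\simeq H^{n-i}(\Omega^{n-j})^\vee$, is the duality for $H^i(\Omega^j)$ with the twist by $\omega$ dropped. Since $(\Omega^j)^\vee\simeq\Omega^{n-j}\otimes\omega^{-1}$, the correct statement is
\[
H^i(\setP^n,\Omega^j\otimes\omega)\simeq H^{n-i}\bigl(\setP^n,\Omega^{n-j}(n+1)\bigr)^\vee .
\]
This vanishes for $i<n$ by Bott vanishing for positive twists. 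It agrees with your own earlier (correct) Bott computation that $\Omega^j(-n-1)$ has cohomology only in degree $n$. Already for $n=1$, $i=j=0$, your formula would give $H^0(\setP^1,\cO(-2))\simeq k$, which is false.

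So the hypothesis of \Cref{cor:HT-split-fiberwise-vanishing-over-curve} does hold, and the speculative Leray analysis in your ``main obstacle'' paragraph is unnecessary. Note also that if the fiberwise vanishing really failed, your fallback would fail with it. \Cref{W_2-liftable-projective-bundle} is deduced from \Cref{prop:log-HT-split-fibration-over-liftable-base}, whose hypothesis (3) is precisely $R^q\pi_*(\Omega^b_{Y/C}\otimes\omega_{Y/C})=0$ for $q<n$. The liftability route therefore does not avoid Bott vanishing on $\setP^n$; it only hides it inside the cited corollary.

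Both valid routes end in \Cref{prop:log-HT-split-fibration-over-liftable-base} with $d=\dim C=1\leq p$. They differ only in how $W_2$-liftability of $Y$ is obtained.
\begin{itemize}
\item The paper derives it from $H^2(Y,T_Y)\simeq H^{n-1}(Y,\Omega^1_Y\otimes\omega_Y)^\vee=0$, itself a consequence of the fiberwise vanishing. So it applies to any smooth projective $Y\to C$ with geometric fibers $\setP^n$, including Brauer--Severi schemes that need not be of the form $\setP_C(E)$.
\item You lift $C$ and $E$ separately, using $H^2(C,T_C)=H^2(C,\mathcal{E}nd(E))=0$. This is more explicit, but it needs $Y=\setP_C(E)$ for a vector bundle $E$. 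That is fine under the usual meaning of ``projective bundle''.
\end{itemize}
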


\begin{proof}
It follows from \Cref{cor:HT-split-fiberwise-vanishing-over-curve}.
\end{proof}

\begin{example}[Symmetric product of smooth curves]\label{symmetric-product-HT-split}
Let \(k\) be a perfect field of characteristic \(p>0\),
and let \(C\) be a smooth projective curve of genus \(g\).
Let $m$ be a positive integer.
If $m \geq 2g-1$ and $p \geq g$, then the \(m\)-th symmetric product $C^{(m)} \defeq \operatorname{Sym}^m(C)$ of
\(C\) is HT-split.
In particular, Akizuki--Nakano vanishing holds on $\operatorname{Sym}^m(C)$ and the Hodge-to-de Rham spectral sequence degenerates at the $E_1$-page.

Indeed, we take a smooth lifting
\[
\widetilde C \to \Spec W_2(k)
\]
of \(C\). 
The Abel--Jacobi morphism admits a lifting
\[
\widetilde a_m\colon
\widetilde C^{(m)}
 \to 
\operatorname{Pic}^m_{\widetilde C/W_2(k)}.
\]
In particular, \(C^{(m)}\) is \(W_2(k)\)-liftable.

For every \(L\in\operatorname{Pic}^m(C)\), the fiber of
\[
a_m\colon C^{(m)} \to \operatorname{Pic}^m(C)
\]
over \(L\) is the complete linear system
\[
|L|=\setP H^0(C,L).
\]
Since \(m\geq2g-1\), one has $H^1(C,L)=0$ and hence
\[
h^0(C,L)=m+1-g.
\]
Therefore every geometric fiber of \(a_m\) is isomorphic to $\setP^{m-g}$.
Since
\[
\dim\operatorname{Pic}^m(C)=g\leq p,
\]
\Cref{prop:log-HT-split-fibration-over-liftable-base} shows that \(C^{(m)}\) is HT-split.
\end{example}

\begin{remark}[Comparison with the Lauritzen--Rao examples]
\label{rem:Lauritzen-Rao-comparison}
The preceding result stands in contrast to the projective bundles constructed
by Lauritzen and Rao in \cite{LauritzenRao}.

Let $V$ be a vector space of dimension $n+1$, where
\[
n\geq 3
\qquad\text{and}\qquad
p\geq n-1.
\]
Let
\[
Y
=
\left\{
(x,H)\in\setP(V)\times\setP(V^\vee)
\ \middle|\
x\in H
\right\}
\]
be the incidence variety parametrizing pairs consisting of a point and a
hyperplane containing it in $\setP(V)$. Lauritzen and Rao construct a
natural vector bundle $\mcalG'$ of rank $n-1$ on $Y$ and consider the
projective bundle
\[
X_{\mathrm{LR}}
 \defeq 
\setP_Y\left(F_Y^*\mcalG'\right).
\]
They prove that $X_{\mathrm{LR}}$ does not satisfy Kodaira vanishing.
Consequently, $X_{\mathrm{LR}}$ is not HT-split.

This example shows that assumption (1) in
\Cref{prop:log-HT-split-fibration-over-liftable-base} is essential and cannot
be omitted, even in the range $p\geq n-1$.
\end{remark}

\section{HT-splitting for Fano varieties} \label{SectionFano}
For Fano varieties, HT-splitting admits a particularly simple
cohomological characterization in terms of Akizuki--Nakano
vanishing. We use this criterion together with the permanence results
of the previous section to study Fano threefolds, del Pezzo
varieties, and Casagrande--Druel Fano fourfolds.

In this section, $k$ is a perfect field of characteristic $p>0$.

\subsection{Fano threefolds and del Pezzo varieties}

\begin{proposition}\label{Fano-chara}
Let $X$ be a smooth proper $k$-scheme of dimension $d$.
We assume $\omega_X$ is anti-ample.
Then the following conditions are equivalent:
\begin{enumerate}
\item $X$ is HT-split.
\item We have
\[
H^{i}(X,\Omega_X^j \otimes \omega_X)=0 \quad (i+j=d-1,j>0).
\]
\item Akizuki--Nakano-type vanishing holds on $X$.
\end{enumerate}
\end{proposition}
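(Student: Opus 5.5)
I would prove the cycle of implications (1)$\Rightarrow$(3)$\Rightarrow$(2)$\Rightarrow$(1). The implication (1)$\Rightarrow$(3) is \Cref{vanishing} with $D=0$, since $X$ is projective ($\omega_X^{-1}$ is ample). For (3)$\Rightarrow$(2), apply Akizuki--Nakano vanishing to the ample line bundle $L=\omega_X^{-1}$: it gives $H^i(X,\Omega^j_X\otimes\omega_X)=0$ for $i+j<d$, in particular for $i+j=d-1$.

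The substantive direction is (2)$\Rightarrow$(1), which I would deduce from \Cref{chara-injectivity} with $D=0$. Thus it suffices to show that
\[
H^d(X,\omega_X)\to H^d\bigl(X,F_*\Omega^\bullet_X\otimes\omega_X\bigr)
\]
is injective. Set $K\defeq F_*\Omega^\bullet_X$. The map $\mcalO_X\to K$ is an isomorphism on $\mcalH^0$ by \eqref{DegreeZeroCartier}, so its cone is $\tau_{\geq1}K$. From the triangle $\mcalO_X\to K\to\tau_{\geq 1}K\xrightarrow{+1}$ tensored with $\omega_X$, it suffices to prove
\[
H^{d-1}(X,\tau_{\geq1}K\otimes\omega_X)=0 .
\]
By the Cartier isomorphism, $\mcalH^j(K)\simeq\Omega^j_X$, and tensoring with the line bundle $\omega_X$ is exact. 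Filtering $\tau_{\geq1}K$ by the canonical truncations, it is enough to check $H^{d-1}(X,\Omega^j_X[-j]\otimes\omega_X)=H^{d-1-j}(X,\Omega^j_X\otimes\omega_X)=0$ for all $1\le j\le d$. Since the filtration is finite, vanishing on each graded piece gives vanishing for the whole complex by induction on the triangles. Each required group is exactly one of those in (2), where $i=d-1-j$ and $j>0$; for $j=d$ the index is negative, so the vanishing is automatic.

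The only point needing care is that the twist by $\omega_X$ in \Cref{chara-injectivity} is compatible with the truncation triangles. It is, because $\omega_X$ is locally free, so $-\otimes\omega_X$ is $t$-exact and commutes with $\tau$. I expect no real obstacle beyond bookkeeping of indices; in particular, the anti-ampleness hypothesis enters only in (3)$\Rightarrow$(2), through the choice $L=\omega_X^{-1}$.
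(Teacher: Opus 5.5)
Your proposal is correct and follows the paper's proof: you use the same cycle (1)$\Rightarrow$(3)$\Rightarrow$(2)$\Rightarrow$(1), and for (2)$\Rightarrow$(1) you reduce, via \Cref{chara-injectivity}, to $H^{d-1}(X,\tau_{\geq 1}F_*\Omega^\bullet_X\otimes\omega_X)=0$ and then to the groups in (2) using the Cartier isomorphism. The only difference is cosmetic: you filter by canonical truncations, where the paper invokes the hypercohomology spectral sequence.
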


\begin{proof}
The implication (1) $\Rightarrow$ (3) follows from \Cref{vanishing}.
Since $\omega_X$ is anti-ample, (3) implies (2).

Finally, we prove the implication (2) $\Rightarrow$ (1).
We note that
\[
\cofib(\cO_X \xrightarrow{F^{\dR}_X} F_*\Omega_X^{\bullet})=\tau_{\geq 1}F_*\Omega_X^{\bullet}
\]
By \Cref{chara-injectivity}, it suffices to show that
\[
H^{d-1}(X,\tau_{\geq 1}F_*\Omega_X^{\bullet} \otimes \omega_X)=0.
\]
By the spectral sequence, it suffices to show that
\[
H^i(X,\mcalH^j(F_*\Omega_X^{\bullet}) \otimes \omega_X) \simeq H^i(X,\Omega_X^j \otimes \omega_X)=0 \quad (i+j=d-1,j >0).
\]
It follows from (2), as desired.
\end{proof}

\begin{proposition}\label{example-Fano-varieties}
The following varieties are HT-split.
\begin{enumerate}
\item Del Pezzo surfaces over $k$ with rational double points.
\item Fano threefolds over $k$.
\end{enumerate}
\end{proposition}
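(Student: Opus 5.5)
For (2) I will go through \Cref{Fano-chara}, which applies since a smooth Fano threefold $X$ over $k$ is smooth projective with $\omega_X$ anti-ample. It then suffices to verify Akizuki--Nakano-type vanishing on $X$. This is exactly the content of the results of Kawakami--Tanaka \cites{KawakamiTanakaFanoLiftabilityI,KT25II}. They prove $H^i(X,\Omega^j_X\otimes L^{-1})=0$ for $i+j<3$ and every ample $L$, for all smooth Fano threefolds in every characteristic; this was obtained via liftability to $W_2(k)$ in most cases, with a separate treatment of the remaining families. Condition (2) of \Cref{Fano-chara} asks only for $i+j=2$, $j>0$, with $L=\omega_X^{-1}$. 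That is a special case of their vanishing, so $X$ is HT-split. If their statement is formulated over an algebraically closed field, I first base change to $\bar k$. The vanishing of $H^i(X,\Omega^j_X\otimes\omega_X)$ descends by flat base change, and (2) of \Cref{Fano-chara} is a purely cohomological condition, so no descent of the splitting itself is needed.

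For (1) the surface $X$ may be singular, so \Cref{Fano-chara} does not apply directly. Instead I will use quasi-$F$-splitting. Kawakami--Tanaka \cite{KawakamiTanakaWeakQuasiFSplitting} show that del Pezzo surfaces with rational double points (more generally log del Pezzo pairs of this type) are quasi-$F$-split. \Cref{qFs-to-HT}, whose first assertion holds for arbitrary qcqs $k$-schemes via \cite{petrov2025Decomposition}*{Corollary~4.4}, then gives that $X$ is HT-split. As an alternative for the smooth locus, or in case one prefers the cohomological route, I note the following. For a smooth del Pezzo surface one can argue by \Cref{Fano-chara}, since $h^1(\Omega^1\otimes\omega)$ and $h^0(\Omega^2\otimes\omega)=h^0(\omega^2)$ vanish for blow-ups of $\P^2$ and for $\P^1\times\P^1$. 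This can also be done via $W_2$-liftability together with \Cref{W_2-lift-to-HT-split}, since $p\geq 2=\dim X$.

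The main obstacle is (1) in small characteristics $p=2,3,5$. There some RDP del Pezzo surfaces are not $F$-split, so one genuinely needs the full quasi-$F$-split theorem rather than Frobenius splitting. One must also check that the cited result covers every RDP del Pezzo over an arbitrary perfect (not only algebraically closed) field. If it is only stated over $\bar k$, I would argue as follows. Quasi-$F$-splitting of height $n$ amounts to the surjectivity of a $k$-linear map $\Hom(F_*\overline W_n\cO_X,\cO_X)\to H^0(\cO_X)$. This is compatible with the flat base change $k\to\bar k$, so it descends. Consequently the HT-splitting over $k$ follows.
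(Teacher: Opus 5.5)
Your argument for (2) is the paper's: Kawakami--Tanaka's Akizuki--Nakano vanishing combined with \Cref{Fano-chara}. The paper reduces to $\bar k$ by applying \Cref{finite-cover}(1) to $X_{\bar k}\to X$ instead of base-changing the cohomological condition, and either works. Part (1), however, is not established as written. Your main route depends on the claim that every del Pezzo surface with rational double points is quasi-$F$-split, over any perfect field and in every characteristic including $p=2,3,5$. You neither prove this nor tie it to a precise statement. In this paper \cite{KawakamiTanakaWeakQuasiFSplitting} is invoked only for del Pezzo varieties in \Cref{del-Pezzo-variety}, in the smooth setting of simple normal crossings pairs. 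You yourself flag the singular, small-characteristic case as the main obstacle and leave it unresolved. Your fallback does not fill the hole, since it only covers smooth del Pezzo surfaces. Its cohomological version also misreads \Cref{Fano-chara}. For $d=2$, condition (2) is just $H^0(X,\Omega^1_X\otimes\omega_X)=0$. The group $H^1(X,\Omega^1_X\otimes\omega_X)$ that you claim vanishes is Serre dual to $H^1(X,T_X)$, which is nonzero for del Pezzo surfaces of degree at most $4$.

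The missing idea is that no quasi-$F$-splitting is needed: HT-splitting descends along any $\pi$ for which $\cO_X\to R\pi_*\cO_Y$ splits (\Cref{finite-cover}(1)). Work over $k=\bar k$ and let $\pi\colon Y\to X$ be the minimal resolution. Since $K_Y=\pi^*K_X$, $Y$ is a smooth weak del Pezzo surface. Hence $Y$ is $\P^1\times\P^1$, $\mathbb{F}_2$, or an iterated blow-up of $\P^2$ in at most eight points \cite{MartinStadlmayr}. It is therefore $W_2$-liftable, because liftability survives blowing up points \cite{LiedtkeSatriano}. As $\dim Y=2\le p$, \Cref{W_2-lift-to-HT-split} shows that $Y$ is HT-split. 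Rational double points are rational singularities, so $\cO_X\to R\pi_*\cO_Y$ is an isomorphism, and \Cref{finite-cover}(1) gives that $X$ is HT-split. Your remark about $W_2$-liftability and Deligne--Illusie was the right ingredient; it just has to be applied to $Y$ and then pushed down to $X$.
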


\begin{proof}
By \Cref{finite-cover} (1), we may assume that $k$ is an algebraically closed field.
First, let $X$ be a del Pezzo surface over $k$ with rational double points.
Let $\pi \colon Y \to X$ be a minimal resolution.
Then $Y$ is $W_2$-liftable.
Indeed, it is either \(\setP^1\times\setP^1\), \(\setF_2\), or a successive blow-up of \(\setP^2\) at most eight points \cite{MartinStadlmayr}*{Lemma~2.7}.
The assertion then follows from the compatibility of liftability with blowing up closed points \cite{LiedtkeSatriano}*{Propositions~2.2 and~3.1}.

In particular, $Y$ is HT-split by \Cref{W_2-lift-to-HT-split}.
Since $X$ has rational singularities, $X$ is also HT-split by \Cref{finite-cover}.

Next, let $X$ be a smooth Fano threefold.
Then the Akizuki--Nakano-type vanishing holds on $X$ by \cite{KawakamiTanakaFanoLiftabilityI}*{Theorem~B}.
By \Cref{Fano-chara}, $X$ is HT-split.
\end{proof}

\begin{theorem}\label{del-Pezzo-variety}
Let $k$ be a perfect field and $X$ a del Pezzo variety over $k$.
Then for every simple normal crossings divisor $D$ on $X$, the pair $(X,D)$ is log HT-split.
In particular, we have
\[
F_*\Omega^{\bullet}_X(\log D) \simeq \bigoplus_{i=0}^{\dim X} \Omega_X^i(\log D)[-i]
\]
and Akizuki--Nakano-type vanishing for $(X,D)$ as in \Cref{vanishing}.
\end{theorem}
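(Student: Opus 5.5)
The plan is to reduce to quasi-\(F\)-splitting and then invoke \Cref{qFs-to-HT}. The introduction records that del Pezzo varieties are quasi-\(F\)-split by \cite{KawakamiTanakaWeakQuasiFSplitting}.

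First, I would fix what ``del Pezzo variety'' means here: a smooth projective variety \(X\) over \(k\) with \(-K_X\sim(\dim X-1)H\) for an ample Cartier divisor \(H\), in the sense used by Kawakami--Tanaka. Since \(D\) is required to be simple normal crossings, \(X\) must be smooth anyway. The key input is then that \(X\) is quasi-\(F\)-split. If the cited result is stated only over algebraically closed fields, I would descend as follows. Quasi-\(F\)-splitting is the surjectivity of a map of finite-dimensional \(k\)-vector spaces \(\Hom_{\cO_X}(F_*\overline W_n\cO_X,\cO_X)\to H^0(X,\cO_X)\), or equivalently an injectivity on \(H^d(X,\omega_X)\)-type groups. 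Such a condition is compatible with flat base change to \(\bar k\), so it descends. Alternatively, one could argue directly at the level of HT-splitting, using \Cref{chara-injectivity}, whose injectivity criterion likewise commutes with the field extension \(k\to\bar k\).

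Second, I would apply \Cref{qFs-to-HT}. Since \(X\) is qcqs, smooth over the perfect field \(k\) and quasi-\(F\)-split, the pair \((X,D)\) is log HT-split for every simple normal crossings divisor \(D\) on \(X\). \Cref{criterion-decomp-log} (1)\(\Rightarrow\)(2) then gives the decomposition
\[
F_*\Omega^\bullet_X(\log D)\simeq\bigoplus_{i=0}^{\dim X}\Omega^i_X(\log D)[-i],
\]
where we identify \(X'\simeq X\) via the perfectness of \(k\) as in \Cref{relative-vs-absolute}. \Cref{vanishing} gives the Akizuki--Nakano-type vanishing.

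The main obstacle is the first step: checking that the quasi-\(F\)-splitting theorem of Kawakami--Tanaka applies in exactly this generality. Possible gaps are the dimension range, the base field and any characteristic restrictions. For instance, in low-degree cases the result may be stated only for large \(p\), or only geometrically. If some cases are not covered, I would try to handle them separately. One route is to show HT-splitting directly via \Cref{Fano-chara} for \(D=0\), but this does not give the log statement. So for the general pair I would aim to rely fully on quasi-\(F\)-splitting, and I would verify carefully the base-change argument that reduces to \(\bar k\).
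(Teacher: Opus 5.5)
Your proposal is correct and is essentially the paper's own argument. The paper proves the theorem by combining Kawakami--Tanaka's quasi-$F$-splitting of del Pezzo varieties (\cite{KawakamiTanakaWeakQuasiFSplitting}*{Theorem~A}) with \Cref{qFs-to-HT}; the decomposition and the vanishing then follow from \Cref{criterion-decomp-log} and \Cref{vanishing}, as you say. The paper applies the cited theorem directly over the perfect field $k$, so your descent from $\bar k$ is only a harmless safeguard.
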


\begin{proof}
It follows from \cite{KawakamiTanakaWeakQuasiFSplitting}*{Theorem~A} and \Cref{qFs-to-HT}.
\end{proof}

\subsection{Casagrande--Druel Fano fourfolds}

In characteristic zero, the Casagrande--Druel construction
(\cite{CD15}*{Lemma~3.1(iii)}) produces a large class of
Fano fourfolds with Lefschetz defect two.
The case of Picard number three was classified in \cite{Sec23};
more precisely, every smooth Fano fourfold with Lefschetz defect two and
Picard number three arises from the Casagrande--Druel construction.
More recently, Casagrande--Druel Fano fourfolds with Lefschetz defect two
and Picard number at least four were completely classified in
\cite{Pas25}, resulting in 147 distinct families.

In this subsection, we consider the HT-splitting of Fano fourfolds obtained by the Casagrande--Druel construction in positive characteristic.
We now recall the Casagrande--Druel construction, following the formulation
in \cite{CheltsovEtAl2025}*{Definition~1.2 and Section~2}.

Let \(V\) be a smooth projective variety over $k$, let \(L\) be a line bundle on \(V\) such that $\omega_V \otimes L$ is anti-ample, and let $R\in |2L|$ be a smooth divisor. 
Set
\[
\pi\colon
Y \defeq \mbfP_V(\mcalO_V\oplus L)
 \to  V.
\]
We denote by $S_+,S_-\subset Y$ the two sections induced by the two quotient line bundles of \(\mcalO_V\oplus L\), and put $F \defeq \pi^{-1}(R)$.
The Casagrande--Druel variety associated with the triple \((V,L,R)\) is
\[
X \defeq \operatorname{Bl}_{S_+\cap F}Y.
\]

\begin{corollary}\label{cor:CD-HT-split}
We use the above notation.
If $V$ is quasi-$F$-split, then the Casagrande--Druel variety
\[
X=\operatorname{Bl}_{S_+\cap F}Y
\]
is HT-split.

In particular, if \(V\) is a Fano threefold, then \(X\) is HT-split. Consequently, Akizuki--Nakano-type vanishing holds on \(X\), and the Hodge-to-de Rham spectral sequence degenerates at the \(E_1\)-page.
\end{corollary}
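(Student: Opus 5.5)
The plan is to exhibit $X$ as a blow-up of a log HT-split pair along a stratum of a simple normal crossings divisor, and then invoke \Cref{HT-ascent-stratum-blowup}. Two inputs are needed: quasi-$F$-splitting of $Y$, and a suitable boundary divisor on $Y$.

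\textbf{Step 1: $Y$ is quasi-$F$-split.} Since $V$ is quasi-$F$-split, \Cref{cor:qFs-split-projective-bundle} applied to the split bundle $\mcalO_V\oplus L$ shows that $Y=\mbfP_V(\mcalO_V\oplus L)$ is quasi-$F$-split.

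\textbf{Step 2: a simple normal crossings divisor on $Y$.} Consider $D\defeq S_++F$ on $Y$. Each piece is smooth. The section $S_+\simeq V$ is smooth because it is a section of a smooth morphism. The divisor $F=\pi^{-1}(R)$ is smooth because $R$ is a smooth divisor and $\pi$ is smooth. The intersection $S_+\cap F$ is identified, under $S_+\simeq V$, with $R$. Étale locally on $V$ the variety $Y$ is $V\times\setP^1$ with $S_+=V\times\{0\}$ and $F=R\times\setP^1$. In these coordinates the two divisors are cut out by parts of a regular system of parameters, namely the fiber coordinate and a local equation of $R$. Hence $D$ is a simple normal crossings divisor with two components, and $S_+\cap F$ is the stratum $D_I$ for $I=\{S_+,F\}$. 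Theorem \Cref{qFs-to-HT} then makes $(Y,D)$ log HT-split. \Cref{HT-ascent-stratum-blowup} applied to this stratum gives that $(X,D_X)$ is log HT-split, and in particular $X$ is HT-split.

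\textbf{Step 3: the case where $V$ is a Fano threefold.} The obstacle is to produce quasi-$F$-splitting of $V$. Here I would try the following two routes in turn.
\begin{enumerate}
\item Use \Cref{example-Fano-varieties} in place of quasi-$F$-splitting. This only gives that $V$ is HT-split, so the argument would have to run at the level of log HT-splitting. One would need $(Y,D)$ to be log HT-split starting from HT-splitting of $V$, via \Cref{relative-dR-base-change} and \Cref{log-HT-product}-type arguments. These apply over the stack $[\setA^1/\setG_m]$ after trivializing $\mcalO\oplus L$ locally, together with the divisor $R$ on $V$. This would require an a priori log HT-splitting of the pair $(V,R)$, which is not automatically available.
\item Cite known quasi-$F$-splitting of smooth Fano threefolds in positive characteristic, presumably the Kawakami--Tanaka results also used for del Pezzo varieties.
\end{enumerate}
I would commit to the citation in route 2 and then apply the first part of the statement.

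\textbf{Step 4: consequences.} Once $X$ is HT-split, \Cref{criterion-decomp-log} with $D=0$ gives the decomposition of $F_*\Omega^\bullet_X$. \Cref{vanishing} then yields Akizuki--Nakano vanishing and $E_1$-degeneration, since $X$ is smooth projective.

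I expect Step 3 to be the main obstacle, because the argument there rests on a citation of quasi-$F$-splitting for Fano threefolds that I have not checked.
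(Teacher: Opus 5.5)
Your treatment of the first assertion (Steps 1, 2 and 4) is exactly the paper's argument. \Cref{cor:qFs-split-projective-bundle} makes $Y$ quasi-$F$-split. \Cref{qFs-to-HT} then makes $(Y,S_++F)$ log HT-split, $S_+\cap F$ is a stratum of $S_++F$, and \Cref{HT-ascent-stratum-blowup} gives the conclusion. The consequences follow from \Cref{vanishing}.

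The gap is in Step 3. There is no available result saying that every smooth Fano threefold is quasi-$F$-split. The del Pezzo result you allude to covers only index two. The theorem the paper uses, \cite{KT25II}*{Theorem~E}, gives quasi-$F$-splitting only when $\rho(V)>1$ or the Fano index satisfies $r_V>1$. This is also why the paper proves HT-splitting of arbitrary Fano threefolds in \Cref{example-Fano-varieties} through Akizuki--Nakano vanishing and \Cref{Fano-chara} rather than through quasi-$F$-splitting. That route does not help here, as you noticed, because you need log HT-splitting of $(Y,S_++F)$ and not just HT-splitting of $V$.

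The missing idea is the Casagrande--Druel hypothesis that $\omega_V\otimes L$ is anti-ample, which you never use. Suppose $\rho(V)=1$ and $r_V=1$. Then $\operatorname{Pic}(V)=\mathbb{Z}\cdot(-K_V)$, so $L\sim m(-K_V)$ for some integer $m$. Ampleness of $-(K_V+L)=(1-m)(-K_V)$ forces $m\le 0$. On the other hand, $R\in|2L|$ being a nonzero effective divisor forces $2m(-K_V)^3>0$, i.e. $m>0$, a contradiction. Hence $\rho(V)>1$ or $r_V>1$, so \cite{KT25II}*{Theorem~E} applies and the first part of the statement finishes the proof. If one admits the degenerate case $R=0$, the centre is empty and $X=Y$; for $L\simeq\mcalO_V$ this is $V\times\setP^1$, which is HT-split by \Cref{example-Fano-varieties} and \Cref{log-HT-product} without any quasi-$F$-splitting.
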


\begin{proof}
First, we assume $V$ is quasi-$F$-split.
By \Cref{cor:qFs-split-projective-bundle}, the projective bundle $Y$ is quasi-$F$-split.
Thus, the pair $(Y,F+S_+)$ is log HT-split by \Cref{qFs-to-HT}.
The center
\[
S_+\cap F
\]
is a stratum of the SNC divisor \(S_++F\). 
Thus, $X$ is HT-split by \Cref{HT-ascent-stratum-blowup}.

Next, we assume that $V$ is a Fano threefold. 
Since $-(K_V+L)$ is ample, we have
\[
\rho(V)>1
\qquad\text{or}\qquad
r_V>1.
\]
By \cite{KT25II}*{Theorem~E}, the variety $V$ is quasi-$F$-split.
Thus, the result follows from the first assertion.
\end{proof}

\section{HT-splitting for varieties with trivial canonical bundle} \label{SectionCY}
When the canonical bundle is trivial, HT-splitting is closely related
to the degeneration of the Hodge-to-de Rham spectral sequence.
We apply this observation to K3 surfaces, abelian varieties, and
Cynk--Hulek Calabi--Yau varieties.

In this section, $k$ is a perfect field of characteristic $p>0$.

\begin{proposition}\label{chara-CY}
Let $X$ be a smooth proper $k$-scheme of dimension $d$.
We assume $\omega_X \simeq \cO_X$.
Then the following conditions are equivalent:
\begin{enumerate}
\item $X$ is HT-split.
\item The homomorphism
\[
H^d(X,\cO_X) \xrightarrow{F^{\dR}_X} H^d(X,F_*\Omega_X^{\bullet})
\]
is injective.
\item The Hodge-to-de Rham spectral sequence is $E_1$-degenerate.
\end{enumerate}
\end{proposition}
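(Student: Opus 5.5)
The plan is to establish (1)$\Leftrightarrow$(2) from \Cref{chara-injectivity}, then (1)$\Rightarrow$(3) from \Cref{vanishing}, and finally (3)$\Rightarrow$(2) by a dimension count.

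\textbf{(1)$\Leftrightarrow$(2).} Apply \Cref{chara-injectivity} with $D=0$. Because $\omega_X\simeq\cO_X$, the map in its condition (2) is the map $H^d(X,\cO_X)\to H^d(X,F_*\Omega^\bullet_X)$ of the present statement. One point needs care: tensoring with $\omega_X$ after $F_*$. We have $F_*\Omega^\bullet_X\otimes\omega_X\simeq F_*(\Omega^\bullet_X\otimes F^*\omega_X)$, and $F^*\omega_X\simeq\omega_X^{p}\simeq\cO_X$, so this is harmless. Since $k$ is perfect, $X'\simeq X$, and $H^d(X',-)$ of $F_*$ agrees with $H^d(X,-)$.

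\textbf{(1)$\Rightarrow$(3).} This is the $E_1$-degeneration statement in \Cref{vanishing}. Alternatively, take hypercohomology of the decomposition in \Cref{criterion-decomp-log} to get $\dim H^n_{\dR}=\sum_{i+j=n}h^j(\Omega^i)$. Degeneration then follows, because the spectral sequence gives the inequality $\le$ with equality only in the degenerate case.

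\textbf{(3)$\Rightarrow$(2).} This is the step requiring an argument. I would use the conjugate spectral sequence
\[
E_2^{a,b}=H^a\bigl(X,\mcalH^b(F_*\Omega^\bullet_X)\bigr)=H^a(X',\Omega^b_{X'})\Rightarrow H^{a+b}_{\dR}(X/k).
\]
It is obtained from the canonical filtration $\tau_{\le b}$, and its edge map
\[
H^d(X',\cO_{X'})=E_2^{d,0}\to H^d_{\dR}(X/k)
\]
is exactly the map induced by $F^{\dR}_X\colon\cO_{X'}\to\tau_{\le 0}F_*\Omega^\bullet_X\to F_*\Omega^\bullet_X$. The Cartier isomorphism identifies the $E_2$-terms with the Hodge numbers of $X'\simeq X$.

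Hodge degeneration gives $\dim H^n_{\dR}=\sum_{a+b=n}h^a(\Omega^b)$. Hence the conjugate spectral sequence also has $\sum_{a+b=n}\dim E_2^{a,b}=\dim H^n_{\dR}$ for every $n$. Since $E_\infty$ is a subquotient of $E_2$ and $\dim H^n=\sum\dim E_\infty^{a,b}$, equality forces $E_2=E_\infty$ in every position. In particular, no differential hits $E^{d,0}$ and none leave it, since $E^{d,0}$ sits in the bottom row. Therefore the edge map $E_2^{d,0}\to H^d_{\dR}$ is injective, which is (2).

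The main obstacle is confirming that the edge map of the conjugate spectral sequence really coincides with $H^d(F^{\dR}_X)$, together with the finite-dimensionality bookkeeping. Both follow from properness and the definitions: $F^{\dR}_X$ factors through $\tau_{\le0}F_*\Omega^\bullet_X\simeq\cO_{X'}$, which is the Cartier isomorphism in degree zero \eqref{DegreeZeroCartier}.
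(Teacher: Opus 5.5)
Your proposal is correct and follows the paper's proof essentially step for step. The paper also gets (1)$\Leftrightarrow$(2) from \Cref{chara-injectivity} with $D=0$ and (1)$\Rightarrow$(3) from the decomposition in \Cref{criterion-decomp-log}. For (3)$\Rightarrow$(2) it likewise uses the Cartier-identified conjugate spectral sequence: the same dimension count forces $E_2=E_\infty$, so the edge map $E_2^{d,0}=H^d(X,\cO_X)\to H^d(X,F_*\Omega^\bullet_X)$, which is the map induced by $F^{\dR}_X$, is injective.
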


\begin{proof}
The equivalence (1) $\Leftrightarrow$ (2) follows from \Cref{chara-injectivity}.
The implication (1) $\Rightarrow$ (3) follows from \Cref{criterion-decomp-log}.

Next, we assume (3).
Consider the conjugate spectral sequence
\[
{}^{\mathrm{conj}}E_2^{a,b}
=
H^a\left(
X,\mcalH^b\left(F_*\Omega_X^\bullet\right)
\right)
 \to 
H^{a+b}\left(X,F_*\Omega_X^\bullet\right).
\]
By the Cartier isomorphism and the assumption (3), we have
\[
\dim_k H^m\left(X,F_*\Omega_X^\bullet\right)
=
\sum_{a+b=m}
\dim_k {}^{\mathrm{conj}}E_2^{a,b}.
\]
Since the conjugate spectral sequence converges to
$H^{a+b}(X,F_*\Omega_X^\bullet)$, we also have
\[
\dim_k H^m\left(X,F_*\Omega_X^\bullet\right)
=
\sum_{a+b=m}
\dim_k {}^{\mathrm{conj}}E_\infty^{a,b}.
\]
Each term ${}^{\mathrm{conj}}E_\infty^{a,b}$ is a subquotient of
${}^{\mathrm{conj}}E_2^{a,b}$. The preceding equalities therefore imply
that
\[
{}^{\mathrm{conj}}E_2^{a,b}
=
{}^{\mathrm{conj}}E_\infty^{a,b}
\]
for all $a$ and $b$. Thus, the conjugate spectral sequence degenerates at
$E_2$.

Under the Cartier identification
\[
\mcalH^0\left(F_*\Omega_X^\bullet\right)
\simeq
\cO_X,
\]
the homomorphism induced by \(F_X^{\dR}\),
\[
H^d(X,\cO_X)
\xrightarrow{F_X^{\dR}}
H^d\left(X,F_*\Omega_X^\bullet\right),
\]
is the edge homomorphism
\[
{}^{\mathrm{conj}}E_2^{d,0}
\longrightarrow
H^d(X,F_*\Omega_X^\bullet).
\]
Thus, it is injective, as desired.
\end{proof}

\subsection{K3 surfaces and abelian varieties}

\begin{theorem}\label{CY-examples}
K3 surfaces and abelian varieties are HT-split.
\end{theorem}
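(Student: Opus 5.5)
The plan treats the two classes separately. In each case we verify one of the equivalent conditions in \Cref{chara-CY}, or we produce the decomposition \eqref{eq:DI-decomposition} directly and then apply \Cref{criterion-decomp-log}. Before starting, we reduce to the case where $k$ is algebraically closed. HT-splitting of a smooth proper $X$ is equivalent, by \Cref{chara-injectivity}, to injectivity of a map of finite-dimensional $k$-vector spaces. Such injectivity is insensitive to extension of the perfect base field, by flat base change for coherent cohomology. Alternatively, one can use \Cref{finite-cover}~(1) with the faithfully flat morphism $X_{\bar k}\to X$, as is done in the proof of \Cref{example-Fano-varieties}.

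\textbf{K3 surfaces.} Let $X$ be a K3 surface over $k$. By Deligne's result (Deligne, ``Relèvement des surfaces K3 en caractéristique nulle''; see also Ogus), every K3 surface lifts to $W(k)$, and in particular to $W_2(k)$. If $p\geq 2=\dim X$, then \Cref{W_2-lift-to-HT-split} applied with $D=0$ gives that $X$ is HT-split. This covers every prime $p$. The only point to check is that the Deligne--Illusie hypothesis $\dim X\le p$ holds for $p=2$, and it does since $\dim X = 2$. As a cross-check, one may instead argue through \Cref{chara-CY}: the Hodge-to-de Rham spectral sequence of a K3 surface degenerates at $E_1$. This follows from Rudakov--Shafarevich, since $H^0(X,\Omega^1_X)=0$ and the Hodge numbers match the de Rham Betti numbers.

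\textbf{Abelian varieties.} Let $A$ be an abelian variety of dimension $g$. Since $\omega_A\simeq\mcalO_A$, \Cref{chara-CY} reduces HT-splitting to $E_1$-degeneration of the Hodge-to-de Rham spectral sequence. This degeneration is classical (Oda): $H^*_{\dR}(A/k)$ has dimension $\binom{2g}{n}$ in each degree $n$, and this equals $\sum_{i+j=n}h^{j}(\Omega^i_A)$ because $\Omega^1_A\simeq\mcalO_A^{g}$ and $H^*(A,\mcalO_A)\simeq\bigwedge^* k^{g}$. So the sums of Hodge numbers equal the de Rham dimensions, and (3) of \Cref{chara-CY} holds. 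An alternative route is \Cref{Decomposable-case}. Abelian varieties lift to $W_2(k)$, for instance via the canonical or Serre--Tate lift, or via deformation theory since $H^2(A,T_A)$ obstructions can be avoided. Moreover, $\Omega^1_A\simeq\mcalO_A^{\oplus g}$ is a direct sum of line subbundles of rank $1<p$, so \cite{ZhangDeRhamHiggsComparison} applies.

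The main subtlety is not the final implication but citing the input facts in the generality of an arbitrary perfect field and every prime $p$. These inputs are $W_2$-liftability of K3 surfaces for $p=2,3$, and the dimension count of $H^*_{\dR}$ for abelian varieties. For the abelian case, I would rely on the de Rham cohomology computation via the Künneth-type structure $H^*_{\dR}(A)\simeq\bigwedge^* H^1_{\dR}(A)$ with $\dim H^1_{\dR}=2g$. I would use \Cref{chara-CY} as the cleanest path, since it avoids any liftability discussion.
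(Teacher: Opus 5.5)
Your proposal is correct. The K3 case is the paper's own argument: Deligne's $W_2$-liftability fed into \Cref{W_2-lift-to-HT-split}. The reduction to $\bar k$ is harmless but unnecessary, since both \Cref{W_2-lift-to-HT-split} and \Cref{chara-CY} work over any perfect field.

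For abelian varieties the paper simply quotes Deligne--Illusie, Remarque~2.6(iv), with \Cref{Decomposable-case} as a backup; that is your secondary route.
\begin{itemize}
\item \emph{The paper's route.} It produces the decomposition of $F_*\Omega^\bullet_A$ directly, without using properness or $\omega_A\simeq\mathcal{O}_A$. The price is $W_2$-liftability of an arbitrary abelian variety. This input is nontrivial because $H^2(A,T_A)\neq 0$ for $g\geq 2$. The canonical lift exists only for ordinary $A$, so you would need to cite unobstructedness of deformations of abelian varieties (for instance Serre--Tate together with unobstructedness for $p$-divisible groups), rather than say the obstructions ``can be avoided''.
\item \emph{Your preferred route via \Cref{chara-CY}.} It trades liftability for the classical computation $H^*_{\mathrm{dR}}(A/k)\simeq\bigwedge^*H^1_{\mathrm{dR}}(A/k)$ with $\dim H^1_{\mathrm{dR}}=2g$ (Oda, or torsion-freeness of crystalline cohomology). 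Serre duality in \Cref{chara-injectivity} and the conjugate spectral sequence then do the rest. Your Vandermonde count is right and forces $E_1$-degeneration termwise. This makes visible a general principle: when the canonical bundle is trivial, any proof of $E_1$-degeneration yields HT-splitting.
\end{itemize}

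The same principle gives a liftability-free proof for K3 surfaces. There your cross-check needs no Betti-number input. Since $H^0(\Omega^1)=H^1(\mathcal{O})=H^1(\Omega^2)=H^2(\Omega^1)=0$, all $E_1$-terms of odd total degree vanish. Every differential changes total degree by one, so the Hodge-to-de Rham spectral sequence degenerates for trivial reasons.
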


\begin{proof}
By \cite{DeligneK3Lifting}*{Corollaire~1.2}, every K3 surface is $W_2$-liftable.
Thus, every K3 surface is HT-split by \Cref{W_2-lift-to-HT-split}.

Every abelian variety is HT-split by \cite{DI87}*{Remarque 2.6(iv)} (see also \Cref{Decomposable-case}).
\end{proof}

\begin{proposition}\label{lemma:K3-involution-pair-lift}
We assume $p>2$.
Let \(S\) be a K3 surface over \(k\), and let $\sigma\colon S \to  S$ be a non-symplectic involution.
Put 
\[
R \defeq \operatorname{Fix}(\sigma).
\]
Then the pair \((S,R)\) admits a lifting to \(W_2(k)\). Consequently, \((S,R)\) is log HT-split.
\end{proposition}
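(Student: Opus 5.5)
The plan is to lift the quotient, not the surface. Let $T \defeq S/\sigma$ be the quotient. For $p>2$ the group $\setZ/2$ is linearly reductive, so $T$ is a smooth projective surface and the branch locus $B \defeq q(R)$ is a smooth divisor with $q^*B = 2R$, where $q\colon S\to T$ is the quotient map. Because $\sigma$ is non-symplectic, $T$ is a smooth rational surface or an Enriques surface (the latter when $R=\varnothing$). Moreover $S$ is recovered as the double cover $\Spec_T(\cO_T\oplus M^{-1})$, with $M^{2}\simeq\cO_T(B)$.

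First I would produce a $W_2(k)$-lift of the pair $(T,B)$, together with a lift of $M$ and of the isomorphism $M^2\simeq\cO_T(B)$.
\begin{itemize}
\item For rational $T$ and for Enriques surfaces in $p>2$, $T$ lifts to $W(k)$ and $H^2(T,T_T)=0$. Indeed, $H^2(T,T_T)$ is dual to $H^0(\Omega^1_T\otimes\omega_T)$, which vanishes for rational surfaces and for Enriques surfaces with $p\neq 2$.
\item Line bundles lift since $H^2(T,\cO_T)=0$.
\item Obstructions to lifting the pair $(T,B)$ lie in $H^2(T,T_T(-\log B))$, dual to $H^0(\Omega^1_T(\log B)\otimes\omega_T)$. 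I would show this vanishes using the residue sequence $0\to\Omega^1_T\otimes\omega_T\to\Omega^1_T(\log B)\otimes\omega_T\to\bigoplus\omega_T|_{B_i}\to0$. Here $H^0(\omega_T|_{B_i})=0$, since $B_i$ are curves with $\deg\omega_T|_{B_i}<0$ except possibly for elliptic components. For those, $\omega_T|_{B_i}$ is nontrivial of degree $0$, or one uses instead the connecting map to $H^1$.
\item Alternatively, and more robustly, I would lift $T$, then lift $M$ and a section $\tilde s$ of $\tilde M^{2}$ deforming the defining section of $B$. The sections lift once $H^1(T,\cO_T(B))=0$, which follows on rational $T$ by Riemann--Roch and Kawamata--Viehweg-type arguments, or by a direct check using $B\in|{-2K_T}|$ since $K_S=q^*(K_T+M)=0$.
\end{itemize}

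Given such lifts $(\widetilde T,\widetilde M,\tilde s)$, form $\widetilde S \defeq \Spec_{\widetilde T}(\cO\oplus\widetilde M^{-1})$ with algebra structure given by $\tilde s$. Since $2$ is invertible in $W_2(k)$, this is a flat double cover, smooth over $W_2(k)$ because its special fiber $S$ is smooth. The ramification divisor $\widetilde R$, cut out by the square root of $\tilde s$, is a relative smooth divisor lifting $R$. This gives the lift of $(S,R)$. Then $\dim S=2\le p$, so \Cref{W_2-lift-to-HT-split} yields log HT-splitness of $(S,R)$.

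The expected main obstacle is the unobstructedness of lifting the section $\tilde s$, i.e.\ the vanishing $H^1(T,\cO_T(B))=0$, or equivalently $H^2(T,T_T(-\log B))=0$. This is delicate when $B$ has elliptic or high-genus components: $R$ can contain a curve of genus up to $10$ together with rational curves. I would first try the cohomological route on the cover, $H^1(S,\cO_S(2R))^{\sigma}=H^1(T,\cO_T(B))$, and bound it via Riemann--Roch and vanishing on the K3 surface. If that fails, I would fall back on the explicit classification of $(T,B)$ in odd characteristic, via Nikulin-type invariants, to check the vanishing case by case.
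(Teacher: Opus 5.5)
Your route differs from the paper's. The paper applies Jang's theorem to lift the tame pair $(S,\sigma)$ to $W(k)$, then takes the fixed scheme of the lifted involution, which is smooth over $W(k)$ because $2$ is invertible. Lifting the quotient pair and re-forming the double cover is a reasonable alternative. However, the step you flag as the main obstacle rests on vanishing statements that are false, so as written there is a genuine gap.

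Since $\sigma$ is non-symplectic, the nowhere vanishing $2$-form on $S$ is anti-invariant, which gives $M\simeq\omega_T^{-1}$ and $B\sim -2K_T$. As the $B_i$ are disjoint, adjunction gives $B_i^2=4g_i-4$, $\deg(\omega_T|_{B_i})=2-2g_i$, and $N_{B_i/T}\simeq\omega_{B_i}^{\otimes 2}$. So $\omega_T|_{B_i}$ has degree $2$ on rational components, giving $H^0(B_i,\omega_T|_{B_i})\simeq k^3$. It is trivial on elliptic components, and it has negative degree only when $g_i\geq 2$. Your sign is reversed: the high-genus curve is the harmless one.

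Likewise, for rational $T$ one has $H^1(T,\cO_T(B))\simeq H^1(B,\cO_B(B))\simeq\bigoplus_i H^1(B_i,\omega_{B_i}^{\otimes 2})$. This is nonzero as soon as $R$ contains a smooth rational or elliptic curve; already $H^1(\mathbb{P}^1,\cO(-4))\simeq k^3$. This happens often, e.g.\ when $T=\mathbb{F}_4$ and $B$ contains the negative section. The same group is your $H^1(S,\cO_S(2R))^{\sigma}$. So no Riemann--Roch, Kawamata--Viehweg-type, or case-by-case Nikulin argument can give the vanishing. With $\widetilde T$ fixed in advance, the section $s$ may genuinely fail to lift.

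What is missing is the vanishing of the actual pair obstruction $H^2(T,T_T(-\log B))$. This holds for an equivariant reason, not by positivity. Its Serre dual is $H^0(T,\Omega^1_T(\log B)\otimes\omega_T)$. Using $q^*\Omega^1_T(\log B)\simeq\Omega^1_S(\log R)$ and $q_*\cO_S\simeq\cO_T\oplus\omega_T$, with $\omega_T$ the $(-1)$-eigensheaf, this is the $(-1)$-eigenspace of $\sigma$ on $H^0(S,\Omega^1_S(\log R))$. Because $\sigma$ fixes $R$ pointwise, the residue map is $\sigma$-equivariant with trivial action on $\bigoplus_i H^0(R_i,\cO_{R_i})$. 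Hence for $p\neq 2$ anti-invariant log forms have zero residue, so they lie in $H^0(S,\Omega^1_S)=0$.

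Therefore $(T,B)$ lifts to some $(\widetilde T,\widetilde B)$ over $W_2(k)$; this uses the freedom in choosing $\widetilde T$. Set $\widetilde M\defeq\omega_{\widetilde T/W_2(k)}^{-1}$. Then $\widetilde M^{\otimes 2}\simeq\cO_{\widetilde T}(\widetilde B)$, because $H^1(T,\cO_T)=0$ makes lifts of line bundles unique, and a unit normalizes this isomorphism to reduce to the given one. With this repair your double-cover construction works. It gives an argument independent of Jang's equivariant lifting theorem, using only the structure of $T$ and $H^0(S,\Omega^1_S)=0$. The paper's proof is shorter but rests entirely on that citation.
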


\begin{proof}
Since \(\sigma\) has order two and \(p\neq2\), it is a tame automorphism.
By \cite{JangLifting}*{Theorem~3.3}, there exist a smooth projective
lifting
\[
\mathscr S \to \operatorname{Spec}W(k)
\]
of \(S\) and an involution
\[
\widetilde\sigma\colon\mathscr S \to \mathscr S
\]
lifting \(\sigma\).

Let
\[
\mathscr R \defeq \operatorname{Fix}(\widetilde\sigma)
\subset\mathscr S
\]
be the scheme-theoretic fixed locus. 
Formation of the fixed locus commutes with base change, so
\[
\mathscr R\times_{W(k)}k=R.
\]
Since \(2\) is invertible in \(W(k)\), the fixed locus of the tame involution \(\widetilde\sigma\) on the smooth scheme \(\mathscr S\) is smooth over \(W(k)\).
Thus \((S,R)\) is \(W_2\)-liftable.
By \Cref{W_2-lift-to-HT-split}, \((S,R)\) is log HT-split.
\end{proof}

\subsection{Cynk--Hulek Calabi--Yau varieties}
Cynk--Hulek varieties form a class of higher-dimensional
Calabi--Yau varieties of Kummer type.  They are obtained as crepant
resolutions of finite quotients of products of elliptic curves and
were originally introduced in order to construct explicit
higher-dimensional modular Calabi--Yau manifolds
\cite{CynkHulek}.
Their quotient description makes both their middle-dimensional
motive and their arithmetic accessible.

Let \(k\) be an algebraically closed field of characteristic \(p>2\). 
Let
\[
E_1,\ldots,E_n
\]
be elliptic curves over \(k\). For each \(i\), let
\[
\iota_i=[-1]_{E_i}
\]
and put
\[
B_i \defeq E_i[2].
\]
Since \(p\neq2\), the scheme \(B_i\) is a reduced étale divisor of
degree four on \(E_i\).

Set
\[
A_n \defeq E_1\times\cdots\times E_n
\]
and
\[
G_n \defeq 
\left\{
(\epsilon_1,\ldots,\epsilon_n)\in(\mbfZ/2\mbfZ)^n
\ \middle|\
\epsilon_1+\cdots+\epsilon_n=0
\right\}
\simeq(\mbfZ/2\mbfZ)^{n-1}.
\]
The group \(G_n\) acts on \(A_n\) coordinate-wise, where the
nontrivial element in the \(i\)-th factor acts by \(\iota_i\).

By the Cynk--Hulek construction \cite{CynkHulek}*{Propositions~2.1 and~2.2}, the quotient $A_n/G_n$ admits a smooth crepant resolution
\[
X_n \to  A_n/G_n.
\]
The resulting variety \(X_n\) is a smooth Calabi--Yau \(n\)-fold.

\begin{proposition}\label{prop:Cynk-Hulek-HT}
The Cynk--Hulek Calabi--Yau variety \(X_n\) is HT-split.
\end{proposition}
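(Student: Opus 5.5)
We follow the route announced in the introduction: combine \Cref{thm:intro-constructions}~(2) and~(4), i.e.\ \Cref{HT-ascent-stratum-blowup} and \Cref{finite-cover}. The Cynk--Hulek resolution is built in two stages. First, on $A_n$ the fixed loci of the elements of $G_n$ are unions of the strata of the divisor
\[
D \defeq \sum_{i=1}^n \pr_i^*B_i .
\]
Successively blowing up these fixed strata, in the order of \cite{CynkHulek}*{Propositions~2.1 and~2.2}, gives a $G_n$-equivariant smooth $\widetilde A_n \to A_n$ on which $G_n$ acts with quotient $X_n\simeq \widetilde A_n/G_n$. So the plan is to show that $\widetilde A_n$, with a suitable boundary, is log HT-split, and then descend along the quotient.

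\textbf{Step 1.} Each $(E_i,B_i)$ is log HT-split. Indeed, $E_i$ is a curve with $B_i$ a reduced étale divisor, and $(E_i,B_i)$ lifts to $W_2(k)$: lift $E_i$ to a Weierstrass model, and the $2$-torsion lifts since $p>2$. As $\dim E_i=1\le p$, \Cref{W_2-lift-to-HT-split} applies. Then \Cref{log-HT-product} gives that $(A_n,D)$ is log HT-split.

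\textbf{Step 2.} Blow up along strata. Each centre in the Cynk--Hulek construction is a stratum of the current simple normal crossings boundary; for the first step it is a component of $\pr_i^*B_i\cap\pr_j^*B_j$. Whenever the centre is a union of disjoint strata, we apply \Cref{HT-ascent-stratum-blowup} one connected component at a time; this is legitimate because disjoint strata do not interact. After each blow-up the new boundary $D_Y=(\pi^{-1}D)_{\red}$ is again SNC. The later centres are intersections of strict transforms of components of $D$, possibly together with exceptional divisors, so they remain strata of $D_Y$. Inducting, $\widetilde A_n$ is log HT-split, and in particular HT-split.

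\textbf{Step 3.} Descend to the quotient. The map $q\colon \widetilde A_n\to X_n$ is the quotient by the finite group $G_n$ of order $2^{n-1}$, prime to $p$. Hence $\cO_{X_n}\to q_*\cO_{\widetilde A_n}$ is split by the averaging (Reynolds) operator, and $Rq_*=q_*$ because $q$ is finite. Then \Cref{finite-cover}~(1) gives that $X_n$ is HT-split.

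The main obstacle is Step 2. One must verify that the concrete Cynk--Hulek blow-up sequence really consists only of blow-ups along strata of the accumulated SNC boundary. One must also verify that the resulting $G_n$-quotient is exactly $X_n$ and is smooth. If the original construction blows up non-stratum loci, or instead passes through intermediate quotients (for instance resolving $A_{n-1}/G_{n-1}\times E_n$ inductively), we would reorganize the argument as an induction on $n$. In that version we set $X_n$ to be a resolution of $(X_{n-1}\times E_n)/(\Z/2)$ and track the boundary consisting of the fixed divisors. At each stage we use \Cref{log-HT-product}, then \Cref{HT-ascent-stratum-blowup} for the blow-up of the fixed strata, then \Cref{finite-cover}~(1) for the $\Z/2$-quotient; the splitting of $\cO\to Rq_*\cO$ holds at each stage since $2$ is invertible.
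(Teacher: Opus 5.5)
Your proposal is correct and is essentially the paper's proof. The paper also obtains log HT-splitting of $(A_n,\Delta_n)$ from the $W_2$-liftability of each $(E_i,B_i)$ together with \Cref{log-HT-product}, then applies \Cref{HT-ascent-stratum-blowup} to the stratum blow-ups making up $\widetilde A_n\to A_n$, and finally descends to $X_n\simeq\widetilde A_n/G_n$ via \Cref{finite-cover}~(1) using that $|G_n|=2^{n-1}$ is prime to $p$; like you, it takes from \cite{CynkHulek} the fact that the centres are strata of the accumulated boundary, so your inductive fallback makes that citation explicit but is not needed.
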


\begin{proof}
Consider the divisor
\[
\Delta_n
 \defeq 
\sum_{i=1}^n
E_1\times\cdots\times E_{i-1}
\times B_i
\times E_{i+1}\times\cdots\times E_n
\]
on \(A_n\). It is a simple normal crossings divisor.

Since every pair \((E_i,B_i)\) is log HT-split by \Cref{W_2-lift-to-HT-split}, the product pair
\[
(A_n,\Delta_n)
\]
is log HT-split.

The fixed loci occurring in the Cynk--Hulek Kummer construction are disjoint unions of subvarieties of the form
\[
\prod_{i\in I}B_i
\times
\prod_{i\notin I}E_i
\]
and their strict transforms. 
In particular, the equivariant modification
\[
\widetilde A_n \to  A_n
\]
appearing in the standard resolution is obtained by a sequence of blow-ups along strata of the reduced transforms of \(\Delta_n\) (see \cite{CynkHulek}*{Propositions~2.1 and~2.2 and the proof of Lemma~2.4}).
Then \(\widetilde A_n\) is HT-split by \Cref{HT-ascent-stratum-blowup}.

The group \(G_n\) is linearly reductive because \(p\neq2\), and the
Cynk--Hulek model is the smooth quotient
\[
X_n\simeq\widetilde A_n/G_n.
\]
The quotient descent theorem therefore implies that \(X_n\) is
HT-split by \Cref{finite-cover}.
\end{proof}

\section{Applications to vanishing in mixed characteristic} \label{SectionMixedCharacteristic}
The main goal of this section is to prove the Akizuki--Nakano-type vanishing theorem for smooth projective lim-perfectoid split schemes over \(W(k)\), and hence for globally \(+\)-regular schemes, stated in \Cref{intro:ANV-G+R}.
We construct a natural morphism from the closed fiber of the perfectization of a \(p\)-adic formal scheme to the de Rham stack of its closed fiber (\Cref{PerfectizationToDeRham}).
This implies that the closed fiber of a lim-perfectoid split formal scheme is HT-split (\Cref{LimPerfectoidSplitImpliesHTSplit}), so that the positive-characteristic vanishing results developed above apply.

\subsection{Comparison between perfectizations and de Rham stacks}

\begin{construction}
    Let \(\Perfd\) be the category of perfectoid \(\setZ_p\)-algebras.
    Consider the functors
\[
\mcalF,\mcalG \colon \Perfd \longrightarrow \CAlg^{an}_{\F_p}
\]
defined by
\[
\mcalF(R) \defeq R/^Lp, \qquad \mcalG(R) \defeq W(R/^Lp)/^Lp.
\]
The restriction morphisms of Witt vectors induce a natural transformation
\[
\rho\colon \mcalG\longrightarrow\mcalF.
\]
\end{construction}

\begin{proposition}\label{ConstructionRetractionTorsfree}
Let $\Perfd_{\tf}$ denote the category of $p$-torsion-free perfectoid rings.
Write the restriction of functors \(\mcalF\) and \(\mcalG\) to the full subcategory \(\Perfd_{\tf} \subset \Perfd\) as
\begin{equation*}
    \mcalF_{\tf}, \mcalG_{\tf} \colon \Perfd_{\tf} \to \CAlg^{an}_{\setF_p}.
\end{equation*}
Then there exists a natural transformation
\[
s\colon \mcalF_{\tf}\longrightarrow\mcalG_{\tf}
\]
together with an equivalence of natural transformations
\[
\rho\circ s
\simeq
\id_{\mcalF_{\tf}}.
\]
\end{proposition}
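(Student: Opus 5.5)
The plan is to build \(s\) from the tilt. For a \(p\)-torsion-free perfectoid ring \(R\), the derived quotient \(R/^Lp\) is the discrete ring \(R/p\). By the standard theory of perfectoid rings, \(R/p\simeq R^\flat/\overline{I}\), where
\[
I\defeq\ker\bigl(\theta\colon A_{\inf}(R)=W(R^\flat)\to R\bigr)
\]
is an invertible (locally principal) ideal and \(\overline I\) is its image in \(R^\flat=A_{\inf}(R)/p\). Since \(R\) is \(p\)-torsion-free, \(\overline I\) is generated locally by a nonzerodivisor, e.g.\ a unit multiple of \(p^\flat\). Hence
\[
R/p\simeq R^\flat\otimes^L_{\mathrm{Sym}(\overline I)}R^\flat\big/\overline I
\]
is a derived quotient of the perfect ring \(R^\flat\) by the invertible ideal \(\overline I\). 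All of this data is functorial in \(R\in\Perfd_{\tf}\).

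First I define a ring map \(R^\flat\to W(R/p)/^Lp\) functorially. Since \(R^\flat\) is perfect, \(W(R^\flat)/^Lp\simeq R^\flat\), and the inverse of this equivalence is \(x\mapsto[x]\). Composing with \(W\) applied to the quotient map \(R^\flat\to R/p\) gives a morphism of animated rings
\[
t_R\colon R^\flat\simeq W(R^\flat)/^Lp\longrightarrow W(R/p)/^Lp.
\]
This is natural in \(R\), because tilting, \(W\), and derived quotients are functors. Second, I show that \(t_R\) factors through \(R^\flat/^L\overline I\simeq R/p\), canonically and naturally. Because \(R/p\) is a derived quotient by an invertible ideal, this factorization amounts to a nullhomotopy of \(t_R|_{\overline I}\), compatible with multiplication. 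Such a nullhomotopy comes from the fact that the map \(W(R^\flat)\to W(R/p)\) sends \(I\) into \(W(\overline I\cdot(R/p))=0\) already at the level of the composite \(W(R^\flat)\to W(R^\flat/\overline I)\). More precisely, I would write the map as
\[
W(R^\flat)/^Lp\to W(R^\flat/\overline I)/^Lp
\]
and observe that the map \(W(R^\flat)\to W(R^\flat/\overline I)\) kills \(W(\overline I)\supseteq[\overline I]\) on the nose. This gives the factorization through \(R^\flat/^L\overline I\) via the universal property of the derived quotient. The result is \(s_R\colon R/p\to W(R/p)/^Lp\).

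Third, I check \(\rho\circ s\simeq\id\). The composite \(R^\flat\to W(R/p)/^Lp\xrightarrow{\res}R/p\) equals \(R^\flat\simeq W(R^\flat)/^Lp\xrightarrow{\res}R^\flat\to R/p\), by naturality of \(\res\). This is the quotient map, since \(\res([x])=x\). Therefore \(\rho\circ s\) and \(\id\) both restrict to the quotient map \(R^\flat\to R/p\), compatibly with the chosen nullhomotopies of \(\overline I\). The universal property of \(R^\flat/^L\overline I\) then identifies them. Because all constructions are functorial in the \(1\)-category \(\Perfd_{\tf}\), the identification promotes to an equivalence of natural transformations.

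The main obstacle is the coherence in the second step. The target \(W(R/p)/^Lp\) is genuinely non-discrete, since \(W(R/p)\) has \(p\)-torsion when \(R/p\) is non-reduced. So the factorization must be produced as a specified nullhomotopy that is natural in \(R\), not merely as a property. To handle this, I would first try to express everything through the functor \(R\mapsto(A_{\inf}(R),I)\) to perfect prisms. There, \(R/p\) is the pushout \(A_{\inf}/^L(p)\otimes^L_{A_{\inf}}A_{\inf}/^LI\) of animated rings, and \(s_R\) is the map induced on this pushout by two compatible maps out of \(A_{\inf}/^Lp\) and \(A_{\inf}/^LI\). Naturality then follows formally from functoriality of pushouts in \(\CAlg^{an}_{\F_p}\).
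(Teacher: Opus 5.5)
\textbf{There is a genuine gap in Step 2.} Your key observation is the right one: the Teichm\"uller ideal $[\bar I]$ is killed strictly by $W(q)\colon W(R^\flat)\to W(R/p)$. But Step 2 fails at exactly the point you call the main obstacle, and your proposed fix does not close it.

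\emph{The factorization is not canonical.} A factorization of $t_R$ through $R^\flat/^L\bar I$ needs a specified nullhomotopy of $t_R$ on $\bar I$, or of $t_R(\bar d)$ once you choose a generator $\bar d$. Such nullhomotopies form a torsor under $\pi_1(W(R/p)/^Lp)=W(R/p)[p]$, which is typically nonzero. So ``the universal property of the derived quotient'' gives nothing canonical until you say which nullhomotopy you take. Moreover, a generator of $\bar I$ is not functorial in $R$.

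\emph{The strict vanishing is on the wrong side.} It holds for the ideal $[\bar I]W(R^\flat)$ in $W(R^\flat)$. To turn it into a nullhomotopy on $\bar I$, you need $\bar I\simeq([\bar I]W(R^\flat))/^Lp$ inside $W(R^\flat)/p=R^\flat$. You never establish this.

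\emph{The pushout fix fails.} $W(q)$ does not kill $I$, so your claim that it sends $I$ into $W(\bar I\cdot(R/p))=0$ is false. For $R=\mathbb{Z}_p[p^{1/p^\infty}]^\wedge$ and $d=p-[p^\flat]$, one gets $W(q)(d)=p=V(1)\neq0$. Hence there is no evident map $A_{\inf}/^LI=R\to W(R/p)/^Lp$. Constructing one compatible with $t_R$ on $A_{\inf}$ is essentially the statement being proved.

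\textbf{The missing idea} (this is how the paper proceeds) is to take the quotient strictly on the Witt side, before reducing mod $p$.
\begin{enumerate}
\item Using the perfect prism $(W(R^\flat),\ker\theta)$, consider the discrete ring $W(R^\flat)/[\bar I]$, the quotient by the ideal generated by Teichm\"uller lifts of $\bar I$. It is functorial in $R$, and $W(q)$ induces a strictly natural ring map $\sigma_R\colon W(R^\flat)/[\bar I]\to W(R/p)$.
\item The one nontrivial check is that $W(R^\flat)/[\bar I]$ is $p$-torsion-free. Write $I=(d)$. Since $R$ is $p$-torsion-free, $\bar d$ is a nonzerodivisor in $R^\flat=W(R^\flat)/p$. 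So $p,[\bar d]$ is a regular sequence, and by $p$-adic separatedness so is $[\bar d],p$.
\item Hence restriction gives a natural equivalence $\alpha\colon(W(R^\flat)/[\bar I])/^Lp\simeq R/p$. Set $s:=(\sigma/^Lp)\circ\alpha^{-1}$. It is natural for free, because the only derived operation is the functor $-/^Lp$ applied to a $1$-categorical natural transformation.
\item Since $\operatorname{res}\circ\sigma_R$ is strictly the reduction map, $\rho\circ s\simeq\mathrm{id}$ follows.
\end{enumerate}
Your retraction argument is fine in itself: $R/p$ is discrete, so that identification can be checked on $\pi_0$. Note also that $(W(R^\flat)/[\bar I])/^Lp\simeq R^\flat/^L\bar I$. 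So this construction is precisely the generator-free, coherent version of the factorization you were after.
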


\begin{proof}
Let $\PerfPrism_{\tf}$ denote the category of perfect prisms $(A,I)$ such that $A/I$ is $p$-torsion-free.
By the equivalence between perfect prisms and perfectoid rings, the quotient functor
\[
q\colon
\PerfPrism_{\tf}
\longrightarrow
\Perfd_{\tf},
\qquad
(A,I)\longmapsto A/I,
\]
is an equivalence of categories.
It is therefore enough to construct, on $N(\PerfPrism_{\tf})$, a natural transformation
\[
\widetilde{s}\colon
q^*\mcalF_{\tf}\longrightarrow q^*\mcalG_{\tf}
\]
together with an equivalence
\[
(q^*\rho)\circ\widetilde{s}
\simeq
\id_{q^*\mcalF_{\tf}}.
\]

For $(A,I)\in\PerfPrism_{\tf}$, put
\begin{equation*}
    R_A \defeq A/I, \qquad J_A \defeq I \otimes_A A/pA.
\end{equation*}
Since $R_A$ is $p$-torsion-free, the natural morphism \(J_A \to A/pA\) is injective.
We denote by
\[
\pi_A \colon A/pA\longrightarrow R_A/pR_A \cong (A/pA)/J_A
\]
the quotient morphism.

We first work in the ordinary category \(\CRing\) of commutative rings.
Let \([J_A] \subseteq W(A/pA)\) denote the ideal of \(W(A/pA)\) generated by the Teichm\"uller lifts $[x]$ for $x\in J_A$.
Define functors
\[
\widetilde{\mcalF_{\tf}},\widetilde{\mcalG_{\tf}} \colon \PerfPrism_{\tf} \longrightarrow \CRing
\]
by
\begin{equation} \label{EquivAlpha}
    \widetilde{\mcalF_{\tf}}(A, I) \defeq W(A/pA)/[J_A] \qquad \text{and} \qquad \widetilde{\mcalG_{\tf}}(A, I) \defeq W(R_A/pR_A)
\end{equation}
since all operations in this construction are functorial.

Since
\[
W(\pi_A)([x])=[\pi_A(x)]=0 \qquad (x\in J_A),
\]
the morphism
\[
W(\pi_A)\colon W(A/pA)\longrightarrow W(R_A/pR_A)
\]
kills the ideal $[J_A]$.
It therefore induces a morphism of ordinary rings
\[
\sigma_A\colon \widetilde{\mcalF_{\tf}}(A,I) \longrightarrow \widetilde{\mcalG_{\tf}}(A,I).
\]
This defines a natural transformation
\begin{equation} \label{ConstSigmaTorsf}
    \sigma \colon \widetilde{\mcalF_{\tf}} \to \widetilde{\mcalG_{\tf}}
\end{equation}
in the ordinary category of functors from \(\PerfPrism_{\tf}\) to \(\CRing\).

We next compare the derived reductions modulo $p$ of these functors with $q^*\mcalF_{\tf}$ and $q^*\mcalG_{\tf}$.
We first claim that the commutative ring
\[
\widetilde{\mcalF_{\tf}}(A,I) = W(A/pA)/[J_A]
\]
is $p$-torsion-free.
To verify this, choose a generator $d$ of $I$.
Since $W(A/pA)/p \simeq A/pA$ is $d$-torsion free, the sequence $p,[d]$ is regular.
Thus, the sequence $[d],p$ is also regular.
Consequently, taking \(\pi_0\) gives an isomorphism
\begin{equation} \label{FTildeIsomModp}
    \widetilde{\mcalF_{\tf}}(A,I)/^Lp \xrightarrow{\simeq} \widetilde{\mcalF_{\tf}}(A,I)/p
\end{equation}
in \(\CAlg^{an}_{\setF_p}\).
The restriction morphism \(W(A/pA) \to A/pA\) induces a canonical isomorphism \(W(A/pA)/p \xrightarrow{\cong} A/pA\) and we obtain an equivalence
\begin{equation} \label{EquivAlphaDerived}
  \alpha_A\colon \widetilde{\mcalF_{\tf}}(A,I)/^Lp \xrightarrow{\simeq} (W(A/pA)/[J_A])/p \xrightarrow{\cong} ((A/pA)/J_A)/p \cong R_A/pR_A = q^*\mcalF_{\tf}(A,I)
\end{equation}
in \(\CAlg^{an}_{\setF_p}\).

Since all operations appearing in $\alpha_A$ are functorial, \(\alpha_A\) is therefore strictly compatible with morphisms of perfect prisms.
Thus the morphisms $\alpha_A$ assemble to a natural equivalence
\[
\alpha\colon \widetilde{\mcalF_{\tf}}/^Lp \xrightarrow{\sim} q^*\mcalF_{\tf}
\]
in \(\Fun(\PerfPrism_{\tf}, \CAlg^{an}_{\setF_p})\), where we use
\[
R_A/^Lp\simeq R_A/pR_A
\]
because $R_A$ is $p$-torsion-free.

On the other hand, by definition,
\[
\widetilde{\mcalG_{\tf}}(A,I)/^Lp = W(R_A/pR_A)/^Lp = (q^*\mcalG_{\tf})(A,I).
\]
Hence there is a canonical identification of functors
\[
\widetilde{\mcalG_{\tf}}/^Lp = q^*\mcalG_{\tf}.
\]

Applying derived reduction modulo $p$ to the natural transformation
$\sigma$ gives a natural transformation
\[
\sigma/^Lp\colon \widetilde{\mcalF_{\tf}}/^Lp \longrightarrow q^*\mcalG_{\tf}
\]
in \(\Fun(\PerfPrism_{\tf}, \CAlg^{an})\).
Then we can define a natural transformation
\[
\widetilde{s}  \defeq  (\sigma/^Lp)\circ\alpha^{-1} \colon q^*\mcalF_{\tf} \longrightarrow q^*\mcalG_{\tf}
\]
in \(\Fun(\PerfPrism_{\tf}, \CAlg^{an})\).

It remains to verify that $\widetilde{s}$ is a section of
$q^*\rho$.
Let
\[
\res_A\colon
W(R_A/pR_A)\longrightarrow R_A/pR_A
\]
denote the restriction morphism.
Since the composition
\begin{equation*}
    W(A/pA)/[J_A] \xrightarrow{\sigma_A} W(R_A/pR_A) \xrightarrow{\res} R_A/pR_A
\end{equation*}
induces the isomorphism \(\alpha_A\) after the factorization
\begin{equation*}
    (W(A/pA)/[J_A])/^L p \xrightarrow{\alpha_A} R_A/pR_A,
\end{equation*}
we have an equivalence of natural transformations
\begin{equation} \label{ConstructionRetractionTorsfreeEq}
  q^*\rho\circ(\sigma/^Lp) \simeq \alpha.
\end{equation}
Therefore
\begin{equation*}
    q^*\rho\circ\widetilde{s} = q^*\rho\circ(\sigma/^Lp)\circ\alpha^{-1} \simeq \alpha\circ\alpha^{-1} \simeq \id_{q^*\mcalF_{\tf}}.
\end{equation*}

Finally, using the equivalence
\[
q\colon\PerfPrism_{\tf}\xrightarrow{\sim}\Perfd_{\tf},
\]
we can pass \(\widetilde{s} \colon q^*\mcalF_{\tf} \to q^*\mcalG_{\tf}\) and the equivalence
\[
q^*\rho\circ\widetilde{s} \simeq \id_{q^*\mcalF_{\tf}}
\]
to the desired natural transformation \(s \colon \mcalF_{\tf} \to \mcalG_{\tf}\) and an equivalence \(\rho \circ s \simeq \id_{\mcalF_{\tf}}\).
\end{proof}

\begin{proposition} \label{ConstructionRetractionPerfect}
    Let \(\Perf\) be the category of perfect \(\setF_p\)-algebras.
    Write the restriction of functors \(\mcalF\) and \(\mcalG\) to the full subcategory \(\Perf \subset \Perfd\) as
    \begin{equation*}
        \mcalF_{\crys}, \mcalG_{\crys} \colon \Perf \to \CAlg^{an}_{\setF_p}.
    \end{equation*}
    Then there exists a natural transformation
    \begin{equation*}
        s \colon \mcalF_{\crys} \to \mcalG_{\crys}
    \end{equation*}
    together with an equivalence of natural transformations
    \begin{equation*}
        \rho \circ s \simeq \id_{\mcalF_{\crys}}.
    \end{equation*}
\end{proposition}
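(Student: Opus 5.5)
For a perfect \(\setF_p\)-algebra \(R\), \(R\) is \(p\)-torsion, so \(\mcalF_{\crys}(R)=R/^Lp\) is not discrete. It is the animated ring \(R\otimes^L_{\setZ}\setF_p\), with \(\pi_0=R\) and \(\pi_1=R\). Likewise \(\mcalG_{\crys}(R)=W(R/^Lp)/^Lp\). The plan is to use the perfect prism \((W(R),(p))\), as in \Cref{ConstructionRetractionTorsfree}, but with a Frobenius-twisted construction that exploits perfectness.

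First identify \(W(R/^Lp)\). Since \(R/^Lp\simeq R\otimes^L_{\setF_p}(\setF_p/^Lp)\), perfectness of \(R\) gives \(W(R/^Lp)\simeq W(R)\otimes^L_{\setZ_p}W(\setF_p/^Lp)\), by rigidity of Witt vectors of perfect rings relative to base change from a perfect base. Alternatively, and more naturally, define \(s_R\) as a composite of natural maps. Start with the map of discrete rings \(R\to W(R)/p\) given by \(r\mapsto[r]\bmod p\); this is the inverse of the restriction isomorphism \(W(R)/p\xrightarrow{\cong}R\), which is a ring map because \(R\) is perfect. Derived reduction modulo \(p\) of the identity of \(W(R)\) gives \(R/^Lp\simeq (W(R)/p)/^Lp\). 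Next, apply the functor \(W(-)/^Lp\) to the quotient map \(W(R)\to W(R)/p\simeq R\to R/^Lp\)? That does not directly give a map out of \(R/^Lp\).

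So instead I would use the universal property of \(\mcalG(R)=W(R/^Lp)/^Lp\) as the derived quotient of a derived \(\delta\)-ring. The ring \(W(R)\) is a \(\delta\)-ring with \(\delta\)-structure lifting the Frobenius, and the cofree adjunction gives a unique \(\delta\)-map \(W(R)\to W(W(R))\) lifting the identity. Composing with \(W(W(R))\to W(R/^Lp)\), induced by \(W(R)\to W(R)/^Lp\simeq R/^Lp\), gives a natural map \(\widetilde{s}\colon W(R)\to W(R/^Lp)\). Its composite with restriction is the canonical map \(W(R)\to R/^Lp\). Reducing modulo \(p\) gives \(s_R\colon W(R)/^Lp\simeq R/^Lp\to W(R/^Lp)/^Lp\). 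Here I use that \(W(R)\) is \(p\)-torsion-free with \(W(R)/p\cong R\), so \(W(R)/^Lp\simeq R\) discretely. Note that this \(R\) is not the same object as \(R/^Lp\): \(\mcalF(R)\) is \(R/^Lp\) with \(R\) itself \(p\)-torsion. Thus I need a map \(R/^Lp\to W(R)/^Lp\), which is supplied by \(R/^Lp\simeq(W(R)/p)/^Lp\) together with the counit-type map \((W(R)/p)/^Lp\to W(R)/^Lp\)? That map goes the wrong way in general, so the key step is to construct instead a natural map \(R\to W(R)/^Lp\) of animated rings over \(\setF_p\) and then extend by the universal property of \(-/^Lp\) relative to the \(\setZ\)-algebra structure. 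Concretely, \(R/^Lp=R\otimes_{\setZ}\setF_p\), so a map out of it into an \(\setF_p\)-algebra is just a ring map out of \(R\). Take \(R\to W(R)/p\simeq R\), followed by \(R\simeq W(R)/^Lp\xrightarrow{\widetilde{s}/^Lp}W(R/^Lp)/^Lp\), and extend \(\setF_p\)-linearly.

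Naturality holds because every step is functorial in \(R\in\Perf\). For the identity \(\rho\circ s\simeq\id\), the composite \(R\to\mcalG(R)\to R/^Lp\) equals the structure map \(R\to R/^Lp\) by the lifting property of \(\widetilde{s}\). Since \(\mcalF(R)=R\otimes_{\setZ}\setF_p\) is free on \(R\) in \(\setF_p\)-algebras, this gives \(\rho\circ s\simeq\id\), naturally in \(R\). The main obstacle is making the \(\infty\)-categorical naturality precise, namely the choice of \(\widetilde{s}\) and the mod-\(p\) extension. I would handle it by the same device as in \Cref{ConstructionRetractionTorsfree}: work with strict functors of ordinary rings on \(\Perf\) (with \(W(R)\) discrete), and only at the end apply the functor \(-\otimes_{\setZ}\setF_p\) on animated rings, using \(\Hom(R\otimes_{\setZ}\setF_p,B)\simeq\Hom(R,B)\) for \(\setF_p\)-algebras \(B\) to descend the equivalences.
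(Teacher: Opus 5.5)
Your final construction is exactly the paper's proof: \(R\simeq W(R)/^Lp\xrightarrow{W(\pi_R)/^Lp}W(R/^Lp)/^Lp\) with \(\pi_R\colon R\to R/^Lp\), extended along \(R/^Lp=R\otimes^L_{\mathbb{Z}}\mathbb{F}_p\) by the base-change adjunction, and \(\rho\circ s\simeq\mathrm{id}\) obtained from \(\res\circ W(\pi_R)\simeq\pi_R\circ\res_R\) and the same adjunction. Your \(\delta\)-ring map \(\widetilde{s}\) is just \(W(\pi_R)\), because \(W(\res_R)\) composed with the cofree lift \(W(R)\to W(W(R))\) of the identity is \(\mathrm{id}_{W(R)}\); the abandoned detours, including the misstatement \(W(R)/^Lp\simeq R/^Lp\) that you later correct to \(W(R)/^Lp\simeq R\), can simply be deleted.
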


\begin{proof}
    Pick a perfect \(\setF_p\)-algebra \(T\).
    By the construction of \(\mcalF\) and \(\mcalG\), we have
    \begin{equation*}
      \mcalF_{\crys}(T) = T/^L p \quad \text{and} \quad \mcalG_{\crys}(T) = W(T/^L p)/^L p.
    \end{equation*}
    
    The quotient morphism \(\pi_T \colon T \to T/^L p\) in \(\CAlg^{an}_{\setF_p}\) gives a morphism
    \begin{equation*}
      W(T) \xrightarrow{W(\pi_T)} W(T/^L p)
    \end{equation*}
    in \(\CAlg^{an}_{\setZ_p}\) together with an equivalence
    \begin{equation} \label{EquivResPi}
      \res_{T/^Lp} \circ W(\pi_T) \simeq \pi_T \circ \res_T
    \end{equation}
    in \(\Map_{\CAlg^{an}_{\setF_p}}(W(T), T/^L p)\), where \(\res_T \colon W(T) \to T\) is the restriction morphism.

    Taking the derived modulo \(p\), we obtain a morphism
    \begin{equation} \label{CrystallineStructureMap}
      u_T \colon W(T)/^L p \xrightarrow{W(\pi_T)/^L p} W(T/^L p)/^L p = \mcalG_{\crys}(T).
    \end{equation}
    Since \(T\) is perfect, the restriction morphism \(\res_T \colon W(T) \to T\) induces an equivalence
    \begin{equation} \label{EquivPerfectWitt}
      W(T)/^L p \xrightarrow{\simeq} T.
    \end{equation}
    Since \(\mcalG_{\crys}(T)\) is an \(\setF_p\)-algebra, the above morphism uniquely factors through
    \begin{equation*}
        \mcalF_{\crys}(T) = T/^L p \to \mcalG_{\crys}(T)
    \end{equation*}
    in \(\CAlg^{an}_{\setF_p}\).
    These morphisms assemble to a natural transformation
    \begin{equation*}
      s \colon \mcalF_{\crys} \to \mcalG_{\crys}
    \end{equation*}
    in \(\Fun(\Perf, \CAlg^{an}_{\setF_p})\).

    Moreover, the equivalence \eqref{EquivResPi} is also functorial in \(T\) and induces an equivalence
    \begin{equation*}
      \rho \circ s \simeq \id_{\mcalF_{\crys}}
    \end{equation*}
    in \(\Fun(\Perf, \CAlg^{an}_{\setF_p})\).
\end{proof}

\begin{proposition} \label{lem:perfectoid-witt-reduction-compatibility}
  Let \(\mcalM\) be the category of morphisms \(R \to T\) in \(\Perfd\) such that \(R\) is \(p\)-torsion-free and \(T\) is an \(\setF_p\)-algebra.
  Let
  \begin{equation*}
    \src \colon\mcalM\to\Perfd_{\tf},\qquad \tgt\colon\mcalM\to\Perf
  \end{equation*}
  be the source and target functors.
  The functors \(\mcalF\) and \(\mcalG\) induce functors
  \begin{equation*}
    \mcalF', \mcalG' \colon \mcalM \to \Fun(\Delta^1, \CAlg^{an}_{\setF_p})
  \end{equation*}
  where the target is the \(\infty\)-category of morphisms in \(\CAlg^{an}_{\setF_p}\).
  Then there exists a natural transformation
  \begin{equation*}
    s' \colon \mcalF' \to \mcalG'
  \end{equation*}
  in \(\Fun(\mcalM, \Fun(\Delta^1, \CAlg^{an}_{\setF_p}))\) whose restrictions to the source and target are \(s^{\tf}\) and \(s^{\crys}\) respectively, together with an equivalence of natural transformations
  \begin{equation*}
      \rho' \circ s' \simeq \id_{\mcalF'},
  \end{equation*}
  where \(\rho'\) is the natural transformation induced from \(\rho \colon \mcalG \to \mcalF\).
\end{proposition}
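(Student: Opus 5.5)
The plan is to lift both constructions to the level of perfect prisms, where everything is an ordinary (1-categorical) functor into commutative rings. Only at the end will we pass to derived reductions modulo \(p\).

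Given \(R\to T\) in \(\mcalM\), let \((A,I)\) be the perfect prism of \(R\). Since \(T\) is a perfect \(\setF_p\)-algebra, its perfect prism is \((W(T),(p))\). The morphism \(R\to T\) corresponds to a unique map of perfect prisms \((A,I)\to(W(T),(p))\), so \(I\) maps into \(pW(T)\). Reducing modulo \(p\) gives \(A/pA\to T\), which kills \(J_A\).

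Next we define a functor \(\mcalM\to\Fun(\Delta^1,\CRing)\) sending \(R\to T\) to the arrow
\[
W(A/pA)/[J_A]\longrightarrow W(T),
\]
induced by \(W(A/pA)\to W(T)\). This map kills \([J_A]\) because Teichm\"uller lifts are multiplicative and \(J_A\mapsto 0\) in \(T\). We also define a second arrow-valued functor sending \(R\to T\) to \(W(R/pR)\to W(T/^Lp)\), realized through the ordinary ring \(W(T)\) together with \(W(\pi_T)\). The construction \(\sigma_A\) from \Cref{ConstructionRetractionTorsfree} and the map \(W(\pi_T)\) from \Cref{ConstructionRetractionPerfect} then assemble into a morphism of arrows. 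The required square commutes strictly in \(\CRing\) by functoriality of \(W\) applied to the commutative square \(A/pA\to R/pR\to T\), \(A/pA\to T\to T/^Lp\).

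We then apply derived reduction modulo \(p\) objectwise. The equivalences \(\alpha_A\) from \eqref{EquivAlphaDerived} and \eqref{EquivPerfectWitt} are natural. Indeed, the restriction maps \(W(A/pA)\to A/pA\) and \(W(T)\to T\) are compatible with the map of arrows, since restriction is a natural transformation \(W\to\id\). So they give a natural equivalence of \(\Delta^1\)-diagrams between the reduced source functor and \(\mcalF'\). Composing as before produces \(s'\). Its restrictions along \(\src\) and \(\tgt\) recover \(s^{\tf}\) and \(s^{\crys}\) by construction.

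The retraction \(\rho'\circ s'\simeq\id\) comes from the strict commutativity of the restriction maps \(W(-)\to(-)\) with everything in sight, exactly as in \eqref{ConstructionRetractionTorsfreeEq} and \eqref{EquivResPi}. Because this commutativity is strict in \(\CRing\) before reduction modulo \(p\), the homotopies are compatible on the arrow category.

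The main obstacle is coherence at the target. The target of \(\mcalG'\) is \(W(T/^Lp)/^Lp\), a genuinely derived object, so the equivalence \eqref{EquivResPi} must be chosen functorially on \(\mcalM\) and not merely on \(\Perf\). I would handle this by producing every homotopy from a strictly commuting diagram of ordinary rings together with the animated functor \(W(-)\) applied to \(\pi_T\). I would also use that \(T\mapsto T/^Lp\) and \(W\) are functors of \(\infty\)-categories, so naturality over \(\mcalM\) follows by precomposition with \(\tgt\).
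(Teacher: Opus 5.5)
Your torsion-free half is fine and matches the first step of the paper's proof: the strict square $W(\overline{f})\circ\sigma_R=\lambda_f$ in $\CRing$, reduced mod $p$ and transported along $\alpha_R$ and $W(T)/^Lp\simeq T$. The gap is the claim that this gives ``a natural equivalence of $\Delta^1$-diagrams between the reduced source functor and $\mcalF'$''. That claim is false.

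Reducing $W(A/pA)/[J_A]\to W(T)$ mod $p$ produces an arrow ending in $W(T)/^Lp\simeq T$, which is discrete. But $\mcalF'(f)=(R/^Lp\to T/^Lp)$, and $T/^Lp$ is not discrete: it has $\pi_1=T$ because $p=0$ in $T$. What you actually build is a map of arrows $(R/pR\xrightarrow{\overline{f}}T)\to\mcalG'(f)$. Its target component is $u_T\colon T\simeq W(T)/^Lp\xrightarrow{W(\pi_T)/^Lp}W(T/^Lp)/^Lp$. This is not $s^{\crys}$, which has source $T/^Lp$, so the restriction along $\tgt$ is not $s^{\crys}$ ``by construction''. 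Relatedly, a square with corner $W(T/^Lp)$ cannot commute strictly in $\CRing$; only the square ending in $W(T)$ does.

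The missing ingredient is the crystalline half of the argument. By definition, $s^{\crys}$ is obtained by factoring $u_T$ through $\pi_T\colon T\to T/^Lp$, using the base-change adjunction from animated rings to animated $\setF_p$-algebras. Done naturally over $\mcalM$, this gives an equivalence $u\simeq s^{\crys}\circ\pi$. Equivalently, it gives a second map of arrows $(T\xrightarrow{\pi_T}T/^Lp)\to(T\xrightarrow{u_T}W(T/^Lp)/^Lp)$ with components $\id_T$ and $s^{\crys}$.

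You must then paste this square below yours along the common component $\id_T$. After that, identify the outer arrows with $f/^Lp$ (via $R/^Lp\simeq R/pR$ and $\pi_T\circ\overline{f}$) and with $W(f/^Lp)/^Lp$ (via $W(\pi_T)\circ W(\overline{f})\simeq W(f/^Lp)$).

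The retraction $\rho'\circ s'\simeq\id_{\mcalF'}$ likewise cannot come from strict commutativity in $\CRing$ on this half. Here $\res\colon W(T/^Lp)\to T/^Lp$ and $\pi_T$ are maps of animated rings. You need to check that the crystalline retraction homotopy $\rho\circ s^{\crys}\simeq\id$ is compatible with $u\simeq s^{\crys}\circ\pi$ and with the Witt-restriction homotopy $\rho\circ u\simeq\pi$. This compatibility follows from the triangle identity of the adjunction. Only then can you paste it with the torsion-free retraction homotopy. This is exactly the content of the second square in the paper's pasted diagram, which your proposal omits.
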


\begin{proof}
  Define functors
  \begin{equation*}
    \mcalF_0,\mcalG_0,\mcalU,\mcalF_1,\mcalG_1\colon \mcalM\to\CAlg^{an}_{\setF_p}
  \end{equation*}
  by
  \begin{equation*}
    \begin{aligned}
      \mcalF_0(f\colon R\to T)&\defeq\mcalF_{\tf}(R),&
      \mcalG_0(f\colon R\to T)&\defeq\mcalG_{\tf}(R),&
      \mcalU(f\colon R\to T)&\defeq T,\\
      \mcalF_1(f\colon R\to T)&\defeq\mcalF_{\crys}(T),&
      \mcalG_1(f\colon R\to T)&\defeq\mcalG_{\crys}(T).
    \end{aligned}
  \end{equation*}
  We use the natural equivalence \(\mcalF_0(f)\simeq R/pR\) without further mention. Let
  \begin{equation*}
    s_0\colon\mcalF_0\to\mcalG_0,\qquad s_1\colon\mcalF_1\to\mcalG_1
  \end{equation*}
  be the natural transformations constructed in \Cref{ConstructionRetractionTorsfree,ConstructionRetractionPerfect}, respectively, and write
  \begin{equation*}
    \rho_0\colon\mcalG_0\to\mcalF_0,\qquad \rho_1\colon\mcalG_1\to\mcalF_1
  \end{equation*}
  for the restrictions of Witt restriction.
  Under the equivalence,
  \begin{equation*}
    \Fun(\mcalM, \Fun(\Delta^1, \CAlg^{an}_{\setF_p})) \simeq \Fun(\Delta^1, \Fun(\mcalM, \CAlg^{an}_{\setF_p})),
  \end{equation*}
  the functors \(\mcalF'\) and \(\mcalG'\) correspond to natural transformations
  \begin{equation} \label{DefQuotientNaturalTransformations}
    a \colon\mcalF_0\to\mcalF_1,\qquad b \colon\mcalG_0\to\mcalG_1
  \end{equation}
  in \(\Fun(\mcalM, \CAlg^{an}_{\setF_p})\), whose values at \(f \colon R \to T\) are
  \begin{equation*}
    f/^Lp \colon R/pR \to T/^L p, \qquad W(f/^Lp)/^Lp \colon W(R/pR)/^Lp \to W(T/^Lp)/^Lp,
  \end{equation*}
  respectively.

  For \(R\in\Perfd_{\tf}\), let \((A_R,I_R)\) be its functorially associated perfect prism and put
  \begin{equation*}
    J_R\defeq I_R\otimes_{A_R}A_R/pA_R \subseteq A_R/pA_R.
  \end{equation*}
  Since the perfect prism associated with \(T\in\Perf\) is \((W(T),(p))\), every object \(f\colon R\to T\) of \(\mcalM\) induces a morphism
  \begin{equation*}
    \widetilde f\colon(A_R,I_R)\to(W(T),(p))
  \end{equation*}
  of perfect prisms. Reducing modulo \(p\) gives a morphism \(A_R/pA_R\to T\) which kills \(J_R\), and hence a morphism of ordinary rings
  \begin{equation*}
    \lambda_f\colon W(A_R/pA_R)/[J_R]\to W(T).
  \end{equation*}
  Write \(\overline f\colon R/pR\to T\) for the reduction of \(f\). The construction \eqref{ConstSigmaTorsf} of \Cref{ConstructionRetractionTorsfree} gives a morphism
  \begin{equation*}
    \sigma_R\colon W(A_R/pA_R)/[J_R]\to W(R/pR)
  \end{equation*}
  of ordinary rings, and the diagram
  \begin{equation}
    \label{DiscreteCompatibilitySquare}
    \begin{tikzcd}
      W(A_R/pA_R)/[J_R] \arrow[r,"\sigma_R"] \arrow[d,"\lambda_f"'] & W(R/pR) \arrow[d,"W(\overline f)"] \\
      W(T) \arrow[r,equal] & W(T)
    \end{tikzcd}
  \end{equation}
  commutes strictly. These squares are functorial in commutative squares in \(\mcalM\), and therefore define functors
  \begin{equation*}
    \widetilde{\mcalH},\widetilde{\mcalK}\colon\mcalM\to\Fun(\Delta^1,\CRing)
  \end{equation*}
  and a natural transformation
  \begin{equation}
    \label{DiscreteCompatibilityTransformation}
    \widetilde\kappa\colon\widetilde{\mcalH}\to\widetilde{\mcalK},
  \end{equation}
  where
  \begin{align*}
    \widetilde{\mcalH}(f)&\defeq\left(W(A_R/pA_R)/[J_R]\xrightarrow{\lambda_f}W(T)\right),\\
    \widetilde{\mcalK}(f)&\defeq\left(W(R/pR)\xrightarrow{W(\overline f)}W(T)\right).
  \end{align*}
  Its source and target components are \(\sigma_R\) and \(\id_{W(T)}\), respectively.

  Apply derived reduction modulo \(p\) to \eqref{DiscreteCompatibilityTransformation}. The equivalences
  \begin{equation*}
    \alpha_R \colon\left(W(A_R/pA_R)/[J_R]\right)/^Lp\xrightarrow{\sim}R/pR
  \end{equation*}
  constructed in \eqref{EquivAlphaDerived} in \Cref{ConstructionRetractionTorsfree} identify the source components of \(\widetilde{\mcalH}/^Lp\) and \(\mcalF_0\).
  The equivalence \eqref{EquivPerfectWitt} gives a natural equivalence
  \begin{equation*}
    \beta \colon W(\mcalU)/^Lp\xrightarrow{\sim}\mcalU.
  \end{equation*}
  Let \(\beta^{-1}\) denote an inverse in the functor category.
  Define natural transformations
  \begin{equation*}
    \overline{\alpha}\colon\mcalF_0\to\mcalU,\qquad \gamma\colon\mcalG_0\to\mcalU
  \end{equation*}
  by
  \begin{equation*}
    \overline{\alpha}_f\defeq\overline f\colon R/pR\to T,\qquad \gamma_f \colon W(R/pR)/^L p \xrightarrow{W(\overline{f})/^L p} W(T)/^L p \xrightarrow{\beta_T} T
  \end{equation*}
  Transporting \(\widetilde\kappa/^Lp\) through these natural equivalences in the arrow functor category gives a natural transformation
  \begin{equation}
    \label{IntermediateArrowTransformation}
    \kappa_0\colon\mcalH\to\mcalK
  \end{equation}
  in \(\Fun(\mcalM,\Fun(\Delta^1,\CAlg^{an}_{\setF_p}))\), where
  \begin{equation*}
    \mcalH\defeq(\mcalF_0\xrightarrow{\overline{\alpha}}\mcalU),\qquad
    \mcalK\defeq(\mcalG_0\xrightarrow{\gamma}\mcalU).
  \end{equation*}
  Its source and target components are \(s_0\) and \(\id_{\mcalU}\), respectively.

  We next use the characteristic-\(p\) construction. Let
  \begin{equation*}
    \pi\colon\mcalU\to\mcalF_1; \quad f \mapsto (T \xrightarrow{\pi_T} T/^L p)
  \end{equation*}
  be the natural transformation and define
  \begin{equation*}
    u\colon\mcalU\xrightarrow{\beta^{-1}}W(\mcalU)/^Lp\xrightarrow{W(\pi)/^Lp}\mcalG_1.
  \end{equation*}
  In the proof of \Cref{ConstructionRetractionPerfect}, \(s^{\crys}\) is obtained by factoring \(u_T\) through \(\pi_T\) by the base-change adjunction from animated rings to animated \(\setF_p\)-algebras. 
  Then so is \(s_1 \colon \mcalF_1 \to \mcalG_1\) in the functor category on \(\mcalM\) and this gives a canonical equivalence
  \begin{equation}
    \label{CrystallineSectionStructuralFactorization}
    \epsilon\colon u\simeq s_1\circ\pi
  \end{equation}
  in \(\Map_{\Fun(\mcalM, \CAlg^{an}_{\setF_p})}(\mcalU, \mcalG_1)\).
  Hence \eqref{CrystallineSectionStructuralFactorization} determines a natural transformation
  \begin{equation*}
    \kappa_{\crys}\colon(\mcalU\xrightarrow{\pi}\mcalF_1)\to(\mcalU\xrightarrow{u}\mcalG_1)
  \end{equation*}
  whose source and target components are \(\id_{\mcalU}\) and \(s_1\), respectively.

  The transformations \(\kappa_0\) and \(\kappa_{\crys}\) have the common middle component \(\id_{\mcalU}\), so they can be pasted in the functor \(\infty\)-category. Equivalently, they form the two squares in
  \begin{equation}
    \label{PastedCompatibilityDiagram}
    \begin{tikzcd}
      \mcalF_0 \arrow[r,"s_0"] \arrow[d,"\overline{\alpha}"'] & \mcalG_0 \arrow[d,"\gamma"] \\
      \mcalU \arrow[r,"\id_{\mcalU}"] \arrow[d,"\pi"'] & \mcalU \arrow[d,"u"] \\
      \mcalF_1 \arrow[r,"s_1"'] & \mcalG_1.
    \end{tikzcd}
  \end{equation}
  The left outer arrow \(\pi\circ\overline{\alpha}\) is naturally identified with \(a\) defined in \eqref{DefQuotientNaturalTransformations}.
  Indeed, at \(f\colon R\to T\), it is the canonical factorization
  \begin{equation*}
    f/^Lp\colon R/^Lp\simeq R/pR\xrightarrow{\overline f}T\xrightarrow{\pi_T}T/^Lp.
  \end{equation*}
  The right outer arrow \(u\circ\gamma\) is naturally identified with \(b\) defined in \eqref{DefQuotientNaturalTransformations}: At \(f\), the counit equivalence \(\beta^{-1}\circ\beta\simeq\id\) and Witt functoriality give
  \begin{align*}
    u_f\circ\gamma_f &\simeq\left(W(\pi_T)/^Lp\right)\circ\left(W(\overline f)/^Lp\right)\\
    &=W(\pi_T\circ\overline f)/^Lp\simeq W(f/^Lp)/^Lp.
  \end{align*}
  Transporting the outer square of \eqref{PastedCompatibilityDiagram} through these natural identifications gives the desired natural transformation
  \begin{equation*}
    s'\colon\mcalF'\to\mcalG'.
  \end{equation*}

  It remains to construct the retraction homotopy. Naturality of Witt restriction gives a canonical equivalence
  \begin{equation*}
    \delta_0\colon\overline{\alpha}\circ\rho_0\simeq\gamma.
  \end{equation*}
  Thus \(\rho_0\), \(\id_{\mcalU}\), and \(\delta_0\) define a natural transformation
  \begin{equation*}
    r_0\colon\mcalK = (\mcalG_0 \xrightarrow{\gamma} \mcalU) \to \mcalH = (\mcalF_0 \xrightarrow{\overline{\alpha}} \mcalU).
  \end{equation*}
  Let
  \begin{equation*}
    h_0^{\tf}\colon\rho_0\circ s_0\simeq\id_{\mcalF_0}
  \end{equation*}
  be induced by the retraction homotopy of \Cref{ConstructionRetractionTorsfree}. In that construction \eqref{ConstructionRetractionTorsfreeEq}, the retraction homotopy is obtained by comparing the derived reduction \(\rho\) of Witt restriction composed with \(\sigma_R\) with the natural equivalence \(\alpha_R\).
  Together with the strict square \eqref{DiscreteCompatibilitySquare}, this shows that \(h_0^{\tf}\) is compatible with the identity homotopy of \(\mcalU\). Hence they determine an equivalence
  \begin{equation*}
    h_0\colon r_0\circ\kappa_0\simeq\id_{\mcalH}.
  \end{equation*}

  On the characteristic-\(p\) side, Witt restriction gives a canonical equivalence
  \begin{equation*}
    \delta_1\colon\rho_1\circ u\simeq\pi.
  \end{equation*}
  Thus \(\id_{\mcalU}\), \(\rho_1\), and \(\delta_1\) define a natural transformation
  \begin{equation*}
    r_1\colon(\mcalU\xrightarrow{u}\mcalG_1)\to(\mcalU\xrightarrow{\pi}\mcalF_1).
  \end{equation*}
  Let
  \begin{equation*}
    h_1^{\crys}\colon\rho_1\circ s_1\simeq\id_{\mcalF_1}
  \end{equation*}
  be induced by the retraction homotopy of \Cref{ConstructionRetractionPerfect}. By the triangle compatibility in the same base-change adjunction that defines \eqref{CrystallineSectionStructuralFactorization}, one has
  \begin{equation*}
    \pi \simeq (\rho_1 \circ s_1) \circ \pi \simeq \rho_1 \circ u \simeq \pi
  \end{equation*}
  by \(h_1^{\crys}\) at the first equivalence, \(\epsilon\) at the second equivalence, and \(\delta_1\) at the third equivalence.
  Consequently, \(\id_{\mcalU}\) and \(h_1^{\crys}\) determine an equivalence
  \begin{equation*}
    h_1\colon r_1\circ\kappa_{\crys}\simeq\id_{(\mcalU\xrightarrow{\pi}\mcalF_1)}.
  \end{equation*}

  Pasting \(r_0\) and \(r_1\), and transporting through the natural identifications of the outer arrows above, gives the natural transformation \(\rho'\colon\mcalG'\to\mcalF'\) induced by Witt restriction. Pasting \(h_0\) and \(h_1\) gives an equivalence of natural transformations
  \begin{equation*}
    \rho'\circ s'\simeq\id_{\mcalF'}.
  \end{equation*}
  All squares, paths, and pasting operations above are formed in functor \(\infty\)-categories, so compatibility with morphisms in \(\mcalM\) and all higher coherences are part of the construction.
\end{proof}

\begin{proposition} \label{ConstructionRetraction}
  There is a natural transformation
  \begin{equation*}
    s\colon\mcalF\to\mcalG
  \end{equation*}
  together with a natural homotopy
  \begin{equation*}
    \rho \circ s\simeq\id_{\mcalF}
  \end{equation*}
  in \(\Fun(\Perfd, \CAlg^{an}_{\setF_p})\)
\end{proposition}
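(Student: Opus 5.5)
We extend the sections \(s^{\tf}\) and \(s^{\crys}\) from \Cref{ConstructionRetractionTorsfree,ConstructionRetractionPerfect} to all of \(\Perfd\) by descent along a functorial resolution of an arbitrary perfectoid ring by \(p\)-torsion-free and perfect ones. The input is the standard fiber square for a perfectoid ring \(R\):
\[
R \simeq R/R[\sqrt{p}] \times_{\overline{R}/\sqrt{p}\,\overline{R}} R_{\mathrm{perf}},
\]
where \(\overline{R}\defeq R/R[\sqrt{p}]\) (more precisely \(R/R[p^\infty]\)) is \(p\)-torsion-free perfectoid, \(R_{\mathrm{perf}}\defeq R/\sqrt{pR}\) is a perfect \(\setF_p\)-algebra, and \(\overline{R}/\sqrt{p\overline{R}}\) is perfect as well. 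This square is functorial in \(R\), is a pullback of perfectoid rings, and the maps \(\overline{R}\to \overline{R}/\sqrt{p}\) and \(R_{\mathrm{perf}}\to \overline{R}/\sqrt{p}\) are, respectively, an object of \(\mcalM\) and a morphism in \(\Perf\).

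First, we check that both \(\mcalF\) and \(\mcalG\) carry this square to a pullback in \(\CAlg^{an}_{\setF_p}\). For \(\mcalF\), derived reduction mod \(p\) is exact, so \(R/^Lp\) is the fiber product of the three reductions. For \(\mcalG\), we use that \(W(-)\) on animated rings preserves fiber products, since it is computed on underlying spectra as a limit of \(\Z\)-indexed products of copies. This should be checked via the ghost/Witt filtration: each \(W_n\) has a finite filtration with graded pieces given by Frobenius-twisted copies of the input, and limits commute. Because \((-)/^Lp\) is also exact, \(\mcalG\) preserves the square.

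Next, we assemble the data on the indexing diagram \(\Lambda = \{0\to 2 \leftarrow 1\}\). The torsion-free leg \(\overline{R}\to\overline{R}/\sqrt{p}\) is handled by \Cref{lem:perfectoid-witt-reduction-compatibility}, which gives \(s'\) compatible with \(s^{\tf}\) and \(s^{\crys}\) together with the retraction homotopy. The perfect leg \(R_{\mathrm{perf}}\to \overline{R}/\sqrt{p}\) is handled by the naturality of \(s^{\crys}\) on \(\Perf\). The two legs agree at the common vertex, where both are \(s^{\crys}\) together with its homotopy, so they glue into a natural transformation of \(\Lambda\)-shaped diagrams \(\mcalF_\Lambda\to\mcalG_\Lambda\) over \(\Perfd\), equipped with a homotopy \(\rho_\Lambda\circ s_\Lambda\simeq\id\). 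Taking limits over \(\Lambda\), which is functorial, and using the pullback identifications from the previous step then yields \(s\colon \mcalF\to\mcalG\) and \(\rho\circ s\simeq\id_{\mcalF}\). On \(p\)-torsion-free or perfect \(R\) this recovers the earlier constructions, since one leg of the square degenerates.

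The main obstacle is making the pullback square itself functorial and coherent as a functor \(\Perfd\to\Fun(\Lambda,\Perfd)\) landing in the right subcategories, and verifying that \(\mcalG\) preserves the fiber product. The former is fine because each vertex is a quotient by a canonically defined ideal, so the construction is a strict 1-categorical functor. For the latter, I would first try reducing to the statement that \(W\) of animated rings is right adjoint-like on underlying objects; if that fails, I would use the ghost-component description \(W(A)\simeq\lim_n W_n(A)\) together with the finite filtrations on \(W_n\).
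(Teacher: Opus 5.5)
Your proposal is correct and matches the paper's proof. Both use the same functorial pullback square \(R\simeq R_{\tf}\times_{R_1}R_0\) with \(R_{\tf}=R/R[p^\infty]\), \(R_0=(R/p)_{\red}\), \(R_1=(R_{\tf}/p)_{\red}\), check that \(\mcalF\) and \(\mcalG\) carry it to pullbacks, and glue \(s^{\tf}\) and \(s^{\crys}\) by applying \Cref{lem:perfectoid-witt-reduction-compatibility} to \(R\mapsto(R_{\tf}\to R_1)\). For \(\mcalG\), the paper only uses that the animated Witt functor is a right adjoint and so preserves limits, which makes your fallback through \(W\simeq\lim_n W_n\) unnecessary (note that this is the truncation tower, not the ghost description).
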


\begin{proof}
  For a perfectoid ring \(R\), put
  \begin{equation*}
    R_{\tf}\defeq R/R[p^\infty], \qquad R_0\defeq(R/p)_{\red}, \qquad R_1\defeq(R_{\tf}/p)_{\red}.
  \end{equation*}
  By \cite{cesnavicius2024Purity}*{(2.1.3.9)}, these are functorial perfectoid rings, \(R_{\tf}\) is \(p\)-torsion-free, \(R_0\) and \(R_1\) are perfect \(\F_p\)-algebras, and there is a functorial pullback square
  \begin{equation}
    \label{eq:perfectoid-canonical-pullback}
    \begin{tikzcd}
      R \arrow[r] \arrow[d] & R_{\tf} \arrow[d] \\
      R_0 \arrow[r] & R_1.
    \end{tikzcd}
  \end{equation}
  Equivalently, the underlying additive groups form a fiber sequence
  \begin{equation*}
    R\longrightarrow R_{\tf}\oplus R_0\longrightarrow R_1.
  \end{equation*}

  Derived reduction modulo \(p\) is exact on underlying spectra, and limits of animated rings are created on underlying spectra.  Thus \eqref{eq:perfectoid-canonical-pullback} gives
  \begin{equation}
    \label{eq:F-perfectoid-pullback}
    \mcalF(R)\simeq\mcalF(R_{\tf})\times_{\mcalF(R_1)}\mcalF(R_0).
  \end{equation}
  The animated Witt functor preserves limits because it is a right adjoint, and a second derived reduction modulo \(p\) is again exact on underlying spectra.  Hence
  \begin{equation}
    \label{eq:G-perfectoid-pullback}
    \mcalG(R)\simeq\mcalG(R_{\tf})\times_{\mcalG(R_1)}\mcalG(R_0).
  \end{equation}

  By \Cref{ConstructionRetractionTorsfree} and \Cref{ConstructionRetractionPerfect}, we can get morphisms \(s^{\tf}(R_{\tf}) \colon \mcalF(R_{\tf}) \to \mcalG(R_{\tf})\), \(s^{\crys}(R_0) \colon \mcalF(R_0) \to \mcalG(R_0)\), and \(s^{\crys}(R_1) \colon \mcalF(R_1) \to \mcalG(R_1)\).
  Applying \Cref{lem:perfectoid-witt-reduction-compatibility} for the functor
  \begin{equation*}
      \Perfd \to \mcalM; \quad R \mapsto (R_{\tf} \to R_1),
  \end{equation*}
  together with the pullback homotopy in \eqref{eq:F-perfectoid-pullback}, we can give a natural transformation
  \begin{equation*}
    s \colon\mcalF \to\mcalG
  \end{equation*}
  in \(\Fun(\Perfd, \CAlg^{an}_{\setF_p})\) together with a natural homotopy
  \begin{equation*}
    \rho \circ s\simeq\id_{\mcalF}.
  \end{equation*}
\end{proof}

\begin{theorem}\label{PerfectizationToDeRham}
Let $\mscrX$ be a bounded $p$-adic formal scheme and let
\[
\nu^{\pfd}_{\mscrX}\colon
\mscrX^{\pfd}\longrightarrow\mscrX
\]
be its perfectization in the sense of \cite{bhatt2025Aspects}*{\S4}.
Then there exists a morphism
\[
\Sigma_{\mscrX}^{\pfd}\colon
(\mscrX^{\pfd})_{\F_p}
\longrightarrow
(\mscrX_{\F_p})^{\dR}
\]
fitting into a commutative diagram
\[
\begin{tikzcd}
(\mscrX^{\pfd})_{\F_p}
  \arrow[rr,"\Sigma_{\mscrX}^{\pfd}"]
  \arrow[dr,"\nu^{\pfd}_{\mscrX,\F_p}"']
&&
(\mscrX_{\F_p})^{\dR}
  \arrow[dl,"\nu_{\mscrX_{\F_p}}"]
\\
&
\mscrX_{\F_p}.
&
\end{tikzcd}
\]
\end{theorem}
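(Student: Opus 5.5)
We construct \(\Sigma^{\pfd}_{\mscrX}\) on functors of points, using the natural section \(s\colon\mcalF\to\mcalG\) of \Cref{ConstructionRetraction}. Recall from \cite{bhatt2025Aspects}*{\S4} that \(\mscrX^{\pfd}\) is the stack on \(p\)-nilpotent animated rings whose value on a test ring is the colimit over maps to perfectoid rings. Concretely, a point of \(\mscrX^{\pfd}\) over a perfectoid ring \(R\) is a morphism \(\Spf(R)\to\mscrX\), and \(\mscrX^{\pfd}\) is the left Kan extension (followed by sheafification) of the functor \(\Perfd\to\Ani\), \(R\mapsto\mscrX(R)\). Consequently \((\mscrX^{\pfd})_{\F_p}\) is the stack on \(\CAlg^{an}_{\F_p}\) obtained from the diagram \(R\mapsto\mscrX(R)\) together with the structure maps \(\Spec(R/^Lp)\to\Spf(R)\). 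Equivalently, its points over \(B\in\CAlg^{an}_{\F_p}\) are colimits, indexed by pairs \((R,\ R/^Lp\to B)\), of \(\mscrX(R)\). It therefore suffices to give, naturally in \(R\in\Perfd\), a map
\[
\mscrX(R)\longrightarrow (\mscrX_{\F_p})^{\dR}(R/^Lp)=\mscrX_{\F_p}\bigl(W(R/^Lp)/^Lp\bigr),
\]
compatible with base change along \(R/^Lp\to B\). The transition maps of the de Rham stack are \(W(-)/^Lp\)-functorial, so this compatibility is automatic.

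Given \(x\colon\Spf(R)\to\mscrX\), its reduction is \(x_{\F_p}\colon\Spec(R/^Lp)=\Spec(\mcalF(R))\to\mscrX_{\F_p}\). We send \(x\) to the composite
\[
\Spec(\mcalG(R))\xrightarrow{\Spec(s_R)}\Spec(\mcalF(R))\xrightarrow{x_{\F_p}}\mscrX_{\F_p}.
\]
Naturality of \(s\) in \(\Fun(\Perfd,\CAlg^{an}_{\F_p})\) makes this a natural transformation of functors on \(\Perfd\). By the universal property of left Kan extension and fpqc sheafification, it then descends to \(\Sigma^{\pfd}_{\mscrX}\). The target \((\mscrX_{\F_p})^{\dR}\) is a stack by \cite{bhatt2022Prismatization}*{Lemma 7.3}, so the map indeed extends from perfectoid-reduction points to all of \((\mscrX^{\pfd})_{\F_p}\).

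For commutativity of the triangle, recall that \(\nu_{\mscrX_{\F_p}}\) is precomposition with \(\res=\rho_R\colon\mcalG(R)\to\mcalF(R)\). Hence \(\nu\circ\Sigma^{\pfd}_{\mscrX}\) sends \(x\) to \(x_{\F_p}\circ\Spec(s_R)\circ\Spec(\rho_R)=x_{\F_p}\circ\Spec(\rho_R\circ s_R)\). The natural homotopy \(\rho\circ s\simeq\id_{\mcalF}\) of \Cref{ConstructionRetraction} identifies this with \(x_{\F_p}\), which is \(\nu^{\pfd}_{\mscrX,\F_p}(x)\). Since that homotopy is natural in \(R\), the identification descends through the Kan extension and gives the 2-commutative triangle.

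The main obstacle is the first step: pinning down the precise description of \((\mscrX^{\pfd})_{\F_p}\) as a colimit over perfectoid points, so that a natural transformation on \(\Perfd\) really determines a map out of it. This involves three checks. First, the reduction \(-\times_{\Spf\Z_p}\Spec\F_p\) must commute with the left Kan extension, which holds because fiber products of stacks are computed pointwise and colimits in \(\Ani\) are universal. Second, the boundedness hypothesis on \(\mscrX\) must guarantee that the definition in \cite{bhatt2025Aspects} agrees with this Kan-extension description. Third, the coherence of the homotopy \(\rho\circ s\simeq\id\) must survive sheafification. Once these are settled, the remaining arguments are formal.
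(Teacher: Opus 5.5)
Your proposal is correct and is essentially the paper's argument. You send a perfectoid point $x$ to $x_{\F_p}\circ\Spec(s_R)$ using the natural section of \Cref{ConstructionRetraction}, transport it along $W(-)/^Lp$ to the test ring, and get commutativity of the triangle from the natural homotopy $\rho\circ s\simeq\id$. The only difference is presentational: the paper describes points only over semiperfectoid $S$ (via $\Spec S\to\Spf R$ with $R^\flat\cong S^\flat$) and simply asserts naturality, whereas you obtain the map from the universal property of the left Kan extension, which makes explicit the same description of $\mscrX^{\pfd}$ that the paper uses implicitly.
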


\begin{proof}
We construct the morphism on points.
Let $S$ be a semiperfectoid $\F_p$-algebra.
A point
\[
x\in(\mscrX^{\pfd})_{\F_p}(S)
\]
is given by a diagram
\[
\Spec(S)
\xrightarrow{i}
\Spf(R)
\xrightarrow{u}
\mscrX,
\]
where $R$ is a perfectoid ring and \(i\) induces an isomorphism \(R^\flat \xrightarrow{\cong} S^\flat\).
In particular, this \(i\) induces a morphism of animated $\F_p$-algebras
\[
\overline{i}\colon
R/^Lp\longrightarrow S.
\]

By \Cref{ConstructionRetraction}, there is a morphism
\[
s_R\colon
R/^Lp\longrightarrow W(R/^Lp)/^Lp,
\]
natural in $R$ such that the following diagram commutes up to the natural homotopy
\begin{equation}\label{eq:pfd-comm}
\begin{tikzcd}
R/^Lp
  \arrow[r,"s_R"]
  \arrow[dr,"\id_{R/^Lp}"']
&
W(R/^Lp)/^Lp
  \arrow[r,"W(\overline{i})/^Lp"]
  \arrow[d,"\res"]
&
W(S)/^Lp
  \arrow[d,"\res"]
\\
&
R/^Lp
  \arrow[r,"\overline{i}"']
&
S.
\end{tikzcd}    
\end{equation}
Hence the composition of upper morphisms
together with the morphism $u$ determines a morphism
\[
\Spec\bigl(W(S)/^Lp\bigr)
\longrightarrow
\Spec(R/^Lp)
\xrightarrow{\var{u}}
\mscrX_{\F_p}.
\]
By the definition of the de Rham stack, this gives an $S$-point
\[
\Sigma_{\mscrX}^{\pfd}(S)(x)
\in
(\mscrX_{\F_p})^{\dR}(S).
\]

Furthermore, the diagram \eqref{eq:pfd-comm} implies
\[
\nu_{\mscrX_{\F_p}}
\circ
\Sigma_{\mscrX}^{\pfd}
\simeq
\nu^{\pfd}_{\mscrX,\F_p}.
\]

The naturality of $s$ in \Cref{ConstructionRetraction}, together with the functoriality of Witt vectors, shows that this construction is compatible with morphisms of points and with base change in $S$.
Hence it defines a morphism
\[
\Sigma_{\mscrX}^{\pfd}\colon
(\mscrX^{\pfd})_{\F_p}
\longrightarrow
(\mscrX_{\F_p})^{\dR}.
\]
\end{proof}

\subsection{Akizuki--Nakano-type vanishing in mixed characteristic}

\begin{proposition}\label{local-van-vs-mod-p-van}
Let $k$ be a perfect field of characteristic $p$.
Let $X$ be a flat projective scheme over $W(k)$ and let $\mcalM$ be a coherent sheaf on $X$.
Let $X_{\F_p}$ be the closed fiber of $X$ and let $\iota \colon X_{\F_p} \to X$ be the closed immersion.
For $i \in \Z$, we have
\[
R\Gamma_{(p)}(X,\mcalM) \in \mcalD^{\geq i} \Leftrightarrow R\Gamma(X_{\F_p},Li^*\mcalM) \in \mcalD^{\geq i-1} \Rightarrow R\Gamma(X,\mcalM) \in \mcalD^{\geq i-1},
\]
where $R\Gamma_{(p)}(X,\mcalM) \defeq R\Gamma_{(p)}R\Gamma(X,\mcalM)$.
\end{proposition}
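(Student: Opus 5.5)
The plan is to write $M \defeq R\Gamma(X,\mcalM)$, a perfect complex over $W(k)$ because $X$ is projective over the Noetherian ring $W(k)$ and $\mcalM$ is coherent. Since $W(k)$ is a DVR with uniformizer $p$, every cohomology module $H^j(M)$ is finitely generated, so $H^j(M)\simeq W(k)^{r_j}\oplus T_j$ with $T_j$ finite length and $p$-power torsion. I would then compute the two other complexes in terms of $M$. By the projection formula for the closed immersion $\iota$, we have $R\Gamma(X_{\F_p},L\iota^*\mcalM)\simeq M\otimes^L_{W(k)}k \simeq M/^Lp$. For local cohomology, the fiber sequence $R\Gamma_{(p)}M\to M\to M[1/p]$, or equivalently $R\Gamma_{(p)}M\simeq M\otimes^L \bigl(W(k)[1/p]/W(k)\bigr)[-1]$, gives short exact sequences
\[
0\to H^{j-1}(M)\otimes W(k)[1/p]/W(k)\to H^j_{(p)}(M)\to H^j(M)[p^\infty]\to 0 .
\]
Similarly, $M/^Lp$ has
\[
0\to H^{j}(M)/p\to H^j(M/^Lp)\to H^{j+1}(M)[p]\to 0 .
\]

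For the equivalence, I would first note that $H^j_{(p)}(M)=0$ if and only if $H^{j-1}(M)$ is torsion and $H^j(M)$ is torsion-free. Indeed, $H^{j-1}(M)\otimes\mathbb{Q}_p/\mathbb{Z}_p$ vanishes precisely when the finitely generated module $H^{j-1}(M)$ is torsion, and $H^j(M)[p^\infty]$ vanishes precisely when $H^j(M)$ is torsion-free. Hence $R\Gamma_{(p)}M\in\mcalD^{\ge i}$ holds if and only if, for all $j<i$, $H^{j-1}(M)$ is torsion and $H^j(M)$ is torsion-free. This forces $H^j(M)=0$ for $j\le i-2$, and forces $H^{i-1}(M)$ to be torsion-free. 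For the other side, by Nakayama, $H^j(M/^Lp)=0$ if and only if $H^j(M)=0$ and $H^{j+1}(M)[p]=0$. So $M/^Lp\in\mcalD^{\ge i-1}$ if and only if $H^j(M)=0$ for $j\le i-2$ and $H^{i-1}(M)$ is torsion-free. The two conditions coincide, which gives the first equivalence.

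The implication $M/^Lp\in\mcalD^{\ge i-1}\Rightarrow M\in\mcalD^{\ge i-1}$ is then immediate from the same computation, since $H^j(M)=0$ for $j\le i-2$. Alternatively, one can use derived Nakayama for the perfect complex $M$ over the local ring $W(k)$.

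The only point requiring care is the bookkeeping with the degree shift in $R\Gamma_{(p)}$, namely getting the sign of the shift in the identification $R\Gamma_{(p)}M\simeq M\otimes\mathbb{Q}_p/\mathbb{Z}_p[-1]$ right. I expect no real obstacle; flatness of $X$ is not even needed beyond fixing notation.
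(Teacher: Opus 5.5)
Your proof is correct. For the first equivalence it takes a different route from the paper; the final implication is handled the same way.

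\textbf{Comparison with the paper.} The paper never unpacks the cohomology of $M=R\Gamma(X,\mathcal{M})$. It uses the cofiber sequence $R\Gamma_{(p)}M\xrightarrow{\times p}R\Gamma_{(p)}M\to M/^Lp$, that is, $R\Gamma_{(p)}M/^Lp\simeq M/^Lp$. It combines this with the fact that each $H^j_{(p)}(M)$ is $p$-power torsion, so $H^j_{(p)}(M)=0$ if and only if $H^j_{(p)}(M)[p]=0$. The vanishing ranges then transfer directly, with the shift by one. For the last implication the paper does what you do: the sequence $M\xrightarrow{\times p}M\to M/^Lp$ plus Nakayama.

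What each route buys:
\begin{itemize}
\item The paper's argument for the equivalence is shorter and formal. It needs neither finite generation nor the structure theory of modules over the DVR $W(k)$; finiteness enters only through Nakayama at the end.
\item Your argument uses finite generation throughout, but it gives a sharper intrinsic criterion. Both conditions are equivalent to $H^j(M)=0$ for $j\le i-2$ together with $H^{i-1}(M)$ being torsion-free. This makes the last implication transparent and shows exactly what is lost in passing to $R\Gamma(X,\mathcal{M})\in\mathcal{D}^{\geq i-1}$, namely torsion-freeness of $H^{i-1}(M)$.
\end{itemize}

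\textbf{Correction to your closing remark.} Flatness is genuinely used, in your projection-formula step. There you need $\iota_*\mathcal{O}_{X_{\mathbb{F}_p}}=\mathcal{O}_X/p\simeq\mathcal{O}_X\otimes^L_{W(k)}k$, which says that $p$ is a nonzerodivisor on $\mathcal{O}_X$. The paper uses flatness implicitly at the same point, in its fiber sequences. Without flatness the equivalence fails for the classical closed fiber. Take $X=\Spec k$, $\mathcal{M}=\mathcal{O}_X$ and $i=1$. Then $R\Gamma(X_{\mathbb{F}_p},L\iota^*\mathcal{M})=k$ lies in $\mathcal{D}^{\geq 0}$, while $R\Gamma_{(p)}(X,\mathcal{M})=k$ does not lie in $\mathcal{D}^{\geq 1}$. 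Here $M/^Lp=k\oplus k[1]$, which differs from the classical fiber computation exactly by the missing $\operatorname{Tor}_1$ term.
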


\begin{proof}
The first equivalence follows from the fiber sequence
\[
R\Gamma_{(p)}(X,\mcalM) \xrightarrow{\times p} R\Gamma_{(p)}(X,\mcalM) \longrightarrow R\Gamma(X_{\F_p},L\iota^*\mcalM)
\]
since every cohomology of $R\Gamma_{(p)}(X,\mcalM)$ is $p$-nilpotent.

Next, we assume
\[
R\Gamma(X_{\F_p},L\iota^*\mcalM) \in \mcalD^{\geq i-1}.
\]
By the fiber sequence
\[
R\Gamma(X,\mcalM) \xrightarrow{\times p} R\Gamma(X,\mcalM) \longrightarrow R\Gamma(X_{\F_p},L\iota^*\mcalM),
\]
the homomorphisms
\[
H^m(X,\mcalM) \xrightarrow{\times p} H^m(X,\mcalM) \quad \text{for every $m < i-1$}
\]
are surjective.
Since $H^m(X,\mcalM)$ is finite module on $W(k)$, by the Nakayama's lemma, we have
\[
R\Gamma(X,\mcalM) \in \mcalD^{\geq i-1}.
\]
\end{proof}

\begin{theorem}\label{ANV-HT-split}
Let $k$ be a perfect field and let $X$ be a smooth projective scheme over $W(k)$.
If the closed fiber $X_{\F_p}$ is HT-split, then for every ample line bundle $L$ on $X$, the following vanishings hold;
\begin{align*}
    H^i_{(p)}(X,\Omega^j_{X/W(k)} \otimes L^{-1})=0 & \quad (i+j<\dim X), \\
    H^i(X_{\F_p}, \Omega^j_{X_{\F_p}} \otimes L_{\F_p}^{-1}) = 0 & \quad (i + j < \dim X - 1), \\
    H^i(X, \Omega^j_{X/W(k)} \otimes L^{-1}) = 0 & \quad (i + j < \dim X - 1 ),
\end{align*}
where \(H^i_{(p)}(X, -)\) is the \(i\)-th cohomology of \(R\Gamma_{(p)}(X, -)\).
\end{theorem}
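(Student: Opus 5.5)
We apply the positive-characteristic vanishing on the closed fiber and then pass to \(W(k)\) via \Cref{local-van-vs-mod-p-van}. Put \(d\defeq\dim X\), so that the closed fiber \(X_{\F_p}\) is smooth projective over \(k\) of dimension \(d-1\), and \(L_{\F_p}\) is ample on it.

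First step. Since \(X_{\F_p}\) is smooth over the perfect field \(k\) and HT-split, \Cref{relative-vs-absolute} and \Cref{criterion-decomp-log} (with \(D=0\)) give the decomposition of \(F_*\Omega^\bullet_{X_{\F_p}}\). \Cref{vanishing} then yields
\[
H^i(X_{\F_p},\Omega^j_{X_{\F_p}}\otimes L_{\F_p}^{-1})=0\qquad (i+j<d-1).
\]
This is the second asserted vanishing.

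Second step: passing to \(X\). Set \(\mcalM_j\defeq\Omega^j_{X/W(k)}\otimes L^{-1}\), a locally free coherent sheaf on \(X\). Since \(X\) is smooth over \(W(k)\), formation of relative differentials commutes with base change, and \(\mcalM_j\) is flat. Hence \(L\iota^*\mcalM_j\simeq\Omega^j_{X_{\F_p}}\otimes L_{\F_p}^{-1}\), with no higher Tor terms. The first step therefore gives
\[
R\Gamma(X_{\F_p},L\iota^*\mcalM_j)\in\mcalD^{\geq d-1-j}.
\]
Apply \Cref{local-van-vs-mod-p-van} with \(i\defeq d-j\). This gives \(R\Gamma_{(p)}(X,\mcalM_j)\in\mcalD^{\geq d-j}\), which is the first vanishing \(H^i_{(p)}=0\) for \(i+j<d\). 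The same proposition also gives \(R\Gamma(X,\mcalM_j)\in\mcalD^{\geq d-1-j}\), which is the third vanishing.

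I expect essentially no obstacle. The only point needing care is the bookkeeping of the dimension shift: the closed fiber has dimension \(\dim X-1\), and the shift by one in \Cref{local-van-vs-mod-p-van} aligns the indices exactly. One should also check that \Cref{vanishing} applies to a possibly disconnected or non-equidimensional fiber. If needed, I would argue componentwise on the connected components of \(X_{\F_p}\). Since \(X\) is flat projective over \(W(k)\), each connected component of \(X\) has closed fiber of pure relative dimension, and I would interpret \(\dim X\) componentwise accordingly.
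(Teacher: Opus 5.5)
Your proposal is correct and is essentially the paper's proof. Apply \Cref{vanishing} on the closed fiber, which has dimension $\dim X-1$, to get the second vanishing. Then feed $L\iota^*(\Omega^j_{X/W(k)}\otimes L^{-1})\simeq\Omega^j_{X_{\F_p}}\otimes L_{\F_p}^{-1}$ into \Cref{local-van-vs-mod-p-van} with $i=\dim X-j$ to get the first and third. Your remarks on base change of differentials and on working componentwise for non-equidimensional $X$ only make explicit what the paper leaves implicit.
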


\begin{proof}
By \Cref{vanishing}, we obtain 
\[
H^i(X_{\F_p},\Omega^j_{X_{\F_p}} \otimes L_{\F_p}^{-1})=0 \quad (i+j<\dim X_{\F_p})
\]
for every ample line bundle $L_{\F_p}$ on $X_{\F_p}$.
This shows the second vanishing. The other vanishings follow from \Cref{local-van-vs-mod-p-van}.
\end{proof}

\begin{corollary}
\label{LimPerfectoidSplitImpliesHTSplit}
Let $X$ be a Noetherian scheme such that every closed point of $X$ is contained in the closed fiber $X_{\F_p}$.
If $X$ is lim-perfectoid split, then \(X_{\F_p}\) is HT-split.

In particular, if $X$ is smooth and projective over $W(k)$ for a perfect field $k$ of characteristic $p$, then for every ample line bundle $L$ on $X$, we have
\[
H^i_{(p)}(X,\Omega^j_{X/W(k)} \otimes L^{-1})=0 \quad (i+j<\dim X).
\]
\end{corollary}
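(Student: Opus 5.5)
The plan is to factor the map \(\nu^\sharp_{X_{\F_p}}\colon \cO_{X_{\F_p}}\to \OdR{X_{\F_p}}\) through the morphism \(\Sigma^{\pfd}_{\mscrX}\) of \Cref{PerfectizationToDeRham}, and then to deduce the splitting from lim-perfectoid splitness. Let \(\mscrX\) be the \(p\)-adic completion of \(X\); it has the same closed fiber \(X_{\F_p}\). \Cref{PerfectizationToDeRham} gives a commutative triangle \(\nu_{X_{\F_p}}\circ\Sigma^{\pfd}_{\mscrX}\simeq\nu^{\pfd}_{\mscrX,\F_p}\). Applying \(R(\nu_{X_{\F_p}})_*\) to the unit of \(\Sigma^{\pfd}_{\mscrX}\) factors
\[
\cO_{X_{\F_p}}\xrightarrow{\nu^\sharp_{X_{\F_p}}}\OdR{X_{\F_p}}\longrightarrow R\nu^{\pfd}_{\mscrX,\F_p,*}\cO_{(\mscrX^{\pfd})_{\F_p}} .
\]
If the composite is ind-split in \(\mcalD_{\qcoh}(X_{\F_p})\), then so is the first map, because any retraction of the composite precomposed with the second map is a retraction of the first. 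This is the same argument as in \Cref{finite-cover}(1) and \Cref{lci-to-HT-split}.

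The next step is to identify the composite with the reduction modulo \(p\) of the structure map \(\cO_{\mscrX}\to R\nu^{\pfd}_{\mscrX,*}\cO_{\mscrX^{\pfd}}\). Locally on \(\mscrX=\Spf(A)\), the perfectization is the colimit of \(\Spf(R)\) over perfectoid \(R\) receiving a map from \(A\), as in \cite{bhatt2025Aspects}. The pushforward is then computed by the limit or colimit expression appearing in the definition of lim-perfectoid splitting in \cite{ishizuka2026Localglobal}. By that definition, lim-perfectoid split means that \(\cO_X\to\) (the perfectoid pushforward) splits (or ind-splits) in \(\mcalD_{\qcoh}\). Applying \(-\otimes^L\F_p\) preserves splittings, so the composite over \(X_{\F_p}\) is split.

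The hypothesis that every closed point lies in \(X_{\F_p}\) is used to pass between \(X\) and its completion \(\mscrX\). The splitting notion of \cite{ishizuka2026Localglobal} is formulated for \(X\), and this hypothesis lets a splitting on \(X\) induce one on \(\mscrX\), or directly after reduction modulo \(p\). It is also where one uses \cite{ishizuka2026Localglobal} to compare the global and local formulations.

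The main obstacle is the compatibility between two pushforwards: the one along \((\mscrX^{\pfd})_{\F_p}\to X_{\F_p}\) used above, and the mod \(p\) reduction of the perfectoid pushforward appearing in the lim-perfectoid split condition. This requires base change for the perfectization, namely \((\mscrX^{\pfd})_{\F_p}\simeq\mscrX^{\pfd}\times_{\mscrX}X_{\F_p}\), which holds by definition. It also requires that the pushforward commutes with \(-/^Lp\). I would check the latter on affines via the Čech or colimit description, using that \(/^Lp\) is a finite colimit.

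For the last assertion, \(X\) smooth projective over \(W(k)\) satisfies the hypothesis on closed points, since \(X\) is proper over \(W(k)\). Combining the first part with \Cref{ANV-HT-split} then gives the vanishing \(H^i_{(p)}(X,\Omega^j_{X/W(k)}\otimes L^{-1})=0\) for \(i+j<\dim X\).
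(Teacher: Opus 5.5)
Your proposal is correct and follows the paper's proof: pass to the $p$-adic completion $\mscrX$, factor $\nu^\sharp_{X_{\F_p}}$ through $\Sigma^{\pfd}_{\mscrX}$ using \Cref{PerfectizationToDeRham}, use that the lim-perfectoid splitting of $\mscrX$ survives derived reduction modulo $p$, and deduce the vanishing from \Cref{ANV-HT-split}. The only difference is that the paper does not check by hand the compatibility you single out as the main obstacle; it quotes \cite{ishizuka2026Algebraization}*{Theorem~4.28}, which directly gives $R\nu^{\pfd}_{\mscrX,\F_p,*}\mcalO_{(\mscrX^{\pfd})_{\F_p}}\simeq\mcalO_{\mscrX,\perfd}/^Lp$ in $\CAlg(\mcalD_{\qcoh}(\mscrX))$, so that citation takes the place of your sketched base-change-plus-limit argument.
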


\begin{proof}
We prove the first assertion.
Let $\mscrX$ be the $p$-adic completion of $X$.
By \cite{ishizuka2026Algebraization}*{Theorem~4.28}, we have an equivalence
\[
R\nu^{\pfd}_{\mscrX,\F_p,*}\mcalO_{(\mscrX^{\pfd})_{\F_p}}
\simeq
\mcalO_{\mscrX,\perfd}/^Lp
\]
in \(\CAlg(\mcalD_{\qcoh}(\mscrX))\).
Since \(\mscrX\) is lim-perfectoid split, the morphism
\[
\mcalO_{X_{\F_p}}
\simeq
\mcalO_{\mscrX}/^Lp
\longrightarrow
R\nu^{\pfd}_{\mscrX,\F_p,*}\mcalO_{(\mscrX^{\pfd})_{\F_p}}
\]
is ind-split in \(\mcalD_{\qcoh}(X_{\setF_p})\).

By \Cref{PerfectizationToDeRham}, the morphism
\[
\cO_{X_{\F_p}} \to \OdR{X_{\F_p}}
\]
is ind-split.
Thus \(X_{\F_p}\) is HT-split.

The second assertion follows from the first assertion and \Cref{ANV-HT-split}.
\end{proof}

\begin{corollary}\label{GPS-to-ANV}
Let $X$ be a Noetherian scheme such that every closed point of $X$ is contained in the closed fiber $X_{\F_p}$.
Assume that $X$ is globally $+$-regular. 
Then \(X_{\F_p}\) is HT-split.

In particular, if $X$ is smooth and projective over $W(k)$ for a perfect field $k$ of characteristic $p$, then for every ample line bundle $L$ on $X$, we have
\[
H^i_{(p)}(X,\Omega^j_{X/W(k)} \otimes L^{-1})=0 \quad (i+j<\dim X).
\]
\end{corollary}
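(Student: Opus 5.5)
The plan is to reduce to \Cref{LimPerfectoidSplitImpliesHTSplit} by showing that global \(+\)-regularity implies lim-perfectoid splitting. The second assertion then follows from \Cref{ANV-HT-split}, exactly as in the proof of \Cref{LimPerfectoidSplitImpliesHTSplit}.

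First, I recall what global \(+\)-regularity gives. In the sense of \cite{BMPSTWW23}, for every finite surjective morphism \(f\colon Y\to X\) from an integral scheme, the map \(\mcalO_X\to f_*\mcalO_Y\) splits. In the version we need, this says that \(\mcalO_X\to R\Gamma(X^+,\mcalO_{X^+})\) (or its sheafified version on \(X\)) splits after \(p\)-completion. Here the hypothesis that all closed points lie in \(X_{\F_p}\) ensures that only the \(p\)-adic completion \(\mscrX\) matters. Indeed, splitting of \(\mcalO_X\to\mcalO_{X^+}\) restricts to a splitting of \(\mcalO_{\mscrX}\to\widehat{\mcalO_{X^+}}\).

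Second, I compare with the perfectoidization. By the theorems of Bhatt (\cite{bhatt2021CohenMacaulayness}, and the Cohen--Macaulayness of \(R^+\) via André's flatness lemma), the \(p\)-completion \(\widehat{R^+}\) of the absolute integral closure of each affine chart is perfectoid. Hence, locally, the map \(\mcalO_{\mscrX}\to\widehat{\mcalO_{X^+}}\) factors through \(\mcalO_{\mscrX,\perfd}\) by the universal property of perfectoidization. This is the key step and the main obstacle. One must produce this factorization globally and compatibly, as a morphism \(\mcalO_{\mscrX,\perfd}\to\widehat{\mcalO_{X^+}}\) of sheaves of (animated) algebras on \(\mscrX\), and check that it is compatible with the structure maps. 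I would try to write \(\mcalO_{\mscrX,\perfd}\) as the colimit, over finite covers, of perfectoidizations, as in \cite{ishizuka2026Localglobal}. The notion of lim-perfectoid splitting there is designed so that a splitting of \(\mcalO_{\mscrX}\to\varinjlim(\text{finite covers})^{\wedge}\) gives an ind-splitting of \(\mcalO_{\mscrX}\to\mcalO_{\mscrX,\perfd}\). Indeed, I expect the implication ``globally \(+\)-regular \(\Rightarrow\) lim-perfectoid split'' to be recorded essentially in that reference, and I would cite it.

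Given this, a retraction of \(\mcalO_{\mscrX}\to\widehat{\mcalO_{X^+}}\), precomposed with \(\mcalO_{\mscrX,\perfd}\to\widehat{\mcalO_{X^+}}\), yields an ind-splitting of \(\mcalO_{\mscrX}\to\mcalO_{\mscrX,\perfd}\). Thus \(X\) is lim-perfectoid split, and \Cref{LimPerfectoidSplitImpliesHTSplit} gives that \(X_{\F_p}\) is HT-split. For smooth projective \(X\) over \(W(k)\), \Cref{ANV-HT-split} then yields the stated vanishing of \(H^i_{(p)}(X,\Omega^j_{X/W(k)}\otimes L^{-1})\) for \(i+j<\dim X\).
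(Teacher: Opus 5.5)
Your proposal is correct and follows the paper's proof. The paper likewise obtains lim-perfectoid splitting of $X$ from global $+$-regularity by citing \cite{ishizuka2026Localglobal}*{Proposition~3.5}, and then concludes with \Cref{LimPerfectoidSplitImpliesHTSplit}, whose second assertion is exactly the application of \Cref{ANV-HT-split} you describe. Your heuristic sketch of the cited implication is not needed and does not affect the argument; note also that the perfectoidness of $\widehat{R^+}$ is elementary, since it follows from $R^+$ being absolutely integrally closed, rather than from Bhatt's Cohen--Macaulayness theorem.
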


\begin{proof}
It follows from \Cref{LimPerfectoidSplitImpliesHTSplit} and the fact that $X$ is lim-perfectoid split by \cite{ishizuka2026Localglobal}*{Proposition 3.5}.
\end{proof}


\end{document}